\documentclass[openany]{amsbook}
\usepackage{lmodern}
\usepackage{amsmath,amssymb}
\usepackage{amsthm}
\usepackage{amsfonts}
\usepackage{mathrsfs}
\usepackage{dutchcal}
\usepackage{parskip}
\usepackage{enumitem}
\usepackage{url}
\usepackage{graphicx}
\usepackage[export]{adjustbox}
\usepackage{float}
\usepackage{bbm}
\usepackage{etoolbox}
\usepackage{imakeidx}

\makeindex

\newtheorem{theorem}{Theorem}[section]

\newtheorem{lemma}{Lemma}
\newtheorem{definition}{Definition}
\newtheorem{corollary}{Corollary}
\DeclareSymbolFontAlphabet{\mathcalorig}   {symbols}

\begin{document}

\title{On Sharpest Tail Bounds for Functions \\ of Tail Bounded Random Variables}
\author{Stephen Jordan Harrison}

\maketitle

\begin{abstract}
Consider $n$ real/complex, independent/dependent random variables with respective tail bounds and $g$ a measurable function of the r.v.'s. 
Consider $f$ the ``sharpest" tail bound of $g$ (sharpest in the sense, if $f$ were any less, then for some $X_1,...,X_n$ satisfying the 
conditions, $g(X_1,...,X_n)$ would not satisfy the tail $f$). Significant research has been done to approximate $f$ often with high accuracy. 
These results are often of the form, for $g$ in this family, and tail bounds of $X_k$ in this family, $f$ is bounded by some $f'$ with high accuracy. 
However, the question ``what would it take to find $f$ exactly?" has received little attention, apparently even for simple cases. This is the question 
we try to answer. For $X_1,...,X_n$ required to be mutually ind., first $X_k$ are simplified to be monotone on $(0,1)$ WLOG. 
This strengthens convergence in distribution to convergence a.e. (Skorokhod's representation theorem), and allows defining shift operators, which help 
reduce the space of r.v.'s one searches to find $f$ and/or the maximum measure of a subset. We do find $f$ in some special cases, however $f$ rarely has 
a closed form. For $X_1,...,X_n$ dependent/not necessarily independent, another reduction in the space of r.v.'s one searches to find $f$ is done.
\end{abstract}

\tableofcontents

\chapter{Introduction}
\section{Preliminaries}\label{preliminaries}
We refer the reader to \cite{Dudley2002} for the general theory of measure theory.

Throughout this thesis, we will always assume for a given $n \in \mathbb{N}$ that $\mathbb{R}^n$ and any subset of $\mathbb{R}^n$ has the Borel $\sigma$-algebra. For 
$X$ a real r.v., the cumulative distribution function (CDF) of $X$ is $F_X := P(X \leq t), \ \forall t \in \mathbb{R}$. We will often refer to $P(X \geq t), \ \forall t \in \mathbb{R}$ as the reversed CDF 
of $X$. 

All CDFs can be shown to have the following properties \cite{Dudley2002} pg. 283: they are right continuous, monotonically increasing, approach $0$ as $t \rightarrow -\infty$, and approach $1$ as $t \rightarrow \infty$. In fact, these properties 
are also sufficient for there to exist a real r.v.\ with CDF the specified function. Similarly, all reversed CDFs are left continuous, 
monotonically decreasing, approach $1$ as $t \rightarrow - \infty$, and $0$ as $t \rightarrow \infty$. Furthermore, these properties are sufficient 
for there to exist a real r.v.\ with reversed CDF the specified function. 

The CDF of a real r.v.\ uniquely characterizes its distribution by the $\lambda$-$\pi$ theorem (see below), but the same is true of its reversed CDF. 

If $(X^k)_k$ is a sequence of real r.v.'s and $X$ is a real r.v.\ such that $F_{X^k}(t) \xrightarrow[k \rightarrow \infty]{} F_X(t)$ for all $t$ at which $F_X$ is continuous, one says 
$X^k$ converges to $X$ in distribution and writes 
$$X^k \xrightarrow[]{d} X.
$$ 
If $X, X^k: \Omega \rightarrow \mathbb{R}^n$ are r.v.'s for $k=1,2,...$ such that $X^k(w) \xrightarrow[k \rightarrow \infty]{} X(w)$ for 
all $\omega \in \Omega$ except a subset of measure of $0$, one says $X^k$ converges to $X$ almost everywhere and writes 
$$
X^k \xrightarrow[]{a.e.} X.
$$
A $\pi$-system on a set $\Omega$ is a set of subsets of $\Omega$ that is non-empty and closed under intersection. We are not concerned with the precise 
statement of ``Dynkin's $\lambda$-$\pi$ theorem" \cite{Dudley2002}, only the following immediate corollary: if $m_1, m_2$ are two probability measures on a measurable space $\Omega$ that agree on a $\pi$-system that generates the $\sigma$-algebra of $\Omega$, then $m_1 = m_2$. 

For a r.v.\ $X$ on a separable metric space $(S,d)$ with the Borel $\sigma$-algebra, a measurable subset $A$ of $S$ is a continuity set of $X$ iff\footnote{$\partial A$ is the topological boundary of $A$.} $P(X \in \partial A) = 0$ \cite[pg. 386]{Dudley2002}. Portmanteau's theorem \cite[pg. 386]{Dudley2002} is a theorem which, for 
r.v.'s $X^1,X^2,...$ also defined on $(S,d)$, gives several equivalent conditions for $X^m$ converging to $X$ in distribution. We only need the following two equivalences: 
\begin{enumerate}[label=(\roman*)]
   \item $X^m \xrightarrow[]{d} X$
   \item $P(X^m \in A) = P(X \in A)$ for all continuity sets $A$ of $X$
\end{enumerate} 
Specifically, suppose $(S,d)$ is $\mathbb{R}$ with the usual metric and (i) holds. Notice for all $t \in \mathbb{R}$ at which $P(X \geq t)$ is continuous, that $P(X \in \partial [t,\infty)) = 0$. Then by definition, $[0,\infty)$ is a continuity set, thus $P(X^m \geq t) \xrightarrow[m \rightarrow \infty]{} P(X \geq t)$ for all $t$ at which $P(X \geq t)$ is continuous. 

This provides a symmetric counterpart to convergence in distribution, which will be helpful in some proofs, as there may be one part that invokes convergence of the CDFs, and a symmetric part 
which invokes convergence of the reversed CDFs.

The following, Fubini's theorem, is from \cite{Dudley2002} pg. 139.
\begin{theorem}\label{Fubini'stheorem}
Let $\mathcalorig{F}_j$ be a $\sigma$-algebra for $\Omega_j$ with $\sigma$-finite measure $\mu_j$ for $j=1,...,n$. Then there is a unique measure $\mu$ on the product $\sigma$-algebra $\mathcalorig{F}$ of $\Omega=\Omega_1\times \cdots \times \Omega_n$
such that for any $A_j \in \mathcalorig{F}_j$, for $j=1,...,n$, $\mu(A_1 \times \cdots \times A_n) = \mu_1(A_1) \times \cdots \times \mu_n(A_n)$, or $0$ if any $\mu_j(A_j) = 0$, even if another is $+\infty$. If $f$ is non-negative and jointly measurable on $\Omega$, or if $f \in \mathcalorig{L}^1(\Omega,\mathcalorig{F},\mu)$, then 
$$
\int f \ d \mu = \int \cdots \int f(x_1,...,x_n) d\mu_1(x_1)\cdots d \mu_n(x_n)
$$
where for $f \in \mathcalorig{L}^1(\Omega,\mathcalorig{F},\mu)$, the iterated integral is defined recursively, ``from the outside" in the sense that for $\mu_n$-almost all $x_n$, the iterated integral with respect to the other variables is defined and finite, so that except on a set of $\mu_{n-1}$ measure $0$ (possibly depending on $x_n$) the iterated integral for the first $n-2$ variables is defined and finite, and so on. The same holds if integrations are done in any order.

\end{theorem}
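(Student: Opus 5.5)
The plan is to prove the statement by induction on $n$. The base case $n=1$ is trivial, and the case $n=2$ carries the real content. The general step follows by grouping $\Omega_1\times\cdots\times\Omega_{n-1}$ as a single $\sigma$-finite space and applying the two-factor result. Two facts make this work. A finite product of $\sigma$-finite spaces is $\sigma$-finite, since products of the countable covers by finite-measure sets give a countable cover. And the product $\sigma$-algebra is associative.

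\textbf{Existence and uniqueness of $\mu$ for $n=2$.} Measurable rectangles $A_1\times A_2$ form a $\pi$-system generating $\mathcalorig{F}$, and finite disjoint unions of them form an algebra. On this algebra I set $\mu(A_1\times A_2)=\mu_1(A_1)\mu_2(A_2)$, with the convention $0\cdot\infty=0$. The key step is countable additivity on rectangles. Suppose $A_1\times A_2=\bigsqcup_k B_k\times C_k$. Then
\[
1_{A_1}(x)\,1_{A_2}(y)=\sum_k 1_{B_k}(x)\,1_{C_k}(y)
\]
pointwise. Integrating in $y$ and then in $x$, using monotone convergence twice, gives the required identity. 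Carathéodory's extension theorem then produces $\mu$.

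For uniqueness, fix $E_j^m\uparrow\Omega_j$ with $\mu_j(E_j^m)<\infty$. Any two extensions agree on the $\pi$-system of rectangles. Restricted to $E_1^m\times E_2^m$, both become finite measures agreeing on a generating $\pi$-system. The $\lambda$-$\pi$ corollary stated in the preliminaries is phrased for probability measures, but its proof works verbatim for finite measures of equal total mass. So the two extensions agree on each $E_1^m\times E_2^m$, and letting $m\to\infty$ gives equality everywhere.

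\textbf{Tonelli part.} Let $f\ge0$ be jointly measurable. First take $f=1_E$ for $E\in\mathcalorig{F}$. I will show that $x\mapsto\mu_2(E_x)$ is measurable and that
\[
\int \mu_2(E_x)\,d\mu_1(x)=\mu(E),
\]
and likewise in the other order. On $E_1^m\times E_2^m$, the class of sets $E$ satisfying both claims contains the rectangles. It is closed under proper differences, by finiteness, and under increasing unions, by monotone convergence. So it is a $\lambda$-system and contains $\mathcalorig{F}$ by Dynkin. Removing the truncation uses monotone convergence once more. The identity then extends to simple functions by linearity, and to nonnegative measurable $f$ via increasing simple approximations and monotone convergence. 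Measurability of the sections, and of the partial integrals as functions of the remaining variable, is carried along at each step.

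\textbf{Fubini part.} Let $f\in\mathcalorig{L}^1$. Split $f=f^+-f^-$, and for complex $f$ split into real and imaginary parts. Tonelli applied to $|f|$ shows that $\int|f(x,y)|\,d\mu_2(y)<\infty$ for $\mu_1$-a.e.\ $x$. Off this null set the inner integrals of $f^\pm$ are finite and may be subtracted. The null set is where the ``defined from the outside'' clause comes from. In the induction, these exceptional null sets may depend on the outer variables, exactly as the statement allows. Any order of integration works because the argument is symmetric in the factors, and the grouping can be chosen arbitrarily.

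\textbf{Main obstacle.} I expect the hardest step to be the monotone class argument proving measurability of $x\mapsto\mu_2(E_x)$ in the $\sigma$-finite setting. The $\lambda$-system property needs finite measures to handle differences. The plan is to work on $E_1^m\times E_2^m$ first and only then pass to the limit, rather than attempting the differences directly on the infinite-measure space.
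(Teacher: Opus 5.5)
Your proposal is correct. It follows the standard textbook route:
\begin{itemize}
\item a premeasure on rectangles, whose countable additivity comes from integrating $1_{A_1}(x)1_{A_2}(y)=\sum_k 1_{B_k}(x)1_{C_k}(y)$ twice by monotone convergence;
\item Carath\'eodory for existence;
\item Dynkin's theorem on the finite-measure truncations $E_1^m\times E_2^m$, both for uniqueness and for the section identities;
\item simple-function approximation for Tonelli;
\item the $f^{\pm}$ split off a $\mu_1$-null set for Fubini;
\item induction by grouping.
\end{itemize}
The paper gives no proof to compare against. It quotes the theorem verbatim from Dudley (2002, p.~139) and uses it only as a black box: to build product random variables with independent components, and to rewrite probabilities such as $P(X\in V)$ as $\int h_1\,d{X_1}_\#(m)$. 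So the only difference is that you supply a self-contained argument where the paper defers to the textbook.

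A few routine steps are compressed and worth stating explicitly.
\begin{itemize}
\item The product formula is first defined on rectangles, which form a semiring. It must then be extended additively to finite disjoint unions, with well-definedness checked through common refinements.
\item Uniqueness for general $n$ is cleanest if you run your $\pi$-$\lambda$ argument directly on $n$-fold rectangles and $E_1^m\times\cdots\times E_n^m$. Grouping gives existence immediately but needs an extra step to give uniqueness.
\item The claim that ``any order'' of integration works really rests on uniqueness. The image of $\mu$ under a coordinate permutation satisfies the rectangle formula for the permuted factors, so it is the permuted product measure.
\end{itemize}
You can also shorten the existence step. Your Dynkin argument already proves measurability of $x\mapsto\mu_2(E_x)$ without any reference to $\mu$. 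So you can simply define $\mu(E):=\int\mu_2(E_x)\,d\mu_1(x)$. Countable additivity is then just monotone convergence, and Carath\'eodory becomes unnecessary.
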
 
Our primary interest with this is constructing product r.v's. In particular, if $X_j:\Omega_j \rightarrow \mathbb{R}$ is a r.v.\ with measure $\mu_j$ on its domains, $j=1,...,n$, it allows constructing the 
r.v.\ $X=(X_1,...,X_n):\Omega_1 \times \cdots \times \Omega_n \rightarrow \mathbb{R}^n$ with the product measure $\mu$ on its domain. In particular, $X$ has mutually ind.\ components. 

\section{Introduction to tail bounds}\label{tailboundintroduction}

For a real r.v. \(X\) the most common form of a tail bound is \(P(|X| \ge t) \le f(t)\), \(t \ge 0\) where \(f(t) \searrow 0\). Let \(g : \mathbb{R}^n \to \mathbb{R}\) 
be measurable and let \(X_1, \dots, X_n\) be real r.v.'s with restrictions on them to prevent them from being arbitrarily large. This could be in the form of tail 
bounds on each r.v., restrictions of their moments/moment generating function, the r.v.'s being bounded, being explicitly specified, or a restriction on their 
mean/variance.

These restrictions are in some sense equivalent: one can convert from one type of restriction to another, but it is lossy; converting will generally result in a 
somewhat weaker restriction than the restriction one started with. This is seen by converting twice and observing your original restriction has been weakened. 
There is an exception: tail bounds are quite detailed and can often be converted to without any weakening of the restriction. They are the most detailed type of 
restriction, but may not be the easiest to work with.

The objective of most tail bound research is to consider a specific measurable function \(g : \mathbb{R}^n \to \mathbb{R}\), r.v.'s \(X_1, \dots, X_n\) with some 
form of restrictions on them, and place as small a tail bound\footnote{Also known as concentration inequalities.} on \(P(g(X_1, \dots, X_n) \ge t)\) or 
\(P(|g(X_1, \dots, X_n)| \ge t)\) as one can.\footnote{In this thesis we will only use tail bounds as our form of restriction.}

\cite{Vershynin2023} is a good introduction on this, helping to bridge the gap between graduate level and research level.

The following are results in the early chapters of \cite{Vershynin2023}.

\begin{theorem}[Hoeffding's inequality]
Let \(X_1, \dots, X_N\) be independent symmetric Bernoulli r.v.'s, and let \(a = (a_1, \dots, a_n) \in \mathbb{R}^N\). Then, for any \(t \ge 0\), we have
\[
P\left( \sum_{i=1}^N a_i X_i \ge t \right) \le \exp\left( -\frac{t^2}{2 \|a\|_2^2} \right).
\]
\end{theorem}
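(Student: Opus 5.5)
We prove Hoeffding's inequality by the standard exponential moment (Chernoff) method, which uses only Markov's inequality, independence, and an elementary bound on $\cosh$. Write $S = \sum_{i=1}^N a_i X_i$, where each $X_i$ takes the values $\pm 1$ with probability $1/2$ each. We may assume $a \neq 0$; otherwise $S=0$, and for $t>0$ the probability is $0$, while the case $t=0$ is read with the convention that the right-hand side equals $1$.

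First, fix a parameter $\lambda > 0$. Since $x \mapsto e^{\lambda x}$ is increasing and positive, Markov's inequality gives
\[
P(S \ge t) = P\left(e^{\lambda S} \ge e^{\lambda t}\right) \le e^{-\lambda t}\, \mathbb{E}\, e^{\lambda S}.
\]
By mutual independence of the $X_i$, the expectation factors:
\[
\mathbb{E}\, e^{\lambda S} = \prod_{i=1}^N \mathbb{E}\, e^{\lambda a_i X_i} = \prod_{i=1}^N \cosh(\lambda a_i).
\]
If one wants this step rigorously within the framework of the paper, it follows from Fubini's theorem (Theorem \ref{Fubini'stheorem}) applied to the product measure on which the independent $X_i$ can be realized. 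The integrand is non-negative, so the theorem applies directly.

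The key analytic step is the elementary inequality $\cosh x \le e^{x^2/2}$ for all real $x$. To prove it, compare Taylor series term by term:
\[
\cosh x = \sum_{k \ge 0} \frac{x^{2k}}{(2k)!} \quad\text{and}\quad e^{x^2/2} = \sum_{k\ge 0} \frac{x^{2k}}{2^k k!}.
\]
It then suffices to note that $(2k)! \ge 2^k k!$, which holds because $(2k)!/k! = (k+1)(k+2)\cdots(2k)$ is a product of $k$ factors, each at least $2$. This is the only nontrivial estimate in the proof, and it is where I would expect a reader to need the most care.

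Combining the steps gives
\[
P(S\ge t) \le \exp\left(-\lambda t + \frac{\lambda^2}{2}\|a\|_2^2\right)
\]
for every $\lambda>0$. Finally, minimize the exponent over $\lambda$. The quadratic is minimized at $\lambda = t/\|a\|_2^2$, which is positive when $t>0$. Substituting this value yields $\exp\left(-t^2/(2\|a\|_2^2)\right)$, as claimed. For $t = 0$ the bound is trivial, since the right-hand side equals $1$.
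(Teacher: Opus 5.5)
Your argument is correct and complete, including the degenerate cases $a=0$ and $t=0$. Markov's inequality applied to $e^{\lambda S}$, the factorization $\mathbb{E}\, e^{\lambda S}=\prod_i \cosh(\lambda a_i)$ by independence, the bound $\cosh x\le e^{x^2/2}$ via $(2k)!\ge 2^k k!$, and the choice $\lambda=t/\|a\|_2^2$ all check out. The paper itself gives no proof of this statement; it only quotes the result from Vershynin's book, and your Chernoff-bound argument is essentially the standard proof given there.
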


\begin{theorem}[Hoeffding's inequality, two-sided]
Let \(X_1, \dots, X_N\) be independent symmetric Bernoulli r.v.'s, and let \(a = (a_1, \dots, a_n) \in \mathbb{R}^N\). Then, for any \(t \ge 0\), we have
\[
P\left( \left| \sum_{i=1}^N a_i X_i \right| \ge t \right) \le 2 \exp\left( -\frac{t^2}{2 \|a\|_2^2} \right).
\]
\end{theorem}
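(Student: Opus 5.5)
The two-sided bound will follow from the one-sided Hoeffding inequality stated just above, combined with the symmetry of symmetric Bernoulli random variables and a union bound. Throughout write $S = \sum_{i=1}^N a_i X_i$. If $a = 0$ then $S \equiv 0$, and the right-hand side should be read as $+\infty$ (or as $2$, or the case $t=0$ handled separately), so the claim is trivial. I will therefore assume $\|a\|_2 > 0$.

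First, split the event. Since $\{|S| \ge t\} = \{S \ge t\} \cup \{-S \ge t\}$, subadditivity of $P$ gives
\[
P(|S| \ge t) \le P(S \ge t) + P(-S \ge t).
\]

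Second, bound each term with the one-sided inequality.
\begin{enumerate}[label=(\roman*)]
\item The first term is at most $\exp(-t^2/(2\|a\|_2^2))$ directly by the one-sided Hoeffding inequality.
\item For the second term, note that $-S = \sum_{i=1}^N (-a_i) X_i$ and $\|-a\|_2 = \|a\|_2$. Applying the one-sided inequality with the coefficient vector $-a$ gives the same bound.
\end{enumerate}
This step does not even need the symmetry of the $X_i$: the one-sided theorem holds for every $a \in \mathbb{R}^N$, so the sign can be absorbed into the coefficients. Alternatively, one could observe that $(-X_1,\dots,-X_N)$ has the same joint distribution as $(X_1,\dots,X_N)$, by independence and symmetry of each coordinate, so $-S$ and $S$ are equal in distribution. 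The coefficient-flipping route is cleaner and is the one I would use.

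Adding the two bounds gives $2\exp(-t^2/(2\|a\|_2^2))$, which is the claim. There is no real obstacle here. The only points needing care are the degenerate case $a=0$ and checking that the one-sided theorem is stated for arbitrary real $a$, which it is.
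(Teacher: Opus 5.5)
Your argument is correct. The union bound $P(|S|\ge t)\le P(S\ge t)+P(-S\ge t)$, together with the one-sided inequality applied to the coefficient vector $-a$ (which has the same $\ell_2$ norm), gives exactly the factor $2$.

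The paper never proves this statement. It quotes it from Vershynin's book as introductory background, alongside the one-sided version, and your derivation is the standard one.

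One small slip is in your degenerate-case remark. When $a=0$ and $t>0$, the natural convention $t^2/0=+\infty$ makes the right-hand side $0$, not $+\infty$. The inequality still holds because the left-hand side is $P(0\ge t)=0$. No separate case is actually needed: the union-bound step carries over whatever convention the one-sided theorem uses at $a=0$.
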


The following definitions are given: if \(X_i\) is a real r.v. such that \(P(|X_i| > t) \le 2e^{-c t^2}\), \(\forall t \ge 0\)\footnote{This was the definition given by Roman Vershynin \cite{Vershynin2023}, however this thesis will only work with \(\leq\) and \(\geq\) in inequalities. This is done for practical reasons.} 
one says \(X_i\) is \emph{sub-gaussian}. In this case, the sub-gaussian norm of \(X_i\) is defined as \(\|X_i\|_{\psi_2} = \inf\{ t > 0 : \mathbb{E} \exp(X_i^2/t^2) \le 2 \}\) which can be shown to be a norm. The following is then shown.

\begin{theorem}[General Hoeffding inequality]
Let \(X_1, \dots, X_N\) be independent mean-zero sub-gaussian r.v.'s, and let \(a = (a_1, \dots, a_n) \in \mathbb{R}^N\). Then, for any \(t \ge 0\), we have
\[
P\left( \left| \sum_{i=1}^N a_i X_i \right| \ge t \right) \le 2 \exp\left( -\frac{c t^2}{K^2 \|a\|_2^2} \right)
\]
where \(K = \max_i \|X_i\|_{\psi_2}\).
\end{theorem}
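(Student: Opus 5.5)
The plan is the standard moment generating function argument, with the sub-gaussian norm used to control each factor. Write $S=\sum_{i=1}^N a_iX_i$. By symmetry (apply the one-sided bound to $-X_i$, which are also independent, mean-zero and have the same $\psi_2$ norm), it suffices to show
$$P(S\ge t)\le \exp\left(-\frac{ct^2}{K^2\|a\|_2^2}\right),$$
and then add the two one-sided bounds to get the factor $2$.

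For the one-sided bound, fix $\lambda>0$ and apply Markov's inequality to $e^{\lambda S}$:
$$P(S\ge t)\le e^{-\lambda t}\,\mathbb{E}e^{\lambda S}=e^{-\lambda t}\prod_{i=1}^N \mathbb{E}e^{\lambda a_iX_i}.$$
The factorisation uses mutual independence. Via Theorem \ref{Fubini'stheorem} one can realise the $X_i$ on a product space, so the expectation of the product of the nonnegative functions $e^{\lambda a_iX_i}$ is the product of the expectations.

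The key lemma, and the main obstacle, is the MGF bound: if $X$ is mean zero with $\|X\|_{\psi_2}\le K$, then for all $s\in\mathbb{R}$,
$$\mathbb{E}e^{sX}\le e^{C s^2K^2}$$
for an absolute constant $C$. I would prove it by normalising to $K=1$, so that $\mathbb{E}e^{X^2}\le 2$.

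For $|s|\le 1$, use the elementary inequality $e^x\le x+e^{x^2}$ for all real $x$. Applying it with $x=sX$ and using $\mathbb{E}X=0$ gives
$$\mathbb{E}e^{sX}\le \mathbb{E}e^{s^2X^2}\le \left(\mathbb{E}e^{X^2}\right)^{s^2}\le 2^{s^2}\le e^{s^2},$$
where the middle step is Jensen's inequality applied to the concave map $y\mapsto y^{s^2}$.

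For $|s|\ge 1$, use $sx\le s^2/2+x^2/2$. This gives
$$\mathbb{E}e^{sX}\le e^{s^2/2}\,\mathbb{E}e^{X^2/2}\le e^{s^2/2}\,2^{1/2}\le e^{s^2},$$
since $\log 2/2\le s^2/2$ when $|s|\ge 1$. So $C=1$ works after normalisation, and rescaling gives $\mathbb{E}e^{sX}\le e^{s^2K^2}$. The delicate points are verifying $e^x\le x+e^{x^2}$ and handling the case split cleanly.

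Plugging this in with $s=\lambda a_i$ and $\|X_i\|_{\psi_2}\le K$ gives
$$P(S\ge t)\le \exp\left(-\lambda t+\lambda^2K^2\|a\|_2^2\right).$$
Optimising at $\lambda=t/(2K^2\|a\|_2^2)$ yields the one-sided bound with $c=1/4$. The edge cases $a=0$ or $t=0$ are trivial, since then the right-hand side is at least $1$ under the convention $\exp(-\infty\cdot 0)$ handled separately.
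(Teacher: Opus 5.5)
The paper gives no proof of this statement. It is quoted from Vershynin's textbook as background in the introduction, so there is no in-paper argument to compare against. Your proof is correct and self-contained. It is the standard Chernoff argument:
\begin{itemize}
\item independence factorizes $\mathbb{E}e^{\lambda S}$;
\item the estimate $\mathbb{E}e^{sX}\le e^{s^2K^2}$, for mean-zero $X$ with $\|X\|_{\psi_2}\le K$, controls each factor;
\item the choice $\lambda=t/(2K^2\|a\|_2^2)$ gives $c=1/4$.
\end{itemize}
As I recall it, the cited source packages the same MGF estimate differently. It first proves $\|\sum_i a_iX_i\|_{\psi_2}^2\le C\sum_i a_i^2\|X_i\|_{\psi_2}^2$ and then reads off the tail via the equivalence of the sub-gaussian characterizations. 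Your direct route avoids that equivalence and gives an explicit constant. The norm bound, on the other hand, is the more reusable structural fact.

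A few loose ends are worth tightening.

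\emph{Normalization.} The bound $\mathbb{E}e^{X_i^2/K^2}\le 2$ needs two facts. First, $t\mapsto\mathbb{E}e^{X_i^2/t^2}$ is nonincreasing. Second, the infimum defining $\|X_i\|_{\psi_2}$ is attained when it is positive, which follows from monotone convergence as $t\downarrow\|X_i\|_{\psi_2}$. If $\|X_i\|_{\psi_2}=0$, then $X_i=0$ a.s.

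\emph{The case $|s|\ge 1$.} The factor $2^{1/2}$ must come from Jensen, $\mathbb{E}e^{X^2/2}\le(\mathbb{E}e^{X^2})^{1/2}$. The cruder bound $\mathbb{E}e^{X^2/2}\le 2$ would fail for $1\le s^2<2\log 2$.

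\emph{The inequality $e^x\le x+e^{x^2}$.} This is elementary. For $x\le 1$, compare with $x+e^{x^2}\ge 1+x+x^2$:
\begin{itemize}
\item when $x\le 0$, use $e^x\le 1+x+x^2/2$ (Taylor with a nonpositive cubic remainder);
\item when $0\le x\le 1$, use $e^x\le 1+x+(e-2)x^2$.
\end{itemize}
For $x\ge 1$, simply $e^x\le e^{x^2}\le x+e^{x^2}$.

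\emph{Edge cases.} Your sentence here is inaccurate. For $a=0$ (or $K=0$) and $t>0$, the right-hand side is not at least $1$. Rather, $S=0$ a.s., so the left-hand side vanishes and the bound holds under the convention $\exp(-\infty)=0$. For $t=0$, the right-hand side is simply $2$.
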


\cite{Vershynin2023} includes much more complicated concentration inequalities, often involving matrices. That is, \(g(X_1, \dots, X_n)\) is often expressed using matrices. To my 
knowledge, current concentration inequality research does not prove that a tail bound is the absolute best (sharpest) tail bound possible. Sometimes proved is that 
certain constants in a tail bound function cannot be improved, although it is not shown the function is better than all other functions. After writing this 
dissertation it has become somewhat apparent to me why this is: producing ``best possible'' tail bounds is very effortful and likely won't have a closed form in 
terms of common functions.

The goal of this thesis was to investigate ``best possible'' (sharpest) tail bounds. To my knowledge, before this thesis a sharpest tail bound had never been proven. 
We write some sharpest tail bounds in non-closed forms, and one sharpest tail bound in a semi-closed form,\footnote{Using the error function. See \eqref{errortailbound}.} although in 
order to obtain that semi-closed form it was necessary to use very cooperative functions.

A significant amount of research has placed a good approximate tail bound on \(g(X_1, \dots, X_n)\) for particular forms of \(g\) and particular tail bounds on 
\(X_i\). Anna Skripka asked me to find tail bounds for Schur multipliers \cite{Skripka2024}, which are complicated functions I would describe as generalized matrix 
multiplication. That is, the \(g\) is complicated.

A tail bound \(f\) on \(g\) is called \emph{sharpest} iff (i) for all \(X_1, \dots, X_n\) satisfying their respective tail bounds (and independence conditions), 
\(g(X_1, \dots, X_n)\) satisfies \(f\) and (ii) were \(f\) any less, then for some \(X_1, \dots, X_n\) satisfying their tails, \(g(X_1, \dots, X_n)\) would not 
satisfy \(f\).

Before addressing Schur multipliers in particular, I questioned how one would begin going about computing the sharpest tail bound of \(g(X_1, \dots, X_n)\) for 
general \(g\) and general tail bounds placed on \(X_i\). This question has not received much attention and I was not successful finding previously developed methods 
which can solve the most simple cases of this question. This is the question we are interested in in this thesis. Our sharpest results are applicable to Schur 
multipliers in certain cases (see theorem~\ref{Schurmultipliertailbound} and \eqref{Schurmultipliereq2}), but the general case remains elusive.

We will develop tools which are theoretically capable of expressing sharpest tail bounds for any function 
$g: \mathbb{R}^n \rightarrow \mathbb{R}^n $ of real mutually ind.\ r.v.’s, but as of now only in a few 
cases are there methods of actually obtaining this expressing. I have no real idea how to proceed further.

\section{Different types of tail bounds}\label{tailtypes}

Let $X_k$ be real r.v.'s, $k=1,...,n$. There are multiple types of tail bounds one can place on $X_k$: 
\begin{enumerate}[label=(\roman*)]
\item $P(|X_k| \geq t) \leq f_k(t),  \forall t\geq 0$
\item $P(X_k \geq t) \leq g_k^+(t), \forall t \in \mathbb{R}$
\item $P(X_k \leq t) \leq g_k^-(t), \forall t \in \mathbb{R}$
\item $P(X_k \geq t) \leq h_k^+(t), \forall t \in \mathbb{R}$ and $P(X_k \leq t) \leq h_k^-(t), \forall t \in \mathbb{R}$ 
\end{enumerate}
where $f_k(t),g_k^+(t),g_k^-(-t),h_k^+(t),h_k^-(-t) \rightarrow 0$ as $t \rightarrow \infty$. WLOG assume these 
functions are monotone and have range $[0,1]$.

These types of tail bounds will be called absolute, right, left, and 2 tail bounds respectively, whereas 
``tail bound" alone means any one of these types. (i), (iv) are convertible to each other, but non-equivalent: given (iv) 
the most we can infer is 
$$
P(|X_k| \geq t) \leq \min \{1, h_i^+(t)+h_i^-(-t)\}, \forall t \geq 0.
$$
Whereas, given (i) the most we can infer is 
\begin{align*} 
   P(X_k \geq t) &\leq f_k(t) , \ \  \forall t \geq 0\\ 
   P(X_k \leq t) &\leq f_k(-t), \forall t \leq 0.
\end{align*}
In particular, transferring type (i) to type (iv) and back is weaker than the original tail bound. Similarly for transferring type (iv) to type (i) and back. 

Throughout we will assume that tails like $f_k, g_k^+,h_k+$ are left continuous and 
tails like $g_k^-,h_k^-$ are right continuous. This is because, for example, given $P(X_k \geq t) \leq f_k(t)$, in this thesis we will often 
want to consider a r.v.\ $\widetilde{X}_k$ characterised by 
$$
P(\widetilde{X}_k \geq t) = f_k(t), \ \forall t \in \mathbb{R}. 
$$
That is, the (reversed) cumulative distribution of $\widetilde{X}_k$ equals $f_k(t)$. In particular, for such an $\widetilde{X}_k$ to exist 
requires $f_k$ is left continuous by left continuity of the reversed CDF. Also, assuming left continuity (right continuity for left tail bounds) will 
significantly reduce the number of limits in proofs.

In (iv) it is prudent to assume $h_k^+(0) = h_k^-(0) = 1$. This is due to the following. Suppose
$$
\exists t',\ h_k^-(t'), h_k^+(t') <1. 
$$
WLOG $h_k^-(t') \leq h_k^+(t')$. Suppose $X_k$ satisfies (iv). It seems natural to let $X_k$ have $h_k^-(t')$ mass at $t'$, 
however this leads to a contradiction. Indeed, since $P(X_k \leq t') \leq h_k^-(t')$ and $X_k$ has $h_k^-(t')$ mass at $t'$, it follows $P(X_k < t') = 0$. Thus
\begin{equation*}
   1 = P(X < t') + P(X \geq t') \leq 0+ h^+(t') < 1.
\end{equation*}
To avoid this easy contradiction, it is sufficient to assume $\exists t_0 \in \mathbb{R}$ 
such that $h_k^+(t_0) = h_k^-(t_0) = 1$. By translating, WLOG $t_0 = 0$. 

It should be noted the logic I am using here is not one of a mathematical proof; there are multiple possible definitions. I am providing a logical explanation for why the definition would be 
very clumsy if it were not done the way I am explaining. In particular if you find both $h_k^-(t')<1$ and $h_k^+(t')<1$ then you can get an easy contradiction. The definition I chose is sufficient to exclude 
the possibility of this meaningless contradiction.

\section{Examples of simple cases}\label{sectionsimplecases}

Learning how to find sharpest tail bounds is equivalent to learning how to maximize the mass\footnote{Throughout this thesis
the measure of a set will often be called the mass of a set. I prefer this view, because I feel it accurately portrays the 
definition of a $\sigma$-algebra with a measure (also a ring of sets with a pre-measure).} (measure) 
of subsets in $\mathbb{R}^n$ over $X_1,...,X_n$ satisfying the conditions. That is, 
\begin{align}\label{eg1sup}
   \sup_{X_k \text{ tail bounded}} P(g(X_1,...,X_n) \geq t) = \sup_{X_k \text{ tail bounded}} &P((X_1,...,X_n)  \\ 
                                                                                              &\in g^{-1}[t,\infty)) \nonumber
\end{align}

One might further require $X_k$ are mutually ind.\ in \eqref{eg1sup}. 

\textbf{Example} $1$: 

The simplest non-trivial case of \eqref{eg1sup}
is $n=2$ and $V := g^{-1}[-t,\infty) = \{p_1,p_2\}$ where $p_1 = (a_1,b_1), \ p_2 = (a_2,b_2)$, with $0 \leq a_1 < a_2$ and $0 \leq b_2 < b_1$. 

\begin{figure}[h!]
   \includegraphics[scale = .18, center]{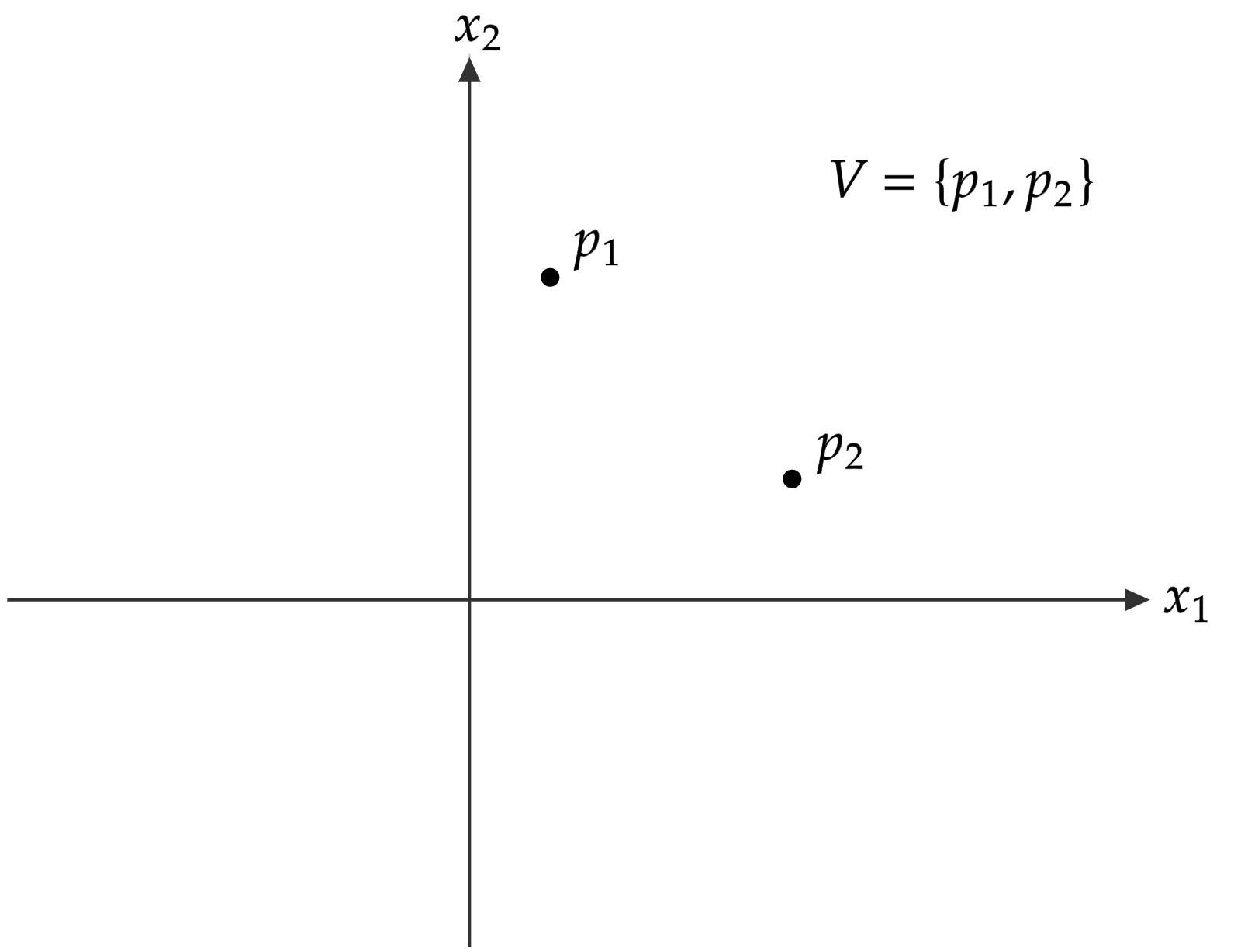}
   \caption{Example $1$}\label{eg.1fig}
\end{figure}

When $X_1,X_2$ are ind., solving \eqref{eg1sup} in terms of $a_1,a_2,b_1,b_2$ and the tail bounds is not easy.\footnote{It seems to be a non-concave/non-convex non-linear program.}
It can be brute forced with elementary calculus, but solving \eqref{eg1sup} this way for $n=2, \ X_1,X_2$ ind., and 
\begin{align*} 
V:= g^{-1}[t,\infty) = &\{(a_1,b_1),(a_2,b_2),(a_3,b_3)\},
\end{align*}
with $0 \leq a_1 < a_2 < a_3, \ 0 \leq b_3 < b_2 < b_1$, is exponentially more difficult. 
\textit{Shift operators} will be introduced later, which will reduce solving \eqref{eg1sup} for $X_1,...,X_n$ mutually 
ind.\ and $g^{-1}[t,\infty) = \{\text{finite points}\} \subset \mathbb{R}^n$ to a straightforward finite computation in section~\ref{solvingeg.1section}.

\textbf{Example} $2$: Another basic case is suppose $X_i$ are mutually ind., right tail bounded (section \ref{tailtypes} (ii)) by $f_i(t),\  i=1,...,n$,
and 
$$
g:(x_1,...,x_n) \mapsto x_1+\cdots + x_n.
$$
Given this, what is the best $f:\mathbb{R} \rightarrow [0,1]$ such that $P(X_1 + \cdots + X_n \geq t) \leq f(t)$? First introduce
mutually ind. $\widetilde{X}_1,...,\widetilde{X}_n$ such that\footnote{By hypothesis, in section $\ref{tailtypes}$,
right tails will satisfy the properties of (reversed) CDFs, which is known to be sufficient for $\widetilde{X}_k$ to exist.}
\begin{equation}\label{eg2Xtilde}
P(\widetilde{X}_i \geq t) = f_i(t), i=1,...,n.
\end{equation}

$f_i$ satisfy the properties of a (reversed) CDF. $\widetilde{X}_i$ are $X_i$ made as large as the tails allow, so apparently
\begin{equation}\label{eg2eq}
P(X_1+\cdots + X_n \geq t) \leq P(\widetilde{X}_1+\cdots + \widetilde{X}_n \geq t), \forall t \in \mathbb{R}.
\end{equation}
On the other hand $\widetilde{X}_i$ satisfy the bounds, so one cannot do better than the RHS of \eqref{eg2eq} 
for a tail bound on $P(X_1+\cdots+X_n \geq t)$. However, there is no immediately clear way to prove \eqref{eg2eq}. 

To prove \eqref{eg2eq}, \textit{neat} r.v.'s will be introduced: 
\begin{definition}\label{neatrvs}
A r.v. $X$ is \textit{neat} iff $X:(0,1)\rightarrow \mathbb{R}$ and $X \nearrow$. $\mathcal{N}$ is the set of neat r.v.'s.
\end{definition}

To understand why this helps, suppose in \eqref{eg2eq} every $X_i, \widetilde{X}_i$ is neat. It will be 
shown in the next section, in this case that
$$
X_i(s) \leq \widetilde{X}_i(s), \ \forall s \in (0,1).
$$
It follows $\forall (s_1,...,s_n) \in (0,1)^n$, 
$$
X_1(s_1) + \cdots + X_n(s_n) \geq t \implies \widetilde{X}_1(s_1) + \cdots + \widetilde{X}_n(s_n) \geq t.
$$
Therefore
\begin{align*}
& \{(X_1,...,X_n) \in g^{-1}[t,\infty)\} \subset \{(\widetilde{X}_1,...,\widetilde{X}_n) \in g^{-1}[t,\infty)\} \\
\implies & m \{(X_1,...,X_n) \in g^{-1}[t,\infty)\} \leq m \{(\widetilde{X}_1,...,\widetilde{X}_n) \in g^{-1}[t,\infty)\}.
\end{align*}
which is \eqref{eg2eq}. 

Chapter $2$ will primarily be results about neat r.v.'s, which are needed for shift operators, relevant to $X_1,...,X_n$ mutually ind.\ and tail bounded (of any type). Because shift operators are much more complicated than 
the case where $X_1,...X_n$ are dependent and tail bounded, dependent r.v.'s will be addressed in chapter $3$, before shift operators in chapter $4$. 

Results about the sharpest tail bound for dependent r.v.'s is applicable to the mutually ind.\ case: in the rigorous set theoretic formulation of probability, every probability variable has a measure associated to it. 
When you consider random variables which are n-tuples of multiple random variables, there is one measure assigned to all of the tuples containing the information of all n of the variables. 
Random variables which are independent have a special characteristic on this one measure. 
Those which are not independent don't necessarily have this characteristic. Thus when making proofs about dependent r.v.'s, 
you are considering all possible cases of the measures, which includes when the measure has the independent property. 
Such an application will no longer provide the sharpest tail bound for the mutually ind.\ case, but 
the tail bound often becomes easier to work with (by considering the dependent case) and I hypothesize it should still 
be asymptotically correct to the sharpest tail bound for the mutually ind.\ case. 

\chapter{Mutually independent real random variables}
In this chapter we will show if $X: \Omega \rightarrow \mathbb{R}$ is a r.v., WLOG $X: (0,1) \rightarrow \mathbb{R}$ and $X \nearrow$. This domain simplification is generalizable to $X: \Omega \rightarrow \mathbb{R}^n$ 
when $X$ has mutually ind.\ components. We prove simplifying the domains of the r.v.'s like this strengthens convergence in distribution to convergence almost 
everywhere.\footnote{(Footnote edited after dissertation submission which originally only mentioned the possibility of the following generalization and did not mention 
Sklar's theorem or have a reference to). Because of this, I tried to find a canonical way to simplify the domain of a general r.v.\ $X: \Omega \rightarrow \mathbb{R}^n$ 
(which does not have mutually ind.\ components) thereby somehow strengthening convergence in distribution. This was made possible by Sklar's theorem \cite{Sklar}, but it 
resulted in a new type of convergence that isn't any of the conventional types of convergence. This result will be in a followup paper.} (The following comment and footnote about Skorokhod's theorem
was added after the university publication, which did not mention Skorokhod's theorem or have a reference of it). This is an example of 
Skorokhod's representation theorem\footnote{Unfortunately, I didn't know Skorokhod's representation theorem while writing this dissertation. The result mentioned in 
the previous footnote was the natural generalization of the view taken in this dissertation, and turned out to be different from Skorokhod's representation theorem.} \cite{Skorokhod} pg. 70. 
Skorokhod's representation theorem states (roughly) if $X^n$ converges to $X$ in distribution, then by changing the domains of the random variables to a common domain, 
its possible to get $X^n$ converges to $X$ a.e.. 

It is also shown why this domain simplification helps with sharpest tail bounds.

\section{Neat r.v.'s}
Through this thesis, we will always assume the Borel $\sigma$-algebra is the $\sigma$-algebra being used. $(0,1)$,
as the domain of neat r.v.'s, will have the Lebesgue measure, but on the Borel $\sigma$-algebra. This is because
r.v.'s tend to work more easily with the Borel $\sigma$-algebra. 

The next three lemmas discuss the uniqueness/existence of neat r.v.'s (definition \ref{neatrvs}).

\begin{lemma}[uniqueness of neat r.v.'s]\label{uniquenessneat}
Let $X,Y \in \mathcal{N}$ (definition~\ref{neatrvs}). 
\begin{enumerate}[label=(\roman*)]
\item If $X, Y \in \mathcal{N}$ are both left continuous, or both right continuous, and $F_X = F_Y$, then $\forall s \in(0,1), \ X(s) = Y(s)$
\item The function $X'(s) = \lim_{s' \nearrow s} X(s'), \ \forall s \in (0,1)$ is neat, left continuous, and has the same distribution as $X$
\item The function $X'(s) = \lim_{s' \searrow s} X(s'), \ \forall s \in (0,1)$ is neat, right continuous, and has the same distribution as $X$
\end{enumerate}
\end{lemma}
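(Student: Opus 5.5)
The approach is to identify a neat r.v.\ with a generalized inverse of its CDF. For $X \in \mathcal{N}$ and $t \in \mathbb{R}$, set $A_t := \{s \in (0,1) : X(s) \leq t\}$. Since $X \nearrow$, $A_t$ is an initial segment of $(0,1)$: it is $(0,c)$ or $(0,c]$, where $c = \sup A_t$ (with $c=0$ if $A_t$ is empty). Lebesgue measure gives $F_X(t) = m(A_t) = c$. Thus
$$
F_X(t) = \sup\{s \in (0,1) : X(s) \leq t\},
$$
so $F_X$ is determined by $X$ through this formula. Every part of the lemma will be read off from this identity. First, though, I will note that $X$ is Borel measurable, because the preimage of $(-\infty,t]$ is an interval.

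For (i), suppose $X,Y$ are both right continuous, $F_X=F_Y$, and $X(s_0) < Y(s_0)$ for some $s_0$. Choose $t$ with $X(s_0) < t < Y(s_0)$.
\begin{enumerate}[label=(\alph*)]
\item Right continuity of $X$ at $s_0$ gives $X(s) \leq t$ for all $s$ in some interval $[s_0, s_0+\delta)$. Hence $F_X(t) \geq s_0 + \delta > s_0$.
\item Monotonicity of $Y$ gives $Y(s) \geq Y(s_0) > t$ for all $s \geq s_0$. Hence $\{Y \leq t\} \subset (0,s_0)$, so $F_Y(t) \leq s_0$.
\end{enumerate}
This contradicts $F_X = F_Y$, and the argument is symmetric in $X$ and $Y$.

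The left continuous case is the main point needing care, since a left continuous $X$ need not satisfy $X(s)\le t$ just to the right of $s_0$. I plan to use the reversed CDF $P(X \geq t)$ instead. The set $\{X \geq t\}$ is a final segment of $(0,1)$, and its measure is $1 - \inf\{s : X(s) \geq t\}$. Now take $X(s_0) > Y(s_0)$ and $t$ strictly between them.
\begin{enumerate}[label=(\alph*)]
\item Left continuity of $X$ gives $X \geq t$ on some $(s_0-\delta, s_0]$. Hence $P(X \geq t) \geq 1 - s_0 + \delta$.
\item On $(0, s_0]$ we have $Y < t$. Hence $P(Y \geq t) \leq 1 - s_0$.
\end{enumerate}
Equal CDFs give equal reversed CDFs, because $P(X\ge t)=1-\lim_{t'\nearrow t}F_X(t')$. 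This is the contradiction.

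For (ii) and (iii), the one-sided limits exist because $X$ is monotone, and $X'$ is clearly increasing. For left continuity of $X'$ in (ii), I will show $\lim_{u\nearrow s}X'(u) = X'(s)$. The bounds $X(u') \leq X'(u) \leq X(u)$ for $u' < u$ squeeze $\lim_{u\nearrow s} X'(u)$ between limits of $X$ from the left, and both equal $X'(s)$. Case (iii) is symmetric.

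For equality of distributions, the key fact is that $X' \neq X$ only at discontinuity points of $X$. A monotone function has at most countably many such points, so $X' = X$ Lebesgue-a.e. on $(0,1)$, and therefore the two have the same distribution. The remaining step is to check that $X'$ is measurable, which holds because it is monotone.
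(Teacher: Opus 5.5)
Your proof is correct and follows essentially the paper's route. In (i), both arguments compare the Lebesgue measures of the sublevel sets of the two increasing functions, using monotonicity plus one-sided continuity; you argue by contradiction at a level $t$ strictly between $X(s_0)$ and $Y(s_0)$, while the paper argues directly at the level $X(s)$ and then takes a left limit. Parts (ii)--(iii) use the same squeeze and countable-discontinuity argument as the paper.

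The switch to reversed CDFs in the left-continuous case is harmless but not needed. With $X(s_0)>t>Y(s_0)$, left continuity gives $\{X\le t\}\subset(0,s_0-\delta]$ while $\{Y\le t\}\supset(0,s_0]$, so the ordinary CDF already yields the contradiction.
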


\begin{proof}
(i): Suppose $X,Y$ are both left continuous. Fix $s \in (0,1)$. Since $F_X = F_Y$ and ($X \nearrow$ by hypothesis) $\{X \leq X(s)\} \supset (0,s)$, 
\begin{align*}
P(Y \leq X(s)) &= P(X \leq X(s)) \\ 
&\geq m(0,s) = s.
\end{align*}
It is necessary
$$
\forall s' \in (0,s), \ Y(s') < X(s).
$$ 
To show this, suppose some $s'$ made this false. One would find (since $Y \nearrow$), $s \leq P(Y \leq X(s)) = m(Y^{-1}(X(s))) \leq m(0,s') = s'$. 
A contradiction. It follows, by left continuity of $Y$, 
$$
X(s) \geq \lim_{s' \nearrow s} Y(s') = Y(s).
$$
By symmetry, $Y(s) \geq X(s)$. Thus $X(s) = Y(s)$ as desired. 

The case where $X,Y$ are both right continuous is treated symmetrically.

(ii): We have
\begin{equation}\label{leftcontwlog}
X'(s) = \lim_{s' \nearrow s} X(s'), \ \forall s \in(0,1).
\end{equation}
$X'$ is neat: $X'$ is well defined since $X \nearrow$ (definition~\ref{neatrvs}). Notice $X' \nearrow$ on $(0,1)$. 
Because monotone functions are measurable, $X'$ is measurable. This shows $X' \in \mathcal{N}$. 

$X'$ is left continuous: if $s' < s'' < s$, then
$$
X(s') \leq X'(s'') \leq X'(s).
$$
Take $s', s'' \rightarrow s$ and use $X(s') \rightarrow X'(s)$ as $s' \nearrow s$ to conclude $X'$ is left continuous.
By \eqref{leftcontwlog}, if $X$ is continuous at $s$, then $X(s) = X'(s)$. But $X\nearrow$, thus has countably many discontinuities, 
hence $X'=X$ a.e.. It is now routine to show $X'$ and $X$ have the same distribution. 

(iii): This proof is symmetric to (ii).
\end{proof}
We will construct $\widetilde{X}$ satisfying \eqref{eg2Xtilde}.
\begin{lemma}[$\widetilde{X}$ lemma]\label{Xtilde}
      Let $f \searrow 0$ on $\mathbb{R}, \ f(t) \rightarrow 1$ as $t \rightarrow -\infty$, and $f$ be left continuous. 
      Define
      \begin{equation}\label{Xtildedefn}
      \widetilde{X}(s) := \sup f^{-1}[1-s,1], \ \forall s \in (0,1).
      \end{equation}
      Then $\widetilde{X}$ has the following properties.
      \begin{enumerate}[label = (\roman*)]
      \item $\widetilde{X}$ is a neat r.v.
      \item $\widetilde{X}$ is right continuous and $P(\widetilde{X} \geq t) = f(t), \ \forall t \in \mathbb{R}$
      \item If $X \in \mathcal{N}$ and $P(X \geq t) \leq f(t), \ \forall t \in \mathbb{R}$, then $$X(s) \leq \widetilde{X}(s), \ \forall s \in (0,1)$$
      \end{enumerate}
\end{lemma}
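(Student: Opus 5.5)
The plan is to reduce everything to one equivalence for the function in \eqref{Xtildedefn}: for all $s \in (0,1)$ and $t \in \mathbb{R}$,
\begin{equation*}
\widetilde{X}(s) \geq t \iff f(t) \geq 1-s .
\end{equation*}
This is the usual Galois-connection property of a generalized inverse. Once it holds, all three parts follow quickly.

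\textbf{Well-definedness.} Since $f \to 1$ at $-\infty$ and $1-s<1$, the set $f^{-1}[1-s,1]$ is nonempty. Since $f \to 0$ at $\infty$ and $1-s>0$, it is bounded above. So $\widetilde{X}(s)\in\mathbb{R}$.

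\textbf{The set is a closed ray.} Because $f \searrow$, the set $f^{-1}[1-s,1]$ is a down-ray. I claim it contains its supremum. Let $t_k \nearrow \widetilde{X}(s)$ with $f(t_k)\geq 1-s$. Left continuity of $f$ gives $f(\widetilde{X}(s)) = \lim f(t_k) \geq 1-s$. Hence
\begin{equation*}
f^{-1}[1-s,1] = (-\infty, \widetilde{X}(s)],
\end{equation*}
which is exactly the displayed equivalence. This is the one place where left continuity of $f$ is used, and I expect it to be the step needing most care.

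\textbf{Part (i).} As $s$ increases, $1-s$ decreases, so the sets $f^{-1}[1-s,1]$ grow and $\widetilde{X}\nearrow$. Monotone functions are Borel measurable, so $\widetilde{X}\in\mathcal{N}$.

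\textbf{Part (ii), distribution.} By the equivalence, $\{\widetilde{X}\geq t\} = \{s\in(0,1): s \geq 1-f(t)\}$. Its Lebesgue measure is $1-(1-f(t)) = f(t)$; the boundary cases $f(t)\in\{0,1\}$ are checked directly.

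\textbf{Part (ii), right continuity.} Take $s_k \searrow s$. Monotonicity gives $\lim \widetilde{X}(s_k) \geq \widetilde{X}(s)$. Let $L = \lim \widetilde{X}(s_k)$ and suppose $L > \widetilde{X}(s)$. Pick $t$ with $\widetilde{X}(s) < t < L$. Then $\widetilde{X}(s_k)\geq t$ for all $k$, so $f(t)\geq 1-s_k$ for all $k$. Letting $k\to\infty$ gives $f(t) \geq 1-s$, hence $\widetilde{X}(s)\geq t$, a contradiction.

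\textbf{Part (iii).} Fix $s$ and set $t = X(s)$. Since $X\nearrow$, we have $\{X\geq t\}\supset[s,1)$. Therefore
\begin{equation*}
f(t) \geq P(X\geq t) \geq 1-s .
\end{equation*}
By the equivalence, $\widetilde{X}(s)\geq t = X(s)$. This argument mirrors the one in Lemma~\ref{uniquenessneat}(i).
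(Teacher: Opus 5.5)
Your proof is correct. For (i) and (ii) you follow essentially the paper's route. The paper also uses left continuity of $f$ to show $f^{-1}[1-s,1]=(-\infty,\widetilde{X}(s)]$, and from this derives the equivalence $\widetilde{X}(s)\geq t \iff s\geq 1-f(t)$. It gets right continuity from the identity $f^{-1}[1-s,1]=\bigcap_{s'>s}f^{-1}[1-s',1]$, which is your limiting step $f(t)\geq 1-s_k \Rightarrow f(t)\geq 1-s$ written in set form.

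Part (iii) is where your route genuinely differs. The paper takes a longer path:
\begin{enumerate}
\item It sets $f'(t)=P(X\geq t)$ and passes to the right-continuous modification $X'$ of $X$ via Lemma~\ref{uniquenessneat}(iii).
\item It applies (i)--(ii) to $f'$ and uses the uniqueness statement Lemma~\ref{uniquenessneat}(i) to identify $X'(s)=\sup {f'}^{-1}[1-s,1]$.
\item It uses $f'\leq f$ to compare the two generalized inverses, and finishes with $X\leq X'$.
\end{enumerate}
You instead note that monotonicity of $X$ gives $P(X\geq X(s))\geq 1-s$. Hence $f(X(s))\geq 1-s$, so $X(s)\in f^{-1}[1-s,1]$ and $X(s)\leq\widetilde{X}(s)$.

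Your argument is shorter and does not depend on Lemma~\ref{uniquenessneat}. It uses only the trivial direction of your equivalence, namely the definition of the supremum, so part (iii) in fact holds even without left continuity of $f$. The paper's detour yields an extra structural fact: every right-continuous neat r.v.\ equals the generalized inverse of its own reversed CDF. That fact is not needed for the statement itself.
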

\index{$\widetilde{X}$} Comment: in the sense of (ii) and (iii), $\widetilde{X}$ is the largest possible neat r.v.\ which satisfies $f$.
\begin{proof}
(i): First we must show $\widetilde{X}$ is finite. $\forall s \in (0,1), \ f^{-1}[1-s,1] \neq \emptyset$ because 
$f \searrow$ and $\lim_{t \rightarrow -\infty} f(t) = 1$ by hypothesis. On the other hand, because $f \searrow 0, \ \forall s \in (0,1),  
\ \exists t \in \mathbb{R}$ such that $t' \geq t \implies f(t') < 1-s$. In particular, $(t,\infty) \cap f^{-1}[1-s,1] = \emptyset$. So 
$\emptyset \neq f^{-1}[1-s,1]$ is bounded above. So $\sup f^{-1}[1-s,1]$ is finite as desired.

We must show $\widetilde{X} \nearrow$. 
\begin{equation}\label{Xtilde(i)eq}
f^{-1}[1-s,1] \subset f^{-1}[1-s',1], \ \forall s,s', 0 < s \leq s' < 1. 
\end{equation}
Taking the supremum, $\widetilde{X}(s) \leq \widetilde{X}(s'), \ \forall s,s', 0 < s \leq s' < 1$. Since monotone functions are measurable, $\widetilde{X}$ is measurable.

(ii): Fix $s \in (0,1)$. We claim 
\begin{align}\label{Xtilde(ii)eq}
f^{-1}[1-s,1] &= \bigcap_{s<s'<1} f^{-1}[1-s',1] \\ 
&\stackrel{\eqref{Xtilde(i)eq}}{=} \lim_{s' \searrow s} f^{-1}[1-s',1].
\end{align}
LHS, RHS will refer to this equation. LHS $\subset$ RHS immediately by \eqref{Xtilde(i)eq}. We must show LHS $\supset$ RHS. If 
$t \in $ RHS then 
$$
f(t) \in [1-s',1], \ \forall s' \in (s,1).
$$
This implies $f(t) \in [1-s,1]$ which implies $ t \in f^{-1}[1-s,1]$. Hence LHS $\supset$ RHS. \eqref{Xtilde(ii)eq} holds as 
desired. 

Right continuity of $\widetilde{X}(s) = \sup f^{-1}[1-s,1]$ follows from \eqref{Xtilde(ii)eq}: If $t'<t \in f^{-1}[1-s,1]$ then $f(t') \geq f(t)$ by hypothesis, i.e.\ $t' \in f^{-1}[1-s,1]$. This says 
$f^{-1}[1-s,1]$ is an interval with endpoints $-\infty$ and $\sup f^{-1}[1-s,1]$. Furthermore, it is a closed interval: suppose $(t_n)_n \subset f^{-1}[1-s,1]$ such that 
$$
t_n \nearrow \sup f^{-1}[1-s,1].
$$ 
So $(f(t_n))_n \subset [1-s,1]$. By hypothesis $f$ is left continuous, hence 
\begin{equation}\label{Xtildeclosedinterval}
f(\sup f^{-1}[1-s,1]) = \lim _{n \rightarrow \infty} f(t_n) \in [1-s,1].
\end{equation}
That is, $\widetilde{X}(s) \stackrel{\eqref{Xtildedefn}}{=} \sup f^{-1}[1-s,1] \in f^{-1}[1-s,1]$. Substituting 
$f^{-1}[1-s,1] = (-\infty, \widetilde{X}(s)], \ \forall s \in(0,1)$ into \eqref{Xtilde(ii)eq},
$$
(-\infty, \widetilde{X}(s)] = \lim_{s' \searrow s} \ (-\infty, \widetilde{X}(s')]
$$
i.e.\ $\widetilde{X}$ is right continuous.

$P(\widetilde{X} \geq t) = f(t)$: It is sufficient to show, 
\begin{equation}\label{Xtilde(iii)eq}
\forall s \in (0,1), \ \widetilde{X}(s) \geq t \iff s \in [1-f(t),1]
\end{equation}
since $m([1-f(t),1])=f(t)$. 

$(\Leftarrow)$: If $s \in [1-f(t),1]$, then 
$$
\widetilde{X}(s) \stackrel{\widetilde{X} \nearrow}{\geq} \widetilde{X}(1-f(t)) \stackrel{\eqref{Xtildedefn}}{=} \sup f^{-1}[f(t),1] \geq t
$$
since $t \in f^{-1}[f(t),1]$. 

$(\Rightarrow)$: One has 
$$
f(t) \stackrel{f \searrow}{\geq} f(\widetilde{X}(s)) \stackrel{\eqref{Xtildeclosedinterval}}{\in} [1-s,1].
$$ 
It follows $0 < 1-s \leq f(t) \implies s \in [1-f(t),1]$. This shows \eqref{Xtilde(iii)eq} as desired.

(iii): Define
\begin{equation}\label{Xtildef'def} 
f'(t) := P(X \geq t), \ \forall t \in \mathbb{R}.
\end{equation} 
By lemma~\ref{uniquenessneat} (iii), 
\begin{equation}\label{XtildeX'def} 
X'(s) := \lim_{s' \searrow s} X(s'), \ \forall s \in (0,1)
\end{equation} 
is a neat, right continuous r.v.\ with $P(X' \geq t) = P(X \geq t) = f'(t), \ \forall t \in \mathbb{R}$. 
By parts (i) and (ii), 
\begin{equation*}
\widetilde{X}'(s) := \sup f'^{-1}[1-s,1], \ \forall s \in (0,1)
\end{equation*}
is also a right continuous neat r.v.\ with $P(\widetilde{X}' \geq t) = f'(t), \ \forall t \in \mathbb{R}$. Only one such neat r.v.\ exists by lemma~\ref{uniquenessneat} (i), so they must be equal. That is, 
\begin{equation}\label{XtildeX=X'}
X'(s) = \widetilde{X}'(s) = \sup {f'}^{-1}[1-s,1], \ \forall s\in(0,1).
\end{equation}
Now, by hypothesis $P(X \geq t) \leq f(t), \ \forall t \in \mathbb{R}$, i.e.\ $f'(t) \stackrel{\eqref{Xtildef'def}}{\leq} f(t), \ \forall t \in \mathbb{R}$. Fix $ s \in (0,1)$. It follows
${f'}^{-1}[1-s,1] \subset f^{-1} [1-s,1]$. Indeed,
$$
t \in {f'}^{-1}[1-s,1] \implies f'(t) \in [1-s,1] \implies f(t) \in [1-s,1].
$$
Applying the supremum to this,
$$
X'(s) \stackrel{\eqref{XtildeX=X'}}{=} \sup {f'}^{-1} [1-s,1] \leq \sup f^{-1} [1-s,1] \stackrel{\eqref{Xtildedefn}}{=} \widetilde{X}(s)
$$
It is now sufficient to show $X(s) \leq X'(s)$: because $X \nearrow $ ($X \in \mathcal{N}$ by hypothesis (definition~\ref{neatrvs})) one has 
$X(s) \leq \lim_{s' \searrow s} X(s') \stackrel{\eqref{XtildeX'def}}{=} X'(s)$. 
\end{proof}
We will frequently use the following notation throughout this thesis.
\begin{align}\label{Xtildeabbreviations}
\widetilde{X}_k &= \sup f_k^{-1}[1-s,1], \ \forall s \in (0,1) \\ 
X &= (X_1,...,X_n)
\end{align}

The purpose of the next lemma, and neat r.v.'s in general, is to simplify the domain of real r.v.'s as much as 
possible. As described, this will lead to proving \eqref{eg2eq}. Later, this will help proving results about
shift operators, because WLOG convergence in distribution becomes a.s.\ convergence, and the organized domains
will be used in proofs.

\begin{lemma}[existence of neat r.v.'s]\label{existenceneat}
   If $Y: \Omega \rightarrow \mathbb{R}$ is a random variable, then $\exists Y' \in \mathcal{N}$ with the same distribution as $Y$.
\end{lemma}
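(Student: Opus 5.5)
The natural route is to reduce this to Lemma~\ref{Xtilde}, applied with $f$ equal to the reversed CDF of $Y$. Set
$$
f(t) := P(Y \geq t), \ \forall t \in \mathbb{R}.
$$
By the facts recalled in section~\ref{preliminaries}, $f$ is a reversed CDF. It is therefore monotonically decreasing and left continuous, with $f(t) \rightarrow 1$ as $t \rightarrow -\infty$ and $f(t) \rightarrow 0$ as $t \rightarrow \infty$. These are exactly the hypotheses of Lemma~\ref{Xtilde}.

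Define $Y' := \widetilde{X}$ as in \eqref{Xtildedefn}, that is,
$$
Y'(s) = \sup f^{-1}[1-s,1], \ \forall s \in (0,1).
$$
Lemma~\ref{Xtilde} (i) gives $Y' \in \mathcal{N}$, and Lemma~\ref{Xtilde} (ii) gives
$$
P(Y' \geq t) = f(t) = P(Y \geq t), \ \forall t \in \mathbb{R}.
$$

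It remains to show that equal reversed CDFs force equal distributions. The family $\{[t,\infty) : t \in \mathbb{R}\}$ is nonempty and closed under intersection, since $[t,\infty)\cap[t',\infty) = [\max(t,t'),\infty)$, so it is a $\pi$-system. It also generates the Borel $\sigma$-algebra of $\mathbb{R}$, because complements give $(-\infty,t)$ and intersections of these give all half-open intervals $[a,b)$. The pushforward measures $P\circ Y^{-1}$ and $P \circ Y'^{-1}$ agree on this $\pi$-system. By the corollary of the $\lambda$-$\pi$ theorem quoted in section~\ref{preliminaries}, the two measures coincide, so $Y'$ has the same distribution as $Y$.

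I do not expect a real obstacle, since all the substantive work was already done in Lemma~\ref{Xtilde}. The only point to check carefully is that the reversed CDF of an arbitrary real r.v.\ really is left continuous and has the correct limits, so that Lemma~\ref{Xtilde} applies. Left continuity follows from continuity from above of $P$: if $t_n \nearrow t$, then $\{Y\geq t_n\}\searrow\{Y \geq t\}$, so $P(Y \geq t_n) \rightarrow P(Y \geq t)$. The limits at $\pm\infty$ follow from continuity of measure in the same way.
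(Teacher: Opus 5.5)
Your proposal is correct and follows essentially the same route as the paper: you apply Lemma~\ref{Xtilde} to the reversed CDF $f(t)=P(Y\geq t)$ to get a neat $Y'$ with the same reversed CDF, and then conclude equality of distributions by the $\lambda$-$\pi$ theorem. The only difference is that you spell out what the paper takes from its preliminaries, namely the left continuity and limits of $f$ and the fact that $\{[t,\infty)\}$ is a generating $\pi$-system.
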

\begin{proof}
$f(t) := P(Y \geq t)$ satisfies the requirements to apply lemma \ref{Xtilde}. Applying it, 
\begin{equation}\label{existenceeq1}
Y'(s) := \sup f^{-1}[1-s,1], \ \forall s \in (0,1), \text{ is neat\ }
\end{equation}
and 
\begin{equation}\label{existenceeq2}
P(Y' \geq t) = P(Y \geq t), \ \forall t \in \mathbb{R}.
\end{equation}
Thus $Y$ and $Y'$ have the same reversed CDF. Therefore they have the same distribution (section~\ref{preliminaries}). 
\end{proof}
Define $\mathcal{N}^n$ to be the set consisting of all product r.v.'s of $n$ neat r.v.'s $X_1,...,X_n \in \mathcal{N}$. Thus the elements of $\mathcal{N}^n$ are of the form $(X_1,...,X_n): (0,1)^n \rightarrow \mathbb{R}^n$ which has mutually ind.\ components by construction of product r.v.'s (section~\ref{preliminaries}) and each component is monotonically increasing. 
This allows generalizing the previous result to higher dimensions. 

\begin{corollary}[WLOG $(X_1,...,X_n)$ is neat for $X_k$ mutually ind.] \label{c2.1} 
   
   Let \\ $X = (X_1,...,X_n):\Omega \rightarrow \mathbb{R}^n$ have mutually independent components. Then $\exists X'= (X_1',...,X_n') \in \mathcal{N}^n$ 
   such that $X'$ has the same distribution as $X$.
   \end{corollary}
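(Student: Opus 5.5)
We apply lemma~\ref{existenceneat} to each component separately and then use independence to glue the pieces together through the product construction of section~\ref{preliminaries}. For each $k=1,\dots,n$, lemma~\ref{existenceneat} gives $X_k' \in \mathcal{N}$ with the same distribution as $X_k$. Define $X' := (X_1',\dots,X_n') : (0,1)^n \rightarrow \mathbb{R}^n$ by $X'(s_1,\dots,s_n) = (X_1'(s_1),\dots,X_n'(s_n))$, and equip $(0,1)^n$ with the product of the Lebesgue measures on the Borel sets. This measure exists and is unique by Fubini's theorem (theorem~\ref{Fubini'stheorem}). By definition $X' \in \mathcal{N}^n$, and $X'$ is measurable because each coordinate map $s \mapsto X_k'(s_k)$ is measurable.

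It remains to show that $X$ and $X'$ have the same distribution on $\mathbb{R}^n$. We do this with the $\lambda$-$\pi$ corollary from section~\ref{preliminaries}. Take the $\pi$-system
\[
\mathcal{P} := \{A_1 \times \cdots \times A_n : A_k \text{ Borel in } \mathbb{R}\}.
\]
It is nonempty and closed under intersection, and it generates the Borel $\sigma$-algebra of $\mathbb{R}^n$. Half-infinite boxes $(-\infty,t_1]\times\cdots\times(-\infty,t_n]$ would also suffice.

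On a set $A_1\times\cdots\times A_n \in \mathcal{P}$ we compute both laws. Mutual independence of the components of $X$ gives
\[
P(X \in A_1\times\cdots\times A_n) = \prod_k P(X_k \in A_k).
\]
For $X'$, the preimage is $\{X' \in A_1\times\cdots\times A_n\} = X_1'^{-1}(A_1)\times\cdots\times X_n'^{-1}(A_n)$. The defining property of the product measure in theorem~\ref{Fubini'stheorem} then gives
\[
m\{X' \in A_1\times\cdots\times A_n\} = \prod_k m(X_k'^{-1}(A_k)) = \prod_k P(X_k' \in A_k).
\]
Since $X_k$ and $X_k'$ have the same distribution, the two products agree. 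The two probability measures on $\mathbb{R}^n$ therefore coincide on $\mathcal{P}$, and hence everywhere.

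There is no serious obstacle; the only points needing care are routine. First, the $\pi$-system must actually generate the Borel $\sigma$-algebra of $\mathbb{R}^n$, which holds because $\mathbb{R}$ is separable, so the Borel sets of $\mathbb{R}^n$ form the product $\sigma$-algebra. Second, the Borel product measure on $(0,1)^n$ must be the one whose coordinate projections are Lebesgue, so that $X'$ has mutually independent components, consistent with the definition of $\mathcal{N}^n$.
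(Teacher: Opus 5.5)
Your proposal is correct and follows the paper's proof essentially step for step. You apply lemma~\ref{existenceneat} componentwise, form the product r.v.\ on $(0,1)^n$, check via independence and the product-measure property that the two laws agree on product sets, and conclude by Dynkin's $\lambda$-$\pi$ theorem; the only cosmetic difference is that you use Borel rectangles as the $\pi$-system where the paper uses closed product sets, and both generate the Borel $\sigma$-algebra of $\mathbb{R}^n$.
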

   \begin{proof}
   Apply lemma~\ref{existenceneat} to $X_k$ to get $X_k' \in \mathcal{N}$ with the same distribution as $X_k$ 
   for $k=1,...,n$. Consider the product r.v.\ $X' = (X_1',...,X_n') \in \mathcal{N}^n$ of $X_1',...,X_n'$. 

   Fix a closed product set $J = J_1 \times \cdots \times J_n \subset \mathbb{R}^n$. By independence of the components of $X$, and since $X_k$ has the same distribution as $X_k'$ for all $k$,
   \begin{align*}
   P\big((X_1,...,X_n) \in J\big) &= P(X_1 \in J_1) \cdots P(X_n \in J_n)  \\ 
   &= P(X_1' \in J_1) \cdots P(X_n' \in J_n).
   \end{align*}
   The product r.v.\ $(X_1',...,X_n')$ has $P(X_1' \in J_1) \cdots P(X_n' \in J_n) = P\big((X_1',...,X_n') \in J_1 \times \cdots \times J_n\big)$ (section~\ref{preliminaries}). 
   Since the closed product set $J$ was arbitrary, and such subsets of $\mathbb{R}^n$ form a $\pi$-system which 
   generates the Borel $\sigma$-algebra on $\mathbb{R}^n$, we may apply Dynkin's $\lambda$-$\pi$ theorem (section~\ref{preliminaries}) to get 
   $X$ and $X'$ have the same distribution.
\end{proof}
For $U \subset \mathbb{R}^n$ in the Borel $\sigma$-algebra, we are interested in the supremum of \\
$P((X_1,...,X_n) \in U)$ over $X_k$ mutually ind.\ and respectively tail bounded (section~\ref{tailboundintroduction} and 
section~\ref{sectionsimplecases}). \index{$\mathcal{B}$} By corollary~\ref{c2.1}, WLOG we may assume $X_k$ are neat. This motivates the following notation.
\begin{equation}\label{absolutetails}
\mathcal{B} := \big\{f:[0,\infty) \rightarrow [0,1]\ \big|\ f(0) = 1, f \searrow 0, f \text{ is left continuous} \big\}.
\end{equation}
This is our set of absolute tail bounds (recall section~\ref{tailtypes} (i)). \index{$\mathcal{T}(f)$} Also define 
\begin{equation}\label{T(f)}
\mathcal{T}(f) := \big\{X \in \mathcal{N}^n \ \big| \ P(|X_k| \geq t) \leq f_k(t), \forall t \geq 0, k = 1,...,n\big\}, \ \forall f \in \mathcal{B}^n.
\end{equation}
Here $f=(f_1,...,f_n)$ and $X=(X_1,...,X_n)$ (these abbreviations for $n$-tuples of tail bounds, and $n$-tuples of neat r.v.'s, will 
be used throughout this thesis).

In other words, $\mathcal{T}(f)$ is the set of $n$-tuples of neat r.v.'s whose components satisfy the absolute tail bounds 
$f_1,...,f_n$ respectively. Left continuity is assumed because
\begin{enumerate}[label=(\roman*)]
   \item It reduces the need for limits in tail bound proofs.
   \item One often wants a r.v. $\widetilde{X}$ such that $P(\widetilde{X} \geq t) = f(t)$. The existence of such a 
   $\widetilde{X}$ requires $f$ to be left continuous (section~\ref{preliminaries}).
\end{enumerate}

In summary, we have the convenient description
\begin{equation}\label{thingthatneedsareference}
\sup_{\substack{X_k \text{ mutually ind., absolute} \\ \text{tail bounded by } f_k, \ \forall k}} P(X \in U) = \sup_{X \in \mathcal{T}(f)} P(X \in U).
\end{equation}

\section{Solving example $2$ (eq. $(3)$)}
As said in section~\ref{tailtypes}, there are characteristically different types of tail bounds. 
The approach for maximizing the measure of $U$ for different types of tail bounds on $X_k$ turns out to be conceptually almost identical, but the proofs must be treated separately. 
In particular, right, left, absolute, and $2$ tail bounds.

The previous discussion/notation for absolute tail bounds easily applies to the other types of tail bounds. \index{$\mathcal{B}_R$} \index{$\mathcal{T}_R(f)$} Define
\begin{align}\label{BRTR}
   \mathcal{B}_R &:= \{f:\mathbb{R} \rightarrow [0,1] \ \big| \lim_{t \rightarrow -\infty} f(t) = 1, f \searrow 0, f \ \text{is left continuous}\} \\
   \mathcal{T}_R(f) &:= \{X \in \mathcal{N}^n \ \big| \ \forall t \in \mathbb{R}, \ P(X_k\geq t) \leq f_k(t), k=1,...,n\}, \ \forall f \in \mathcal{B}_R^n \label{TR}
\end{align}
$\mathcal{B}_R$ is the set of right tail bounds and $\mathcal{T}_R(f),\  f=(f_1,...,f_n)$, is the set of $n$-tuples of neat r.v.'s whose components satisfy the right tails $f_1,...,f_n$ respectively. Notice 
$\mathcal{T}(f) \subset \mathcal{N}^n$, and as such only contains r.v.'s with mutually ind.\ components (because all elements of $\mathcal{N}^n$ have mutually ind.\ components by construction of product r.v.'s).

For left tail bounds, \index{$\mathcal{B}_L$} \index{$\mathcal{T}_L(f)$} the corresponding notation is 
\begin{align*}
   \mathcal{B}_L &:= \{f:\mathbb{R} \rightarrow [0,1] \ \big|  \lim_{t \rightarrow -\infty} f(t) = 0, f \nearrow 1, f \ \text{is right continuous} \} \\
   \mathcal{T}_L(f) &:= \{X \in \mathcal{N}^n \big| \forall t \in \mathbb{R}, P(X_i \leq t) \leq f_i(t), i = 1,...,n \}, \ \forall f \in \mathcal{B}_L^n
\end{align*}
For $2$ tail bounds, \index{$\mathcal{B}_2$} \index{$\mathcal{T}_2(f)$} the corresponding definitions are 
\begin{align}\label{B2T2}
   \mathcal{B}_2 &:= \{ (f^-,f^+) \in \mathcal{B}_L \times \mathcal{B}_R \big| f^-(0) = f^+(0) = 1\}\\
   \mathcal{T}_2(f) &:= \big\{X \in \mathcal{N}^n \ \big| \ \forall t \in \mathbb{R}, \ P(X_k \geq t) \leq f_k^+(t), \\ & \ \ \ \ \ \ \ \ \ \ \ \ \ \ \ \ \ \ \ \ \ \ \ \ \ \ \ \ \ \ P(X_k \leq t) \leq f_k^-(t), \ k=1,...,n \big\}, \ \forall f \in \mathcal{B}_2^n \nonumber
\end{align}
(here $f= ((f_1^-,f_1^+),...,(f_n^-,f_n^+))$. The presence of the condition $f^-(0) = f^+(0) = 1$ for $2$ tails was described in 
section~\ref{tailtypes}.

The next few theorems will show how neat r.v.'s and the $\widetilde{X}$ in lemma~\ref{Xtilde} can help prove the sharpest tail 
bound. When example $2$ was introduced in section~\ref{tailtypes}, an outline was given for how to prove the sharpest tail of 
$P(X_1+\cdots + X_n \geq t)$ is $P(\widetilde{X}_1+\cdots+\widetilde{X}_n \geq t)$. It required lemma~\ref{Xtilde} and lemma~\ref{existenceneat}
to work, but now these have been proved, so we are ready; the argument for \eqref{eg2eq} is given below.

The steps consist of
\begin{align}
\sup_{\substack{X_k \text{mutually ind.}, \\ P(X_k \geq t) \leq f_k(t), \forall t}} P(X_1 + \cdots + X_n \geq t) &\stackrel{(1)}{=} \sup_{X \in \mathcal{T}_R(f)} P(X_1 + \cdots + X_n \geq t) \\ 
&\stackrel{(2)}{=} P(\widetilde{X}_1+\cdots + \widetilde{X}_n \geq t) \label{1.3again}
\end{align}
where $\widetilde{X}_k := \sup f_k^{-1}[1-s,1], \forall s \in (0,1)$, is from lemma~\ref{Xtilde}. 

To show $(1)$: WLOG $X \in \mathcal{N}^n$ by corollary~\ref{c2.1}, and $P(X_k \geq t) \leq f_k(t), k=1,...,n \implies (X_1,...,X_n) \in \mathcal{T}_R(f)$ by definition
of $\mathcal{T}_R$, \eqref{TR}. 

$(2)$ is an application of the theorem below.
\begin{theorem}\label{thm.1}
   Let $g:\mathbb{R}^n \rightarrow \mathbb{R}$ be Borel measurable such that
   \begin{equation}\label{ghypothesis}
   \forall x,y \in \mathbb{R}^n, \ x_i \leq y_i, \ i=1,...,n, \implies g(x) \leq g(y).
   \end{equation} 
   Let $f \in \mathcal{B}_R^n$ \eqref{BRTR} be an $n$-tuple of right tails. Then, 
   \begin{equation}\label{whoknows}
   \sup_{X \in \mathcal{T}_R(f)} P(g(X) \geq t) = P(g(\widetilde{X}) \geq t), \ \forall t \in \mathbb{R}.
   \end{equation}
\end{theorem}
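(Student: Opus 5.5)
The plan is to prove the two inequalities separately. Throughout, $\widetilde{X}=(\widetilde{X}_1,\dots,\widetilde{X}_n)$ denotes the product r.v.\ on $(0,1)^n$ of the neat r.v.'s $\widetilde{X}_k(s)=\sup f_k^{-1}[1-s,1]$ from lemma~\ref{Xtilde}.

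For the inequality ``$\geq$'' in \eqref{whoknows}, I only need $\widetilde{X}\in\mathcal{T}_R(f)$, since then $\widetilde{X}$ is admissible in the supremum. By lemma~\ref{Xtilde}(i), each $\widetilde{X}_k$ is neat, so the product r.v.\ $\widetilde{X}$ lies in $\mathcal{N}^n$. By lemma~\ref{Xtilde}(ii), $P(\widetilde{X}_k\geq t)=f_k(t)\le f_k(t)$ for all $t$. Hence $\widetilde{X}\in\mathcal{T}_R(f)$ by \eqref{TR}. Note that the hypotheses of lemma~\ref{Xtilde} on $f_k$ are exactly membership in $\mathcal{B}_R$.

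For the inequality ``$\leq$'', fix $X\in\mathcal{T}_R(f)$ and $t\in\mathbb{R}$. Each component $X_k$ is neat and satisfies $P(X_k\geq t')\le f_k(t')$ for all $t'$. Lemma~\ref{Xtilde}(iii) therefore gives the pointwise bound $X_k(s)\le \widetilde{X}_k(s)$ for every $s\in(0,1)$. On the common domain $(0,1)^n$, for each $(s_1,\dots,s_n)$ we have $X_k(s_k)\le\widetilde{X}_k(s_k)$ for all $k$. The monotonicity hypothesis \eqref{ghypothesis} then yields $g(X(s))\le g(\widetilde{X}(s))$. Consequently
\[
\{s\in(0,1)^n : g(X(s))\geq t\}\subset\{s\in(0,1)^n: g(\widetilde{X}(s))\geq t\}.
\]
Both sets are Borel, because $g$ is Borel and $X,\widetilde{X}$ are measurable. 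Taking the product Lebesgue measure $m$ (the measure of the product r.v., by theorem~\ref{Fubini'stheorem}) gives $P(g(X)\geq t)\le P(g(\widetilde{X})\geq t)$. Taking the supremum over $X$ then finishes the proof.

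I do not expect a serious obstacle, since all the real work sits in lemma~\ref{Xtilde}(iii). The one point needing care is that both $X$ and $\widetilde{X}$ must be realized on the same probability space $(0,1)^n$ with the same product measure, so that the comparison is a genuine set inclusion rather than a statement about distributions. This holds by the definition of $\mathcal{N}^n$. A second point is that $P(g(X)\geq t)$ depends only on the distribution of $X$, so working within $\mathcal{N}^n$ is legitimate; this is already encoded in the definition of $\mathcal{T}_R(f)$.
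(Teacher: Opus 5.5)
Your proposal is correct and follows the paper's proof essentially step for step. In both, the componentwise domination $X_k(s)\le\widetilde{X}_k(s)$ from lemma~\ref{Xtilde}(iii), combined with the monotonicity of $g$, gives the set inclusion on $(0,1)^n$ and hence the inequality, and equality then follows because $\widetilde{X}\in\mathcal{T}_R(f)$ by lemma~\ref{Xtilde}(i),(ii).
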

\begin{proof} 
Fix $X = (X_1,...,X_n) \in \mathcal{T}_R(f)$ \eqref{TR}. In particular, $X_k$ are neat. By \eqref{ghypothesis} and lemma~\ref{Xtilde} (iii), 
$$
\forall s = (s_1,...,s_n) \in (0,1)^n, \ g(X(s)) \leq g(\widetilde{X}(s))
$$
where $\widetilde{X} = (\widetilde{X}_1,...,\widetilde{X}_n)$. Fix $t \in \mathbb{R}$. It follows 
$$
\{s \in(0,1)^n \ \big| \ g(X(s)) \geq t\} \subset \{ s \in(0,1)^n \ \big| \ g(\widetilde{X}(s)) \geq t\}
$$
Denote these LHS, RHS. Thus,
$$
P(g(X) \geq t) = m(\text{LHS}) \leq m(\text{RHS}) = P(g(\widetilde{X})) \geq t).
$$ 
Taking $\sup$ over $X \in \mathcal{T}_R(f)$, and then using $\widetilde{X} \in \mathcal{T}_R(f)$ (by lemma~\ref{Xtilde} (i), (ii), and \eqref{BRTR}),
\begin{equation*}
\sup_{X \in \mathcal{T}_R(f)} P(g(X) \geq t) = P(g(\widetilde{X}) \geq t).
\end{equation*}
Since $t$ was arbitrary we are done.
\end{proof}  

The form of \eqref{whoknows} is characteristic of sharpest tail bounds, or equivalently, the maximum mass/measure of subsets. That is, for $X_1,...,X_n$
tail bounded and mutually ind., the sharpest tail of $g(X_1,...,X_n)$ is the probability that $g(X_1^t,....,X_n^t)$ exceeds $t$ for some carefully chosen
$X_k^t$ satisfying the tails. 

One gets the rephrasement of this in terms of subsets by setting $V_t = g^{-t}[t,\infty)$. In particular, the following form of equation is common: 
\begin{equation}\label{Xtmasseq}
\sup_{X \in \mathcal{T}_R(f)} P(X \in V_t) = P\big((X_1^t,...,X_n^t) \in V_t \big).
\end{equation}
It seems difficult to find $g$ and tails $f_1,...,f_n$, such that \eqref{Xtmasseq} has a closed form, or even a simple expression with integrals. 
This explains why previous tail bound research has looked to find good approximations of $\eqref{Xtmasseq}$ for certain $g$ and tails.

$\textbf{Example}$: One semi-closed form of a sharpest tail bound can be contrived with Gaussian r.v.'s: consider ind.\ Gaussian r.v.'s $Y_k, \ k = 1,...,n,$ with probability distribution functions
$$
f_{Y_k}(x) = \frac{1}{\sigma_k \sqrt{2\pi}} \exp \left(-\frac{1}{2}\left(\frac{x - \mu_k}{\sigma_k} \right)^2\right)
$$
where $\mu_k$ is the mean and $\sigma_k^2$ the variance. It is well known
$$
P(Y_k \geq t) = \frac{1}{2} \left[1 + \text{erf} \left( \frac{-t+\mu_k}{\sqrt{2}\sigma_k}\right) \right], \ \forall t \in \mathbb{R}
$$
where $\text{erf}(z) = \frac{2}{\sqrt{\pi}} \int_0^z e^{-u^2} du$. Consider the right tails $f_k(t) := P(Y_k \geq t), \ \forall t \in \mathbb{R}, \ 
k=1,...,n$. 

Suppose $X_k$ are mutually ind.\ r.v.'s with $P(X_k \geq t) \leq f_k(t), \ \forall t \in \mathbb{R}, \ k = 1,...,n$. Notice since $P(\widetilde{X}_k \geq t) = f_k(t), \ \forall t \in \mathbb{R}$ (lemma~\ref{Xtilde}, \eqref{Xtildedefn}) that $\widetilde{X}_k$ is a Gaussian r.v.\ with mean $\mu_k$ and variance $\sigma_k^2$ too.

Applying theorem~\ref{thm.1} to $f=(f_1,...,f_n)$ and $g:(x_1,...,x_n) \mapsto x_1 + \cdots + x_n$ gives
\begin{align} 
   \sup_{X \in \mathcal{T}_R(f)} P(X_1 + \cdots + X_n \geq t) &= P(\widetilde{X}_1 + \cdots + \widetilde{X}_n \geq t) \nonumber \\ 
   &=\frac{1}{2} \left[1 + \text{erf} \left(\frac{-t+\mu}{\sqrt{2}\sigma} \right) \right],\ \forall t \in \mathbb{R} \label{errortailbound}
\end{align} 
since $\widetilde{X}_1+\cdots+\widetilde{X}_n$ is Gaussian with mean $\mu = \mu_1+\cdots+\mu_n$ and variance $\sigma^2 = \sigma_1^2+\cdots + \sigma_n^2$. This 
gives the sharpest right tail for $X_1+\cdots + X_n$, $X_k$ being subject to the conditions.

Moving on, the method used to prove \eqref{whoknows} works for other cases.

\section{Example $2$'s solution applied to other cases}\label{afewothercases}
Suppose $g$ is a multinomial with positive coefficients, $X_i$ are (mutually) ind., and 
$$
P(|X_i| \geq t) \leq f_i(t), \forall t \geq 0, i=1,...,n.
$$
Suppose $\exists \widetilde{X}_i$ such that (note $f_i(0) =1$) 
$$
P(\widetilde{X}_i \geq t) = 
\begin{cases}
f_i(t) & t \geq 0 \\
1 & t \leq 0
\end{cases}
$$
Intuitively, $\widetilde{X}_i$ are $X_i$ made as large as possible. Furthermore, $g(\widetilde{X}_1,...,\widetilde{X}_n)$ should be 
$g(X_1,...,X_n)$ made as large (positive) as possible, that is
$$
P(g(\widetilde{X}_1,...,\widetilde{X}_n) \geq t)
$$
should be the sharpest right tail of $g(X_1,...,X_n)$. The next theorem proves this.
\begin{theorem}\label{thm.2}
Let $g:\mathbb{R}^n \rightarrow \mathbb{R}$ be measurable, satisfy 
\begin{enumerate}[label=(\roman*)]
\item $0 \leq x_i \leq y_i, i=1,...,n  \implies g(x_1,...,x_n) \leq g(y_1,...,y_n)$
\item $\forall (x_1,...,x_n) \in \mathbb{R}^n, \ g(x_1,...,x_n) \leq g(|x_1|,....,|x_n|)$
\end{enumerate}
Let $f \in \mathcal{B}^n$ \eqref{absolutetails} be an $n$-tuple of absolute tail bounds. Then, 
\begin{equation}\label{positivemultinomialeq}
   \sup_{X \in \mathcal{T}(f)} P(g(X) \geq t) = P(g(\widetilde{X}) \geq t), \ \forall t \in \mathbb{R}.
\end{equation}
\end{theorem}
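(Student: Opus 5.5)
The plan is to run the argument of theorem~\ref{thm.1} again, after first replacing each $X_k$ by $|X_k|$. That replacement is what hypothesis (ii) allows, and afterwards only the monotonicity on the nonnegative orthant, hypothesis (i), is needed.

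First I pin down $\widetilde{X}$ in this setting. For $f_k \in \mathcal{B}$, define $\hat f_k(t) := f_k(t)$ for $t \geq 0$ and $\hat f_k(t) := 1$ for $t<0$. Since $f_k(0)=1$, $f_k\searrow 0$ and $f_k$ is left continuous, $\hat f_k \in \mathcal{B}_R$. Let
\[
\widetilde{X}_k(s) := \sup \hat f_k^{-1}[1-s,1], \quad s \in (0,1),
\]
as in lemma~\ref{Xtilde}. Because $\hat f_k(0)=1$, we have $0 \in \hat f_k^{-1}[1-s,1]$, so $\widetilde{X}_k \geq 0$ everywhere. By lemma~\ref{Xtilde} (ii), $P(\widetilde{X}_k \geq t) = \hat f_k(t)$. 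Hence $P(|\widetilde{X}_k|\geq t) = f_k(t)$ for all $t\geq 0$, so $\widetilde{X}=(\widetilde{X}_1,\dots,\widetilde{X}_n) \in \mathcal{T}(f)$. This gives the inequality ``$\geq$'' in \eqref{positivemultinomialeq}.

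For ``$\leq$'', fix $X \in \mathcal{T}(f)$ and $t\in\mathbb{R}$. By (ii), $\{g(X)\geq t\} \subset \{g(|X_1|,\dots,|X_n|)\geq t\}$, so
\[
P(g(X)\geq t)\leq P(g(|X_1|,\dots,|X_n|)\geq t).
\]
The maps $|X_k|$ are not neat, but $(|X_1|,\dots,|X_n|)$ still has mutually independent components on $(0,1)^n$. I build $Y_k$ concretely as in lemma~\ref{existenceneat}: set $h_k(t) := P(|X_k|\geq t)$ and $Y_k(s) := \sup h_k^{-1}[1-s,1]$. Then $Y_k$ is neat and has the distribution of $|X_k|$. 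Moreover $Y_k \geq 0$ pointwise, since $h_k(0)=1$ puts $0$ in the set. Taking $Y=(Y_1,\dots,Y_n)\in\mathcal{N}^n$ as a product r.v., the proof of corollary~\ref{c2.1} shows that $Y$ has the same joint distribution as $(|X_1|,\dots,|X_n|)$. Since $g$ is measurable, $P(g(|X_1|,\dots,|X_n|)\geq t)=P(g(Y)\geq t)$.

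Finally, $h_k \leq \hat f_k$ on all of $\mathbb{R}$: for $t \geq 0$ this is the hypothesis $X\in\mathcal{T}(f)$, and for $t<0$ we have $\hat f_k = 1$. So lemma~\ref{Xtilde} (iii) gives $Y_k(s)\leq \widetilde{X}_k(s)$ for all $s$. Combined with $0\leq Y_k(s)$, hypothesis (i) yields $g(Y(s)) \leq g(\widetilde{X}(s))$ for every $s\in(0,1)^n$. Therefore $\{g(Y)\geq t\}\subset\{g(\widetilde X)\geq t\}$, and chaining the inequalities gives
\[
P(g(X)\geq t)\leq P(g(\widetilde X)\geq t).
\]
Taking the supremum over $X \in \mathcal{T}(f)$ finishes the proof.

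The main point needing care is that hypothesis (i) only compares points of the nonnegative orthant. That is why the proof must use the explicit $\sup$-construction of the neat versions: it makes both $Y_k$ and $\widetilde{X}_k$ pointwise nonnegative. A different neat representative, such as a left continuous version, might only be nonnegative a.e. This would also suffice, by discarding a null set, but the explicit construction avoids the issue.
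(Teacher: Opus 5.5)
Your proof is correct and follows essentially the same route as the paper: use hypothesis (ii) to pass from $X$ to $(|X_1|,\dots,|X_n|)$, then replace this by a nonnegative $Y\in\mathcal{N}^n$ with the same joint distribution, and finally compare $Y_k\le\widetilde{X}_k$ pointwise via lemma~\ref{Xtilde} (iii) and hypothesis (i), as in theorem~\ref{thm.1}. You also spell out details the paper leaves implicit: why $Y_k\geq 0$ holds pointwise (from the $\sup$-construction of lemma~\ref{existenceneat}), why $\widetilde{X}\in\mathcal{T}(f)$, and the final comparison step that the paper only calls ``almost identical'' to the proof of theorem~\ref{thm.1}.
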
 
\begin{proof}
Minor technicality: by definition ($\mathcal{B}$ definition \eqref{absolutetails}) the domain of $f_i$ is $[0,\infty)$ and $f_i(0)=1$. So, 
to apply lemma~\ref{Xtilde}, which had $f_i$ with domain $\mathbb{R}$, one should extend $f_i(t) = 1, \forall t < 0$. 

Fix $X \in \mathcal{T}(f)$. Writing $s=(s_1,...,s_n)$, 
$$
I_t := \{ s \in (0,1)^n \ \big| \ g(X(s)) \geq t \} \stackrel{\text{(ii)}}{\subset} I_t' := \{s \in (0,1)^n \ \big| \ g(|X_1(s_1)|),...,|X_n(s_n)|) \geq t\}
$$
So
\begin{equation}\label{multinomialeq4}
P(g(X) \geq t) = m(I_t) \leq m(I_t') = P((g(|X_1|),...,|X_n|) \geq t).
\end{equation}
Because $(X_1,...,X_n)$ has ind.\ components, $(|X_1|,...,|X_n|)$ does too. By corollary~\ref{c2.1} there exists 
$Y=(Y_1,...,Y_n) \in \mathcal{N}^n$ with the same distribution as $(|X_1|,...,|X_n|)$. In particular, 
\begin{equation}\label{multinomialeq44}
P\big((|X_1|,...,|X_n|) \in g^{-1}[t,\infty)\big) = P\big(Y \in g^{-1}[t,\infty)\big).
\end{equation}
Notice $P(|Y_i| \geq t) \leq f_i(t), \ \forall t \geq 0$ (since $P(|X_i| \geq t) \leq f_i(t)$ by hypothesis). Then $Y = (Y_1,...,Y_n) \in \mathcal{T}(f)$
by definition of $\mathcal{T}$ \eqref{absolutetails}. Furthermore $Y \geq 0$. It follows, by \eqref{multinomialeq4} and \eqref{multinomialeq44},
\begin{equation}\label{multinomialeq5}
\sup_{X \in \mathcal{T}(f)} P(g(X) \geq t) \leq \sup_{\substack{Y \in \mathcal{T}(f) \\ Y_i \geq 0}} P(g(Y) \geq t)
\end{equation} 
The converse inequality holds trivially, thus equality holds. At this point one can use an almost identical argument to that used for \eqref{whoknows} to show 
\begin{equation}\label{multinomialeqlast}
\sup_{\substack{Y \in \mathcal{T}(f) \\ Y_i \geq 0}} P(g(Y) \geq t) = P(g(\widetilde{X}) \geq t).
\end{equation}
$t \geq 0$ was arbitrary, so we are done.  
\end{proof}

Comment about application: if one tries to apply theorem~\ref{thm.1} and theorem~\ref{thm.2} to specific cases to get concrete tail bounds/concentration inequalities, 
one gets integrals. The goal would be to accurately bound these integrals above or below with understandable functions. This is good because it readily allows 
access to tail bounds above and below which one proveably knows is close to the best possible tail bound. However when I tried to approximate the integrals for 
a few simple cases, the approximations were extremely unwieldly so I did not look far into the idea. Theorem~\ref{thm.1} and theorem~\ref{thm.2} are closer to 
application than the shift operators in chapter 4 are. (Although I defined and proved the results about shift operators I wanted to prove, I do not know how to 
make shift operators useful).

Theorem~\ref{thm.2} is also applicable to Schur multipliers. \cite{Skripka2024} includes a definition of Schur multipliers.

\begin{definition}[Schur multiplier]
Let \(\mathcal{M}_n\) denote the space of \(n \times n\) real matrices and \(E_{i,j}\) the elementary matrix whose only non-zero entry is in position \((i,j)\). 
Let \(d,n \in \mathbb{N}\) and let
\[
\mathfrak{m}(d) = \{m_{j_1,\dots,j_{d+1}}\}_{j_1,\dots,j_{d+1}=1}^n
\]
be a multi-dimensional matrix with real entries. The \(d\)-linear Schur multiplier \(\mathfrak{M}_{\mathfrak{m}(d)} : \mathcal{M}_n \times \cdots \times \mathcal{M}_n 
\to \mathcal{M}_n\) is
\begin{equation}\label{Schurmultipliereq0}
\mathfrak{M}_{m(d)}(X_1,\dots,X_d) = \sum_{j_1,\dots,j_{d+1}=1}^n m_{j_1,\dots,j_{d+1}} x^{(1)}_{j_1,j_2} \cdots x^{(d)}_{j_d,j_{d+1}} E_{j_1,j_{d+1}}.
\end{equation}
\end{definition}

Although the definition of Schur multipliers is for real \(n \times n\) matrices, we are interested in inputting \(d\) real \(n \times n\) random matrices into 
\(\mathfrak{M}_{m(d)}\). So although in definition 2 \(X_k\) are real matrices, in theorem 2.3.2 \(X_k\) will be real random matrices. Hence there is a change from 
lower case \(x\)'s (real variables) for the entries of each \(X_k\) to upper case \(x\)'s (real random variables) for the entries of each \(X_k\) in theorem~\ref{Schurmultipliertailbound}.

\begin{theorem}\label{Schurmultipliertailbound}
Let \(\mathfrak{m}(d) = \{m_{j_1,\dots,j_{d+1}}\}_{j_1,\dots,j_{d+1}=1}^n\) have all non-negative entries. Let \(X_1,\dots,X_d\) be real random \(n \times n\) matrices 
with mutually independent components. Denote the \(i,j\) entry of \(X_k\) by \(X^{(k)}_{ij}\) for all \(i,j,k\). Let \(f_{ijk} \in B\) and 
\(X^{(k)}_{ij} \in \mathcal{T}(f_{ijk})\) for all \(i,j,k\). Denote \(\widetilde{X}_k = \{\widetilde{X}^{(k)}_{ij}\}_{i,j=1}^n\) for \(k=1,\dots,d\). Here 
\(\widetilde{X}^{(k)}_{ij}\) is the ``right-shifted'' version of \(X^{(k)}_{ij}\) (see lemma~\ref{Xtilde} \eqref{Xtildedefn}). Then $\forall t \ge 0$,
\begin{equation}\label{Schurmultipliereq1}
P\bigg(\operatorname{Tr}\big(\mathfrak{M}_{\mathfrak{m}(d)}(X_1,\dots,X_d)\big) \ge t\bigg) \le P\bigg(\operatorname{Tr}\big(\mathfrak{M}_{\mathfrak{m}(d)}(\widetilde{X}_1,\dots,\widetilde{X}_d)\big) \ge t\bigg)
\end{equation}
and the RHS cannot be improved.
\end{theorem}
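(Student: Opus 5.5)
The plan is to reduce the statement to theorem~\ref{thm.2} by writing the trace of the Schur multiplier as an explicit polynomial in the $N := dn^2$ scalar entries $X^{(k)}_{ij}$. Only diagonal entries $E_{j_1,j_1}$ contribute to the trace, so
\begin{equation*}
g\big((x^{(k)}_{ij})_{i,j,k}\big) := \operatorname{Tr}\big(\mathfrak{M}_{\mathfrak{m}(d)}(X_1,\dots,X_d)\big) = \sum_{j_1,\dots,j_d=1}^n m_{j_1,j_2,\dots,j_d,j_1}\, x^{(1)}_{j_1,j_2} x^{(2)}_{j_2,j_3}\cdots x^{(d)}_{j_d,j_1}.
\end{equation*}
This is a polynomial $g:\mathbb{R}^N \to \mathbb{R}$ with non-negative coefficients, in particular Borel measurable. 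The aim is to check hypotheses (i) and (ii) of theorem~\ref{thm.2} for $g$ and then read off \eqref{Schurmultipliereq1} together with its sharpness.

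For (i), each monomial is a product of coordinates, and such a product is coordinatewise non-decreasing on $[0,\infty)^N$. Multiplying by the non-negative coefficients $m_{j_1,\dots,j_d,j_1}$ and summing preserves this monotonicity. For (ii), the triangle inequality gives
\begin{equation*}
g(x) \le |g(x)| \le \sum m_{j_1,\dots,j_d,j_1} \big|x^{(1)}_{j_1,j_2}\big|\cdots\big|x^{(d)}_{j_d,j_1}\big| = g(|x|),
\end{equation*}
again using the non-negativity of the entries of $\mathfrak{m}(d)$. Both hypotheses are elementary, so the work lies in the setup.

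The main obstacle is matching the statement to the framework $\mathcal{T}(f)$ with $f\in\mathcal{B}^N$. I would index the $N$ entries as one tuple, with tails $f=(f_{ijk})$. Since the entries are mutually independent and each satisfies its absolute tail $f_{ijk}$, corollary~\ref{c2.1} lets me replace the whole tuple by an element of $\mathcal{N}^N$ with the same joint distribution. That element lies in $\mathcal{T}(f)$, and the replacement leaves $P(g(\cdot)\ge t)$ unchanged, which is exactly \eqref{thingthatneedsareference}. Theorem~\ref{thm.2} then gives, for every $t\ge 0$,
\begin{equation*}
P(g(X)\ge t) \le \sup_{Y\in\mathcal{T}(f)} P(g(Y)\ge t) = P(g(\widetilde{X})\ge t),
\end{equation*}
where $\widetilde{X}^{(k)}_{ij}$ is built from $f_{ijk}$ extended by $1$ on $(-\infty,0)$, as in the proof of theorem~\ref{thm.2}. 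This proves \eqref{Schurmultipliereq1}. I would also note that the hypothesis ``$X^{(k)}_{ij}\in\mathcal{T}(f_{ijk})$'' should be read as ``satisfies the absolute tail $f_{ijk}$'', with neatness obtained after the fact from corollary~\ref{c2.1}.

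For the claim that the RHS cannot be improved, I need that $\widetilde{X}$ itself is admissible. By lemma~\ref{Xtilde}(ii), $P(\widetilde{X}^{(k)}_{ij}\ge s)=f_{ijk}(s)$ for all $s$. Since $\widetilde{X}^{(k)}_{ij}\ge 0$ (its tail at negative arguments is $1$), we get $P(|\widetilde{X}^{(k)}_{ij}|\ge s)=f_{ijk}(s)$ for $s\ge 0$. So the product r.v.\ $\widetilde{X}$ has independent entries and satisfies all the tails. Any bound strictly smaller than the RHS at some $t$ therefore fails for $X=\widetilde{X}$.
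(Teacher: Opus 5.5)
Your proposal is correct and follows the paper's proof: write $\operatorname{Tr}\big(\mathfrak{M}_{\mathfrak{m}(d)}(X_1,\dots,X_d)\big)$ as a polynomial with non-negative coefficients in the $dn^2$ entries, check hypotheses (i) and (ii) of theorem~\ref{thm.2}, and read off \eqref{Schurmultipliereq1} from \eqref{positivemultinomialeq}. You also make explicit three points the paper leaves implicit: the trace formula (only diagonal terms contribute), the reduction to $\mathcal{N}^{dn^2}$ via corollary~\ref{c2.1}, and the fact that $\widetilde{X}$ is itself admissible, which gives the sharpness.
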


\begin{proof}
Let \(g(x_1,\dots,x_{n^2 d}) : \mathbb{R}^{n^2 d} \to \mathbb{R}\) where \(g(x_1,\dots,x_{n^2 d}) = \\ \operatorname{Tr}(\mathfrak{M}_{\mathfrak{m}(d)}(X_1,\dots,X_d))\) where 
\(\operatorname{Tr}(\mathfrak{M}_{\mathfrak{m}(d)}(X_1,\dots,X_d))\) has had each real random variable entry replaced by a real variable \(x_i\), of which there are \(n^2 d\) many. 
Because \(m_{j_1,\dots,j_{d+1}}\) are non-negative for all \(j_l\), one sees by \eqref{Schurmultipliereq0} that (i) and (ii) of theorem~\ref{thm.2} hold 
for \(g\). Therefore \eqref{positivemultinomialeq} holds, which is the same as \eqref{Schurmultipliereq1}.
\end{proof}

Assuming the same conditions as theorem~\ref{Schurmultipliertailbound}, theorem~\ref{thm.2} also derives
\begin{align}
&P\bigg(\Big|\operatorname{Tr}(\mathfrak{M}_{\mathfrak{m}(d)}\big(X_1,\dots,X_d)\big)\Big| \ge t\bigg) \nonumber \\ 
&\leq P\bigg(\Big|\operatorname{Tr}(\mathfrak{M}_{\mathfrak{m}(d)}\big(\widetilde{X}_1,\dots,\widetilde{X}_d)\big)\Big| \ge t\bigg), \forall t \ge 0 \label{Schurmultipliereq2}
\end{align}
and the RHS cannot be improved. The proof is essentially the same, simply include absolute value bars as part of \(g\).

I'm quite sure \eqref{Schurmultipliereq2} can be generalized to \(X_k\) being random complex matrices with mutually independent matrix entries without too much difficulty, but I have not worked out the details. Although in the shift operator chapter we will develop tools which in theory are capable of expressing the sharpest tail bound for
\[
P\bigg(\Big|\operatorname{Tr}(\mathfrak{M}_{\mathfrak{m}(d)}(X_1,\dots,X_d))\Big| \geq t\bigg)
\]
when \(\mathfrak{m}(d) = \{m_{j_1,\dots,j_{d+1}}\}_{j_1,\dots,j_{d+1}=1}^n\) has non-negative and negative entries, I do not know how to produce this theoretically 
possible expression. This is representative of the current state of shift operators: they are theoretically capable of expressing sharpest tail bounds for every function of mutually independent r.v.'s, but only in a few cases are methods found which lets one actually do this expression.

Indeed, in chapter 4 we generalize the method used in theorem~\ref{thm.1} and theorem~\ref{thm.2}; typically one does not want to consider the r.v. 
\(g(\widetilde{X})\), but the r.v. \(g(\mathscr{S}(f,G))\) where \(G = G^1 \times \cdots \times G^n \subset \mathbb{R}^n\) is a properly chosen closed subset. 
\(\mathscr{S}\) is called a shift operator, and returns (very roughly speaking) the r.v. \((X_1,\dots,X_n)\) that is ``shifted outwards'' on \(G\) as far as possible 
while satisfying the tails. For \(G = \mathbb{R}^n\), in fact
\[
\mathscr{S}(f,\mathbb{R}^n) = \widetilde{X},
\]
so \(\mathscr{S}(f,G)\) is a generalization of \(\widetilde{X}\).

There are different types of shift operators, corresponding if \(X_i\) are left, right, absolute, or 2 tail bounded (section~\ref{tailtypes}). 
These will respectively be written $\mathscr{S}_L(f,G)$ for $f \in \mathcal{B}_L^n$, $\mathscr{S}_R(f,G)$ for $f \in \mathcal{B}_R^n$, 
$\mathscr{S}(f,G)$ for $f \in \mathcal{B}^n$, and $\mathscr{S}_2(f,G)$ for $\mathcal{B}_2^n$ \eqref{absolutetails}-\eqref{B2T2}.

The proofs for the different shift operators are very similar and all used the same proof blueprint with modest adjustments. Therefore, I tried to define a generalized
shift operator which combined the other shift operators, but for some unknown reason I encountered newly necessary sets whose measurability I could not prove.

\section{Convergence of neat r.v.'s}
\begin{lemma}[Tail bounds are preserved under convergence in distribution]\label{preservedconvind}
Let \\ $(X^m)_{m \in \mathbb{N}}$ be a sequence of real r.v.'s (not necessarily neat) that converges to the real r.v.\ $X$ in distribution.
\begin{enumerate}[label=(\roman*)]
\item If $f \in \mathcal{B}_R$ \eqref{BRTR} and $\forall m \in \mathbb{N}, \forall t \in \mathbb{R}, \ P(X^m \geq t) \leq f(t)$, then $$\forall t \in \mathbb{R}, \ P(X \geq t) \leq f(t)$$
\item If $f \in \mathcal{B}_L$ and $\forall m \in \mathbb{N}, \forall t \in \mathbb{R}, \ P(X^m \leq t) \leq f(t)$, then $$\forall t \in \mathbb{R}, \ P(X \leq t) \leq f(t)$$
\item If $f \in \mathcal{B}$ \eqref{absolutetails} and $\forall m \in \mathbb{N}, \forall t \geq 0, \ P(|X^m| \geq t) \leq f(t)$, then $$\forall t \geq 0, \ P(|X| \geq t) \leq f(t)$$
\item If $f=(f^-,f^+) \in \mathcal{B}_2$ \eqref{B2T2} and $\forall m \in \mathbb{N}, \forall t \geq 0, \ P(X^m \geq t) \leq f^+(t)$, also $\forall t \leq 0, P(X^m \leq t) \leq f^-(t)$, then $$\forall t \geq 0, P(X \geq t) \leq f^+(t) \text{ and } \forall t \leq 0, P(X \leq t) \leq f^-(t)$$
\end{enumerate}
\end{lemma}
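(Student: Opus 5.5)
The plan is to prove (i) directly, using the reversed-CDF form of Portmanteau's theorem stated in section~\ref{preliminaries}. Then (ii) follows by the symmetric argument with CDFs, and (iii), (iv) are reduced to (i), (ii).

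For (i), fix $t \in \mathbb{R}$. The function $h(u) := P(X \geq u)$ is monotone, so its set of discontinuities $D$ is countable. Hence we may choose $u_j \nearrow t$ with $u_j < t$ and $u_j \notin D$. At each continuity point $u_j$ of $h$, the boundary $\partial[u_j,\infty) = \{u_j\}$ has $P(X = u_j) = 0$. So $[u_j,\infty)$ is a continuity set, and Portmanteau gives
$$
P(X \geq u_j) = \lim_{m\to\infty} P(X^m \geq u_j) \leq f(u_j).
$$
Now let $j \to \infty$. On the left, the sets $\{X \geq u_j\}$ decrease to $\{X \geq t\}$ (since $u_j<t$ and $u_j \nearrow t$), so continuity from above of the probability measure gives $P(X \geq u_j) \to P(X \geq t)$. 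On the right, $f \in \mathcal{B}_R$ is left continuous, so $f(u_j) \to f(t)$. Together these give $P(X \geq t) \leq f(t)$. This limiting step is the crux: it is exactly where the standing left-continuity assumption on right tails is needed. Without it, only the weaker bound $P(X\ge t)\le f(t^-)$ would follow.

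For (ii), fix $t$ and take continuity points $u_j \searrow t$ of $P(X \leq \cdot)$ with $u_j > t$. Then $(-\infty,u_j]$ is a continuity set, and the same argument gives $P(X \leq u_j) \leq f(u_j)$. Letting $j\to\infty$ uses continuity from above of $\{X \leq u_j\}\downarrow\{X\le t\}$ together with right continuity of $f \in \mathcal{B}_L$.

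For (iii), note that $|\cdot|$ is continuous, so $|X^m| \xrightarrow{d} |X|$ by the continuous mapping theorem; equivalently, $\{|x| \geq u\}$ is a continuity set of $X$ whenever $P(|X| = u)=0$. Extending $f$ by $1$ on $(-\infty,0)$ keeps it left continuous (since $f(0)=1$) and puts it in $\mathcal{B}_R$, so (i) applied to $|X^m|, |X|$ gives the claim for $t \geq 0$. For (iv), apply the arguments of (i) and (ii) with $f^+$ and $f^-$ for $t \geq 0$ and $t \leq 0$ respectively. Only values $u_j$ approaching $t$ from within the relevant half-line are needed, except at $t=0$. There $f^\pm(0)=1$, so the bound at $t=0$ is trivial, and no issue arises.
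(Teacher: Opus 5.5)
Your proposal is correct and follows essentially the same route as the paper: approximate $t$ from the appropriate side by continuity points, pass to the limit in $m$ via Portmanteau, then use the one-sided continuity of $f$; (iii) is reduced to (i) through $|X^m| \xrightarrow[]{d} |X|$, and (iv) is reduced to (i) and (ii). You also make explicit two details the paper leaves implicit, namely extending $f$ by $1$ on $(-\infty,0)$ so that (i) applies, and handling $t=0$ in (iv). Where you use continuity from above, the paper uses the simpler bound $P(X\geq t)\leq P(X\geq u_j)$, which also suffices.
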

\begin{proof}
(i): Since $X^m$ converges to $X$ in distribution, $P(X^m \geq t) \xrightarrow[m \rightarrow \infty]{} P(X \geq t), \forall t$ with $P(X \geq t)$ continuous at $t$ (section~\ref{preliminaries}). Since monotone functions have 
countable discontinuities, $P(X^m \geq t) \rightarrow P(X \geq t)$ on a dense subset. 

Fix $t' \in \mathbb{R}$. Since $P(X \geq t)$ is continuous on a dense subset, let $(t_k)_k$ be such that $\forall k \in \mathbb{N}, P(X \geq t)$ 
is continuous at $t_k$ and $t_k \nearrow t'$. It follows,
$$
\forall k \in \mathbb{N}, P(X \geq t_k) = \lim_{m \rightarrow \infty} P(X^m \geq t_k) \leq f(t_k).
$$
Notice $P(X \geq t') \leq P(X \geq t_k) \leq f(t_k)$. Since $f$ is left continuous, taking $k\rightarrow \infty$ gives $P(X \geq t') \leq f(t')$. 

(ii): Done similarly (symmetrically) to (i).

(iii): Define $Y^m := |X^m|, \ Y := |X|$. First we will show $Y^m \xrightarrow[]{d} Y$. Suppose $P(Y \geq t)$ is continuous at $t$. Because $Y^m, Y \geq 0$, if $t \leq 0$ then 
$P(Y \geq t) = P(Y^m \geq t) = 1$ so $P(Y^m \geq t) \rightarrow P(Y \geq t)$ trivially. Therefore assume $t > 0$. 

Because $P(Y \geq t)$ is continuous at $t$, necessarily $P(Y = t) = 0$, i.e.\ $P(X \in \{-t,t\}) = 0$. Because the topological boundary $\partial\big((-\infty,t] \cup [t,\infty) \big) = \{-t,t\}$, by definition $(-\infty, t] \cup [t,\infty)$ is a continuity set of $X$. Recall Portmanteau's theorem (section~\ref{preliminaries}) states convergence in distribution is equivalent to convergence on continuity sets. Thus because $X^m \xrightarrow[]{d} X$, 
$$
P\big(X^m \in (-\infty, -t] \cup [t,\infty)\big) \rightarrow P\big(X \in (-\infty,-t] \cup [t,\infty)\big).
$$
On the other hand clearly
\begin{align*} 
P(Y^m \in [t,\infty)) &= P(X^m \in (-\infty, -t] \cup [t,\infty)), \ \forall m \in \mathbb{N} \\ 
P(Y \in [t,\infty)) &= P(X \in (-\infty, t] \cup [t,\infty)).
\end{align*}
Thus $P(Y^m \in [t,\infty)) \rightarrow P(Y \in [t,\infty))$ too. This shows $Y^m \xrightarrow[]{d} Y$. Thus one can use (i) on $(Y^m)_m, Y$.

(iv): Apply (i) to $f^+$ and (ii) to $f^-$.
\end{proof}
The following lemma easily generalizes to higher dimensions. In particular, if \\ $(X^m)_{m \in \mathbb{N}} \subset \mathcal{N}^n, X \in \mathcal{N}$, such that the joint CDFs of $X^m$ converge to the joint CDF of $X$ at every point of $\mathbb{R}^n$ the joint CDF of $X$ is continuous at, then one has $X^m \xrightarrow[]{a.e.} X$ (section~\ref{preliminaries}). Recall any r.v.\ with mutually ind.\ components has distribution 
equal to a r.v.\ in $\mathcal{N}^n$ (corollary~\ref{c2.1}), and that convergence in distribution is the weakest form of convergence. It follows, for real r.v.'s with mutually ind.\ components, 
without loss of generality all convergence is convergence a.e.. (The following comment about Skorokhod's representation 
theorem (and reference to) was added after the dissertation submission, and is not present in the university publication due to my ignorance of Skorokhod's theorem). As mentioned at the 
start of chapter $2$, this is a special case of Skorokhod's representation theorem \cite{Skorokhod} pg. 70. Note that Skorokhod's representation theorem only guarantees almost sure convergence. The countability of the non-convergent subset in the 
monotone case (lemma~\ref{neatnessconvergence}) is an additional observation made specific to this case.
\begin{lemma}\label{neatnessconvergence}
   Let $(X^m)_m \subset \mathcal{N}, X \in \mathcal{N}$ be such that $X^m \xrightarrow[]{d} X$. Then $X^m \xrightarrow[]{a.e.} X$. 
\end{lemma}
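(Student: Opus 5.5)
We will show that $X^m(s) \to X(s)$ at every $s \in (0,1)$ where $X$ is continuous. Since $X \nearrow$, it has only countably many discontinuities, so this gives convergence a.e. (in fact off a countable set, matching the remark before the lemma). The idea is to squeeze $X^m(s)$ between two points $t_1 < X(s) < t_2$ that are arbitrarily close to $X(s)$. To do this we transfer the CDF convergence at continuity points of $F_X$ into statements about the monotone functions $X^m$.

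Fix $s$ at which $X$ is continuous, and fix $\varepsilon>0$. The discontinuities of $F_X$ are countable, so we can choose $t_1 \in (X(s)-\varepsilon, X(s))$ and $t_2 \in (X(s), X(s)+\varepsilon)$ that are continuity points of $F_X$.

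We start with the lower bound. By continuity of $X$ at $s$ there is $\delta>0$ with $X(s') > t_1$ for all $s' \in (s-\delta, s)$, using $X\nearrow$. So $\{X \le t_1\} \subset (0, s-\delta]$, which gives $F_X(t_1) \le s-\delta < s$. By convergence in distribution, $F_{X^m}(t_1) \to F_X(t_1)$, so for large $m$ we have $F_{X^m}(t_1) < s$. Now suppose $X^m(s) \le t_1$. Since $X^m \nearrow$, this gives $(0,s] \subset \{X^m \le t_1\}$, hence $F_{X^m}(t_1) \ge s$, a contradiction. So $X^m(s) > t_1$ for large $m$.

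The upper bound is symmetric. Continuity at $s$ gives $X(s') < t_2$ on $(s, s+\delta)$, so $\{X \ge t_2\} \subset [s+\delta,1)$ and $P(X\ge t_2) \le 1-s-\delta$. Since $t_2$ is a continuity point, the reversed-CDF convergence noted in section~\ref{preliminaries} gives $P(X^m \ge t_2) < 1-s$ for large $m$. If instead $X^m(s)\ge t_2$, monotonicity would give $[s,1)\subset\{X^m\ge t_2\}$, so $P(X^m \ge t_2) \ge 1-s$, a contradiction. Hence $|X^m(s)-X(s)|<\varepsilon$ eventually.

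The main obstacle is the bookkeeping at the endpoints: using strict versus non-strict inequalities, and making sure the continuity point $t_i$ of $F_X$ is also one where the reversed CDF converges. This holds because $P(X\ge t)=1-F_X(t^-)$ is continuous exactly where $F_X$ is.
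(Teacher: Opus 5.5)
Your proof is correct and is essentially the paper's argument. Both proofs pick continuity points of $F_X$ near the target value and use monotonicity of $X^m$ to turn $X^m(s)\le t$ (resp.\ $X^m(s)\ge t$) into a lower bound on $F_{X^m}(t)$ (resp.\ $P(X^m\ge t)$). This contradicts convergence of the CDF (resp.\ reversed CDF), and the countably many discontinuities of $X$ are then discarded. The differences are only bookkeeping: the paper gets its strict margin from auxiliary points $s'<s_0<s$, bounding $X^m(s_0)$ between $X(s')-\epsilon$ and $X(s)+\epsilon$ at every $s_0$ and using continuity of $X$ at $s_0$ only at the end, and it assigns the CDF to the upper bound and the reversed CDF to the lower bound. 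You instead use continuity of $X$ at $s$ up front to get the margin $\delta$, and you swap those two roles.
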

\begin{proof}
Recall if $(a_n)_{n \in \mathbb{N}} \subset V$ and $U \subset V$, one says $a_n$ is eventually in $U$ iff $\exists N \in \mathbb{N}$ such that $\forall n \geq N, a_n \in U$. 

Suppose $0<s'<s_0<s<1$. We will prove the lemma from the following two claims: $\forall \epsilon > 0, \ X^m(s_0)$ is eventually in $(-\infty, X(s) + \epsilon)$ and $\forall \epsilon >0, X^m(s_0)$ 
is eventually in $(X(s')-\epsilon, \infty)$. To understand why these claims are sufficient to prove the lemma, suppose $0 < \epsilon' < \min\{s_0, 1-s_0\}$. Set $s = s_0 + \epsilon'$ and $s' = s_0 - \epsilon'$. 

Recall $X \nearrow$ ($X$ is neat), thus $X(s_0-\epsilon') \leq X(s_0+\epsilon')$. Combining the claims,
$$
\forall \epsilon > 0, \ X^m(s_0) \text{ is eventually in } (-\epsilon + X(s_0 - \epsilon'), X(s_0 + \epsilon') + \epsilon).
$$
If $X$ is continuous at $s_0$, taking $\epsilon, \epsilon' \rightarrow 0$ in this equation gives $X^m(s_0) \rightarrow X(s_0)$. Since 
$X \nearrow$, $X$ has countable discontinuous, i.e.\ is continuous almost everywhere. Therefore $X^m \xrightarrow[]{a.e.} X$ as desired. 

Claim $1$: Fix $0<s_0<s<1$. $\forall \epsilon >0, \ X^m(s_0)$ is eventually in $(-\infty, X(s)+\epsilon)$. 

By contradiction, suppose this was false. Then $\exists (m_k)_k$ such that $\forall k, X^{m_k}(s_0) \geq X(s) + \epsilon$. Because $F_X$ is continuous on a dense subset, 
$\exists \epsilon' \in (0,\epsilon)$ such that $F_X$ is continuous at $X(s) + \epsilon'$. 

Now $X \nearrow$ implies $\{s' \in (0,1) \ \big| \ X(s') \leq X(s)\} \supset (0,s)$.
Applying the Lebesgue measure gives
\begin{equation}\label{neatnessconvergenceeq1}
F_X(X(s)) \geq s.
\end{equation}
Because $X^{m_k} \nearrow$ ($X^{m_k}$ are neat) and by construction of $(m_k)_k$ and choice of $\epsilon'$, 
$$
X^{m_k}(s_0) \geq X(s)+\epsilon > X(s) + \epsilon'.
$$
It follows
$$
\Big\{s' \in (0,1) \ \big| \ X^{m_k}(s) \leq X(s) + \epsilon' \Big\} \subset (0,s_0).
$$
Applying $m$, 
$$ 
F_{X^{m_k}}(X(s) + \epsilon') \leq s_0.
$$
These equations are a problem, because now 
\begin{align*}
s > s_0 \geq \lim_{k \rightarrow \infty} F_{X^{m_k}}(X(s) + \epsilon') \stackrel{(1)}{=} F_X(X(s) + \epsilon') \geq F_X(X(s)) \stackrel{\eqref{neatnessconvergenceeq1}}{\geq} s.
\end{align*}
where $(1)$ is since $\epsilon'$ was chosen such that $F_X$ is continuous at $X(s) + \epsilon'$ and $X^{m_k} \xrightarrow[]{d} X$. A contradiction, so the claim holds.

Claim $2$: Fix $0<s'< s_0 < 1$. $\forall \epsilon >0, \ X^m(s_0)$ is eventually in $(X(s')-\epsilon, \infty)$. 

Because $X^m \xrightarrow[]{d} X$, one has $P(X^m \geq t) \xrightarrow[m \rightarrow \infty]{} P(X \geq t)$ for all $t$ at which $P(X \geq t)$ is continuous (section~\ref{preliminaries}). The rest is similar (symmetric) to claim $1$.
\end{proof}

The purpose of the next lemma is to extract from some $(X^m)_{m \in \mathbb{N}} \subset \mathcal{T}(f), f \in \mathcal{B}^n$, a 
subsequence which converges almost everywhere. This construction will be used when shift operators are introduced to show 
for $U \subset \mathbb{R}^n$ closed,
$$
\exists X_0 \in \mathcal{T}(f), P(X_0 \in U) = \sup_{X \in \mathcal{T}(f)}P(X \in U).
$$

\begin{lemma}[convergent a.e.\ subsequence]\label{conva.e.subseq}
Let $f \in \mathcal{B}, (X^m)_{m \in \mathbb{N}} \subset \mathcal{T}(f)$. Then, $\exists (m_k)_k \subset \mathbb{N}$ and $X \in \mathcal{T}(f)$ 
such that $X^{m_k} \xrightarrow[]{a.e.} X$.
\end{lemma}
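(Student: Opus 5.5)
The plan is Helly's selection theorem plus tightness, then Lemma~\ref{preservedconvind} and Lemma~\ref{neatnessconvergence}. (The statement has $f \in \mathcal{B}$ with $\mathcal{T}(f)$; if $f\in\mathcal{B}^n$, apply the one-dimensional argument to each coordinate and pass to successive subsequences. Coordinatewise a.e.\ convergence on $(0,1)$ then gives a.e.\ convergence on $(0,1)^n$, by Fubini: the bad set is a finite union of sets of the form (null set)$\times(0,1)^{n-1}$.) So assume $n=1$.

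\textbf{Step 1 (tightness).} Each $X^m$ is neat and satisfies $P(|X^m|\ge t)\le f(t)$ with $f\searrow 0$. The family of laws is therefore uniformly tight. Concretely, for $s\in(0,1)$ choose $t_s$ with $f(t_s)<\min\{s,1-s\}$. Monotonicity then forces $|X^m(s)|\le t_s$ for every $m$. Indeed, if $X^m(s)\ge t_s$, then $X^m\ge t_s$ on $[s,1)$, giving $P(X^m\ge t_s)\ge 1-s>f(t_s)$, a contradiction. The lower bound is symmetric.

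\textbf{Step 2 (diagonal extraction).} Step 1 shows the sequence is pointwise bounded at each $s$. Enumerate the rationals $q_1,q_2,\dots$ in $(0,1)$ and use a diagonal argument to get $(m_k)_k$ with $X^{m_k}(q)\to Y(q)$ for every rational $q$. Then $Y$ is nondecreasing on the rationals, and we extend it by
\[
X(s):=\lim_{q\searrow s,\ q\in\mathbb{Q}}Y(q).
\]
This $X$ is neat, monotone, and finite, since the uniform bounds $|X^m(s)|\le t_s$ pass to the limit. At any continuity point $s$ of $X$, sandwich: for rationals $q<s<q'$,
\[
X^{m_k}(q)\le X^{m_k}(s)\le X^{m_k}(q').
\]
Taking limits and letting $q,q'\to s$ gives $X^{m_k}(s)\to X(s)$. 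Since $X$ is monotone, its discontinuities are countable, so $X^{m_k}\to X$ a.e.

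Convergence a.e.\ on $(0,1)$ implies convergence in distribution. Lemma~\ref{preservedconvind}(iii) then gives $P(|X|\ge t)\le f(t)$, so $X\in\mathcal{T}(f)$. This route avoids Helly entirely: the monotone structure lets us take pointwise limits directly.

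\textbf{Main obstacle.} The delicate step is the finiteness and boundedness in Step 1, which prevents the limit from escaping to $\pm\infty$. It is exactly where the hypothesis $f\searrow 0$ is used, and tightness near the endpoints $s\to 0,1$ is handled by letting $t_s$ depend on $s$. An alternative route is to cite Helly's theorem to get convergence in distribution to some r.v. $Z$, realize $Z$ as neat via Lemma~\ref{existenceneat}, and then invoke Lemma~\ref{neatnessconvergence}. The tightness argument above is again what rules out mass escaping to infinity.
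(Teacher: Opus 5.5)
Your proof is correct, and it takes a different route from the paper's.

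\textbf{The paper's route.} It runs the selection argument on the distribution side. It diagonalizes so that the CDFs $F_{X^{m_k}}(q)$ converge at every rational $q\in\mathbb{R}$, and builds a right-continuous limit $F$. The tail bound enters only to show $G(q)\to 0,1$ as $q\to\mp\infty$, so no mass escapes. It then proves $X^{m_k}\xrightarrow{d}X$, passes to a neat representative via Lemma~\ref{existenceneat}, and upgrades to a.e.\ convergence via Lemma~\ref{neatnessconvergence}.

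\textbf{Your route.} You run the same selection argument on the quantile side. Since the $X^m$ are themselves monotone functions on $(0,1)$, the tail bound gives locally uniform bounds $|X^m(s)|\le t_s$. You diagonalize at rational $s\in(0,1)$, take right limits, and get pointwise convergence at every continuity point of the limit by sandwiching. The details hold. For finiteness and monotonicity of $X$, it suffices that $Y(q_0)\le X(s)\le Y(q)$ for rationals $q_0<s<q$. For the sandwich, the left limit of $Y$ at $s$ equals $X(s^-)$, because $X(q_1)\le Y(q)\le X(q)$ whenever $q_1<q$.

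\textbf{What each route buys.} Yours is shorter and self-contained. A.e.\ convergence comes out directly, off a countable set, and the limit is an explicit right-continuous neat function. Lemmas~\ref{existenceneat} and~\ref{neatnessconvergence} are never needed. You return to convergence in distribution only to apply Lemma~\ref{preservedconvind}(iii). You could even replace that with a direct Fatou-type bound $P(|X|\ge t)\le\liminf_k P(|X^{m_k}|>t')\le f(t')$ for $t'<t$, followed by left continuity of $f$. The paper's route is modular: it separates distributional compactness from the Skorokhod-type representation step. It also reuses Lemma~\ref{neatnessconvergence}, which the paper wants anyway as the general principle that, for neat r.v.'s, convergence in distribution is convergence a.e.

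The alternative you sketch at the end (Helly, then a neat representative, then Lemma~\ref{neatnessconvergence}) is exactly the paper's proof.
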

\begin{proof}
This will be done in three stages. 

Step $1$: Find $(m_k)_k \subset \mathbb{N}$ such that $F_{X^{m_k}}(t)$ converges $\forall t \in \mathbb{Q}$.

Step $2$: Construct $X$ by naturally extending $F_X(q):= \lim_{k \rightarrow \infty} F_{X^{m_k}}(q), \forall q \in \mathbb{Q}$ to $\mathbb{R}$. 

Step $3$: Show $X^{m_k} \xrightarrow[]{d} X$. Then, by lemma~\ref{existenceneat}, without loss of generality $X$ is 
neat. Then $X \in \mathcal{T}(f)$ by lemma~\ref{preservedconvind}. $X^{m_k} \xrightarrow[]{a.e.} X$ by lemma~\ref{neatnessconvergence}.

We begin now. Step $1$: Let $(q_j)_j$ be an enumeration of $\mathbb{Q}$. Since $\forall m \in \mathbb{N}, \forall q \in \mathbb{Q}$, $0 \leq F_{X^m}(q) \leq 1$, 
one can construct subsequences
$$
(X^m)_m \supset (X^{m_k^1})_k \supset (X^{m_k^2})_k \supset \cdots
$$
such that $\lim_{k \rightarrow \infty} F_{X^j_k}(q_j)$ exists $\forall j \in \mathbb{N}$. Consider $(X^{m_k})_k, m_k = m_k^k, \forall k \in \mathbb{N}$. Then $\forall q \in \mathbb{Q}$, 
$$
F_{X^{m_k}}(q) \xrightarrow[k \rightarrow \infty]{} \text{ (some) } r_q \in [0,1].
$$

Step $2$: Let $G: \mathbb{Q} \rightarrow [0,1], \ G(q) := r_q$. For the rest of the proof $q,q'$ will mean rational numbers. We claim
\begin{enumerate}[label=(\roman*)]
\item $G \nearrow$
\item $G \rightarrow 1$ as $q \rightarrow \infty$
\item $G \rightarrow 0$ as $q \rightarrow -\infty$
\end{enumerate}
(i): $\forall k \in \mathbb{N}, q < q',$ one has $F_{X^{m_k}}(q) \leq F_{X^{m_k}}(q')$. Letting $k \rightarrow \infty$ gives $G(q) \leq G(q')$.\\
(ii), \ (iii): Using $P(|X^m| \geq t) \leq f(t), \forall t \geq 0$, it is trivial to show
\begin{align*}
P(X^m \leq q) &\in [1-f(q),1], \ \forall q \geq 0 \\
P(X^m \leq q) &\in [0,f(|q|)], \ \forall q < 0
\end{align*}
In particular, it is true for $q \geq 0$ that $ 1 \geq G(q) \geq 1-f(q)$. The RHS goes to $1$ as $q \rightarrow \infty$. Similarly for $q < 0, \ 0 \leq G(q) \leq f(|q|)$ which goes to $0$ as $q \rightarrow -\infty$. This concludes the claim.

Define 
$$
F(t) := \lim_{q \searrow t} G(q), \ \forall t \in \mathbb{R}.
$$
This will be the CDF of $X$, but first we must show it satisfies the CDF properties $F \nearrow, F(t) \rightarrow 0$ as 
$t \rightarrow -\infty, F(t) \rightarrow 1$ as $t \rightarrow \infty$, and $F$ is right continuous. 
First, notice since $G \nearrow$ (by (i)), that 
$$
\forall q>t, \ F(t) \leq G(q). 
$$
This will be used multiple times. Now, given $t' > t, \exists q \in \mathbb{Q} \cap [t,t']$. Also, 
$$
F(t) \leq G(q) \leq G(q'), \ \forall q'>t'.
$$
Since $F(t') = \lim_{q' \searrow t'} G(q')$, letting $q' \searrow t'$ here shows $F(t) \leq F(t')$. So $F \nearrow$. Next, since $G(q) \rightarrow 1$
as $q \rightarrow \infty$, by (ii), it follows 
$$
\lim_{t \rightarrow \infty} F(t) = \lim_{t \rightarrow \infty} \lim_{q \searrow t} G(q) = 1.
$$
Similarly, one can argue $F(t) \rightarrow 0$ as $t \rightarrow -\infty$. Lastly, since $F \nearrow$, it follows $\forall q'>t'>t$ that
\begin{equation*}
F(t) \leq F(t') \leq G(q').
\end{equation*}
As $q',t' \searrow t$ the RHS converges to $F(t)$, so $F$ is right continuous at $t$. This shows $F$ is a CDF. So there exists a random variable $X$ with CDF $F$ (section~\ref{preliminaries}).

Step $3$: Suppose $F$ is continuous at $t \in \mathbb{R}$. We must show $F_{X^m}(t) \rightarrow F_X(t)$ as $m \rightarrow \infty$. By definition 
of $F$, and since $F,G$ are monotonically increasing, $\forall t' < q < t < q' < t''$,
\begin{equation}\label{countabledist1}
F(t') \leq G(q) \leq F(t) \leq G(q') \leq F(t'')
\end{equation}

Fix $\epsilon > 0$. By continuity at $t$ suppose $t',t''$ are sufficiently close to $t$ that 
\begin{equation}\label{whateverwhocares}
|F(t) - F(t')|,\ |F(t)-F(t'')|<\epsilon.
\end{equation}
Since $G(q) = \lim_{k \rightarrow \infty} F_{X^{m_k}}(q)$ by definition, $\exists K \in \mathbb{N}$ such that $k\geq K \implies |G(q) - F_{X^{m_k}}(q)| < \epsilon$. There exists a similar $K'$ for $q'$. For $k \geq \max\{K,K'\}$ 
\begin{equation*}
G(q) - \epsilon < F_{X^{m_k}}(q) \leq F_{X^{m_k}}(t) \leq F_{X^{m_k}}(q') < G(q')+\epsilon
\end{equation*}
By \eqref{countabledist1}, $F(t')-\epsilon < F_{X^{m_k}}(t) < F(t'')+\epsilon$. By \eqref{whateverwhocares}, 
$F(t)-2\epsilon < F_{X^{m_k}}(t) < F(t)+2\epsilon$. Since $\epsilon>0$ was arbitrary, $F_{X^{m_k}}(t) \rightarrow F(t)$ as desired.
\end{proof}

\begin{corollary}\label{S2subseqconva.e.}
   Let $f = (f^-,f^+) \in \mathcal{B}_2$ and $(X^m)_{m \in \mathbb{N}} \subset \mathcal{T}_2(f)$. Then $\exists (m_k)_k \subset \mathbb{N}$ and $X \in \mathcal{T}_2(f)$ such that $X^{m_k} \xrightarrow[]{a.e.} X$.
   \end{corollary}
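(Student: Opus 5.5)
The plan is to copy the proof of lemma~\ref{conva.e.subseq} almost verbatim; only the control of the tails and the final membership check change. There is a notational issue first. The corollary is stated for a single $f=(f^-,f^+)\in\mathcal{B}_2$ and a sequence in $\mathcal{T}_2(f)$. I will treat the one-dimensional case, which is exactly what is written. The $n$-dimensional case follows by passing to successive subsequences in each coordinate: extract a subsequence for coordinate $1$, then a further subsequence for coordinate $2$, and so on. Each coordinate then converges a.e.\ on $(0,1)$. Since a finite union of null sets is null and $(0,1)^n$ carries the product measure, the product r.v.'s converge a.e.\ on $(0,1)^n$.

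\textbf{Step 1} (diagonal extraction). Enumerate $\mathbb{Q}$ as $(q_j)_j$. Because $F_{X^m}(q)\in[0,1]$, a Cantor diagonal argument gives $(m_k)_k$ such that $F_{X^{m_k}}(q)\to G(q)$ for every $q\in\mathbb{Q}$. The limit $G$ is nondecreasing, since it is a pointwise limit of nondecreasing functions.

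\textbf{Step 2} (building the limit CDF). This step needs the tail bounds, and it is the only place where the argument differs from lemma~\ref{conva.e.subseq}. For $q<0$ we have $F_{X^m}(q)=P(X^m\le q)\le f^-(q)$. Since $f^-\in\mathcal{B}_L$, $f^-(q)\to0$ as $q\to-\infty$, so $G(q)\to0$. For $q>0$ we have $1-F_{X^m}(q)=P(X^m>q)\le P(X^m\ge q)\le f^+(q)$, and $f^+(q)\to 0$ as $q\to\infty$, so $G(q)\to1$. These bounds are uniform in $m$, which is the tightness needed to rule out mass escaping to $\pm\infty$. Now set $F(t):=\lim_{q\searrow t}G(q)$. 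The argument of lemma~\ref{conva.e.subseq} shows word for word that $F$ is a CDF, so some real r.v.\ has CDF $F$. By lemma~\ref{existenceneat} we may take this r.v.\ to be a neat $X$.

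\textbf{Step 3} (convergence). The sandwich argument \eqref{countabledist1} of lemma~\ref{conva.e.subseq} shows $F_{X^{m_k}}(t)\to F(t)$ at every continuity point $t$ of $F$, so $X^{m_k}\xrightarrow{d}X$. Lemma~\ref{preservedconvind}~(iv), or more precisely parts (i) and (ii) applied to $f^+\in\mathcal{B}_R$ and $f^-\in\mathcal{B}_L$ on all of $\mathbb{R}$, then gives $P(X\ge t)\le f^+(t)$ and $P(X\le t)\le f^-(t)$ for all $t$. Hence $X\in\mathcal{T}_2(f)$. Finally, lemma~\ref{neatnessconvergence} upgrades the convergence in distribution to $X^{m_k}\xrightarrow{a.e.}X$.

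The main obstacle is Step 2, namely checking that the two one-sided tails give enough tightness that no mass escapes to $\pm\infty$. As shown above, this follows directly from $f^-(t)\to0$ as $t\to-\infty$ and $f^+(t)\to0$ as $t\to\infty$, which are built into the definitions of $\mathcal{B}_L$ and $\mathcal{B}_R$. Everything else is routine transcription.
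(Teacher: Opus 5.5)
Your proof is correct, but it is organized differently from the paper's.

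The paper does not rerun the extraction argument. Instead it dominates the two one-sided tails by the single absolute tail $h(t):=\min\{1,f^+(t)+f^-(-t)\}$ for $t\ge 0$. It checks that $h\in\mathcal{B}$: $h(0)=1$ comes from $f^+(0)=1$, and left continuity of $t\mapsto f^-(-t)$ comes from right continuity of $f^-$. Since $P(|X^m|\ge t)\le P(X^m\ge t)+P(X^m\le -t)$, it follows that $(X^m)_m\subset\mathcal{T}(h)$. The paper then applies lemma~\ref{conva.e.subseq} as a black box to get $X^{m_k}\xrightarrow{a.e.}X\in\mathcal{T}(h)$. Finally it uses lemma~\ref{preservedconvind} to recover the sharper membership $X\in\mathcal{T}_2(f)$.

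You instead transcribe the proof of lemma~\ref{conva.e.subseq} and feed the one-sided bounds directly into the step where the limit CDF is shown to tend to $0$ and $1$. The two routes trade off as follows:
\begin{itemize}
\item The paper's reduction is shorter and reuses earlier work, at the cost of introducing an auxiliary tail and verifying its regularity.
\item Your version avoids $h$ altogether. It also isolates uniform tightness at both $-\infty$ and $+\infty$ as the only real input, which is exactly what a one-sided family such as $\mathcal{T}_R(f)$ lacks.
\end{itemize}

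One small point in your favor: applying parts (i) and (ii) of lemma~\ref{preservedconvind} on all of $\mathbb{R}$ is slightly cleaner than citing part (iv). Part (iv) is stated only for $t\ge 0$ and $t\le 0$ respectively; the remaining ranges are trivial because $f^+=1$ on $(-\infty,0]$ and $f^-=1$ on $[0,\infty)$.
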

   \begin{proof}
   Define $h(t) := \min \{1,f^+(t)+f^-(-t)\}, \forall t \in [0,\infty)$. $P(X^m \geq t) \leq f^+(t)$ and $P(X^m \leq t) \leq f^-(t)$ imply
   $$
   P(|X^m| \geq t) \leq h(t), \ \forall t \geq 0.
   $$
   $h \in \mathcal{B}$ because $h(0) = 1$, $h \searrow 0$, and the sum of two left continuous functions is left continuous ($f^+(t), f^-(-t)$ are left continuous by definition of $\mathscr{T}_2(f)$ \eqref{B2T2}). So 
   $(X^m)_m \subset \mathcal{T}(h)$. By lemma~\ref{conva.e.subseq}, $\exists$ a subsequence $X^{m_k}_k \xrightarrow[]{a.e.} X$ for some $X \in \mathcal{T}(h)$). By lemma~\ref{preservedconvind}, $X \in \mathcal{T}_2(f)$. So we are done.

\end{proof}

\chapter{Dependent/not necessarily ind.\ real random variables}

For $V \subset \mathbb{R}^n$ closed, the main interest in this chapter is finding $\sup P\big((X_1,...,X_n) \in V\big)$, where the supremum is taken over real r.v.'s 
$X_1,...,X_n$, allowed to be dependent, and absolute tail bounded (section~\ref{tailtypes}) by 
$f_1,...,f_n \in \mathcal{B}$ given by \eqref{absolutetails} respectively. We emphasize the supremum is being taken over r.v.'s allowed to be dependent, but not 
necessarily dependent. Thus it includes the cases where the $X_1,...,X_n$ are actually independent. Therefore this dependent case supremum is an upper bound for the 
ind.\ case supremum. So this chapter is applicable to the mutually ind.\ case, although this application will not give the sharpest tail bound for the ind.\ case. 
On the other hand, because the dependent case is easier to analyze, we are able to provide a formula which is a significant simplification of the dependent supremum.

\section{Notation and rephrasement using measures}\label{notationdep}
In this chapter we will often abbreviate 
\begin{equation}\label{absoluteabbreviationdep}
\{x \in \mathbb{R}^n \ \big| \ |\pi_i(x)| \geq t\} = \{x \in \mathbb{R}^n\}_{|x_i| \geq t}
\end{equation}
where $\pi_i$ denotes the projection on the $i$th component. Similarly, we will abbreviate 
\begin{equation}\label{rightabbreviationdep}
\{x \in \mathbb{R}^n \  | \ \pi_i(x) \geq t \} = \{x \in \mathbb{R}^n\}_{x_i \geq t}
\end{equation}

Recall for a measure $m$ on $\Omega_1$, and $h:\Omega_1 \rightarrow \Omega_2$ measurable, the pushforward 
measure $h_{\#}(m)$ on $\Omega_2$ is given by 
\begin{equation}\label{pushforwarddef}
h_{\#}(U) = m(h^{-1}(U)), \  \forall U \text{ in the } \sigma \text{-algebra of } \Omega_2.
\end{equation} 
Notice if $h = X$, then $m(X^{-1}(U)) = P(X \in U)$. 

Let $Q^1,...,Q^{2^n}$ be a listing of the closed quadrants of $\mathbb{R}^n$ with $Q^1 = [0,\infty)^n$.  Let 
\begin{equation}\label{Qdef}
   Q(x_1,...,x_n) := (|x_1|,...,|x_n|), \ \forall x \in \mathbb{R}^n.
\end{equation}
These notations will be used often in this chapter.

Consider $\sup P\big((X_1,...,X_n) \in V\big)$, where the supremum is taken over real r.v.'s $X_1,...,X_n$, allowed to be dependent, and absolute tail bounded (section~\ref{tailtypes}) by 
$f_1,...,f_n \in \mathcal{B}$ \eqref{absolutetails} respectively. We will simplify the computation of this supremum in a few ways. First, for the sake of making the proofs easier, we will rephrase the supremum in terms of measures (this is easy to reverse\footnote{One can let $X$ be the identity function on $\mathbb{R}^n$.}); 
in particular, the collection of measures the r.v.'s induce on $\mathbb{R}^n$ via pushforward.  This is done now. Define 
\begin{equation}\label{Td}
   \begin{split}
   \mathcal{T}^d(f) := &\{m \ \text{a measure on the Borel} \ \sigma\text{-algebra of} \ \mathbb{R}^n \ \text{such that} \\  &m \big(\{x \in \mathbb{R}^n\}_{|x_i| \geq t}\big) \leq f_i(t), \ \forall t \geq 0, \ i = 1,...,n \}, 
   \ \forall f \in \mathcal{B}^n.
   \end{split}
\end{equation}
where $f=(f_1,...,f_n)$ is notation we will use. 

\begin{lemma}
For all $U \subset \mathbb{R}^n$ measurable and $f \in \mathcal{B}^n$, 
\begin{equation}\label{rephrasementwithmeasureseq}
\sup_{\substack{ {\rm dep. \ } X_i, {\rm \ absolute \ tail} \\ {\rm bounded \ by }f_i, \ i = 1,...,n}} P(X \in U) = \sup_{m \in \mathcal{T}^d(f)} m(U).
\end{equation}
\end{lemma}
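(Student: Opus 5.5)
We prove \eqref{rephrasementwithmeasureseq} by showing each side is at least the other, using the pushforward \eqref{pushforwarddef} in one direction and the identity random variable in the other.

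\textbf{LHS $\leq$ RHS.} Fix real r.v.'s $X_1,\dots,X_n$ on a common probability space $(\Omega,\mathcal{F},P)$, possibly dependent, with $P(|X_i|\geq t)\leq f_i(t)$ for all $t\geq 0$. Write $X=(X_1,\dots,X_n)$, which is Borel measurable into $\mathbb{R}^n$. Set $m:=X_{\#}(P)$, a Borel measure on $\mathbb{R}^n$. For each $i$ and $t\geq 0$ we have $X^{-1}\big(\{x\in\mathbb{R}^n\}_{|x_i|\geq t}\big)=\{|X_i|\geq t\}$, so
$$
m\big(\{x \in \mathbb{R}^n\}_{|x_i| \geq t}\big)=P(|X_i|\geq t)\leq f_i(t).
$$
Hence $m\in\mathcal{T}^d(f)$ by \eqref{Td}, and $P(X\in U)=m(U)$. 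Taking the supremum over all such $X$ gives LHS $\leq$ RHS.

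\textbf{RHS $\leq$ LHS.} Fix $m\in\mathcal{T}^d(f)$. First we check that $m$ is a probability measure. Since $f_i(0)=1$ by \eqref{absolutetails}, the case $t=0$ gives $m(\mathbb{R}^n)=m\big(\{x\}_{|x_i|\geq 0}\big)\leq 1$. However, \eqref{Td} only says ``a measure'', so $m(\mathbb{R}^n)$ may be strictly less than $1$. This is the one point needing care. If $m(\mathbb{R}^n)=c<1$, we add mass $1-c$ at the origin: put $m':=m+(1-c)\delta_0$. For $t>0$ the sets $\{x\}_{|x_i|\geq t}$ do not contain $0$, so $m'$ agrees with $m$ on them and the tail conditions still hold. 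At $t=0$ we get $m'(\mathbb{R}^n)=1=f_i(0)$. So $m'\in\mathcal{T}^d(f)$, $m'$ is a probability measure, and $m'(U)\geq m(U)$. It therefore suffices to treat probability measures.

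Now take $\Omega=\mathbb{R}^n$ with its Borel $\sigma$-algebra and probability measure $m'$, and let $X$ be the identity map, with components $X_i=\pi_i$. Each $X_i$ is a real r.v., and
$$
P(|X_i|\geq t)=m'\big(\{x\}_{|x_i|\geq t}\big)\leq f_i(t).
$$
So $X_1,\dots,X_n$ are admissible (possibly dependent) r.v.'s in the LHS, with $P(X\in U)=m'(U)\geq m(U)$. Taking the supremum over $m$ gives RHS $\leq$ LHS.

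Both steps are routine. The only subtle point is the normalization issue in the second step, which the point-mass-at-origin fix handles. If the intended reading of \eqref{Td} is that $m$ is a probability measure, this step is unnecessary.
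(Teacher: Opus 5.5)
Your proof is correct and follows the paper's argument: the pushforward $X_\#$ gives LHS $\leq$ RHS, and for the reverse you add mass $1-m(\mathbb{R}^n)$ at the origin and take the identity random variable on $\mathbb{R}^n$. The only difference is that you use $m'(U)\geq m(U)$ directly, which avoids the paper's case split on whether $0\in U$.
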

\begin{proof} 
LHS, RHS will refer to \eqref{rephrasementwithmeasureseq}. 

To show LHS $\leq$ RHS: suppose $X = (X_1,...,X_n)$ satisfies the conditions of the LHS. $X$ induces a pushforward measure $m = X_\#(\mu)$ on $\mathbb{R}^n$, where $\mu$ is the measure on the domain of $X$, such that 
\begin{equation}\label{deptails}
m\big(\{x \in \mathbb{R}^n\}_{|x_i| \geq t} \big) \stackrel{\eqref{pushforwarddef}}{=} P\big((X_1,...,X_n) \in \{x \in \mathbb{R}^n\}_{|x_i| \geq t}\big), \ \forall t \geq 0, \ i = 1,...,n. 
\end{equation}
Now, $(X_1,...,X_n) \in \{x \in \mathbb{R}^n\}_{|x_i| \geq t}$ implies $|X_i| \geq t$. However, by hypothesis, $P(|X_i| \geq t) \leq f_i(t), \ \forall t \geq 0$. In particular 
$P\big((X_1,...,X_n) \in \{x \in \mathbb{R}^n\}_{|x_i| \geq t}\big) \leq f_i(t)$. Thus
\begin{equation}\label{deptails} 
   m\big(\{x \in \mathbb{R}^n\}_{|x_i| \geq t} \big) \leq f_i(t), \ \forall t \geq 0, \ i=1,...,n.
\end{equation} 
That is, $m \in \mathcal{T}^d(f)$. $P(X \in U) = m(U)$ trivially. It follows LHS $\leq$ RHS. 

We will now show LHS $\geq$ RHS. Suppose $m \in \mathcal{T}^d(f)$ (that is, $m$ satisfies \eqref{deptails}). Because $f_i(0) = 1$ \eqref{absolutetails} and $\{ x \in \mathbb{R}^n\}_{|x_i| \geq 0} = \mathbb{R}^n$, one finds 
$m(\mathbb{R}^n) \leq 1$. Introduce the measure $m_0$ on $\mathbb{R}^n$ such that for measurable $A \subset \mathbb{R}^n$, 
$$
m_0(A) = 
\begin{cases} 
m(A) & \text{if } 0 \notin A \\
m(A) + 1-m(\mathbb{R}^n) & \text{if } 0 \in A
\end{cases} 
$$
This way $m_0(\mathbb{R}^n) = 1$. Furthermore, $m_0$ still satisfies \eqref{deptails}, i.e.\ $m_0 \in \mathcal{T}^d(f)$.

Consider the r.v.\ $X = (X_1,...,X_n): x \mapsto x, \ \forall x \in \mathbb{R}^n$, such that the domain has measure $m_0$. Note $X_i = \pi_i \circ X$ is the identity function on $\mathbb{R}$. Furthermore, $\forall t \geq 0$,
\begin{align*} 
P(|\pi_i \circ X| \geq t) &= P\big(X \in \{x \in \mathbb{R}^n\}_{|x_i| \geq t}\big) \\ 
                          &= m_0\big(\{x \in \mathbb{R}^n\}_{|x_i| \geq t}\big) \\ 
                          &\stackrel{m_0 \in \mathcal{T}^d(f)}{\leq} f_i(t).
\end{align*} 
That is, $X$ satisfies the conditions of the LHS. Now, if $0 \in U$ notice the r.v.\ $Y: \Omega \rightarrow \mathbb{R}^n$ such that $P(Y \in A) = 1$ if $0 \in A$ and $P(Y \in A) = 0$ if $0 \notin A$ for every measurable $A \subset \mathbb{R}^n$ 
trivially satisfies the conditions of the LHS and has $P(Y \in U) = 1$. Thus LHS $\geq 1$. Clearly LHS $\leq 1$ so LHS $= 1$. Similarly, the measure $\mu$ on $\mathbb{R}^n$ such that $\mu(A) = 1$ if $0 \in A$ and $\mu(A) = 0$ if $0 \notin A$ 
satisfies the conditions of the RHS and has $\mu(U) = 1$. So RHS $=1$ too. So LHS $=$ RHS.  

If instead $0 \notin U$, then 
$$
P(X \in U) = m_0(X^{-1}(U)) \stackrel{(1)}{=} m_0(U) \stackrel{(2)}{=} m(U)
$$
where $(1)$ is since $X$ is the identity function on $\mathbb{R}^n$ and $(2)$ since $m_0(V) = m(V)$ for all sets not containing $0$. This shows LHS $\geq$ RHS. Therefore \eqref{rephrasementwithmeasureseq} holds.
\end{proof}

\eqref{rephrasementwithmeasureseq} will be simplified in two more ways in this chapter. First we will show for all $V \subset \mathbb{R}^n$ closed, that  
$$
\sup_{m \in \mathcal{T}^d(f)} m(V) = \sup_{m \in \mathcal{T}^d(f)} m(Q(V)), \ \forall  V \subset \mathbb{R}^n \text{ closed.}
$$
where $Q:(x_1,...,x_n) \mapsto (|x_1|,...,|x_n|)$ is from \eqref{Qdef}. Then, for any closed subset $V \subset [0,\infty)^n$, we will define the \textit{southwest boundary} $\partial_{SW}V$ of $V$ which, roughly speaking, 
consists of the points of $\partial V$ closest to the origin (see \eqref{southwestboundaryeq}). We will then show 
$$
\sup_{m \in \mathcal{T}^d(f)} m(V) = \sup_{m \in \mathcal{T}^d(f)} m\big(\partial_{SW} V\big)
$$
Successively applying these simplifications will give
\begin{equation}\label{SWQ(V)WLOG}
\sup_{m \in \mathcal{T}^d(f)} m(V) = \sup_{m \in \mathcal{T}^d(f)} m\big(\partial_{SW} Q(V)\big), \ \forall V \subset \mathbb{R}^n \text{ closed.} 
\end{equation}
\section{Without loss of generality $V \subset [0,\infty)^n$}

Recall $Q^1,...,Q^{2^n}$ is a listing of the (closed) quadrants of $\mathbb{R}^n$ with $Q^1 = [0,\infty)^n$.
\begin{lemma}\label{Qlemma}
Assume notations \eqref{pushforwarddef} and \eqref{Td}. $Q$ given by \eqref{Qdef} is measurable, and if $m \in \mathcal{T}^d(f)$ then $Q_\# (m) \in \mathcal{T}^d(f)$. Furthermore, if $U \subset \mathbb{R}^n$ is measurable, then so is $Q(U)$. 
Lastly, if $V \subset \mathbb{R}^n$ is closed then $Q(V)$ is closed. 
\end{lemma}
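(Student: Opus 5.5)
The lemma has four claims, and I will take them in order. For measurability of $Q$, I will observe that $Q$ is continuous. Each component $x \mapsto |x_i|$ is continuous, and continuous maps between metric spaces are Borel measurable, since preimages of open sets are open. For the pushforward claim, I will fix $m \in \mathcal{T}^d(f)$ and compute, for $t \geq 0$,
\begin{equation*}
Q_\#(m)\big(\{x \in \mathbb{R}^n\}_{|x_i| \geq t}\big) = m\big(Q^{-1}(\{x \in \mathbb{R}^n\}_{|x_i| \geq t})\big) = m\big(\{x \in \mathbb{R}^n\}_{|x_i| \geq t}\big) \leq f_i(t).
\end{equation*}
The middle equality holds because $|\pi_i(Q(x))| = |x_i|$, so the preimage of $\{x\}_{|x_i|\geq t}$ under $Q$ is the set itself. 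Hence $Q_\#(m) \in \mathcal{T}^d(f)$.

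For measurability of $Q(U)$, I will decompose along the quadrants. Write $U = \bigcup_{j=1}^{2^n} (U \cap Q^j)$, so that $Q(U) = \bigcup_j Q(U \cap Q^j)$. On each closed quadrant $Q^j$, the map $Q$ restricts to $x \mapsto (\epsilon_1 x_1, \dots, \epsilon_n x_n)$ for a fixed sign vector $\epsilon \in \{\pm 1\}^n$. This is the restriction of a linear homeomorphism $L_\epsilon$ of $\mathbb{R}^n$ (an involution). So $Q(U \cap Q^j) = L_\epsilon(U \cap Q^j) = L_\epsilon^{-1}(U \cap Q^j)$. This is the preimage of a Borel set under a continuous map, hence Borel. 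A finite union of Borel sets is Borel.

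For the closedness claim, I will use the same decomposition. $V \cap Q^j$ is closed because $V$ and $Q^j$ are closed. Its image under the homeomorphism $L_\epsilon$ is therefore closed, and a finite union of closed sets is closed.

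I do not expect any step to be a real obstacle. The only point needing care is the measurability of $Q(U)$, because images of Borel sets under measurable maps need not be Borel in general. The quadrant decomposition sidesteps this by reducing to images under homeomorphisms. I must also check that the overlaps of the closed quadrants (points on coordinate hyperplanes) cause no trouble. They do not, because each piece is handled separately and then the pieces are unioned.
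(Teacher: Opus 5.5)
Your proof is correct. For the measurability of $Q$, the tail bound for $Q_\#(m)$, and the Borel measurability of $Q(U)$, you follow the paper. The paper also writes $Q(U)=\bigcup_j Q(U\cap Q^j)$ and observes that $Q$ acts on each closed quadrant as a rigid motion. Your remark that this reflection is an involution, so its image equals a preimage under a continuous map, is a slightly more explicit justification of the same point.

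The genuine difference is in the closedness of $Q(V)$. The paper argues with sequences: given $x^k\in Q(V)$ with $x^k\to y$, it picks preimages in $V$ and extracts a convergent subsequence with limit $y'\in V$, then concludes $Q(y')=y$. This tacitly uses that the preimages are bounded (because $\|Q(z)\|=\|z\|$), so Bolzano--Weierstrass applies, together with continuity of $Q$.

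You instead reuse the quadrant decomposition: $Q(V)$ is a finite union of images of the closed sets $V\cap Q^j$ under linear homeomorphisms, hence closed. Your route is shorter, needs no compactness, and handles both the measurability and the closedness claims with one mechanism. The paper's route does not use the piecewise-linear structure of $Q$. It works for any continuous map under which preimages of bounded sets are bounded, but that generality is not needed here.
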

\begin{proof}
$Q$ is continuous in each component, hence continuous, hence Borel measurable. Now, $\forall t \geq 0$, 
\begin{align*} 
Q_\# (m) \big(\{x \in \mathbb{R}^n\}_{|x_i| \geq t}\big) &\stackrel{\eqref{pushforwarddef}, \eqref{absoluteabbreviationdep}}{=} m\left(Q^{-1}\big(\{x \in \mathbb{R}^n\}_{|x_i| \geq t}\big) \right) \\ 
&\ \ \ \stackrel{(1)}{=} m\big(\{x \in \mathbb{R}^n\}_{|x_i| \geq t}\big) \\
&\ \ \ \stackrel{(2)}{\leq} f_i(t), \ i=1,...,n
\end{align*} 
where $(1)$ follows since $Q^{-1}\big( \{x \in \mathbb{R}^n\}_{|x_i| \geq t}\big) = \{x \in \mathbb{R}^n\}_{|x_i| \geq t}$ and $(2)$ holds by definition of $\mathcal{T}^d(f)$ \eqref{Td}. So $Q_\# \in \mathcal{T}^d(f)$. 

Now we will prove that $Q(U)$ is measurable. Notice 
\begin{equation}\label{Qismeasurableeq}
Q(U) = Q(U \cap Q^1) \cup \cdots \cup Q(U \cap Q^{2^n}).
\end{equation}
Furthermore, for fixed $k \in \{1,...,2^n\}$, all elements of $U \cap Q^k$ have the same sign of their $i$th component, for $i = 1,...,n$. In particular, by \eqref{Qdef}, $Q(U \cap Q^k)$ is a rigid motion of $U\cap Q^k$. Because rigid motions 
of Borel measurable sets are measurable, $Q(U \cap Q^k)$ is measurable. $k$ was arbitrary, so $Q(U)$ is measurable by \eqref{Qismeasurableeq}.

Now, let $V \subset \mathbb{R}^n$ be closed and $(x^k)_k \subset Q(V), \ x^k \rightarrow y$. Notice there exists a convergent subsequence in $Q^{-1}(\{x^k\}_k) \cap V$ which converges to some $y'$. By closure, $y' \in V$. Clearly $Q(y') = y$, so $y \in Q(V)$. So $Q(V)$ is closed.   
\end{proof}

The next lemma is known facts of measure theory and will help us manipulate measures. 
\begin{lemma}\label{addmeasures}
Let $m_1,m_2$ be measures on $\Omega$ with $\sigma$-algebra $\mathcalorig{F}$. The following are also measures on $\Omega$. 
\begin{enumerate}[label=(\roman*)]
\item $m_1+m_2$ given by pointwise addition (of sets).
\item For $\alpha \geq 0$, $\alpha m_1$ given by $(\alpha m_1)(A) = \alpha m_1(A)$.
\item For $m_1,m_2$ finite and $\forall U \in \mathcalorig{F}$, $m_1(U) \geq m_2(U)$, $m_1 - m_2$ given by pointwise subtraction.
\item For $U \in \mathcalorig{F}$, the restriction of $m_k$ to $U$. That is, $m_k\big|_U(A) := m_k(A \cap U), \forall A \in \mathcalorig{F}$.
\end{enumerate}
\end{lemma}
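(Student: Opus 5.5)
Each of the four claims can be checked directly from the axioms of a measure: nonnegativity, $m(\emptyset)=0$, and countable additivity on disjoint families in $\mathcalorig{F}$. Throughout, fix a pairwise disjoint sequence $(A_j)_j \subset \mathcalorig{F}$ with $A = \bigcup_j A_j$.

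For (i), $(m_1+m_2)(A) = m_1(A)+m_2(A)$. Each term equals a series of nonnegative terms, $\sum_j m_1(A_j)$ and $\sum_j m_2(A_j)$. Series of nonnegative terms in $[0,\infty]$ may be added termwise, so the sum is $\sum_j (m_1+m_2)(A_j)$. Nonnegativity and $(m_1+m_2)(\emptyset)=0$ are immediate.

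For (ii), we have $\alpha\, m_1(A) = \alpha \sum_j m_1(A_j) = \sum_j \alpha\, m_1(A_j)$. This uses the convention $0\cdot\infty = 0$ when $\alpha = 0$, and otherwise that multiplication by a positive constant commutes with sums of nonnegative series.

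For (iii), finiteness of $m_1$ and $m_2$ makes $m_1(U)-m_2(U)$ well defined and real. It is nonnegative by the hypothesis $m_1 \ge m_2$ setwise, and it vanishes at $\emptyset$. Both $\sum_j m_1(A_j)$ and $\sum_j m_2(A_j)$ converge, since their sums are at most $m_k(\Omega)<\infty$. Convergent real series may be subtracted termwise, which gives $\sum_j (m_1-m_2)(A_j)$. This is the only step needing care, because without finiteness one could meet $\infty-\infty$. That is exactly why the hypothesis is imposed, so it is the main point to flag rather than a genuine obstacle.

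For (iv), the sets $A_j\cap U$ lie in $\mathcalorig{F}$ because $U \in \mathcalorig{F}$, they are pairwise disjoint, and their union is $A\cap U$. Hence $m_k|_U(A) = m_k(A\cap U) = \sum_j m_k(A_j\cap U) = \sum_j m_k|_U(A_j)$. Nonnegativity and the value at $\emptyset$ are clear.
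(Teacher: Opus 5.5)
Your proof is correct and takes the same route as the paper: you verify nonnegativity, $m(\emptyset)=0$ and $\sigma$-additivity directly, and your computation for (iv) matches the paper's line for line. The only difference is that you also write out (i)--(iii), which the paper leaves as routine; your use of the $0\cdot\infty=0$ convention in (ii) and of finiteness to avoid $\infty-\infty$ in (iii) is exactly right.
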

\begin{proof}
Checking $\sigma$-additivity is elementary and very routine, albeit tedious. The other conditions are clear. We will 
verify $\sigma$-additivity for $4$. Let $U_1,U_2,... \in \mathcalorig{F}$ be pairwise disjoint. Then,
\begin{align*}
m_k\Big|_U \Big(\bigcup_k U_k \Big) &= m_k \bigg( \Big( \bigcup_k U_k\Big) \cap U \bigg) \\ 
&= m_k \bigg(\bigcup_k (U_k \cap U) \bigg) \\
&= \sum_k m_k(U_k\cap U) \ \ \ \ \ \ \ \ \ \ \ \ \ \ \ (\sigma\text{-additivity of } m_k)\\
&= \sum_k m_k\big|_U (U_k).
\end{align*}
\end{proof}
\begin{lemma}\label{l4.a}
   Let $f \in \mathcal{B}^n$. For $U \subset \mathbb{R}^n$ measurable, 
   \begin{equation}\label{Q(U)WLOG}
   \sup_{m \in \mathcal{T}^d(f)} m(U)  = \sup_{m \in \mathcal{T}^d(f)} m(Q(U)),
   \end{equation}
where $Q$ is defined by \eqref{Qdef}.
\end{lemma}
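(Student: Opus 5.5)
We prove the two inequalities in \eqref{Q(U)WLOG} separately. Both rely on Lemma~\ref{Qlemma}: $Q$ is measurable, $Q(U)$ is measurable, and $Q_\#$ maps $\mathcal{T}^d(f)$ into itself. The key point is that the constraints in \eqref{Td} only involve the sets $\{x \in \mathbb{R}^n\}_{|x_i| \geq t}$, and these are invariant under every reflection of coordinates.

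\textbf{Step 1 ($\leq$).} Fix $m \in \mathcal{T}^d(f)$. Since $U \subset Q^{-1}(Q(U))$,
$$
m(U) \leq m\big(Q^{-1}(Q(U))\big) = Q_\#(m)\big(Q(U)\big).
$$
By Lemma~\ref{Qlemma}, $Q_\#(m) \in \mathcal{T}^d(f)$. So $m(U) \leq \sup_{m' \in \mathcal{T}^d(f)} m'(Q(U))$, and taking the supremum over $m$ gives the inequality.

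\textbf{Step 2 ($\geq$).} Fix $m \in \mathcal{T}^d(f)$. We build $m' \in \mathcal{T}^d(f)$ with $m'(U) \geq m(Q(U))$. The idea is to move the mass of $m$ on $Q(U)$ back into $U$ along reflections.

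For each quadrant $Q^k$, let $R_k$ be the linear reflection (flipping the appropriate coordinate signs) with $R_k(Q^1) = Q^k$. Then $R_k$ restricted to $Q^k$ agrees with $Q$, so $R_k(U \cap Q^k) = Q(U \cap Q^k)$.

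The sets $Q(U \cap Q^k)$ may overlap, so we disjointify them. Set
$$
A_k := Q(U \cap Q^k) \setminus \bigcup_{j<k} Q(U \cap Q^j), \qquad k = 1,\dots,2^n.
$$
These are measurable by the argument of Lemma~\ref{Qlemma}, pairwise disjoint, contained in $Q^1$, and their union is $Q(U)$.

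Define
$$
m' := m\big|_{\mathbb{R}^n \setminus Q(U)} + \sum_{k} (R_k)_\#\big(m\big|_{A_k}\big).
$$
This is a measure by Lemma~\ref{addmeasures} (i) and (iv). Since $A_k \subset Q(U \cap Q^k)$, the set $R_k(A_k)$ lies in $U \cap Q^k \subset U$, because $R_k$ is an involution. Hence
$$
m'(U) \geq \sum_k m(A_k) = m(Q(U)).
$$

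\textbf{Step 3 (checking $m' \in \mathcal{T}^d(f)$).} This is the step requiring care, though it is short. Write $S_i(t) := \{x \in \mathbb{R}^n\}_{|x_i| \geq t}$. Each $R_k$ preserves $|x_i|$, so $R_k^{-1}(S_i(t)) = S_i(t)$. Therefore
$$
m'(S_i(t)) = m\big(S_i(t) \setminus Q(U)\big) + \sum_k m\big(S_i(t) \cap A_k\big) = m(S_i(t)) \leq f_i(t).
$$
The last equality holds because the $A_k$ partition $Q(U)$. So $m' \in \mathcal{T}^d(f)$, and taking the supremum over $m$ finishes the proof.

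The only genuine obstacle is the overlap issue. A naive choice such as $\sum_k (R_k)_\# m|_{Q^1}$ would multiply mass and violate the tails. The disjointification into the $A_k$, together with the tail sets being invariant under reflections, handles this.
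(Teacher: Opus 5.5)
Your proof is correct and is essentially the paper's argument: one inequality via the pushforward $Q_\#$, the other by splitting $Q(U)$ quadrant by quadrant into disjoint pieces and reflecting each piece back into $U$. Your $A_k$ coincide with the paper's $Q(U_k)$, your $\sum_k (R_k)_\#(m|_{A_k})$ is exactly its $m''$, and the tail sets $\{x\in\mathbb{R}^n\}_{|x_i|\geq t}$ are reflection-invariant; the only cosmetic difference is that you keep $m|_{\mathbb{R}^n\setminus Q(U)}$ instead of assuming WLOG that $m$ vanishes off $Q^1$, which gives you equality rather than inequality in the tail check.
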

\begin{proof}
LHS, RHS will refer to \eqref{Q(U)WLOG}. For arbitrary $m \in \mathcal{T}^d(f)$, we will construct measures $m', m''$ such that

\begin{align}
   m'(Q(U)) &= m(U) \nonumber \\ 
   m''(U) &= m(Q(U)) \label{m''(U)=m(Q(U))}\\
   m', m'' & \in \mathcal{T}^d(f).     \label{m''tailbounded}
\end{align}
This is sufficient to show LHS $=$ RHS, since if $(m_k)_k \subset \mathcal{T}^d(f)$ such that $m_k(U) \rightarrow $ LHS, then 
$$
\text{LHS }= \lim_{k\rightarrow \infty} m_k(U) = \lim_{k \rightarrow \infty} m_k'(Q(U)) \leq \text{ RHS}.
$$
Similarly, RHS $ \leq$ LHS via $m''$. 

To construct $m'$: Consider the measure $m' := Q_\#(m \big|_U)$ on $\mathbb{R}^n$ (lemma~\ref{addmeasures} and \eqref{pushforwarddef}) characterised by $Q_\#(m \big|_U)(A) = m(Q^{-1}(A) \cap U)$ for $A$ measurable. Now,
\begin{align*}
   \forall t \geq 0, \ Q_\#\left(m_k \big|_U\right)\big(\{x \in \mathbb{R}^n\}_{x_i \geq t}\big) &= m_k \left(Q^{-1}\big(\{x \in \mathbb{R}^n\}_{x_i \geq t} \cap U\big) \right) \\
   & \leq m_k\left(Q^{-1}\big(\{x \in \mathbb{R}^n\}_{x_i \geq t}\big) \right) \\ 
   &\stackrel{\eqref{Qdef}}{=} m_k \big(\{x \in \mathbb{R}^n\}_{|x_i| \geq t}\big) \stackrel{\eqref{deptails}}{\leq} f_i(t).
   \end{align*}
So $m' \in \mathcal{T}^d(f)$. Furthermore, 
\begin{align*} 
   m'(Q(U)) &= Q_\#(m\big|_U)(Q(U)) \\ 
   &= m\big|_U(Q^{-1}(Q(U)) \\
   &= m (Q^{-1}(Q(U)) \cap U) \\ 
   &= m(U)
\end{align*}
as desired.

To construct $m''$: Still let $Q^1,...,Q^{2^n}$ be an enumeration of the (closed) quadrants of $\mathbb{R}^n$ with $Q^1 = [0,\infty)^n$. Since 
$Q(U) \subset Q^1$, and we want to construct $m'' \in \mathcal{T}^d(f)$ such that $m(Q(U)) = m''(U)$, we may assume WLOG
\begin{equation}\label{WLOGm=0}
m(\mathbb{R}^n \backslash Q^1) = 0.
\end{equation}
For $j=1,...,2^n$ define $Q_j: [0,\infty)^n \rightarrow \mathbb{R}^n$ of the form $(x_1,...,x_n) \mapsto (\pm x_1,..., \pm x_n)$ for some fixed choice of signs as follows.
\begin{align}\label{eq.Q_j}
   &Q_j: [0,\infty)^n \rightarrow \mathbb{R}^n \\ 
      &Q_j: x \mapsto \text{unique } y \in Q^j \text{ with } Q(y) = x.
\end{align}
Clearly each $Q_j$ is continuous, hence measurable. Notice that
\begin{equation}\label{eq.Q_j2}
Q_j^{-1} = Q \big|_{Q^j}.
\end{equation}

Introduce
\begin{align*}
   U_1 &:= U \cap Q^1 \\
   U_2 &:= (U \cap Q^2) \backslash Q^{-1}(Q(U_1)) \\
   U_3 &:= (U \cap Q^3) \backslash ( Q^{-1}(Q(U_1)) \cup Q^{-1}(Q(U_2)))\\ 
   \cdots& \\\
   U_{2^n} &:= (U \cap Q^{2^n}) \backslash (\cup_{i=1}^{2^n-1} Q^{-1}(Q(U_i))).
\end{align*}
We will show $U_j$ have the properties
\begin{enumerate}[label=(\roman*)]
   \item $U_j \subset U \cap Q^j$
   \item $Q(U_j) = Q_j^{-1}(U_j)$ are pairwise disjoint
   \item $U_j$ are measurable 
   \item $Q\big(\bigcup_{j=1}^{2^n} U_j \big) = Q(U)$
\end{enumerate}
Recall \eqref{WLOGm=0}. By (i), which holds trivially, the ``reversal'' of $m$ into only the $j$th quadrant can be defined 
\begin{equation}\label{m^{(j)}def}
m^{(j)} := \big({Q_j}_\#(m) \big) \big|_{U_j}.
\end{equation}
That is, $m^{(j)}(A) = m(Q_j^{-1}(A \cap U_j))$. Next define, using lemma~\ref{addmeasures} (i) repeatedly ($2^n-1$ times to be exact),
\begin{equation}\label{m''def}
m'' := m^{(1)} + \cdots + m^{(2^n)}.
\end{equation}
We have assumed associativity in this operation of adding measures (associativity should not be difficult to prove). That is, $(m_1+m_2)+m_3 = m_1+(m_2+m_3)$ 
for three measures $m_1,m_2,m_3$. We will now prove (ii)-(iv). Afterwards we will prove this $m''$ does indeed satisfy \eqref{m''(U)=m(Q(U))} and \eqref{m''tailbounded}.

(ii): By \eqref{eq.Q_j2} and $U_j \subset Q^j$, $Q_j^{-1}(U_j) = Q\big|_{Q^j} (U_j) = Q(U_j)$. Now, if $1 \leq i < j \leq 2^n$, then $Q^{-1}(Q(U_i)) \cap U_j = \emptyset$ by construction. This says $\nexists x \in U_j$ such that 
$Q(x) \in Q(U_i)$, i.e. $Q(U_i)$ and $Q(U_j)$ are disjoint.

(iii): $U\cap Q^j$ are clearly measurable. By induction assume $U_1,...,U_m$ are measurable where $1 < m < 2^n$. By lemma~\ref{Qlemma}, $Q^{-1}(Q(U_1)) \cup \cdots \cup Q^{-1}(Q(U_m))$ is too. 
It follows $U_{m+1}$ is measurable. 

(iv): Since $\bigcup_{j=1}^{2^n} U_j \subset U$ trivially, this reduces to showing 
\begin{equation}\label{xsi4}
\forall x \in U, \ \exists x' \in \bigcup_{j=1}^{2^n} U_j, \text{ such that } Q(x') = Q(x). 
\end{equation}
Fix $x \in U$. Necessarily $Q^{-1}(Q(x)) \cap (U\cap Q^j) \neq \emptyset$ for some $j$, otherwise $Q^{-1}(Q(x)) \cap U = \emptyset$ because $Q^1 \cup \cdots \cup Q^n = \mathbb{R}^n$. A contradiction. 
Denote
$$ 
j_x \text{ the smallest $j$ such that } Q^{-1}(Q(x)) \cap (U\cap Q^j) \neq \emptyset.
$$
Let
\begin{equation}\label{xsi5}
x' \in Q^{-1}(Q(x))\cap(U \cap Q^{j_x}).
\end{equation}
Notice $Q(x') = Q(x)$. Therefore, to prove \eqref{xsi4}, we need show $x' \in U_j$ for some $j$. 

Case $j_x = 1$: $U_1 \stackrel{\text{(def)}}{=} U \cap Q^1$, so by \eqref{xsi5} $x' \in U_1$.

Case $j_x \neq 1$: It is sufficient to show 
\begin{equation}\label{xsi6}
x' \notin \bigcup_{j=1}^{j_x-1} Q^{-1}(Q(U_j))
\end{equation}
since then, by \eqref{xsi5}, $x' \in (U \cap Q^{j_x}) \backslash \bigcup_{j=1}^{j_x-1} Q^{-1}(Q(U_j)) \stackrel{\text{(def)}}= U_{j_x}$. 

Suppose by contradiction \eqref{xsi6} is false. Then $\exists i<j_x$ such that $x' \in Q^{-1}(Q(U_i))$. But then, $Q(x) = Q(x') \in Q(U_i) \implies \exists x'' \in U_i \stackrel{\text{(i)}}{\subset} U \cap Q^i$ 
such that $Q(x'') = Q(x)$. That is, 
$$
x'' \in Q^{-1}(Q(x)) \cap (U \cap Q^i), \ i < j_x
$$
which contradicts the minimality of $j_x$. This shows \eqref{xsi6}, hence \eqref{xsi4}.

This concludes proving (i)-(iv). We will now prove \eqref{m''(U)=m(Q(U))} and \eqref{m''tailbounded}, which will prove the lemma.

To show $m''(U) = m(Q(U))$ \eqref{m''(U)=m(Q(U))}: Here
\begin{align*}
m''(U) &\stackrel{\eqref{m''def}}{=} \sum_{j=1}^{2^n} m^{(j)}(U) \\ 
&\stackrel{\eqref{m^{(j)}def}}{=} \sum_{j=1}^{2^n} m(Q_j^{-1}(U\cap U_j)) \\ 
&\  \stackrel{\text{(i)}}{=} \sum_{j=1}^{2^n} m(Q_j^{-1}(U_j)).
\end{align*}
Furthermore, 
\begin{align*} 
   \sum_{j=1}^{2^n} m(Q_j^{-1}(U_j)) &\stackrel{\sigma\text{-add.\ and (ii)}} {=} m\Big(\bigcup_{j=1}^{2^n} Q(U_j)\Big) \\ 
   &\ \ \ \ \ \ = m\Big( Q \Big( \bigcup_{j=1}^{2^n} U_j\Big) \Big) \\ 
   &\ \ \ \ \ \ \stackrel{\text{(iv)}}{=} m(Q(U))
\end{align*}
as desired.

To show $m'' \in \mathcal{T}^d(f)$ \eqref{m''tailbounded}: We must show $m''$ satisfies \eqref{Td}. $\forall t \geq 0, 1 \leq k \leq n$, one has the sequence
\begin{align*}
m''\big(\{x \in \mathbb{R}^n\}_{|x_k| \geq t}\big) &\stackrel{\eqref{m''def}}{=} \sum_{j=1}^{2^n} m^{(j)} \big(\{x \in \mathbb{R}^n\}_{|x_k| \geq t}\big) \\
&\stackrel{\eqref{m^{(j)}def}}{=} \sum_{j=1}^{2^n} m\Big(Q_j^{-1}\Big(\{x \in \mathbb{R}^n\}_{|x_k| \geq t} \cap U_j\Big) \Big)  \\
&\stackrel{\text{(i),\eqref{eq.Q_j2}}}{=} \sum_{j=1}^{2^n} m\Big(Q\Big(\{x \in \mathbb{R}^n\}_{|x_k| \geq t} \cap U_j\Big) \Big) \\
&\stackrel{\sigma \text{ add, (ii)}}{=} m \Big(\bigcup_{j=1}^{2^n} Q\Big(\{x \in \mathbb{R}^n\}_{|x_k| \geq t} \cap U_j\Big) \Big) \\
& = m \Big(Q\Big(\{x \in \mathbb{R}^n\}_{|x_k| \geq t} \cap \bigcup_{j=1}^{2^n} U_j\Big) \Big) \\
& \ \leq m\Big(Q\Big(\{x \in \mathbb{R}^n\}_{|x_k| \geq t}\Big)\Big) \leq m \big(\{x \in \mathbb{R}^n\}_{|x_k| \geq t}\big) \leq f_k(t)
\end{align*}
where the last line used monotonicity of $m$ twice (notice $Q\big(\{x \in \mathbb{R}^n\}_{|x_k| \geq t}\big) \subset \{x \in \mathbb{R}^n\}_{|x_k| \geq t}$), and $m \in \mathcal{T}^d(f)$ \eqref{Td} by hypothesis. 
$k$ was arbitrary, so $m'' \in \mathcal{T}^d(f)$ as desired.
\end{proof}
   
\section{Southwest boundary of $V \subset [0,\infty)^n$}
In this section and the next $V$ will be a closed subset of $[0,\infty)^n$. Introduce $\leq'$ on $[0,\infty)^n$ such that 
\begin{equation}\label{leq'def}
\forall x =(x_1,...,x_n), y=(y_1,...,y_n) \in [0,\infty)^n, \ x\leq ' y \iff x_i \leq y_i, \ i =1,...,n.
\end{equation}
\index{Southwest boundary $\partial_{SW}V$}
\begin{definition}[Southwest boundary]
For $V \subset [0,\infty)^n$ closed, the southwest boundary of $V$, $\partial_{SW} V$, is the set of minimal elements of $V$ using $\leq'$. i.e. 
\begin{equation}\label{southwestboundaryeq}
\partial_{SW} V := \big\{x \in V \ \big| \ \nexists x' \in V, x' <' x \big\}.
\end{equation}
\end{definition}

\begin{lemma}\label{southwestboundaryproperties}
Let $V \subset [0,\infty)^n$ be closed. $\partial_{SW} V$ satisfies the following. 
\begin{enumerate}[label=(\roman*)]
\item $\forall x \in V, \ \exists x' \in \partial_{SW}V$ such that $x' \leq' x$. 
\item $\partial_{SW}V \subset \partial V$ 
\item $\partial_{SW}V$ is Borel measurable
\end{enumerate}
\end{lemma}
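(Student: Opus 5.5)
For (i), fix $x \in V$ and consider the set $V_x := V \cap \{y \in [0,\infty)^n \mid y \leq' x\}$. It is nonempty (it contains $x$), closed (an intersection of closed sets) and bounded (contained in $\prod_i [0,x_i]$), hence compact. The continuous function $\sigma(y) := y_1+\cdots+y_n$ attains its minimum on $V_x$ at some $x'$. We claim $x' \in \partial_{SW}V$. Suppose instead some $x'' \in V$ has $x'' <' x'$, meaning $x'' \leq' x'$ and $x'' \neq x'$. Then $x'' \leq' x' \leq' x$, so $x'' \in V_x$, and $\sigma(x'') < \sigma(x')$ because at least one coordinate is strictly smaller. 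This contradicts minimality, and since $x' \leq' x$ by construction, (i) follows. The compactness of $V_x$ is where closedness of $V$ and the lower bound $0$ coming from $V \subset [0,\infty)^n$ are used.

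For (ii), take $x \in \partial_{SW}V$. Then $x \in V = \overline{V}$, so it suffices to show $x$ is not an interior point of $V$. The points $x - \epsilon e_1$ satisfy $x - \epsilon e_1 <' x$ for every $\epsilon > 0$. If $x_1 > 0$, then for small $\epsilon$ these points lie in $[0,\infty)^n$, so by minimality they are not in $V$; they converge to $x$. If $x_1 = 0$, then $x - \epsilon e_1 \notin [0,\infty)^n \supset V$ and still converges to $x$. In either case $x \in \partial V$.

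For (iii), which is the main obstacle, I would write
\[
V\setminus \partial_{SW}V=\{x\in V \mid \exists x'\in V,\ x'\leq' x,\ x'\neq x\}.
\]
This set is the projection onto the second factor of the closed set $\{(x',x)\in V\times V \mid x'\leq' x\}$ with the diagonal removed, which is not obviously Borel. To avoid projections, I would decompose it as the countable union over $k \in \mathbb{N}$ and $i = 1,\dots,n$ of the sets
\[
A_{k,i}:=\{x\in V \mid \exists x'\in V,\ x'\leq' x,\ x'_i\leq x_i-1/k\}.
\]
Each $A_{k,i}$ is closed. If $x^j \in A_{k,i}$ with witnesses $x'^j$ and $x^j \to x$, then the $x'^j$ lie in the bounded set $\{0 \leq' y \leq' \sup_j x^j\}$, so a subsequence converges to some $x'$. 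Since $V$ is closed, $x' \in V$, and the non-strict inequalities $x' \leq' x$ and $x'_i \leq x_i - 1/k$ pass to the limit. Hence $V \setminus \partial_{SW}V = \bigcup_{k,i} A_{k,i}$ is $F_\sigma$, and $\partial_{SW}V = V \setminus \bigcup_{k,i} A_{k,i}$ is Borel, indeed $G_\delta$ relative to the closed set $V$.
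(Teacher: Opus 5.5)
Your proof is correct. Part (ii) is the paper's argument: the paper shifts along the diagonal, $x-\epsilon(1,\dots,1)$, where you shift along $e_1$ with a case split on $x_1$.

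Part (i) takes a genuinely different route. The paper applies Zorn's lemma to $A=\{y\in V \mid y\leq' x\}$, bounding each chain below by its coordinatewise infimum. You observe that $A$ is compact and take a minimizer of $y_1+\cdots+y_n$. That minimizer is automatically $\leq'$-minimal, because any strict $<'$ decrease strictly lowers the coordinate sum. Your version avoids the axiom of choice. It also sidesteps a point the paper leaves unargued: that the coordinatewise infimum of a chain actually lies in $A$. This holds because a chain is totally ordered, so the infimum is a limit of chain elements, but the paper does not say so.

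Part (iii) rests on the same underlying idea as the paper but is carried out differently. The paper writes $\{(x,y)\in V\times V \mid x<'y\}$ as a closed set intersected with a relatively open set. It then cites two facts: open sets in a $\sigma$-compact Polish space are $\sigma$-compact, and continuous images of compact sets are compact. From these it concludes the projection is $\sigma$-compact and its complement in $V$ is $G_\delta$. Your sets $A_{k,i}$ are exactly the projections of the closed pieces $\{x'\leq' x,\ x'_i\leq x_i-1/k\}$ of that relation. Your subsequence argument, with witnesses trapped between $0$ and $\sup_j x^j$, is a hands-on proof that these projections are closed. So your argument is self-contained and shows explicitly where closedness of $V$ and the lower bound $0$ are used, at the cost of some length compared with citing the descriptive set theory facts.

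The projection you call ``not obviously Borel'' is in fact Borel for precisely the paper's reason, and your decomposition amounts to proving that directly.
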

\begin{proof}
(i): Let $A = \{y \in V \ | \ y \leq' x\}$. Because $x \in A, \ A$ is non-empty. Notice $A = V \cap \{y \in \mathbb{R}^n \ | \ y \leq' x\}$, which is the intersection of two closed sets. Hence $A$ is closed. 

First we will apply Zorn's lemma to show $A$ has a minimal element. Let $\{x^i\}_{i \in I}, \\ x^i \leq' x^j \iff i \leq j$ be a chain in $A$. Write $x^i = (x_1^i,...,x_n^i), \ \forall i \in I$. Then 
$$
y = (\inf_{i \in I} x_1^i,...,\inf_{i\in I} x_n^i) \in A
$$
by closedness of $A$ and since each component is bounded below by $0$. Because $y$ bounds $\{x^i\}_{i\in I}$, by Zorn's lemma $A$ has a minimal element $x' \leq' x$. Suppose, by contradiction, $x' \notin \partial_{SW} V$. Then 
$\exists x'' \in V, \ x'' \leq' x'$. But then $x'' \in A$, which contradicts the minimality of $x'$. So $x' \in \partial_{SW} V$ as desired.

(ii): $\forall x = (x_1,...,x_n) \in \partial_{SW} V, \ \forall \epsilon > 0$, one has $(x_1 - \epsilon,....,x_n - \epsilon) \notin V$ by \eqref{southwestboundaryeq}.  
$x \in V$ by definition. In particular $x \in V$. However $x$ is not in the interior of $V$ because as we pointed out the sequence $(x_1-1/m,...,x_n-1/m)$ from $m=1$ to $\infty$ is not in $V$, but converges to an element of $V$. Therefore $x \in \partial V$.  

(iii): This was proven on math stack exchange by Jonathan Schilhan and uses descriptive set theory. Their proof is repeated below.

$\partial_{SW} V$ is easily $G_\delta$, and thus Borel. Indeed, the set $C = \{ (x,y)\in V\times V : x <' y \}$ is $\sigma$-compact, since $$C = A \cap \bigcup_{i \leq n} \{ (x,y) \in V\times V : \pi_i(x) < \pi_i(y) \}.$$ 
where $A = \bigcap_{k=1}^n \{(x,y) : x,y \in [0,\infty)^n, \pi_k(x) \leq \pi_k(y)\}$. $A$ is the intersection of $n$ closed subsets, hence closed, and the set on the right is open (in $V\times V$) and thus $\sigma$-compact (in a Polish space open sets are $F_\sigma$, and if the space is $\sigma$-compact, then $F_\sigma$ sets are obviously 
also $\sigma$-compact). Thus its projection to the second coordinate $V' = \{ y \in V : \exists x \in V,  \ x <' y \}$ is also $\sigma$-compact (remember that the projection of a compact set is compact). Also 
the complement $\partial_{SW} V = V \setminus V'$ is $G_\delta$.

\end{proof}
\section{Retracting $V \subset [0,\infty)^n$ into the SW boundary}
\begin{definition}[Mass retracts]\label{massretractdef}
   A mass retract \index{Mass retract} is a measurable function $\gamma$ on $\mathbb{R}^n$ such that 
   \begin{align}
      0 \leq' \gamma(x) \leq' x, \ &\forall x \in [0,\infty)^n \label{massretracteq1} \\
      \gamma(x) = x, \  \ &\forall x \notin [0,\infty)^n. \label{massretracteq2}
   \end{align} 
   A mass retract $\gamma$ is said to be of $U$, for $U \subset \mathbb{R}^n$ measurable, iff $\gamma(U) \subset U$ and $\forall x \notin U, \ \gamma(x) = x$. 
\end{definition}
\begin{lemma}\label{massretractproperties}
   Mass retracts have the following properties. 
   \begin{enumerate}[label=(\roman*)]
   \item  If $\gamma$ is a mass retract of $U$, then $\forall$ measures $m, \gamma_\#(m)(U) = m(U)$.
   \item Preserve tail bounds: if $m \in \mathcal{T}^d(f)$, then $\gamma_\#(m) \in \mathcal{T}^d(f)$ too. 
   \item Closed under composition. Also, if $\gamma_1,\gamma_2$ are mass retracts of $U$, then so is $\gamma_2 \circ \gamma_1$.
   \end{enumerate}
\end{lemma}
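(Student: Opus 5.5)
The plan is to verify the three properties directly from Definition~\ref{massretractdef} and the pushforward formula \eqref{pushforwarddef}. The main tool is the set inclusion $\gamma^{-1}(A) \subset A$ for suitable sets $A$.

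For (i), let $\gamma$ be a mass retract of $U$. I claim $\gamma^{-1}(U) = U$, so that
\[
\gamma_\#(m)(U) = m(\gamma^{-1}(U)) = m(U).
\]
The inclusion $U \subset \gamma^{-1}(U)$ holds because $\gamma(U) \subset U$. Conversely, if $x \notin U$ then $\gamma(x) = x \notin U$, so $x \notin \gamma^{-1}(U)$. Thus $\gamma^{-1}(U) \subset U$.

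For (ii), fix $t \geq 0$ and $i$, and set $A_t := \{x \in \mathbb{R}^n\}_{|x_i| \geq t}$. It suffices to show $\gamma^{-1}(A_t) \subset A_t$, since then
\[
\gamma_\#(m)(A_t) = m(\gamma^{-1}(A_t)) \leq m(A_t) \leq f_i(t)
\]
by monotonicity of $m$ and \eqref{Td}. So suppose $|\gamma(x)_i| \geq t$, and split into two cases.
\begin{enumerate}[label=(\alph*)]
\item If $x \notin [0,\infty)^n$, then $\gamma(x) = x$ by \eqref{massretracteq2}, so $|x_i| \geq t$ trivially.
\item If $x \in [0,\infty)^n$, then \eqref{massretracteq1} gives $0 \leq \gamma(x)_i \leq x_i$, hence $|x_i| = x_i \geq \gamma(x)_i = |\gamma(x)_i| \geq t$.
\end{enumerate}
In both cases $x \in A_t$. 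The measurability of $\gamma$ guarantees that $\gamma_\#(m)$ is a measure. Its total mass is $m(\mathbb{R}^n)$, so it is still bounded by $f_i(0) = 1$.

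For (iii), the composition $\gamma_2 \circ \gamma_1$ is measurable, so only conditions \eqref{massretracteq1} and \eqref{massretracteq2} need checking. If $x \notin [0,\infty)^n$, then $\gamma_1(x) = x$ and then $\gamma_2(x) = x$. If $x \in [0,\infty)^n$, then $\gamma_1(x) \in [0,\infty)^n$ with $\gamma_1(x) \leq' x$. Hence $0 \leq' \gamma_2(\gamma_1(x)) \leq' \gamma_1(x) \leq' x$ by transitivity of $\leq'$.

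If moreover both $\gamma_1, \gamma_2$ are mass retracts of $U$, two more facts are needed:
\begin{itemize}
\item $\gamma_2(\gamma_1(U)) \subset \gamma_2(U) \subset U$;
\item for $x \notin U$, $\gamma_2(\gamma_1(x)) = \gamma_2(x) = x$.
\end{itemize}

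The only step needing any care is the case split in (ii), which is where condition \eqref{massretracteq2} earns its place. Everything else is routine bookkeeping.
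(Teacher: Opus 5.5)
Your proof is correct and takes essentially the same approach as the paper. In (ii) you show $\gamma^{-1}(\{|x_i|\ge t\})\subset\{|x_i|\ge t\}$ by a pointwise case split, while the paper splits the preimage over $Q^1=[0,\infty)^n$ and its complement and uses additivity, which amounts to the same inclusion. In (iii) you explicitly check that $\gamma_1(x)\in[0,\infty)^n$ before applying \eqref{massretracteq1} to $\gamma_2$, a detail the paper leaves implicit.
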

   \begin{proof}
   (i): $(\gamma_\#m)(U) \stackrel{\text(def.)}{=} m(\gamma^{-1}(U)) = m(U)$ where we know by definition of $\gamma$ being a mass retract of $U$ that 
   $\gamma^{-1}(U) = U$ (since $\gamma(U)\subset U$ and $\forall x \notin U, \gamma(x)=x)$.\\
   (ii): Write $Q^1 = [0,\infty)^n$. If $m\{|x_k| \geq t\} \leq f_k(t), \forall t \geq 0$, where $f_k \in \mathcal{B}$, then 
   \begin{align}
   {Q^1}^c \cap \gamma^{-1} \{|x_k|\geq t\} &\stackrel{\eqref{massretracteq2}}{=} \gamma^{-1}({Q^1}^c) \cap \gamma^{-1}\{|x_k|\geq t\} \nonumber \\ 
   & \ \ = \gamma^{-1}({Q^1}^c \cap \{|x_k| \geq t\}) \stackrel{\eqref{massretracteq2}}{=} {Q^1}^c \cap \{|x_k| \geq t\}. \label{l4.3e1}  
   \end{align}
   Also since $\gamma^{-1}(Q^1) = Q^1$ by \eqref{massretracteq1},
   \begin{align}
   Q^1 \cap \gamma^{-1}\{|x_k| \geq t \} &= \gamma^{-1}(Q^1) \cap \gamma^{-1}\{ |x_k| \geq t\} \nonumber \\
   &= \gamma^{-1}(Q^1 \cap \{|x_k| \geq t\}) \stackrel{(1)}{\subset} Q^1 \cap \{|x_k| \geq t\}. \label{l4.3e2}
   \end{align} 
   To show (1): It is enough to show if $x \notin Q^1 \cap \{|x_k| \geq t\}$ then $\gamma(x) \notin Q^1 \cap \{|x_k| \geq t\}$. 
   Either $x \in {Q^1}^c$, in which case $\gamma(x) = x \notin Q^1 \cap \{|x_k| \geq t\}$, 
   or $x \in \{x' \in Q^1 \ \big| \ |\pi_k(x')| < t\}$, in which case,
   $$
   0 \leq \pi_k \big (\gamma(x)\big) \stackrel{\eqref{massretracteq1}, \eqref{leq'def}}{\leq} \pi_k(x) < t.
   $$
   That is, $\gamma(x) \notin \{|x_k| \geq t\}_{x \in \mathbb{R}^n}$. This shows (1). By $\sigma$-additivity, \eqref{l4.3e1}, and \eqref{l4.3e2},
   \begin{align*}
   \gamma_\# (m)\{|x_k| \geq t\} &= m ( \gamma^{-1} \{|x_k| \geq t\}) \\
   &=m(Q^1 \cap \gamma^{-1}\{|x_k| \geq t\}) + m({Q^1}^c \cap \gamma^{-1}\{|x_k| \geq t\}) \\ 
   &\leq m(Q^1 \cap \{|x_k| \geq t\}) + m({Q^1}^c \cap \{|x_k| \geq t\}) \\
   &= m\{|x_k| \geq t\} \leq f_k(t)
   \end{align*}
   as desired.
   
   (iii): For $\gamma_1,\gamma_2$, we have $\gamma_2 \circ \gamma_1$ is measurable since a composition of measurable functions is measurable. Also 
   $(\gamma_2 \circ \gamma_1)(x) \leq' \gamma_1(x) \leq' x$.  If $\gamma_1,\gamma_2$ are retracts of $U$ then 
   $$
   (\gamma_2 \circ \gamma_1)(U) \subset \gamma_2(U) \subset U
   $$
   and $\forall x \notin U, \gamma_2(\gamma_1(x)) = \gamma_2(x) = x$. So $\gamma_2 \circ \gamma_1$ is a retract of $U$.
\end{proof}

The circus of mass retracts has been to prove the following.
\begin{theorem} 
\begin{equation}\label{depsupcircus}
\sup_{m \in \mathcal{T}^d(f)} m(V)= \sup_{m \in \mathcal{T}^d(f)} m\big(\partial_{SW} V\big), \ \forall V \subset [0,\infty)^n \text{ closed}.
\end{equation}
\end{theorem}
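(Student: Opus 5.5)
The inequality $\geq$ is immediate. By Lemma~\ref{southwestboundaryproperties}(ii) we have $\partial_{SW}V \subset \partial V \subset V$, since $V$ is closed; by (iii) the set $\partial_{SW}V$ is Borel. Monotonicity then gives $m(\partial_{SW}V) \leq m(V)$ for every $m \in \mathcal{T}^d(f)$.

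For $\leq$, the plan is to construct a single mass retract $\gamma$ (Definition~\ref{massretractdef}) with $\gamma(V) \subset \partial_{SW}V$, $\gamma(x) = x$ off $V$, and $\gamma$ Borel measurable. Given $m \in \mathcal{T}^d(f)$, Lemma~\ref{massretractproperties}(ii) gives $\gamma_\#(m) \in \mathcal{T}^d(f)$. Since $\gamma^{-1}(\partial_{SW}V) \supset V$, we get $\gamma_\#(m)(\partial_{SW}V) \geq m(V)$, and taking suprema yields the result. Condition \eqref{massretracteq1} holds automatically from the construction below, and \eqref{massretracteq2} holds because $V \subset [0,\infty)^n$ and $\gamma$ is the identity off $V$.

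The construction is coordinatewise lexicographic minimization. For $x \in V$ set $A_0(x) = \{y \in V : y \leq' x\}$, which is compact and nonempty. Then inductively, for $k = 1,\dots,n$, let $c_k(x) = \min\{y_k : y \in A_{k-1}(x)\}$ and $A_k(x) = \{y \in A_{k-1}(x) : y_k = c_k(x)\}$. Each $A_k(x)$ is compact and nonempty, and $A_n(x)$ is the single point $\gamma(x) := (c_1(x),\dots,c_n(x))$. We need two properties.
\begin{enumerate}[label=(\alph*)]
\item $\gamma(x) \leq' x$, since $\gamma(x) \in A_0(x)$.
\item $\gamma(x) \in \partial_{SW}V$. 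If some $y \in V$ had $y <' \gamma(x)$, then $y \in A_0(x)$. Let $k$ be the first index with $y_k < c_k(x)$; then $y$ agrees with $\gamma(x)$ in coordinates $1,\dots,k-1$, so $y \in A_{k-1}(x)$, contradicting the minimality of $c_k(x)$.
\end{enumerate}
This avoids the Zorn's-lemma choice used in Lemma~\ref{southwestboundaryproperties}(i), which would give no measurability.

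The main obstacle is showing that $\gamma$ is Borel measurable. I would show each $c_k$ is lower semicontinuous on $V$, or at least Borel, by induction. The set $\{x \in V : c_1(x) \leq a\}$ is the projection of the closed set $\{(x,y) \in V \times V : y \leq' x,\ y_1 \leq a\}$. Intersecting with $\{|x| \leq R\}$ makes this set compact, so its projection is compact; taking the union over $R$ shows it is $F_\sigma$, hence Borel. For general $k$, write $\{c_k(x) \leq a\}$ as the projection of
\[
\{(x,y) \in V\times V : y \leq' x,\ y_j = c_j(x) \text{ for } j<k,\ y_k \leq a\}.
\]
By the inductive hypothesis that $c_1,\dots,c_{k-1}$ are Borel, this set is Borel, but projections of Borel sets are only analytic. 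If that difficulty arises, I would fall back on one of two routes. The first is to note that the sections in $y$ are compact and apply a measurable selection/projection theorem for Borel sets with $\sigma$-compact sections (Arsenin–Kunugui), in the same descriptive-set-theoretic spirit as the proof of Lemma~\ref{southwestboundaryproperties}(iii). The second, more elementarily, is to show directly that $c_k$ is lower semicontinuous along sequences, using compactness of $A_0(x)$ and closedness of $V$, so that the sublevel sets are closed in $V$.

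Once $\gamma$ is measurable on $V$, extending it by the identity on $\mathbb{R}^n \setminus V$, which is a Borel set, gives a mass retract of $V$, and the argument above concludes.
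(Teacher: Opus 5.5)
Your strategy is the paper's: build a Borel mass retract of $V$ that lands in $\partial_{SW}V$, then push $m$ forward using Lemma~\ref{massretractproperties}(ii). In fact your lexicographic minimizer is literally the paper's map. Take $W=\widetilde V$ in Definition~\ref{directionalmassretractsdef}. An induction shows $\Gamma_k\circ\cdots\circ\Gamma_1(x)=(c_1(x),\dots,c_k(x),x_{k+1},\dots,x_n)$ for $x\in V$, so $\gamma=\gamma_{SW}$. What differs is the proofs. Your argument that $\gamma(x)\in\partial_{SW}V$ (take the first index where $y$ drops strictly below $\gamma(x)$) is shorter and more transparent than the paper's route, which goes through $\widetilde V$, Lemma~\ref{Vtildeproperties}, and the idempotence $\Gamma\circ\Gamma=\Gamma$ of Lemma~\ref{Gammalemma}.

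On measurability, your second fallback is false: $c_k$ need not be lower semicontinuous for $k\geq 2$. Take $V=\{(0,1)\}\cup(\{1\}\times[0,1])$, $x=(1,1)$, and $x^j=(1,1-1/j)$.
\begin{itemize}
\item At $x$, $A_0(x)=V$, so $c_1(x)=0$ and $c_2(x)=1$.
\item At $x^j$, the point $(0,1)$ drops out of $A_0(x^j)$, so $c_1(x^j)=1$ and $c_2(x^j)=0$.
\item Hence $\{x\in V : c_2(x)\leq 0\}=\{1\}\times[0,1)$, which is not closed in $V$.
\end{itemize}
The failure is that $c_1$ is only lower semicontinuous, so limits of points of $A_1(x^j)$ need not lie in $A_1(x)$.

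Your first fallback does work. The set you project is Borel with compact $x$-sections, so its projection is Borel by the Arsenin--Kunugui (Novikov) theorem, and the induction closes. With that citation your proof is complete.

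The paper avoids descriptive set theory by factoring the construction so that each step sees only the current point, through a closed condition. Let $h_k(z)=\min\{v_k : v\in V,\ v_j\leq z_j \text{ for } j\neq k\}$; this is the paper's $h_k$ for $W=\widetilde V$. Then on $V$ one has $c_k(x)=h_k\bigl(c_1(x),\dots,c_{k-1}(x),x_k,\dots,x_n\bigr)$. Moreover $h_k$ is lower semicontinuous by the same compactness argument you used for $c_1$. (The paper instead proves each $\Gamma_k$ Borel via the dyadic approximation of Lemma~\ref{massretractlemmafk}.) So $\gamma$ is a composition of Borel maps. The conditioning on $c_1(x),\dots,c_{k-1}(x)$ that forced you into projections of non-closed Borel sets is absorbed into the composition and never arises.
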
 
To understand this, \index{Southwest mass retract $\gamma_{SW}$} suppose there existed a mass retract $\gamma_{SW}$ of $V$ such that $\gamma_{SW}(V) \subset \partial_{SW} V$. Firstly $V \supset \partial_{SW} V$ 
(lemma~\ref{southwestboundaryproperties}). It follows $\forall m \in \mathcal{T}^d(f), \ m(V) \geq m(\partial_{SW} V)$. Thus LHS $\geq$ RHS in \eqref{depsupcircus}.

To prove the converse, given $m \in \mathcal{T}^d(f)$ without loss of generality $m(V^c) = 0$. By lemma~\ref{massretractproperties} (ii), 
${\gamma_{SW}}_\#(m) \in \mathcal{T}^d(f)$. Furthermore,
\begin{equation*}
   {\gamma_{SW}}_\#(m)\big(\partial_{SW}V\big) \stackrel{\text{(a)}}{=} {\gamma_{SW}}_\#(m)(V) = m\big( \gamma_{SW}^{-1}(V)\big) = m(V)
\end{equation*}
where (a) follows since $\gamma_{SW}^{-1}(\partial_{SW}V)=V$ by $\gamma_{SW}(V)\subset\partial_{SW}V$ and $\gamma_{SW}^{-1}(V)\cap V^c=\emptyset$. 
That is, $\gamma_{SW}^{-1}(V) = V$. $m$ was arbitrary so LHS $\leq$ RHS. So \eqref{depsupcircus} follows from the existence of $\gamma_{SW}$. 

We will begin constructing this $\gamma_{SW}$. 

\begin{definition} 
For $V \subset [0,\infty)^n$ closed, define 
\begin{equation}\label{Vtildedef} 
   \widetilde{V} = \bigcup_{x \in V} \{x' \in \mathbb{R}^n \ | \ x' \geq' x \}.  
\end{equation} 
\end{definition} 
Notice $V \subset \widetilde{V} \subset [0,\infty)^n$ trivially. 
\begin{lemma}\label{Vtildeproperties} 
$\widetilde{V}$ is closed and $\partial_{SW} \widetilde{V} = \partial_{SW} V$. 
\end{lemma}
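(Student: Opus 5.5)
The plan is to prove the two claims separately. Closedness will come from a compactness argument that uses the fact that $V$ is bounded below by $0$. The equality of southwest boundaries will then follow by comparing minimal elements in the two orders.

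\emph{Closedness.} Let $(y^k)_k \subset \widetilde{V}$ with $y^k \rightarrow y$. By \eqref{Vtildedef}, for each $k$ choose $x^k \in V$ with $x^k \leq' y^k$. Every $x^k$ lies in $[0,\infty)^n$, so $0 \leq' x^k \leq' y^k$. Since $(y^k)_k$ converges, it is bounded, and therefore $(x^k)_k$ lies in a bounded box. By Bolzano--Weierstrass, pass to a subsequence $x^{k_j} \rightarrow x$. Because $V$ is closed, $x \in V$. The relation $\leq'$ is a finite intersection of the closed conditions $\pi_i(x) \leq \pi_i(y)$, so it survives limits and gives $x \leq' y$. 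Hence $y \in \widetilde{V}$, and $\widetilde{V}$ is closed. In particular $\partial_{SW}\widetilde{V}$ is defined via \eqref{southwestboundaryeq}.

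\emph{$\partial_{SW}\widetilde{V} \subset \partial_{SW}V$.} Let $y \in \partial_{SW}\widetilde{V}$. By \eqref{Vtildedef} there is $x \in V$ with $x \leq' y$. Also $V \subset \widetilde{V}$, so $x \in \widetilde{V}$. Minimality of $y$ in $\widetilde{V}$ rules out $x <' y$, so $x = y$ and therefore $y \in V$. If some $x' \in V$ satisfied $x' <' y$, then $x' \in \widetilde{V}$ would contradict minimality of $y$. Hence $y \in \partial_{SW}V$.

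\emph{$\partial_{SW}V \subset \partial_{SW}\widetilde{V}$.} Let $x \in \partial_{SW}V$. Then $x \in V \subset \widetilde{V}$. Suppose some $y \in \widetilde{V}$ satisfies $y <' x$. Choose $z \in V$ with $z \leq' y$. Then $z \leq' y \leq' x$ and $z \neq x$: if $z = x$, antisymmetry of $\leq'$ would force $y = x$. So $z <' x$ with $z \in V$, which contradicts $x \in \partial_{SW}V$.

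The only step that needs care is closedness. It relies on the lower bound $0 \leq' x^k$ to get compactness. That bound holds because $V \subset [0,\infty)^n$, so I would stress that hypothesis at this point. The remaining steps are routine order-theoretic arguments.
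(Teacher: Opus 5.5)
Your proof is correct and follows essentially the same route as the paper. Closedness comes from a Bolzano--Weierstrass argument using $0 \leq' x^k \leq' y^k$, and the two inclusions of southwest boundaries come from direct minimality arguments; your explicit appeal to antisymmetry to get $z \neq x$ spells out a step the paper leaves implicit.
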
 
\begin{proof} 
First we will prove closure. Let $(x^l)_l \subset \widetilde{V}, \ x^l \rightarrow x^\infty$. By \eqref{Vtildedef} $\exists (y^l)_l \subset V$ with $0 \leq' y^l \leq' x^l, \ \forall l \in \mathbb{N}$. Because 
the RHS converges, $(y^l)_l$ is clearly contained in a sufficiently large closed box, hence has a convergent subsequence $y^{l_k} \rightarrow y^\infty$. $y^\infty \in V$ due to closedness. We claim 
$$
y^{\infty} \leq' x^\infty. 
$$
Indeed, $\pi_i(x^{l_k}) \geq \pi_i(y^{l_k}), i = 1,...,n$. The LHS converges to $\pi_i(x^\infty)$; the RHS to $\pi_i(y^\infty)$. The claim follows. Since $y^\infty \in V$, by \eqref{Vtildedef} $x^\infty \in \widetilde{V}$; 
$\widetilde{V}$ is closed. 

We will now prove $\partial_{SW} V \subset \partial_{SW} \widetilde{V}$. Let $x \in \partial_{SW} V \subset V \subset \widetilde{V}$. If by contradiction $x \notin \partial_{SW} \widetilde{V}$ then $\exists x' \in \widetilde{V}$ with $x' <' x$. Then $\exists x'' \in V$ with $x'' \leq' x'$ by 
\eqref{Vtildedef}. Then $x'' <' x$, which contradicts the minimality of $x$. So $x \in \partial_{SW} \widetilde{V}$. 

To prove the other direction, let $x \in \partial_{SW} \widetilde{V}$. If $x' \in V, \ x' \leq' x$, because $\nexists y \in \widetilde{V}$ with $y <' x$, and $ x' \in V \subset \widetilde{V}$, it follows $x = x'$. Specifically, $\nexists y \in \widetilde{V} \supset V$ with $y<'x$. Furthermore at least one such $x'$ exists by \eqref{Vtildedef}, so $x = x' \in V$. By definition $x \in \partial_{SW} V$. So $\partial_{SW} \widetilde{V} \subset \partial_{SW} V$. 
\end{proof} 

\begin{lemma}\label{massretractlemmafk} 
Let $V \subset [0,\infty)^n$ be closed. Denote
\begin{equation}\label{massretracthdef} 
h(x) = \inf \big\{y \in V \ | \ (x,y) \in V \big\}, \ \forall x \in \mathbb{R}^{n-1}
\end{equation} 
which is $+\infty$ if the set is empty. Define $\forall x \in \mathbb{R}^{n-1}, \forall y \in \mathbb{R}$, 
\begin{equation}\label{deffmassretract} 
f(x,y) = 
\begin{cases} 
(x,h(x)) & \text{if } y \geq h(x) \\ 
(x,y) & \text{otherwise} 
\end{cases}
\end{equation}
Then $f$ is Borel measurable. 
\end{lemma}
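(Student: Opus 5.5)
The plan is to show first that $h:\mathbb{R}^{n-1}\rightarrow[0,\infty]$ is Borel measurable, and in fact lower semicontinuous. Measurability of $f$ then follows by splitting $\mathbb{R}^n$ into two Borel pieces. (Here the set in \eqref{massretracthdef} is read as $\{y\in\mathbb{R} \ | \ (x,y)\in V\}$.)

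\textbf{Step 1: $h$ is lower semicontinuous.} Fix $c\in\mathbb{R}$; we show $\{x \ | \ h(x)\leq c\}$ is closed. Take $x^k\rightarrow x$ with $h(x^k)\leq c$. Since $V$ is closed, each nonempty fibre $\{y \ | \ (x^k,y)\in V\}$ is closed, so the infimum is attained: $(x^k,h(x^k))\in V$. Moreover $V\subset[0,\infty)^n$, so $0\leq h(x^k)\leq c$. The sequence $(x^k,h(x^k))$ is therefore bounded and has a subsequence converging to some $(x,y)$ with $0\leq y\leq c$. Closedness of $V$ gives $(x,y)\in V$, hence $h(x)\leq y\leq c$. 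So $h$ is lower semicontinuous, and in particular Borel measurable as an extended-real function.

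\textbf{Step 2: split $f$ into two pieces.} Let $A=\{(x,y)\in\mathbb{R}^n \ | \ y\geq h(x)\}$. The map $(x,y)\mapsto y-h(x)$ need not make sense when $h=+\infty$, so instead write
$$A=\{(x,y) \ | \ h(x)<\infty\}\cap\bigcap_{q\in\mathbb{Q}}\big(\{h(x)>q\}\cup\{y\geq q\}\big).$$
This is Borel because $(x,y)\mapsto h(x)$ is Borel as the composition of $h$ with the projection. One could also use lower semicontinuity directly: then $A$ is closed, since it is the epigraph of $h$. On $A^c$, $f$ is the identity. On $A$, $f(x,y)=(x,h(x))$, and $h$ is finite there. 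Each component of this map is Borel on $A$: the first $n-1$ components are projections, and the last is $h\circ\pi$ restricted to $A$. Hence for Borel $B$,
$$f^{-1}(B)=(A^c\cap B)\cup\big(A\cap (x,y)\mapsto(x,h(x))\big)^{-1}(B)$$
is Borel.

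\textbf{Main obstacle.} The only delicate point is Step 1: handling the value $+\infty$, and using compactness in place of any continuity of $h$. Boundedness of the sequence $(x^k,h(x^k))$ comes from the bound $0\leq h\leq c$, and this uses $V\subset[0,\infty)^n$ essentially. Everything else is routine.
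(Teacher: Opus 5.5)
Your argument is correct, but it takes a genuinely different route from the paper.

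\textbf{How the two proofs differ.} The paper never isolates a regularity property of $h$. It proceeds in four steps:
\begin{enumerate}
\item It cuts $V$ into dyadic strips $\mathbb{R}^{n-1}\times[l/2^m,(l+1)/2^m]$. Using that projections of closed sets are Borel ($\sigma$-compact), it obtains the sets $V_{l,m}$ and their disjointifications $W_{l,m}$, on which $h$ is known to within $2^{-m}$.
\item It defines maps $f_m$ that push $(x,y)$ down to the dyadic level $l/2^m$.
\item It checks each $f_m$ is Borel by an explicit preimage computation.
\item It concludes from $f_m\to f$ pointwise.
\end{enumerate}
You instead show by sequential compactness that every sublevel set $\{h\le c\}$ is closed, i.e.\ $h$ is lower semicontinuous. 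The switching set $A=\{y\ge h(x)\}$ is then a closed epigraph. So $f$ is simply the identity on $A^c$ and the Borel map $(x,y)\mapsto(x,h(x))$ on $A$.

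Your route is shorter and avoids both the projection theorem and the limiting argument. It also gives more: closedness of $A$ and semicontinuity of $h$. In fact the paper's sets $V_{l,m}$ are closed by exactly your compactness argument, since the last coordinate is confined to a bounded interval. Your Step 1 is the structural fact that the dyadic construction is implicitly approximating.

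\textbf{One slip to fix.} The displayed formula for $A$ has the inequality backwards. If $h(x)<\infty$, any rational $q>\max\{h(x),y\}$ makes both $h(x)>q$ and $y\ge q$ fail. So $\{h<\infty\}\cap\bigcap_{q}(\{h>q\}\cup\{y\ge q\})$ is empty. The correct expression is $A=\bigcap_{q\in\mathbb{Q}}\big(\{h(x)\le q\}\cup\{y\ge q\}\big)$, which automatically excludes $h(x)=+\infty$.

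You also justify that $A$ is Borel via the closed epigraph, and that argument is valid. So the proof stands once the formula is corrected or dropped. The preimage display is also cleaner written as $f^{-1}(B)=(A^c\cap B)\cup\{(x,y)\in A \mid (x,h(x))\in B\}$.
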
 
\begin{proof} 
Let 
\begin{equation}\label{defVlm}
V_{l,m} = \pi\Big(V \cap \mathbb{R}^{n-1} \times \Big[\frac{l}{2^m}, \frac{l+1}{2^m}\Big]\Big), \ \forall l,m \geq 0.
\end{equation} 
where $\pi:(x_1,...,x_n) \mapsto (x_1,...,x_{n-1}), \ \forall (x_1,...,x_n) \in \mathbb{R}^n$. It is well known the projection of a finite dimensional real closed set is Borel measurable (due to being a countable union of compact sets, compactness
being preserved under projection). So $V_{l,m}$ are measurable. Let 
\begin{equation}\label{defWlm}
W_{l,m} = V_{l,m} \backslash \bigcup_{0 \leq l' < l} V_{l',m}, \ \forall l,m \geq 0.
\end{equation}
Given $l,m$, we claim 
\begin{equation}\label{Wlmclaim} 
\forall x \in W_{l,m}, \ h(x) \in \Big[\frac{l}{2^m}, \frac{l+1}{2^m}\Big].
\end{equation}
Indeed, $x \in W_{l,m} \implies x \in V_{l,m} \implies \exists y \in \big[l/2^m, (l+1)/2^m \big]$ with $(x,y) \in V$. 
By \eqref{massretracthdef}, $h(x) \leq y \leq (l+1)/2^m$. Furthermore, if $h(x) < l/2^m$, by \eqref{massretracthdef} $\exists (x,y) \in V$ with $y$ in some 
$\big[l'/2^m, (l'+1)/2^m\big], \ l'<l$. But then $x \in V_{l',m}$. A contradiction to \eqref{defWlm}. Therefore $l/2^m \leq h(x)$, which proves the claim.

$\forall m \in \mathbb{N}$, define $\forall x \in \mathbb{R}^{n-1}, \forall y \in \mathbb{R}$,
\begin{equation}\label{massretractfmdef}  
f_m(x,y) = 
\begin{cases} 
(x,l/2^m) & \text{if } \exists l \geq 0, \ (x,y) \in W_{l,m} \times \big[l/2^m,\infty\big) \\ 
(x,y) & \text{otherwise}
\end{cases} 
\end{equation}
Note, by \eqref{defWlm}, if $l \neq l'$ then $W_{l,m} \cap W_{l',m} = \emptyset$. So each $f_m$ is well-defined. 

We claim each $f_m$ is Borel measurable. Indeed, for $U \subset \mathbb{R}^n$ measurable, notice
\begin{align*} 
f_m^{-1}(U) &=  \Big(U \backslash \bigcup_{l \geq 0} W_{l,m} \times \big[l/2^m, \infty\big) \Big)\\ 
               &\cup \bigcup_{l \geq 0} \pi\big(U \cap W_{l,m} \times \{l/2^m\} \big) \times \big[l/2^m, \infty\big).
\end{align*} 
If each $\pi\big(U \cap W_{l,m} \times \{l/2^m\} \big)$ is measurable, then the RHS is measurable. It is routine, in projective cases such as this, to show $\pi$ restricted to 
$\mathbb{R}^{n-1} \times \{l/2^m\}$ is a bijection between $\sigma(\mathbb{R}^{n-1})$ and the $\sigma$-algebra of $\mathbb{R}^{n-1} \times \{l/2^m\}$ given by intersecting the Borel measurable sets 
of $\mathbb{R}^n$ with $\mathbb{R}^{n-1} \times \{l/2^m\}$. It follows each $\pi\big(U \cap W_{l,m} \times \{l/2^m\} \big)$ is measurable. So $f_m$ is indeed measurable. 

Recall Borel measurability of real functions is preserved under pointwise convergence. Thus, to show $f$ is measurable, we may prove $f_m \rightarrow f$, which we do now. 

Fix $x \in \mathbb{R}^{n-1}, y \in \mathbb{R}$. 

Case $y \geq h(x)$: Since $h(x)$ \eqref{massretracthdef} is finite, it follows for each $m \geq 0, \ \exists l_m \geq 0$ such that $x \in W_{l_m,m}$ (otherwise, by \eqref{defWlm}, 
$x \notin \bigcup_{l \geq 0} V_{l,m} = \pi(V)$, which contradicts $h(x)$ being finite). By \eqref{Wlmclaim} $\forall m \in \mathbb{N}, \ h(x) \in [l_m/2^m, (l_m+1)/2^m]$. It follows
$$
f_m(x,y) \stackrel{\eqref{massretractfmdef}}{=} (x,l_m/2^m) \xrightarrow[m \rightarrow \infty]{} (x,h(x)) \stackrel{\eqref{deffmassretract}}{=} f(x,y). 
$$

Case $y < h(x) < + \infty$: Similarly to before, $\forall m \in \mathbb{N}, \ \exists l_m$ such that $X\in W_{l_m,m}$. Furthermore, $\forall m \in \mathbb{N}, \ h(x) \in [l_m/2^m, (l_m+1)/2^m]$. 
Thus for $m>>0$, $h(x) - 1/2^m > y$. That is, $l_m/2^m > y$. So $(x,y) \notin W_{l_m,m} \times \big[l_m/2^m, \infty \big)$. 

Because $x \in W_{l_m,m}$ and $W_{l,m} \cap W_{l',m} = \emptyset$ for $l \neq l'$, it follows by \eqref{massretractfmdef} for $m>>0$ that $f_m(x,y) = (x,y)$. But $(x,y) = f(x,y)$ by \eqref{deffmassretract}, so we are done.  

Case $h(x) = + \infty$: By definition of $h$, 
$$
x \notin \pi(V) \stackrel{\eqref{defVlm}}{=} \bigcup_{l \geq 0} V_{l,m} \stackrel{\eqref{defWlm}}{=} \bigcup_{l \geq 0} W_{l,m}, \ m = 1, 2,...
$$
Then $\forall m \in \mathbb{N}, \ f_m(x,y) \stackrel{\eqref{massretractfmdef}}{=} (x,y) \stackrel{\eqref{deffmassretract}}{=} f(x,y)$, so we are done.

This shows $f_m \rightarrow f$ pointwise, hence $f$ is measurable. 
\end{proof} 
\begin{definition}[directional mass retracts]\label{directionalmassretractsdef}  
Let $W \subset [0,\infty)^n$ be closed. For $k=1,...,n$, $\forall x = (x_1,...,x_n) \in \mathbb{R}^n$, denote
\begin{equation}\label{massretracthkdef}
h_k(x) = \inf \Big\{y \in \mathbb{R} \ \big| \ (x_1,...,x_{k-1},y,x_{k+1},...,x_n) \in W  \Big\}
\end{equation}
from which we define
\begin{equation}\label{defGammak} 
\Gamma_k(x) := 
\begin{cases} 
\big(x_1,..,x_{k-1},h_k(x),x_{k+1},...,x_n\big) & \text{if } x_k \geq h_k(x) \\ 
x & \text{otherwise} 
\end{cases} 
\end{equation} 
Lastly, define $\Gamma := \Gamma_n \circ \cdots \circ \Gamma_1$.
\end{definition}  

By lemma~\ref{massretractlemmafk} (because the $n$th component was arbitrary) each $\Gamma_k$ is measurable. Notice $\Gamma_k$ satisfies definition~\ref{massretractdef} and 
$\forall x \in V, \ \Gamma_k(x) \in W$. $\Gamma$ is a mass retract by lemma~\ref{massretractproperties} (iii). Clearly $\forall x \in W, \ \Gamma(x) \in W$.

\begin{lemma}\label{Gammalemma}
If $W = \widetilde{W}$ \eqref{Vtildedef} in definition~\ref{directionalmassretractsdef}, then $\Gamma \circ \Gamma = \Gamma$.  
\end{lemma}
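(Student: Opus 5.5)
The plan is to prove that $\Gamma(x)$ is a fixed point of every $\Gamma_k$; then $\Gamma(\Gamma(x)) = \Gamma_n\circ\cdots\circ\Gamma_1(\Gamma(x)) = \Gamma(x)$. Throughout I use that $W=\widetilde W$ means $W$ is closed, contained in $[0,\infty)^n$, and upward closed: $z\in W,\ z\leq' z'\implies z'\in W$. Every $h_k$ is read off \eqref{massretracthkdef} with this $W$.

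\textbf{Step 1 (basic facts about $\Gamma_k$).} First, whenever $h_k(x)<\infty$, the point $(x_1,\dots,x_{k-1},h_k(x),x_{k+1},\dots,x_n)$ lies in $W$. This holds because $W$ is closed and the fibre $\{y : (x_1,\dots,y,\dots,x_n)\in W\}$ is closed and bounded below by $0$.

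Second, for $x\in[0,\infty)^n$ we have $x\in W \iff x_k\geq h_k(x)$. The forward direction is the definition of $\inf$. For the converse, $x$ dominates the point just described, so upward closure puts $x$ in $W$.

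Consequently $\Gamma_k$ fixes every $x\notin W$. For points outside $[0,\infty)^n$ this needs a short check: either some coordinate $j\neq k$ is negative, so $h_k(x)=+\infty$; or $x_k<0\leq h_k(x)$. For $x\in W$, $\Gamma_k(x)\in W$ and its $k$th coordinate equals $h_k$ of itself; note $h_k$ does not depend on the $k$th coordinate. Hence if $x\notin W$ then $\Gamma(x)=x\notin W$ and $\Gamma\circ\Gamma(x)=\Gamma(x)$ trivially. So assume $x\in W$ and set $y^k:=\Gamma_k\circ\cdots\circ\Gamma_1(x)$. Every $y^k$ lies in $W$, and $y^n\leq' y^{n-1}\leq'\cdots\leq' y^1\leq' x$.

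\textbf{Step 2 (monotonicity of $h_k$, the key step).} I would show that if $y'\leq' y$ and $y'$, $y$ agree in coordinate $k$, then $h_k(y')\geq h_k(y)$. Indeed, if $(y'_1,\dots,z,\dots,y'_n)\in W$, then upward closure gives $(y_1,\dots,z,\dots,y_n)\in W$. Hence the fibre defining $h_k(y')$ is contained in the one defining $h_k(y)$, and infima reverse.

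\textbf{Step 3 (minimality is preserved).} Suppose $y^k_k=h_k(y^k)$, and let $j>k$. Then $y^{j}$ differs from $y^{j-1}$ only in coordinate $j$ and satisfies $y^j\leq' y^{j-1}$. Inductively, for the pair $y^{j-1}$, $y^j$, Step 2 gives
$$
y^j_k=y^{j-1}_k=h_k(y^{j-1})\leq h_k(y^j)\leq y^j_k,
$$
where the last inequality uses $y^j\in W$ and Step 1. So $y^j_k=h_k(y^j)$. Applying this for each $k$ shows $y:=\Gamma(x)=y^n$ satisfies $y_k=h_k(y)$ for every $k$. Then by \eqref{defGammak} each $\Gamma_k$ fixes $y$, so $\Gamma(y)=y$, as required.

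The main obstacle is Step 3. Without upward closure, a later $\Gamma_j$ could lower the fibre minimum $h_k$ below the $k$th coordinate already fixed, and idempotence would fail; this is precisely where the hypothesis $W=\widetilde W$ enters, through Step 2. Some care is also needed with the convention $h_k=+\infty$ and with points outside $[0,\infty)^n$ in Step 1, but those are routine.
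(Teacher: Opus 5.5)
Your proof is correct and rests on the same key fact as the paper's: because $W=\widetilde W$ is upward closed, the fibre infimum $h_k$ can only increase when the other coordinates decrease, so no later $\Gamma_j$ can undo the minimisation already performed in coordinate $k$. The paper argues by contradiction, comparing $h_j$ at $(\Gamma_{j-1}\circ\cdots\circ\Gamma_1)(x)$ and at $\Gamma(x)$, whereas you show directly that $\Gamma(x)$ is fixed by every $\Gamma_k$. Your treatment of $x\notin W$ via $x_k<h_k(x)$ is also more careful than the paper's claim that the defining set is empty there, which can fail: take $W=[1,\infty)^2$ and $x=(2,0)$, where $h_2(x)=1$.
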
 
\begin{proof} 
If $x \notin \widetilde{W}$ notice the set at \eqref{massretracthkdef} is empty, hence $h_k = +\infty$, in particular $\Gamma_k(x) = x$ by \eqref{defGammak}. Because this holds for all $k$, $\Gamma(x) = x$ which implies $\Gamma^2(x) = \Gamma(x)$. This proves the case $x \notin W$. 

Now, suppose by contradiction $\exists x \in W$ with $\Gamma^2(x) \neq \Gamma(x)$. Necessarily $\Gamma_k(\Gamma(x)) \neq \Gamma(x)$ for some $k$, otherwise $\Gamma^2(x) = \Gamma(x)$. Denote
$$
j = \min \{ k \ | \ \Gamma_k(\Gamma(x)) \neq \Gamma(x) \}.
$$
Applying \eqref{massretracthkdef} and \eqref{defGammak}, closedness of $W$, and $\Gamma(W) \subset W$,
$$
\Gamma_j(\Gamma(x)) = \big(\pi_1(\Gamma(x)), ..., \pi_{j-1}(\Gamma(x)), h_j(\Gamma(x)), \pi_{j+1}(\Gamma(x)),..., \pi_n(\Gamma(x)) \big) \in W.
$$
Because $\Gamma_j(\Gamma(x)) \neq \Gamma(x)$, 
\begin{equation}\label{Gammalemmaeq1}
   \pi_j(\Gamma_j(\Gamma(x))) = h_j(\Gamma(x)) < \pi_j(\Gamma(x)).
\end{equation}   
Because $\Gamma_j(\Gamma(x)) \in W, \ \exists y^1 \in \partial_{SW} W, \ y^1 \leq' \Gamma_j(\Gamma(x))$. By \eqref{Gammalemmaeq1}
\begin{equation}\label{Gammalemmaeq2} 
\pi_j(y^1) < \pi_j(\Gamma(x)).
\end{equation} 
Write $y^2 = (\Gamma_{j-1} \circ \cdots \circ \Gamma_1)(x) \geq' (\Gamma_n \circ \cdots \circ \Gamma_1)(x) = \Gamma(x)$. Combining inequalities, 
\begin{equation}\label{Gammalemmaeq3}
y^2 \geq' \Gamma(x) >' \Gamma_j(\Gamma(x)) \geq' y^1.
\end{equation} 
Let $y^3$ be $y^2$ with its $j$th component replaced by the $j$th component of $y^1$. That is, 
\begin{equation}\label{y3eqdef}
y^3 = (y^2_1,...,y^2_{j-1},y^1_j,y^2_{j+1},...,y^2_n). 
\end{equation} 

By \eqref{Gammalemmaeq3}, $y^3 \geq' y^1$. Because $y^1 \in \partial_{SW} W \subset W$, by definition of $\widetilde{W}$, $y^3 \in \widetilde{W} = W$. In particular, 
by \eqref{massretracthkdef} and \eqref{y3eqdef}, $y^3 \in W$ implies $h_j(y^2) \leq \pi_j(y^3) = y^1_j$. i.e.\
\begin{equation}\label{Gammalemmaeq4}
h_j(y^2) \stackrel{\eqref{defGammak}}{=} \pi_j(\Gamma_j(y^2)) = \pi_j(\Gamma_j \circ \cdots \circ \Gamma_1 (x)) \leq y^1_j. 
\end{equation}
Notice $\Gamma(x) \leq' \Gamma_j \circ \cdots \circ \Gamma_1(x)$. Applying $\pi_j$ and by \eqref{Gammalemmaeq4},
$$
\pi_j(\Gamma(x)) \leq y^1_j \stackrel{\eqref{Gammalemmaeq2}}{<} \pi_j(\Gamma(x))
$$
which is a contradiction. So $x \in W \implies \Gamma^2(x) = \Gamma(x)$. 
\end{proof} 
\begin{theorem} 
Let $V \subset [0,\infty)^n$ be closed. Define $\Gamma$ by definition~\ref{directionalmassretractsdef} with $W = \widetilde{V}$ where $\widetilde{V}$ is given by 
\eqref{Vtildedef}. Define
\begin{equation}\label{SWmassretractdef}
\forall x \in \mathbb{R}^n, \ \gamma_{SW}(x) := 
\begin{cases} 
\Gamma(x) & \text{if } x \in V \\
x & \text{ otherwise} 
\end{cases} 
\end{equation}
Then $\gamma_{SW}$ is a mass retract of $V$ with $\gamma_{SW}(V) \subset \partial_{SW} V$.
\end{theorem}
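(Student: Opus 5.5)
We have to verify three things: $\gamma_{SW}$ is measurable and satisfies \eqref{massretracteq1}--\eqref{massretracteq2}; it is a mass retract \emph{of} $V$; and $\gamma_{SW}(V)\subset \partial_{SW}V$. The plan is to get almost everything from properties of $\Gamma$ with $W=\widetilde{V}$, which is closed by lemma~\ref{Vtildeproperties}.

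\textbf{Measurability and the mass retract axioms.} Each $\Gamma_k$ is Borel measurable by lemma~\ref{massretractlemmafk}, applied with the $k$th coordinate playing the role of the last one. So $\Gamma=\Gamma_n\circ\cdots\circ\Gamma_1$ is measurable. Since $V$ is closed, $\gamma_{SW}=\Gamma\mathbbm{1}_V+\mathrm{id}\,\mathbbm{1}_{V^c}$ is measurable, because its preimage of a Borel set $U$ is $(\Gamma^{-1}(U)\cap V)\cup(U\setminus V)$. For $x\notin V$ we have $\gamma_{SW}(x)=x$, which gives \eqref{massretracteq2} and also the condition ``$\gamma_{SW}(x)=x$ for $x\notin V$''. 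For $x\in V\subset[0,\infty)^n$, each $\Gamma_k$ either fixes its argument or lowers the $k$th coordinate to $h_k\ge 0$, since $\widetilde{V}\subset[0,\infty)^n$. So along the chain the coordinates never increase and stay nonnegative, giving $0\le'\Gamma(x)\le' x$, which is \eqref{massretracteq1}. On $[0,\infty)^n\setminus V$ the map is the identity, so \eqref{massretracteq1} holds there too.

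\textbf{Image in $\widetilde V$, then in $V$.} First, $\Gamma_k(\widetilde V)\subset\widetilde V$. If $x\in\widetilde V$, then $h_k(x)\le x_k$ is finite. The infimum in \eqref{massretracthkdef} is attained because $\widetilde V$ is closed, so $\Gamma_k(x)\in\widetilde V$. Hence $\Gamma(V)\subset\Gamma(\widetilde V)\subset\widetilde V$. To show $\Gamma(x)\in\partial_{SW}\widetilde V=\partial_{SW}V$ (lemma~\ref{Vtildeproperties}), I would argue by contradiction. Suppose $y\in\widetilde V$ with $y<'\Gamma(x)$. By upward closedness of $\widetilde V$, for the first index $j$ with $y_j<\pi_j(\Gamma(x))$ we get
$$z=(\pi_1(\Gamma(x)),\dots,\pi_{j-1}(\Gamma(x)),y_j,\pi_{j+1}(\Gamma(x)),\dots,\pi_n(\Gamma(x)))\in\widetilde V.$$
Hence $h_j(\Gamma(x))\le y_j<\pi_j(\Gamma(x))$, so $\Gamma_j(\Gamma(x))\neq\Gamma(x)$. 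This contradicts $\Gamma\circ\Gamma=\Gamma$ from lemma~\ref{Gammalemma}, which applies since $W=\widetilde V$ and $\widetilde{\widetilde V}=\widetilde V$ trivially. Concretely, lemma~\ref{Gammalemma} gives $\Gamma_k(\Gamma(x))=\Gamma(x)$ for every $k$: its proof shows precisely that no $\Gamma_j$ moves $\Gamma(x)$. Since $\partial_{SW}V\subset V$, this yields $\gamma_{SW}(V)=\Gamma(V)\subset\partial_{SW}V\subset V$, so $\gamma_{SW}$ is a mass retract of $V$.

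\textbf{Main obstacle.} The delicate step is passing from ``$\Gamma\circ\Gamma=\Gamma$'' to ``$\Gamma_j(\Gamma(x))=\Gamma(x)$ for each $j$''. Idempotence of the composition alone does not formally give coordinatewise fixedness. I would handle this by extracting that conclusion from the proof of lemma~\ref{Gammalemma}, whose contradiction argument starts exactly from the minimal $j$ with $\Gamma_j(\Gamma(x))\ne\Gamma(x)$. Failing that, I would argue directly with that minimal $j$: then $\Gamma_j\circ\cdots\circ\Gamma_1(\Gamma(x))\neq\Gamma(x)$ has a strictly smaller $j$th coordinate, and later maps only lower coordinates. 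So $\Gamma^2(x)\neq\Gamma(x)$, contradicting the lemma.
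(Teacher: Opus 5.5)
Your proof is correct and follows the paper's argument. You suppose $\Gamma(x)$ is not southwest-minimal, use a point strictly $<'$ below it (with upward closedness of $\widetilde V$) to show some $\Gamma_j$ moves $\Gamma(x)$, and then use the minimal such index to get $\Gamma(\Gamma(x))<'\Gamma(x)$, contradicting $\Gamma\circ\Gamma=\Gamma$ from lemma~\ref{Gammalemma}; the only differences are cosmetic: you take the smaller point in $\widetilde V$ and pass to $\partial_{SW}V$ via lemma~\ref{Vtildeproperties}, and you explicitly check $\Gamma(\widetilde V)\subset\widetilde V$ and the mass-retract axioms, which the paper merely asserts.
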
 
\begin{proof} 
Recall $\Gamma$ is a mass retract. Notice $\gamma_{SW}$ is measurable and a mass retract. Below we prove $\gamma_{SW}(V) \subset \partial_{SW} V$. 

Firstly, if $x \in \widetilde{W}$ then $\exists x^1 \in W$ with $x^1 \leq' x$ by \eqref{Vtildedef}. Because $W = \widetilde{V}, \exists x^2 \in V$ with $x^2 \leq' x^1$. So $x \in \widetilde{V} = W$. Therefore  $\widetilde{W} \subset W$. 
Because $W \subset \widetilde{W}$ trivially, equality holds. By lemma~\ref{Gammalemma}, $\Gamma^2 = \Gamma$. It immediately follows $\gamma_{SW} \circ \gamma_{SW} = \gamma_{SW}$. 

Fix $x \in V$. Suppose by contradiction $\gamma_{SW}(x) \notin \partial_{SW} V$. Write $y = \gamma_{SW}(x) \stackrel{\eqref{SWmassretractdef}}{=} \Gamma(x)$. Then $\exists y^1 \in \partial_{SW} V$ with $y^1 <' y$. So $y^1_k \leq y_k$ for all $k$ with inequality for some 
$k$, say $y^1_j < y_j$. Now 
$$
y^1 \leq' (y_1,...,y_{j-1},y_j^1,y_{j+1},...,y_n).
$$
Because $y^1 \in \partial_{SW}V \subset V$, the RHS is an element of $\widetilde{V} = W$. By \eqref{massretracthkdef}, $h_j(y) \leq y_j^1$. Therefore $\pi_j(\Gamma_j(y)) = h_j(y) \leq y_j^1 < y_j$. So $\Gamma_j(y) \neq y$. Let
$$
j_0 = \min \big\{k \ | \ \Gamma_k(y) \neq y \big\}. 
$$
Recall $\Gamma_k(y)$ leaves all components of $y$ fixed except possibly the $k$th \eqref{defGammak}, which is sometimes made less. One has  
$$
\Gamma(y) \leq' \Gamma_{j_0} \circ \cdots \circ \Gamma_1(y) = \Gamma_{j_0}(y) <' y = \Gamma(x).
$$
That is, $\Gamma(\Gamma(x)) <' \Gamma(x)$. This contradicts $\Gamma^2 = \Gamma$. So $\gamma_{SW}(x) \in \partial_{SW}V$. 
\end{proof} 

\begin{theorem} 
Let $f \in \mathcal{B}^n$. Let $Q:(x_1,...,x_n) \mapsto (|x_1|,...,|x_n|), \ \forall x \in \mathbb{R}^n$. Let $V \subset \mathbb{R}^n$ be closed. Then 
\begin{equation}\label{maineqdeptails} 
\sup P(X \in V ) = \sup P\big(X \in \partial_{SW} Q(V)\big)
\end{equation} 
where the supremums are taken over\footnote{$X_k$ are not required to be independent.} $X = (X_1,...,X_n) : \Omega \rightarrow \mathbb{R}^n$ such that $P(|X_k| \geq t) \leq f_k(t), \ \forall t \geq 0, \ k=1,...,n$.
\end{theorem}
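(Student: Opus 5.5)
The plan is to chain the three reductions already established in this chapter, passing first to measures, then to the positive quadrant, then to the southwest boundary. First, by the rephrasement lemma \eqref{rephrasementwithmeasureseq}, both sides of \eqref{maineqdeptails} equal the corresponding suprema over $\mathcal{T}^d(f)$:
\begin{equation*}
\sup P(X \in V) = \sup_{m \in \mathcal{T}^d(f)} m(V), \qquad \sup P\big(X \in \partial_{SW}Q(V)\big) = \sup_{m \in \mathcal{T}^d(f)} m\big(\partial_{SW}Q(V)\big).
\end{equation*}
For the second equality I need $\partial_{SW}Q(V)$ to be measurable. This follows in two steps. By lemma~\ref{Qlemma}, $Q(V)$ is closed, and it lies in $[0,\infty)^n$ by \eqref{Qdef}. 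Then lemma~\ref{southwestboundaryproperties} (iii) gives that $\partial_{SW}Q(V)$ is Borel.

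Next, apply lemma~\ref{l4.a} with $U = V$ (closed, hence measurable) to get
\begin{equation*}
\sup_{m \in \mathcal{T}^d(f)} m(V) = \sup_{m \in \mathcal{T}^d(f)} m(Q(V)).
\end{equation*}
Finally, since $Q(V) \subset [0,\infty)^n$ is closed, apply \eqref{depsupcircus} to $Q(V)$:
\begin{equation*}
\sup_{m \in \mathcal{T}^d(f)} m(Q(V)) = \sup_{m \in \mathcal{T}^d(f)} m\big(\partial_{SW}Q(V)\big).
\end{equation*}
Chaining these with the two rephrasements yields \eqref{maineqdeptails}.

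The main obstacle is justifying \eqref{depsupcircus} itself, which the text only reduces to the existence of $\gamma_{SW}$. The preceding theorem supplies $\gamma_{SW}$, a mass retract of $V$ with $\gamma_{SW}(V) \subset \partial_{SW}V$. I would therefore check that the sketched argument is complete, in particular the step ``WLOG $m(V^c)=0$''. This step is justified by replacing $m$ with $m|_V$: restriction keeps the measure in $\mathcal{T}^d(f)$ by monotonicity and lemma~\ref{addmeasures} (iv), and it leaves $m(V)$ unchanged. Then lemma~\ref{massretractproperties} (ii) gives ${\gamma_{SW}}_\#(m|_V) \in \mathcal{T}^d(f)$ with mass $m(V)$ on $\partial_{SW}V$, since $\gamma_{SW}^{-1}(\partial_{SW}V) \supset V$. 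The reverse inequality is trivial from $\partial_{SW}V \subset V$.
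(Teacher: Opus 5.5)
Your proposal is correct and is essentially the paper's proof. You pass to measures with \eqref{rephrasementwithmeasureseq}, reduce $V$ to $Q(V)$ with lemma~\ref{l4.a}, and apply \eqref{depsupcircus} to the closed set $Q(V)\subset[0,\infty)^n$ via the mass retract $\gamma_{SW}$. Your extra checks are valid details the paper leaves implicit: the Borel measurability of $\partial_{SW}Q(V)$ from lemma~\ref{southwestboundaryproperties} (iii), and the explicit ``WLOG $m(V^c)=0$'' step by restricting to $V$. The restriction is in fact unnecessary, since $\gamma_{SW}^{-1}(\partial_{SW}V)=V$ already gives ${\gamma_{SW}}_\#(m)(\partial_{SW}V)=m(V)$.
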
 
\begin{proof} 
As described below \eqref{depsupcircus}, the existence of $\gamma_{SW}$ implies \eqref{depsupcircus}. Now, suppose $V \subset \mathbb{R}^n$ is closed. $Q(V) \subset [0,\infty)^n$ is closed (lemma~\ref{Qlemma}), 
therefore one can apply \eqref{depsupcircus} to \eqref{Q(U)WLOG}, resulting in \eqref{SWQ(V)WLOG}. \eqref{SWQ(V)WLOG} can be phrased in terms of r.v.'s with 
\eqref{rephrasementwithmeasureseq}. The result of these combinations is \eqref{maineqdeptails}.
\end{proof} 

I solved \eqref{maineqdeptails} exactly when $n=2$ and $f_1$ is continuous, but the proof is not included because it is long. Specifically, for $\emptyset \neq V \subset \mathbb{R}^2 \backslash \{0\}$ closed, write $W = \pi_1(\partial_{SW} Q(V))$ and let
$$
\kappa = \inf_{t \in W \backslash \sup W} f_1(j(t)) + f_2(h_2(t)) 
$$
where $j(t) = \inf (t,\infty) \cap W$ and $h_2(t) = \inf \{y \geq 0 \ | \ (t,y) \in V\}$.  I computed \eqref{maineqdeptails} equals
\begin{equation}\label{SWexact}
\sup P((X_1,X_2) \in V) = 
\begin{cases} 
\min \big\{ f_1(\inf W), \kappa \big\} & \text{if } \sup W \notin W \\ 
\min \big\{f_1(\inf W), \kappa, f_2(h_2(\sup W)) \big\} & \text{if } \sup W \in W
\end{cases} 
\end{equation}
Naturally this results in an explicit formula for the sharpest tail bound for the case $g: \mathbb{R}^2 \rightarrow \mathbb{R}$ is continuous and 
$X_1,X_2$, are absolute tail bounded by $f_1,f_2$ respectively ($f_1$ is continuous), but are not necessarily independent. In particular, 
$\forall t \in \mathbb{R}$ the lowest upper bound on $P(g(X_1,X_2) \geq t)$ is given by setting 
$W = \pi_1\big(\partial_{SW} Q(g^{-1}([t,\infty))\big)$ in the equation above.

\chapter{Shift Operators for mutually ind.\ real r.v.'s}
The goal of this chapter is to develop techniques to find the lowest possible upper bound on $P((X_1,...,X_n) \in V)$ when $V \subset \mathbb{R}^n$ is closed, $X_1,...,X_n$ are real r.v.'s and tail bounded, and are known to be mutually independent.
\section{Subscript notation}
This chapter is the continuation of chapter $2$. The results are, roughly, ``common sense," 
once shift operators are grasped, but the details are much more work. To start, many operations on subsets of $\mathbb{R}$
are applied, often in succession. This results in many bulky equations. Therefore, the following abbreviations are used: $\forall V \subset \mathbb{R}$,    
\begin{enumerate}\label{abbreviationschapter4}\label{Abr.S.O.'s}
   \item [1.] $V_{>x} = V \cap (x, \infty), \ V_{<x} = V \cap (-\infty, x), \ V_{\geq x} = V \cap [x,\infty), \ V_{\leq x} = V \cap (-\infty, x]$
   \item [2.] $V_- = \{ -x \ \big| \ x \in V\}$
   \item [3.] $V_{|\cdot |} = \{ |x| \ \big| \ x \in V\}$
   \item [4.] $X = (X_1,...,X_n)$ and  (recall lemma~\ref{Xtilde}) $$\widetilde{X}_k = \sup f_k^{-1}[1-s,1], \ \forall s \in (0,1)$$
   \item [5.] $G$ stands for a set of the form $G^1 \times \cdots \times G^n \subset \mathbb{R}^n$
\end{enumerate}
It is elementary to show $\overline{(V_{|\cdot|})} = \left(\overline{V} \right)_{|\cdot|}$, so $\overline{V}_{|\cdot|}$ is unambiguous.

All r.v.'s in this chapter have mutually ind.\ components---this is the situation shift operators were designed for. Recall $\mathcal{N}^n = \{(X_1,...,X_n) \ |  \ X_1,...,X_n \in \mathcal{N}\}$, where each $(X_1,...,X_n): (0,1)^n \rightarrow \mathbb{R}^n$ is the product r.v. of mutually independent $X_1,...,X_n$. Recall the following are subsets of $\mathcal{N}^n$: $\mathcal{T}(f)$ \eqref{T(f)} for $f \in \mathcal{B}^n$ \eqref{absolutetails}, $\mathcal{T}_2(f)$ for $f \in \mathcal{B}_2^n$ \eqref{B2T2}, $\mathcal{T}_R(f)$ for $f \in \mathcal{B}_R^n$, and $\mathcal{T}_L(f)$ for $f \in \mathcal{B}_L^n$. Therefore these sets also consists of r.v.'s whose elements have mutually ind.\ components. 

\section{Overview}
Recall the situations (dilemas) described with example $1$ and example $2$ (figure~\ref{eg.1fig}, \eqref{eg2eq}). Example $2$ was resolved by simplifying the domains of the r.v.'s with 
neat r.v.'s, specifically at \eqref{1.3again} and \eqref{whoknows}. Example $1$ will be resolved by introducing shift operators.

$\mathscr{S}_R(f,G)$ is the right shift operator, which will be \textit{constructed} to return, in the next  few pages, for $\emptyset \neq G = G^1 \times \cdots \times G^n \subset \mathbb{R}^n$ closed, a r.v.\ 
$(X_1,...,X_n) \in \mathcal{N}^n$ such that 
\begin{enumerate}[label=(\roman*)]
\item $\forall s_k \in (0,1), \ X_k(s_k) \in G^k\cup(-\infty, \inf G^k]$
\item $P(X_k \geq t) \leq f_k(t), \ \forall t \in \mathbb{R}$, with equality $\forall t \in G^k \cup (-\infty, \inf G^k]$
\end{enumerate}
$\mathscr{S}_R(f,G)$ is interpretable as the $n$-tuple of neat r.v.'s $(X_1,...,X_n)$ with each $X_k$ ``shifted rightwards," without leaving $G^k \cup (-\infty, \inf G^k]$, as far as $P(X_k \geq t) \leq f_k(t)$ 
allows. 

Our main goal is to define $\mathscr{S}_R, \mathscr{S}_L, \mathscr{S}_2, \mathscr{S}$, and then prove the following: let $V \subset \mathbb{R}^n$ be closed. Then,
\begin{align}
\text{For } f \in \mathcal{B}^n, \sup_{X \in \mathcal{T}(f)} P(X \in V) &= P(\mathscr{S}(f,G) \in V), \text{ some } G\ni 0. \label{Ssup2} \\
\text{For } f \in \mathcal{B}_2^n, \sup_{X \in \mathcal{T}_2(f)} P(X \in V) &= P(\mathscr{S}_2(f,G,c) \in V), \text{ some } G \ni 0, \label{Ssup4}
\end{align} 
some $c \in [0,1]^n$ (recall $G = G^1 \times \cdots \times G^n \subset \mathbb{R}^n$). These equations are rooted in the following informal argument. Here is the figure.

\begin{figure}[h!]
   \includegraphics[scale = .22, center]{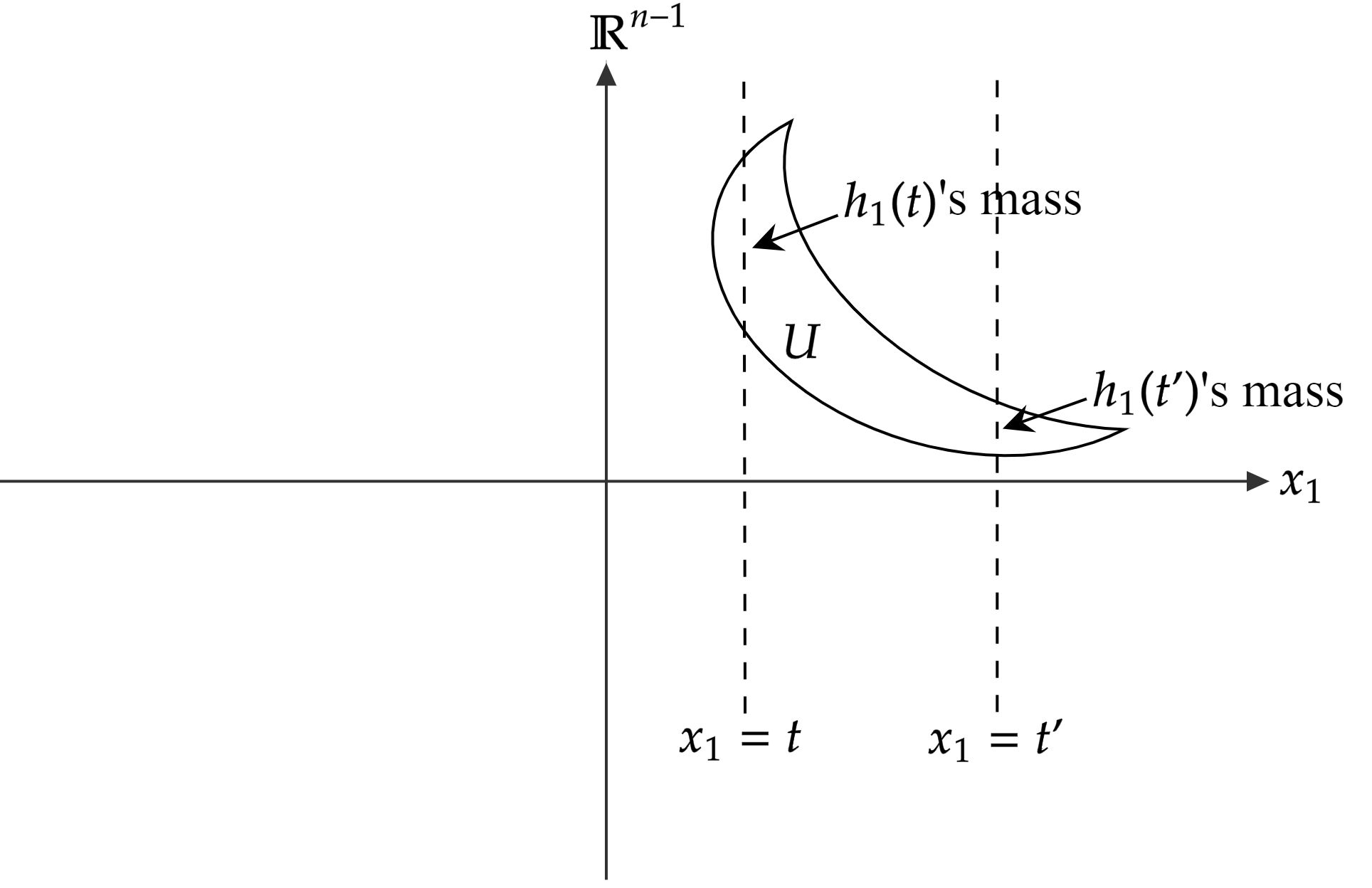}\label{testfig2}
   \caption{Informal argument for shift operators}\label{fig.1}
\end{figure}
Suppose $(X_1,...,X_n)$ maximizes the mass/measure of $U \subset \mathbb{R}^n$ over $X_k$ mutually ind.\ and tail bounded.
Denote 
\begin{align*}\label{h1informal}
h_1(t) &= P\big((t, X_2...,X_n) \in U\big) \\ 
       &= P\big((t, X_2...,X_n) \in U \cap \{x \in \mathbb{R}^n \ \big| \ \pi_1(x) = t\}\big)
\end{align*}
Each slice $U \cap \{x \in \mathbb{R}^n \ \big| \ \pi_1(x) = t\}$ of $U$, given by fixing the $1$st component at $t$, has (infinitesimal) mass ``$P(X_1 = t) h_1(t).$" Comparing two such slices at $t$ and $t'$, $0 \leq t < t'$, if $h_1(t) \geq h_1(t')$, then the overall measure of $U$ can be increased by moving $X_1$'s mass at $t'$ to $t$. That is, 
$$
P(X_1 = t) h_1(t) + P(X_1 = t') h_1(t') \leq (P(X_1 = t) + P(X_1 = t'))h_1(t).
$$
Vice versa, if $h_1(t') > h_1(t)$, then one can increase the overall measure of $U$ by moving $X_1's$ mass at $t$ to $t'$. There is one caveat: one can always move mass inwards from $t'$ to $t$, 
but the tail bound on $X_1$ may prevent moving mass outwards from $t$ to $t'$.

Because $(X_1,...,X_n)$ maximizes the measure of $U$ by hypothesis, by the previous informal argument we conclude if $X_1$ has mass at some $t'$, then $\forall t \in [0,t'), h_1(t) < h_1(t')$ (otherwise the measure of $U$ could be increased by moving $X_1$'s mass at $t'$ backwards to $t$). Continuing this line of reasoning, because we have concluded $\forall t \in [0,t'), \ h_1(t) < h_1(t')$, the measure of $U$ can be increased by moving $X_1$'s mass from all $t \in [0,t')$ to $t'$ until the tail bound on $X_1$ (which may be a right, absolute, or $2$ tail bound) prevents adding more mass to $t'$.

Because $t'$ was arbitrary, these observations describe the form of $(X_1,...,X_n)$. In conclusion, if $(X_1,...,X_n)$ maximizes the mass of $U$, without loss of generality $X_1$ is ``pushed outwards against its tail bound on some $G^1 \subset \mathbb{R}."$ Furthermore, 
$\forall t, t' \in G^1, \ 0 \leq t<t'$, one has $h_1(t) < h_1(t')$. Lastly, $X_1$ was arbitrary, so each $X_k$ has the form of being ``pushed outwards on some $G^k$ against its tail" as well.

Notice this informal argument appears to apply no matter what type of tail bound is placed on each $X_k$ (section~\ref{tailtypes}). Unfortunately I am still looking for a general framework which treats all tail types collectively, rather 
than on a case by case basis. 
\index{Right shift operator $\mathscr{S}_R$} 
\begin{definition}[Right shift operator]\label{SRdef}
The right shift operator $\mathscr{S}_R$ is defined as
$$
\mathscr{S}_R: (f, G) \mapsto (X_1,...,X_n)
$$
where $f=(f_1,...,f_n) \in \mathcal{B}_R^n, \ G$ is non-empty and closed, and (recall section~\ref{abbreviationschapter4} notations) 
\begin{equation}\label{SRXkeq}
   X_k(s) = \sup \big(G^k \cup (-\infty, \inf G^k]\big)_{\leq \widetilde{X}_k(s)}, \ \forall s \in (0,1)
\end{equation}
where
\begin{equation}\label{Xtilde(2)}
\widetilde{X}_k(s) := \sup f_k^{-1}[1-s,1], \ \forall s \in (0,1)
\end{equation}
is from lemma~\ref{Xtilde}. 
\end{definition}
The full expression is, $\forall s = (s_1,...,s_n) \in (0,1)^n$,
\begin{align*}
\mathscr{S}_R((f_1,...,f_n), G)(s) =& \bigg( \sup \Big(\big(G^1\cup(-\infty,\inf G^1]\big) \cap \big(-\infty, \sup f_1^{-1}[1-s_1,1] \big]\Big), \\
&..., \sup \Big(\big(G^n \cup (-\infty, \inf G^n]\big) \cap (-\infty, \sup f_n^{-1}[1-s_n,1] \big]\Big)\bigg).
\end{align*}
Notice $\mathscr{S}_R(f_k,G^k)$, the right shift operator for $n=1$, equals $X_k$ in \eqref{SRXkeq}. So, to condense this formula, we may write\footnote{Similar condensations exist and will be used for the other shift operators.}
\begin{align}\label{condenseSR}
\mathscr{S}_R(f,G) &= (\mathscr{S}_R(f_1,G^1),...,\mathscr{S}_R(f_n,G^n)), \text{ or } \\
&= \Big(\sup \big(G^1 \cup (-\infty, \inf G^1]\big)_{\leq \widetilde{X}_1},..., \sup \big(G^n \cup (-\infty, \inf G^n]\big)_{\leq \widetilde{X}_n}\Big).
\end{align}
Explanation of \eqref{SRXkeq}: lemma~\ref{Xtilde} said $\widetilde{X}_k$ is the ``largest," or ``rightmost,'' neat r.v.\ which satisfied 
$$
P(\widetilde{X}_k \geq t) \leq f_k(t), \ \forall t \in \mathbb{R}.
$$ 
More specifically, (ii) said
$$
P(\widetilde{X}_k \geq t) = f_k(t), \ \forall t \in \mathbb{R},
$$
and (iii) said
\begin{equation}\label{Xtilde(3)}
f_k \in \mathcal{B}_R, \ X_k \in \mathcal{T}_R(f_k) \implies \forall s \in (0,1), \ X_k(s) \leq \widetilde{X}_k(s).
\end{equation}
In other words, there does not exist a neat r.v.\ which sends $s \in (0,1)$ to a value greater than $\widetilde{X}_k(s)$ while maintaining the right tail bound $f_k$.

$\mathscr{S}_R(f,G) = (X_1,...,X_n)$ was supposed to formalize ``shift each $X_k$ outwards on $G^k$ as far as possible against $f_k$." By lemma~\ref{existenceneat}, WLOG $X_k$ are neat, 
which is advantageous. Given these desires for $X_k$, why \eqref{SRXkeq}?

A good try is $X_k$ mapping each $s \in (0,1)$ to the largest value of $G^k$ it can while maintaining $X_k \nearrow$ and $P(X_k \geq t) \leq f_k(t), \ \forall t \in \mathbb{R}$. In terms
of formulas, these restrictions read $X_k(s) \in G^k$ and $X_k(s) \leq \widetilde{X}_k(s)$ (by \eqref{Xtilde(3)}). The largest value of $\mathbb{R}$ allowed by the restrictions is 
$\sup \big(G^k \cap (-\infty, \widetilde{X}_k(s)]\big) = \sup G^k_{\leq \widetilde{X}_k(s)}$. 

There is one problem: this supremum is $-\infty$ when $\widetilde{X}_k(s) < \inf G^k$, because the least upper bound of the empty set is $- \infty$. Adjoining $(-\infty, \inf G^k]$ to $G^k$ solves this problem, 
which is \eqref{SRXkeq}.

\begin{theorem}\label{SRTR(f)}
For all $f \in \mathcal{B}_R^n$ and $G$ non-empty and closed,\footnote{Recall $G$ is of the form $G=G^1 \times \cdots \times G^n \subset \mathbb{R}^n$ (section~\ref{abbreviationschapter4}).}
$$
\mathscr{S}_R(f,G) \in \mathcal{T}_R(f).
$$
\end{theorem}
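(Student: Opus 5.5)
The proof reduces to a single coordinate. By \eqref{condenseSR}, $\mathscr{S}_R(f,G)$ is the product r.v.\ of the one-dimensional maps $X_k=\mathscr{S}_R(f_k,G^k)$ from \eqref{SRXkeq}. By the definition of $\mathcal{T}_R(f)$ in \eqref{TR}, it therefore suffices to fix $k$ and show two things: that $X_k\in\mathcal{N}$, and that $P(X_k\geq t)\leq f_k(t)$ for all $t\in\mathbb{R}$. Throughout, write $H_k:=G^k\cup(-\infty,\inf G^k]$. Since $G^k$ is nonempty and closed, $H_k$ is nonempty and closed. (If $\inf G^k=-\infty$, then $H_k=G^k$.)

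First I will check that $X_k$ is well defined and finite. For $s\in(0,1)$, the number $\widetilde{X}_k(s)$ is finite by lemma~\ref{Xtilde} (i). The set $(H_k)_{\leq \widetilde{X}_k(s)}$ is bounded above by $\widetilde{X}_k(s)$. It is also nonempty. If $\inf G^k>-\infty$, it contains $\min\{\inf G^k,\widetilde{X}_k(s)\}$. If $\inf G^k=-\infty$, then $G^k$ is unbounded below and so has points below $\widetilde{X}_k(s)$. Hence $X_k(s)$ is a finite real number. Moreover $(H_k)_{\leq \widetilde{X}_k(s)}$ is closed, so the supremum is attained: $X_k(s)\in H_k$ and
\[
X_k(s)\leq \widetilde{X}_k(s).
\]

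Next comes neatness. Lemma~\ref{Xtilde} (i) gives $\widetilde{X}_k\nearrow$. If $s\leq s'$, then $(H_k)_{\leq\widetilde{X}_k(s)}\subset (H_k)_{\leq\widetilde{X}_k(s')}$, and taking suprema gives $X_k(s)\leq X_k(s')$. So $X_k$ is monotone on $(0,1)$, hence Borel measurable, and therefore $X_k\in\mathcal{N}$.

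Finally, the tail bound. The pointwise inequality $X_k(s)\leq\widetilde{X}_k(s)$ gives the inclusion
\[
\{s\in(0,1)\mid X_k(s)\geq t\}\subset\{s\in(0,1)\mid \widetilde{X}_k(s)\geq t\}.
\]
Taking Lebesgue measure and using lemma~\ref{Xtilde} (ii), we get $P(X_k\geq t)\leq P(\widetilde{X}_k\geq t)=f_k(t)$ for every $t$. This places $X_k$ in the $k$th factor of $\mathcal{T}_R$. Because the product r.v.\ has mutually independent components by construction (section~\ref{preliminaries}), $\mathscr{S}_R(f,G)\in\mathcal{T}_R(f)$.

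The only step needing care is the well-definedness step: showing the supremum is finite, attained, and taken over a nonempty set in both cases $\inf G^k=\pm\infty$ versus finite. This is exactly what the adjoined ray $(-\infty,\inf G^k]$ is designed to guarantee. Everything else is immediate from lemma~\ref{Xtilde}.
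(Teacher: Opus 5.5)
Your proof is correct and follows the same route as the paper. You get monotonicity of $\mathscr{S}_R(f_k,G^k)$ from $\widetilde{X}_k\nearrow$ via the nested sets $(H_k)_{\leq\widetilde{X}_k(s)}$, and the tail bound from the pointwise inequality $\mathscr{S}_R(f_k,G^k)\leq\widetilde{X}_k$ together with lemma~\ref{Xtilde} (ii); your additional check that the supremum is finite and attained (which the paper leaves implicit) is a welcome refinement rather than a different approach.
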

\begin{proof}
To show $(\mathscr{S}_R(f_1,G^1),...,\mathscr{S}_R(f_n,G^n))$ satisfies \eqref{TR}, the definition of $\mathcal{T}_R(f)$, we must use \eqref{SRXkeq} to prove $\mathscr{S}_R(f_k,G^k)$ are neat and $P(\mathscr{S}_R(f_k,G^k) \geq t) \leq f_k(t), \forall t \in \mathbb{R}$. 

$\mathscr{S}_R(f_k,G^k) \in \mathcal{N}$: 

By lemma~\ref{Xtilde} (i), $\widetilde{X}_k$ is neat, i.e.\ monotonically increasing on $(0,1)$. The subset $\big(G^k \cap (-\infty, \inf G^k] \big)_{\leq \widetilde{X}_k(s)}$ in \eqref{SRXkeq} 
can only become larger if $\widetilde{X}_k(s)$ increases. It follows $\big(G^k \cap (-\infty, \inf G^k] \big)_{\leq \widetilde{X}_k(s)}$ is monotonically increasing on $(0,1)$ as desired.

$P(\mathscr{S}_R(f_k,G^k) \geq t)$ satisfies the right tail $f_k$: 

It is sufficient to show 
\begin{equation}\label{K1}
\big\{s \in (0,1) \ \big| \ \mathscr{S}_R(f_k,G^k)(s) \geq t \big\} \subset \big\{s \in (0,1) \ \big| \ \widetilde{X}_k(s) \geq t \big\},
\end{equation}
because by lemma~\ref{Xtilde} (ii), the measure of the RHS is $f_k(t)$. \eqref{K1} is true because by \eqref{SRXkeq}, $\mathscr{S}_R(f_k,G^k)(s) \leq \widetilde{X}_k(s), \forall s \in (0,1)$. Since $k$ was arbitrary we are done.
\end{proof}

Notice if $G = G^1 \times \cdots \times G^n = \mathbb{R}^n$, then 
$$
\mathscr{S}_R(f_k,\mathbb{R})(s) \stackrel{\eqref{SRXkeq}}{=} \sup \mathbb{R}_{\leq \widetilde{X}_k(s)} = \widetilde{X}_k(s).
$$
All together, $\mathscr{S}_R(f,\mathbb{R}^n) = (\widetilde{X}_1,...,\widetilde{X}_n)$. 
\begin{equation*}
\sup_{X \in \mathcal{T}_R(f)} P(X \in g^{-1}[t,\infty)) = P\big(\mathscr{S}_R(f,\mathbb{R}^n) \in g^{-1}[t,\infty)\big), \ \ \forall t \in \mathbb{R}.
\end{equation*}

\begin{theorem}
The reversed CDF (section~\ref{preliminaries}) of $\mathscr{S}_R(f_k,G^k)$ is 
\begin{equation}\label{SRRCDF}
P(\mathscr{S}_R(f_k,G^k) \geq t) = f_k\Big(\inf \big(G^k\cup (-\infty, \inf G^k]\big)_{\geq t}\Big), \ \forall t \in \mathbb{R}
\end{equation}
with the conventions $\inf \emptyset = + \infty$ and $f_k(+\infty) = 0$. 
\begin{proof}
For convenience, we may replace $G^k$ with $G^k \cup (-\infty, \inf G^k]$ WLOG. \eqref{SRXkeq} becomes
$$
\mathscr{S}_R(f_k,G^k)(s) = \sup G^k_{\leq \widetilde{X}_k(s)}, \ \forall s \in (0,1).
$$
One finds
$$
\mathscr{S}_R(f_k,G^k)(s) \geq t \iff \exists t' \in G^k \cap [t, \widetilde{X}_k(s)].
$$
The RHS holds iff $\widetilde{X}_k(s) \geq \inf G^k \cap [t,\infty) = \inf G^k_{\geq t}, \ G^k_{\geq t} \neq \emptyset$. Thus, 
\begin{equation}\label{gamma1}
\mathscr{S}_R(f_k,G^k)(s) \geq t \iff \widetilde{X}_k(s) \geq \inf G^k_{\geq t}, \ G^k_{\geq t} \neq \emptyset. 
\end{equation}
So $P(\mathscr{S}_R(f_k,G^k) \geq t) = 0$ if $G^k_{\geq t} = \emptyset$ (i.e.\ if $\inf G^k_{\geq t} = +\infty$, because the greatest lower bound of the empty set is $+\infty$). For $G^k_{\geq t} \neq \emptyset$, define
$$
I_t := \big\{ s \in (0,1) \ \big| \ \mathscr{S}_R(f_k,G^k)(s) \geq t \big\} \stackrel{\eqref{gamma1}}{=} \big\{ s \in (0,1) \ \big| \ \widetilde{X}_k(s) \geq \inf G^k_{\geq t} \big\}.
$$
This gives
\begin{align*}
P(\mathscr{S}_k(f_k,G^k) \geq t) = m(I_t) &= P(\widetilde{X}_k \geq \inf G^k_{\geq t}) \\ 
&= f_k(\inf G^k_{\geq t})
\end{align*}
where the last step is because $P(\widetilde{X}_k \geq t) = f_k(t), \ \forall t \in \mathbb{R}$ \ (lemma~\ref{Xtilde} (ii)).
\end{proof} 
\end{theorem}
Recall lemma~\ref{uniquenessneat} states there is a unique right/left continuous neat r.v.\ with a given CDF (equivalently, a given reversed CDF). In the next theorem we will prove 
$\sup \ (G^k \cup (-\infty, \inf G^k])_{\leq \widetilde{X}_k(s)}$ is right continuous. It follows $\mathscr{S}_R(f_k,G^k)$ is the unique neat right continuous r.v.\ which satisfies \eqref{SRRCDF}.

\begin{theorem}
Given $f_k \in \mathcal{B}_R$ and $G^k$,
\begin{equation}\label{gamma2}
   \mathscr{S}_R(f_k,G^k) (s) = \sup (G^k \cup (-\infty, G^k])_{\leq \widetilde{X}_k(s)}, \ \forall s \in (0,1)
\end{equation} 
is right continuous.
\end{theorem}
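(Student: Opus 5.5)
Write $H := G^k \cup (-\infty, \inf G^k]$ (if $\inf G^k=-\infty$ this is just $G^k$). Since $G^k$ is closed and nonempty, $H$ is closed and nonempty, and $H$ is unbounded below. The statement then concerns the composition $s \mapsto \phi(\widetilde{X}_k(s))$ with
$$
\phi(u) := \sup H_{\leq u}, \ \forall u \in \mathbb{R}.
$$
The plan is to show two things. First, $\phi$ is right continuous, monotonically increasing, and real valued. Second, $\widetilde{X}_k$ is right continuous and increasing; this is already lemma~\ref{Xtilde} (i), (ii). Right continuity of the composition then follows from the standard fact for monotone maps. If $s_j \searrow s$, then $\widetilde{X}_k(s_j) \searrow \widetilde{X}_k(s)$ from above, because $\widetilde{X}_k$ is increasing and right continuous. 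Applying the increasing, right continuous $\phi$ then gives $\phi(\widetilde{X}_k(s_j)) \to \phi(\widetilde{X}_k(s))$. (If the sequence is eventually equal to $\widetilde{X}_k(s)$, this is trivial; otherwise we are approaching from the right.)

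For the properties of $\phi$, argue as follows. Since $H$ is unbounded below, $H_{\leq u} \neq \emptyset$ for every $u$, so $\phi(u) > -\infty$. Also $H_{\leq u} \subset (-\infty,u]$, so $\phi(u) \leq u < \infty$. Since $H_{\leq u} = H \cap (-\infty,u]$ is closed and bounded above, the supremum is attained, so $\phi(u) \in H$ and $\phi(u) \leq u$. Monotonicity is immediate, because $H_{\leq u} \subset H_{\leq u'}$ for $u \leq u'$.

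The key step, and the only real obstacle, is right continuity of $\phi$. Fix $u$ and take $u_j \searrow u$. Then $\phi(u_j) \in H \cap (-\infty,u_j]$, and $\phi(u_j)$ is decreasing and bounded below by $\phi(u)$, so it converges to some $v \geq \phi(u)$. Closedness of $H$ gives $v \in H$, and passing to the limit in $\phi(u_j) \leq u_j$ gives $v \leq u$. Hence $v \in H_{\leq u}$, so $v \leq \phi(u)$, and therefore $v = \phi(u)$. This is where closedness of $G^k$ is essential. Without it, for example with $G^k=\{0\}\cup(1,2)$, right continuity can fail at $u=1$.

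Finally, recombine. Substituting $u=\widetilde{X}_k(s)$ and using the monotone-composition argument from the first paragraph gives right continuity of $\mathscr{S}_R(f_k,G^k)$ in \eqref{gamma2}. One remark: the statement's $(-\infty, G^k]$ should be read as $(-\infty,\inf G^k]$, as in \eqref{SRXkeq}.
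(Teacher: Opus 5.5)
Your proof is correct, but it is organized differently from the paper's.

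\textbf{The paper's route.} The paper fixes $s$ and splits into two cases according to whether $H=G^k\cup(-\infty,\inf G^k]$ meets every interval $[\widetilde{X}_k(s),\widetilde{X}_k(s)+\epsilon]$. If some such interval misses $H$, right continuity of $\widetilde{X}_k$ makes $\mathscr{S}_R(f_k,G^k)$ constant on some $[s,s+\epsilon_s)$. Otherwise closedness forces $\widetilde{X}_k(s)\in H$, so $\mathscr{S}_R(f_k,G^k)(s)=\widetilde{X}_k(s)$. The squeeze $\widetilde{X}_k(s)\le\mathscr{S}_R(f_k,G^k)(s+\epsilon)\le\widetilde{X}_k(s+\epsilon)$ then finishes; it uses the previously proved neatness of $\mathscr{S}_R$ and the bound $\mathscr{S}_R\le\widetilde{X}_k$.

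\textbf{Your route.} You factor $\mathscr{S}_R(f_k,G^k)=\phi\circ\widetilde{X}_k$. You then prove once, by a single limit-plus-closedness argument, that the deterministic map $\phi(u)=\sup H_{\le u}$ is increasing and right continuous, with $\phi(u)\in H$ and $\phi(u)\le u$. The paper's two cases are exactly the two ways $\phi$ is right continuous at $u=\widetilde{X}_k(s)$: either $\phi$ is locally constant to the right of $u$, or $u\in H$ and $\phi$ is squeezed between $u$ and the identity.

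\textbf{Comparison.} Your factorization is more modular. It shows exactly where closedness of $G^k$ enters, and your counterexample $\{0\}\cup(1,2)$ makes this concrete. It does not rely on the earlier monotonicity result for $\mathscr{S}_R$; that monotonicity, together with properties (v) and (vi) of theorem~\ref{SRproperties}, comes for free from properties of $\phi$. It also transfers verbatim to any increasing right-continuous input, for instance $\mathscr{S}_R(f_k,G^k_{|\cdot|})$ inside the absolute shift operator. The paper's argument avoids the auxiliary map and the composition step, but re-derives the same facts inline at each $s$.

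Your reading of $(-\infty,G^k]$ as $(-\infty,\inf G^k]$ is the intended one.
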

\begin{proof}
There are two cases to consider.

$
(1) \ \ \ \ \ \ \ \ \text{Case } \big(G^k \cup (-\infty, \inf G^k]) \cap \big[\widetilde{X}_k(s), \widetilde{X}_k(s) + \epsilon \big] = \emptyset, \ \text{some } \epsilon >0:
$

Since $\widetilde{X}_k$ is right continuous and $\widetilde{X}_k \nearrow$ (lemma~\ref{Xtilde}), necessarily $\exists \epsilon_s>0$ such that $\widetilde{X}_k\big([s,s+\epsilon_s)\big) \subset 
[ \widetilde{X}_k(s), \widetilde{X}_k(s)+\epsilon].$ By this and the case hypothesis, 
$$
\widetilde{X}_k \big([s,s+\epsilon_s)\big) \cap \big(G^k \cup (-\infty, \inf G^k ]\big) = \emptyset.
$$
Combining this with \eqref{SRXkeq},
$$
\mathscr{S}_R(f_k,G^k)(s) = \mathscr{S}_R(f_k,G^k)(s'), \ \forall s' \in [s,s+\epsilon_s).
$$
This implies right continuity as desired. 
$$
\text{Case } \big(G^k \cup (-\infty, \inf G^k] \big) \cap \big[ \widetilde{X}_k(s), \widetilde{X}_k(s) + \epsilon \big] \neq \emptyset, \ \forall \epsilon >0:
$$
One can construct a sequence in $G^k \cup (-\infty, \inf G^k]$ which converges to $\widetilde{X}_k(s)$. $G^k$ is (always) closed (section~\ref{Abr.S.O.'s}). So $\widetilde{X}_k(s) \in G^k \cup (-\infty, \inf G^k]$. It follows 
\begin{equation*} 
   \mathscr{S}_R(f_k,G^k)(s) \stackrel{\eqref{SRXkeq}} = \widetilde{X}_k(s).
\end{equation*}
On the other hand, $\mathscr{S}_R(f_k,G^k)(s+\epsilon) \geq \mathscr{S}_R(f_k,G^k)(s)$ ($\mathscr{S}_R$ is neat by theorem~\ref{SRTR(f)}). Furthermore, by \eqref{SRXkeq}, $\mathscr{S}_R(f_k,G^k)(s+\epsilon) \leq \widetilde{X}_k(s+\epsilon)$. Combining these three,
$$
\widetilde{X}_k(s) = \mathscr{S}_R(f_k,G^k)(s)\leq \mathscr{S}_R(f_k,G^k)(s+\epsilon) \leq \widetilde{X}_k(s+\epsilon)
$$
Because $\widetilde{X}_k$ is right continuous (lemma~\ref{Xtilde} (ii)), this implies $\mathscr{S}_R(f_k, G^k)(s+\epsilon) \rightarrow \mathscr{S}_R(f_k,G^k)(s)$ as $\epsilon \searrow 0$.
\end{proof}
It is convenient to collect the properties of $\mathscr{S}_R$ we have learned in one location, which are mostly the previous results restated. 
\begin{theorem}[$\mathscr{S}_R$ properties]\label{SRproperties}
Let $f \in \mathcal{B}_R^n$ \eqref{BRTR}. Let $\emptyset \neq G = G^1 \times \cdots \times G^n \subset \mathbb{R}^n$, with all $G^k$ closed. Then 
\begin{align}
   \mathscr{S}_R(f,G) &= (\mathscr{S}_R(f_1,G^1),...,\mathscr{S}_R(f_n,G^n)) \\
   &= \Big(\sup \big(G^1 \cup (-\infty, \inf G^1]\big)_{\leq \widetilde{X}_1},..., \sup \big(G^n \cup (-\infty, \inf G^n]\big)_{\leq \widetilde{X}_n}\Big) \label{SRXkeq2}
   \end{align}
has the following properties. 
\begin{enumerate}[label = (\roman*)]
\item $\mathscr{S}_R(f,G) \in \mathcal{T}_R(f)$
\item $\forall k$, $P(\mathscr{S}_R(f_k,G^k) \geq t) = f_k \big( \inf \big(G^k \cup (-\infty, \inf G^k]\big)_{\geq t} \big), \ \forall t \in \mathbb{R}$ (employing the convention\footnote{This is natural because $f_k \searrow 0$ by definition of $\mathcal{B}_R$.} $f_k(+\infty) = 0$)
\item $\mathscr{S}_R(f_k,G^k)$ are right continuous
\item $\mathscr{S}_R(f,G)$ is the unique element of $\mathcal{N}^n$ which satisfies (ii) and (iii)
\item $\forall s_k \in (0,1), \ \mathscr{S}_R(f_k,G^k)(s_k) \in G^k \cup (-\infty, \inf G^k]$
\item $\forall s_k \in(0,1), \ \mathscr{S}_R(f_k,G^k)(s_k) \leq \widetilde{X}_k(s_k), \ k=1,...,n$
\end{enumerate}
\begin{proof} 
(iv) is an application of lemma~\ref{uniquenessneat}. (v) and (vi) follow immediately by \eqref{SRXkeq} and since $G^k$ are closed.
\end{proof} 
\end{theorem}

\section{Applying the right shift operator to $g: \mathbb{R} \rightarrow \mathbb{R}$}
For $g: \mathbb{R} \rightarrow \mathbb{R}$ measurable, one way to visualize the sharpest right tail of $g(X)$, for $X$ right tail bounded by $f \in \mathcal{B}_R$, is to graph $g$. 

\begin{figure}[h!]
\includegraphics[scale = .35, center]{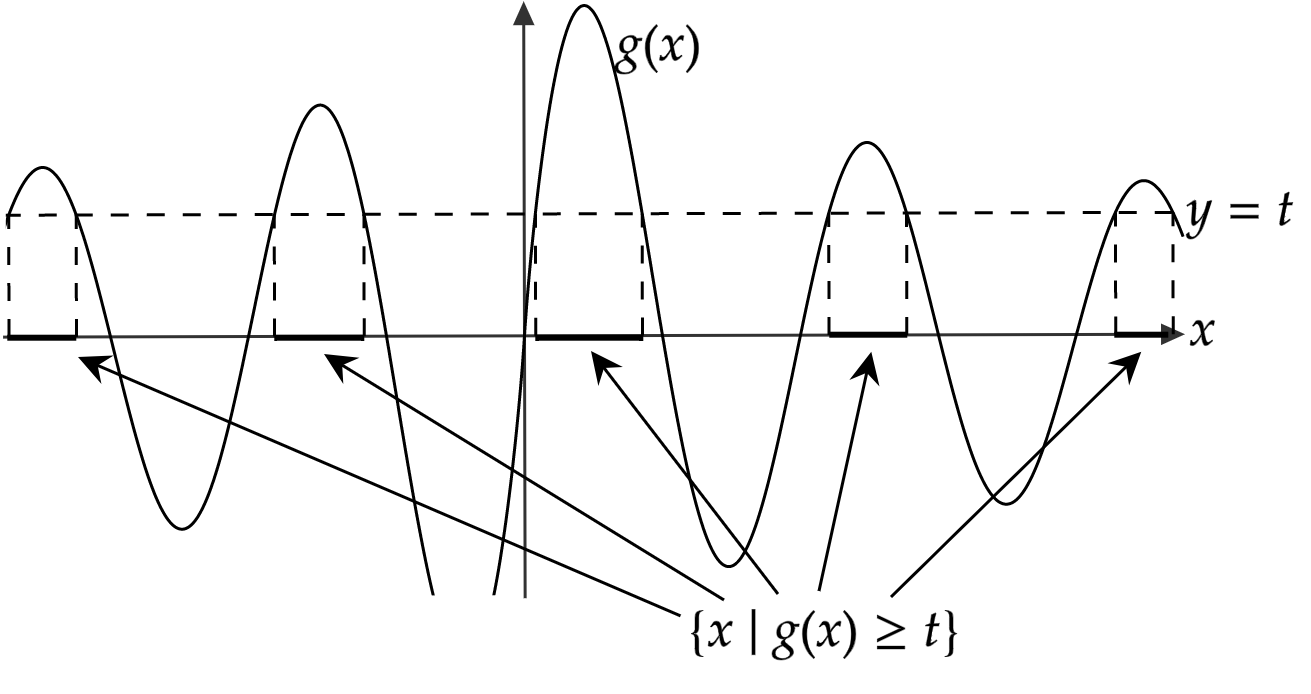}\label{testfig}
\caption{Visualizing the sharpest tail bound}\label{fig.2}
\end{figure}
Then, maximizing the measure of $g^{-1}[t,\infty)$ amounts to getting $X$'s mass as much in $g^{-1}[t,\infty)$ as the right tail allows. Potentially, $g^{-1}[t,\infty), \ g^{-1}[t',\infty), \ t \neq t'$ have different $X$ which maximize their mass/measure.  

It is better to have $X$'s mass at points where $g$ is highest, since that mass will be in $g^{-1}[t,\infty)$ for many values of $t$. This way, the same $X$ can be used to maximize the mass of $g^{-1}[t,\infty)$ for many $t$.

This discussion also applies to higher dimensions when $g: \mathbb{R}^n \rightarrow \mathbb{R}$, trying to maximize the mass of $g^{-1}[t,\infty)$ over $(X_1,...,X_n) \in \mathcal{T}_R(f_1,...,f_n)$; one can simply suppose the horizontal axis is $\mathbb{R}^{n-1}$. Theorem~\ref{thm.1}, theorem~\ref{thm.2}, and example $2$ were 
such cases where a single choice of $(X_1,...,X_n)$ maximized the measure of $g^{-1}[t,\infty)$ for all $t$. In particular, $G = \mathbb{R}^n$, which returns $\mathscr{S}_R(f,\mathbb{R}^n) = (\widetilde{X}_1,...,\widetilde{X}_n)$ \eqref{SRXkeq}.

\begin{theorem}
Let $g: \mathbb{R} \rightarrow \mathbb{R}$ be continuous and $f \in \mathcal{B}_R$. If 
\begin{equation}\label{gcontG^1def}
G:= \big\{ t \in \mathbb{R} \ \big| \ \forall t' < t, g(t') \leq g(t) \big\}
\end{equation}
is not bounded below,\footnote{The necessity for $G^1$ to not be bounded below can be understood by figure~\ref{fig.2}; if $G$ were bounded below, then no single $X \in \mathcal{T}_R(f_1)$ 
would maximize the measure of $g^{-1}[t,\infty)$ for all $t$. This theorem is difficult to generalize to higher dimensions, so this is not essential to understand.} then 
\begin{equation}\label{gconteq}
\sup_{X \in \mathcal{T}_R(f)}  P(g(X) \geq t) = P\big(g(\mathscr{S}_R(f,G) \geq t)\big), \ \forall t \in \mathbb{R}.
\end{equation}
\end{theorem}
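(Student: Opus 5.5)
The plan is to mimic the proof of theorem~\ref{thm.1}: compare an arbitrary $X\in\mathcal{T}_R(f)$ pointwise on $(0,1)$ with $Y:=\mathscr{S}_R(f,G)$, and show $g(X(s))\le g(Y(s))$ for every $s\in(0,1)$. This gives $\{s: g(X(s))\ge t\}\subset\{s: g(Y(s))\ge t\}$, hence $P(g(X)\ge t)\le P(g(Y)\ge t)$ for all $t$. Since $Y\in\mathcal{T}_R(f)$ by theorem~\ref{SRproperties} (i), the supremum in \eqref{gconteq} is attained at $Y$.

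\textbf{Preliminary facts about $G$.} First, $G$ is closed. If $t_j\in G$ and $t_j\to t$, then for $t'<t$ we eventually have $t'<t_j$, so $g(t')\le g(t_j)\to g(t)$ by continuity of $g$. Second, $G$ is nonempty and not bounded below, by hypothesis. Hence $\inf G=-\infty$, so $G\cup(-\infty,\inf G]=G$, and \eqref{SRXkeq} reads
$$Y(s)=\sup G_{\le \widetilde{X}(s)}.$$
This set is nonempty because $G$ is unbounded below, and it is bounded above by $\widetilde{X}(s)$. So $y:=Y(s)$ is finite, and $y\in G$ by closedness (theorem~\ref{SRproperties} (v)). Finally, every neat $X\in\mathcal{T}_R(f)$ satisfies $X(s)\le\widetilde{X}(s)$ by lemma~\ref{Xtilde} (iii).

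\textbf{Pointwise comparison.} Fix $s$, write $x:=X(s)$ and $y:=Y(s)$, and split into two cases.
\begin{enumerate}[label=(\alph*)]
\item If $x\le y$, then $g(x)\le g(y)$ immediately, because $y\in G$.
\item If $y<x\le\widetilde{X}(s)$, this is the main step. Suppose for contradiction that $g(x)>g(y)$. By compactness and continuity, pick $x^*\in[y,\widetilde{X}(s)]$ maximizing $g$ on that interval. Then $g(x^*)\ge g(x)>g(y)$, so $x^*>y$.
\end{enumerate}
In case (b) I claim $x^*\in G$. Take $t'<x^*$. If $t'\ge y$, then $g(t')\le g(x^*)$ by maximality. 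If $t'<y$, then $g(t')\le g(y)<g(x^*)$, because $y\in G$. So $x^*\in G_{\le\widetilde{X}(s)}$ with $x^*>y=\sup G_{\le\widetilde{X}(s)}$, a contradiction. Therefore $g(x)\le g(y)$ in case (b) as well.

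The only genuinely delicate point is case (b). It relies on the continuity of $g$ (to obtain the maximizer and the closedness of $G$) and on $y\in G$ (to control $g$ to the left of $y$). Measurability of $g(Y)$ is automatic, since $g$ is continuous and $Y$ is neat.
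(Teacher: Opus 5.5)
Your proof is correct and follows the paper's argument: closedness of $G$, the fact that $\mathscr{S}_R(f,G)(s)\in G$, the bound $X(s)\le\widetilde{X}(s)$, and the observation that a maximizer of $g$ on $[\mathscr{S}_R(f,G)(s),\widetilde{X}(s)]$ lies in $G$ are exactly the paper's ingredients. The only cosmetic difference is that the paper proves $g(\mathscr{S}_R(f,G)(s))=\sup_{t\le\widetilde{X}(s)}g(t)$ directly by showing the maximizer equals $\mathscr{S}_R(f,G)(s)$, whereas you argue by contradiction for the single point $X(s)$.
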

\begin{proof}
To show $G$ is closed: Suppose $(t_k)_k \subset G$ such that $t_k \rightarrow t$. We will show $t$ satisfies the definition of $G$. Suppose $t' <t$. By \eqref{gcontG^1def}, 
$g(t') \leq g(t_k)$ for all $k>>0$. Taking $k \rightarrow \infty$ and using the continuity of $g$,
$$
g(t') \leq \lim_{k \rightarrow \infty} g(t_k) = g(t)
$$
as desired. So $G$ is closed. We will now show\footnote{This is hopefully visually intuitive (figure~\ref{fig.2}), but is too lengthy to explain to be worthwhile if it is not.}
\begin{equation}\label{gcontclaim}
g\big(\mathscr{S}_R(f,G)(s)\big) = \sup_{t \in (-\infty, \widetilde{X}(s)]} g(t), \ \forall s\in (0,1).
\end{equation}
LHS $\leq$ RHS: $\mathscr{S}_R(f,G)(s) \in (-\infty, \widetilde{X}(s)]$ (theorem~\ref{SRproperties} (vi)), so we are done.

LHS $\geq$ RHS: there are two cases for $t \leq \widetilde{X}(s)$.

Case $t \in (-\infty, \mathscr{S}_R(f,G)(s))$: 

Because $\mathscr{S}_R(f,G) \in G$ (theorem~\ref{SRproperties} (v)), by the definition of $G, \\ g(t) \leq g(\mathscr{S}_R(f,G))$. 

Case $t \in [\mathscr{S}_R(f,G)(s), \widetilde{X}(s)]$: 

Because $g$ is continuous, $\exists t_{max} \in [\mathscr{S}_R(f,G)(s), \widetilde{X}(s)]$ such that $g(t_{max})$ achieves the max of $g$ on 
$[\mathscr{S}_R(f,G), \widetilde{X}(s)]$. We will show 
$$
t_{max} = \mathscr{S}_R(f,G)(s).
$$
This will be done by showing $t_{max} \in G$, since then, because $t_{max} \leq \widetilde{X}(s)$,
$$
\mathscr{S}_R(f,G)(s) = \sup G_{\leq \widetilde{X}(s)} \geq t_{max} \geq \mathscr{S}_R(f,G)(s).
$$
$t_{max} \in G$: 

By \eqref{gcontG^1def}, definition of $G$, we must show $\forall t' \in (-\infty, t_{max}], \ g(t') \leq g(t_{max})$. This certainly holds for $t' \in [\mathscr{S}_R(f,G)(s), t_{max}]$, by 
definition of $t_{max}$. If instead $t' < \mathscr{S}_R(f,G)(s)$, then
\begin{align*}
\ \ \ \ \ \ \ \ \ \ \ \ \ \ \ \ \ \ \ \ \ \ \ \ \ \ \ \ \ g(t') &\leq g(\mathscr{S}_R(f,G)(s)) \ \ \ \ \eqref{gcontG^1def} \text{ and since } \mathscr{S}_R(f,G) \in G\\ 
\ \ \ \ \ \ \ \ \ \ \ \ \ \ \ \ \ \ \ \ \ \ \ \ \ \ \ \ \ &\leq g(t_{max}). \ \ \ \ \ \ \ \ \ \ \ \ \ \ t_{max} \text{ definition}
\end{align*}
 So $t_{max} \in G$ as desired. Thus LHS $\geq$ RHS. Thus \eqref{gcontclaim} holds.

Now, suppose $X \in \mathcal{T}_R(f), \ t \in \mathbb{R}$. Firstly $X(s) \leq \widetilde{X}(s)$ (lemma~\ref{Xtilde}). So by \eqref{gcontclaim}, if $s \in (0,1)$ such that $g(X(s)) \geq t$, then
$g(\mathscr{S}_R(f,G)(s)) \geq t$ too. Therefore
$$
\{s \in (0,1) \ \big| g(X(s)) \geq t\} \subset \{ s \in (0,1) \ \big| \ g(\mathscr{S}_R(f,G)(s)) \geq t \}.
$$
Applying the measure $m$, and then the supremum over $X \in \mathcal{T}_R(f)$,
$$
\sup_{X \in \mathcal{T}_R(f)} P\big(g(X)\geq t\big) \leq P\big(g(\mathscr{S}_R(f,G)) \geq t\big).
$$
Equality holds since $\mathscr{S}_R(f,G) \in \mathcal{T}_R(f)$ (theorem~\ref{SRproperties} (i)).
\end{proof} 
\section{Left and $2$ shift operators}

We have only rigorously defined $\mathscr{S}_R$. We will now define $\mathscr{S}_L$ and $\mathscr{S}_2$, corresponding to left and $2$ tail bounds (\eqref{B2T2} and section~\ref{tailtypes}), 
respectively.

\index{Left shift operator $\mathscr{S}_L$} 
$\mathscr{S}_L$ is symmetric to $\mathscr{S}_R$ (it is a reflection) shifting the $X_k$ subject to $P(X_k \leq t) \leq f_k(t), \ \forall t \in \mathbb{R}$ leftwards on $G^k$ rather than rightwards. Specifically, 
\begin{definition}
The left shift operator $\mathscr{S}_L$ is defined as
\begin{equation*}
\mathscr{S}_L: (f,G) \mapsto (X_1,...,X_n)
\end{equation*}
where $f=(f_1,...,f_n) \in \mathcal{B}_L, \ \emptyset \neq G = G^1 \times \cdots \times G^n$ is closed, and \footnote{This is an analog of \eqref{SRXkeq}, obtained by reflecting about $0$. $f_k^{-1}[s,1], \forall s \in (0,1)$ 
is used instead of $f_k^{-1}[1-s,1]$ because we want $\mathscr{S}_L(f_k,G^k) \in \mathcal{N}$, namely to be monotonically increasing.}
\begin{equation*}
   X_k(s) := \inf \big(G^k\cup [\sup G^k, \infty) \big)_{\geq \inf f_k^{-1}[s,1]}, \ \forall s \in(0,1). 
\end{equation*}
\end{definition}
It would be very tedious and not very helpful to reprove reflected versions of lemma~\ref{Xtilde} and theorem~\eqref{SRproperties} for $\mathscr{S}_L$, when it is clear they should hold due to 
$\mathscr{S}_L$ being totally symmetric to $\mathscr{S}_R$, so we will skip these proofs. The reflected version of theorem~\ref{SRproperties} is 
\begin{theorem}[$\mathscr{S}_L$ properties]\label{SLproperties}
   Let $f \in \mathcal{B}_L^n$. Let $\emptyset \neq G = G^1 \times \cdots \times G^n \subset \mathbb{R}^n$, with all $G^k$ closed. Then 
   \begin{align*}
      \mathscr{S}_L(f,G) = &(\mathscr{S}_L(f_1,G^1),...,\mathscr{S}_L(f_n,G^n)) \\
      = &\Big(\inf \big(G^1 \cup [\sup G^1, \infty)\big)_{\geq \inf f_1^{-1}[s,1]},\\ &...,\inf \big(G^n \cup [\sup G^n, \infty)\big)_{\geq \inf f_n^{-1}[s,1]}\Big)
      \end{align*}
   has the following properties.
\begin{enumerate}[label = (\roman*)]
   \item $\mathscr{S}_L(f,G) \in \mathcal{T}_L(f)$
   \item $\forall k$, $P(\mathscr{S}_L(f_k,G^k) \leq t) = f_k \big( \sup \big(G^k \cup [\sup G^k, \infty)\big)_{\leq t} \big), \ \forall t \in \mathbb{R}$ (employing the convention $f_k(-\infty) = 0$)
   \item $\mathscr{S}_L(f_k,G^k)$ are left continuous
   \item $\mathscr{S}_L(f,G) :(0,1)^n \rightarrow \mathbb{R}^n$ is the unique function which satisfies (i)-(iii)
   \item $\forall s_k \in (0,1)^n, \ \mathscr{S}_L(f_k,G^k)(s_k) \in G^k \cup [\sup G^k, +\infty)$
   \item $\forall s_k\in(0,1), \ \mathscr{S}_L(f_k,G^k)(s_k) \geq \inf f_k^{-1}[s_k,1], \ k=1,...,n$
   \end{enumerate}
   \end{theorem}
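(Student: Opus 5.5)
The plan is to avoid reproving everything from scratch and instead transport Theorem~\ref{SRproperties} through the reflection $x \mapsto -x$ on values together with $s \mapsto 1-s$ on the domain. Fix $k$ and $f_k \in \mathcal{B}_L$. Define $\hat f_k(t) := f_k((-t)^-)$, the left limit $\lim_{u \nearrow -t} f_k(u)$. Since $f_k$ is right continuous, increasing, and satisfies $f_k \to 0$ at $-\infty$ and $f_k \to 1$ at $+\infty$, the function $\hat f_k$ is left continuous, decreasing to $0$, and tends to $1$ at $-\infty$, so $\hat f_k \in \mathcal{B}_R$. Put $\hat G^k := G^k_-$, which is closed. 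This gives $\inf \hat G^k = -\sup G^k$, so $\hat G^k \cup (-\infty,\inf \hat G^k]$ is the reflection of $G^k \cup [\sup G^k,\infty)$.

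The first substantive step is the identity
\[
\mathscr{S}_L(f_k,G^k)(s) = -\lim_{s' \searrow 1-s} \mathscr{S}_R(\hat f_k,\hat G^k)(s'), \quad \text{i.e. } -\mathscr{S}_R(\hat f_k,\hat G^k)(1-s)\ \text{up to left limits.}
\]
To prove it, I first show $\inf f_k^{-1}[s,1] = -\widetilde{\hat X}_k(1-s)$ off the countably many jumps. This uses the description of $f^{-1}[1-s,1]$ as a closed half-line obtained in the proof of Lemma~\ref{Xtilde}(ii); the reflected half-line $f_k^{-1}[s,1] = [\inf f_k^{-1}[s,1],\infty)$ is closed by right continuity. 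Then I use that reflection converts $\sup(\cdot)_{\le a}$ into $-\inf(\cdot)_{\ge -a}$. Rather than fight the left/right-limit bookkeeping, I expect it is cleaner to prove the one-dimensional facts directly by mirroring each argument, using the reflection only as a guide.

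With this in hand, the properties go as follows. Monotonicity holds because $s \mapsto \inf f_k^{-1}[s,1]$ is increasing and $(\cdot)_{\ge a}$ shrinks as $a$ grows, so the infimum increases. Hence each $\mathscr{S}_L(f_k,G^k)$ is neat, and the product lies in $\mathcal{N}^n$.

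Property (vi) is immediate from the definition, and (v) follows from closedness as in Theorem~\ref{SRproperties}(v).

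For (ii), I mirror the proof of \eqref{SRRCDF}. Writing $H = G^k \cup [\sup G^k,\infty)$, we have $\mathscr{S}_L(s) \le t$ iff $\inf f_k^{-1}[s,1] \le \sup H_{\le t}$, with $H_{\le t} \neq \emptyset$. This in turn holds iff $f_k(\sup H_{\le t}) \ge s$, using right continuity and the closed half-line. The measure of this set of $s$ is $f_k(\sup H_{\le t})$, and the convention $f_k(-\infty) = 0$ covers the case $H_{\le t} = \emptyset$.

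Property (i) then follows: since $\sup H_{\le t} \le t$ and $f_k$ is increasing, we get $P(\mathscr{S}_L \le t) \le f_k(t)$.

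Property (iii), left continuity, mirrors the two-case proof of right continuity of $\mathscr{S}_R$. It uses that $s \mapsto \inf f_k^{-1}[s,1]$ is left continuous, which is the reflected analog of Lemma~\ref{Xtilde}(ii) and comes from $f_k^{-1}[s,1] = \bigcap_{s'<s} f_k^{-1}[s',1]$.

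Property (iv) follows from (ii) and (iii) via Lemma~\ref{uniquenessneat}(i) in its left continuous form, since (ii) determines the CDF. Here "unique" must be read within $\mathcal{N}^n$, exactly as in Theorem~\ref{SRproperties}(iv).

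The main obstacle is (ii). The equivalence $\inf f_k^{-1}[s,1] \le u \iff f_k(u) \ge s$ needs care at the endpoints. I will also have to check that $H_{\le t}$ is closed with a finite supremum whenever it is nonempty, which holds because $H$ contains $[\sup G^k,\infty)$ or $G^k$ is bounded above.
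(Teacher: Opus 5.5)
Your direct, mirrored arguments are correct and follow the paper's route. The paper gives no separate proof and simply appeals to the symmetry between $\mathscr{S}_L$ and $\mathscr{S}_R$. Your pieces hold: monotonicity, the quantile equivalence giving (ii), (i) from (ii), left continuity via $f_k^{-1}[s,1]=\bigcap_{s'<s}f_k^{-1}[s',1]$, and (iv) from Lemma~\ref{uniquenessneat}(i).

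One correction to the reflection you set aside: $\hat f_k(t)=f_k((-t)^-)$ is right continuous, not left continuous, because it is the reflection of the left continuous function $u\mapsto f_k(u^-)$. So it is not in $\mathcal{B}_R$. The correct choice is $\hat f_k(t)=f_k(-t)$, which is left continuous precisely because $f_k$ is right continuous. Since $\hat f_k^{-1}[r,1]=(f_k^{-1}[r,1])_-$, this gives $\mathscr{S}_L(f_k,G^k)(s)=-\mathscr{S}_R(\hat f_k,G^k_-)(1-s)$ for every $s\in(0,1)$, with no left limits or exceptional points. Hence (i)--(vi) can also be read off directly from Theorem~\ref{SRproperties}.

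Minor point: the ``bounded above or not'' dichotomy you cite is what guarantees $H_{\ge a}\neq\emptyset$, i.e.\ that $\mathscr{S}_L$ is finite-valued. It is not needed for $H_{\le t}$, which is trivially bounded above by $t$.
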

The two shift operator and absolute shift operator have the same goal $\mathscr{S}_L$ and $\mathscr{S}_R$ do: shifting each $X_k$ outwards as much as possible against its tails on $G^k$. But, there are 
new nuances. Recall for $(X_1,...,X_n)$  to satisfy an n-tuple $((f_1^-,f_1^+,...,f_n^-,f_n^+)) \in \mathcal{B}_2^n$ \eqref{B2T2} of $2$ tail bounds means 
\begin{align*}
P(X_k \geq t) &\leq f_k^+(t), \ \forall t \geq 0 \\
P(X_k \leq t) &\leq f_k^-(t), \ \forall t \leq 0\\
\text{ with } f_k^-(0) &= f_k^+(0) =1
\end{align*}
for $k=1,...,n$. To shift $X_k$ ``outwards against its tails on $G^k \subset \mathbb{R}$," no longer means exclusively right or leftwards, but rightwards on $G^k$ for mass on the positive axis, and leftwards on $G^k$ for mass on the negative axis. 

\index{$2$ shift operator $\mathscr{S}_2$} Because of this, one must also specify the amount of mass, $c_k \in [0,1]$, that will be in $[0,\infty)$. Lastly, lemma~\ref{Xtilde} tells us the output can, in principle, be neat. The definition which meets these wants is
\begin{definition}[$2$ shift operator]\label{S2def}
The $2$ shift operator $\mathscr{S}_2$ is defined as
\begin{align*}
   \mathscr{S}_2: (f,G,c) \mapsto (X_1,...,X_n)
\end{align*} 
where $f = \big((f_1^-,f_1^+),...,(f_n^-,f_n^+)\big) \in \mathcal{B}_2^n, \ G \ni 0$ and is closed, $c=(c_1,...,c_n) \in [0,\infty]^n$, and for $k=1,...,n$, 
$X_k:(0,1) \rightarrow \mathbb{R}$ such that
\begin{equation}\label{S2Xkeq}
X_k(s) :=
\begin{cases}
   \mathscr{S}_L(f_k^-, G^k)(s) & 0 < s < c_k \\ 
   \mathscr{S}_R(f_k^+, G^k)(s) & c_k \leq s < 1
\end{cases}
\end{equation}
\end{definition}
By definition $6$ and definition $7$, \eqref{S2Xkeq} expands to\footnote{Because $0 < s < 1$, if $c_k = 0$ or $1$, it may be no $s$ satisfies $0 < s < c_k$ or $c_k \leq s <1$.}
$$
X_k(s) = 
\begin{cases}
   \inf \big(G^k \cup [\sup G^k, \infty)\big)_{\geq \inf {f_k^-}^{-1}[s,1]} & 0<s<c_k \\
   \sup \big(G^k \cup (-\infty, \inf G^k] \big)_{\leq \sup {f_k^+}^{-1}[1-s,1]} & c_k \leq s < 1
\end{cases}
$$
This can be simplified slightly: $f_k^+(0) = 1$, by definition of $\mathcal{B}_2$ \eqref{B2T2}, implies $ \sup {f_k^+}^{-1}[1-s,1] \geq 0, \ \forall s \in(0,1)$. Since $0 \in G^k$ by hypothesis, it follows
\begin{equation*}
\sup \big(G^k \cup (-\infty, \inf G^k] \big)_{\leq \sup {f_k^+}^{-1}[1-s,1]}  \geq 0.
\end{equation*}
Since $(-\infty, \inf G^k] \subset (-\infty, 0]$, this implies
\begin{equation}\label{SR2geq0}
0 \leq \sup \big(G^k \cup (-\infty, \inf G^k] \big)_{\leq \sup {f_k^+}^{-1}[1-s,1]} = \sup G^k_{\leq \sup {f_k^+}^{-1}[1-s,1]}, \ \forall s \in (0,1).
\end{equation}
By a symmetrical argument $0 \geq \inf \big(G^k \cup [\sup G^k, \infty)\big)_{\geq \inf {f_k^-}^{-1}[s,1]} = \\ \inf G^k_{\geq \inf {f_k^-}^{-1}[s,1]}$. Together,
\begin{equation}\label{condenseS2Xk}
\forall s \in(0,1), \ X_k(s) = 
\begin{cases}
   \inf G^k_{\geq \inf {f_k^-}^{-1}[s,1]} & 0<s<c_k \\
   \sup G^k_{\leq \sup {f_k^+}^{-1}[1-s,1]} & c_k \leq s < 1
\end{cases}
\end{equation}
Recall \eqref{condenseSR}. Similar to how $\mathscr{S}_R$ for $n=1$ allowed us to condense the full expression of $\mathscr{S}_R(f,G)$, $\mathscr{S}_2$ for $n=1$ allows us to write
\begin{equation}\label{S2condensed}
\mathscr{S}_2(f,G) = \Big(\mathscr{S}_2((f_1^-,f_1^+),G^1,c_1),...,\mathscr{S}_2((f_n^-,f_n^+),G^n,c_n)\Big)
\end{equation}
where $\mathscr{S}_2((f_k^-,f_k^+),G^k,c_k)$ equals \eqref{S2Xkeq} (equivalently \eqref{condenseS2Xk}), $k=1,...,n$. More useful, perhaps, is by \eqref{SR2geq0}, $\forall s \in (0,1)$,
\begin{align} 
\mathscr{S}_2((f_k^-,f_k^+),G^k,c_k) &= \mathscr{S}_R(f_k^+,G^k)(s) = \sup G^k_{\leq \sup {f_k^+}^{-1}[1-s,1]} \geq 0, s \in [c_k,1) \label{S2useful+}\\
\mathscr{S}_2((f_k^-,f_k^+),G^k,c_k) &= \mathscr{S}_L(f_k^-,G^k)(s) = \inf G^k_{\geq \inf {f_k^-}^{-1}[s,1]} \ \ \ \  \leq 0, s \in(0,c_k) \label{S2useful-}
\end{align}
As one can see, there is a lot of machinery in $\mathscr{S}_2$, too much to unpack, reliant on understanding the properties (not necessarily the formulas) of $\mathcal{N}$ (lemma~\ref{neatrvs}, 
lemma~\ref{Xtilde}), $\mathscr{S}_R$ (theorem~\ref{SRproperties}), and $\mathscr{S}_L$ (theorem~\ref{SLproperties}). 

Here is a picture.

\begin{figure}[H]
   \includegraphics[scale = .29, center]{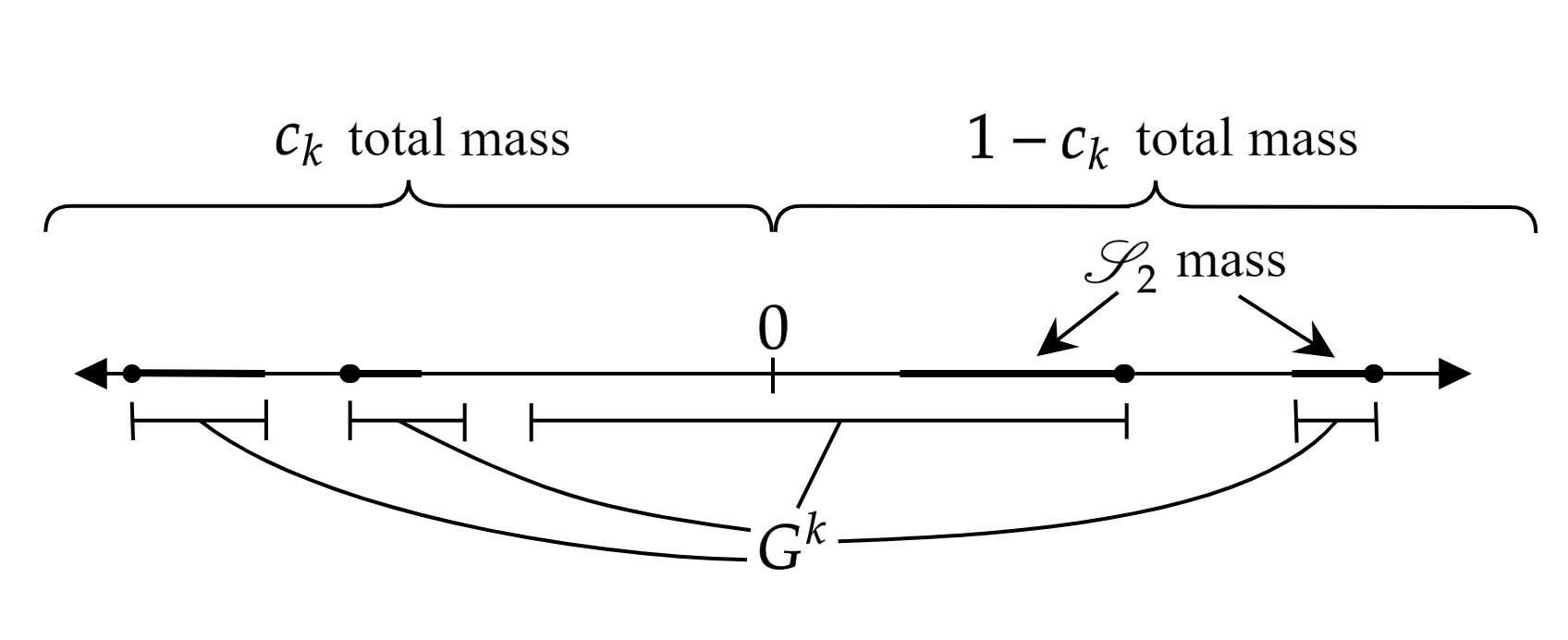}\label{randomcrap}
   \caption{Distribution of $\mathscr{S}_2((f_k^-,f_k^+),G^k,c_k)$}\label{fig.3}
\end{figure}
   
The bold dots represent a large accumulation of mass at a point. 

\begin{theorem}[$\mathscr{S}_2$ properties]\label{S2properties} Let $f \in \mathcal{B}_2^n, \ 0 \in G = G^1 \times \cdots \times G^n \subset \mathbb{R}^n$ be closed, and $c=(c_1,...,c_n) \in [0,1]^n$. Then
$\mathscr{S}_2(f,G,c)$ satisfies the following.
\begin{enumerate}[label = (\roman*)]
   \item $\mathscr{S}_2(f,G,c) \in \mathcal{T}_2(f)$
   \item $\forall s \in [c_k,1)$, $$\mathscr{S}_2\big((f_k^-,f_k^+),G^k,c_k\big)(s) = \mathscr{S}_R(f_k^+,G^k)(s) = \sup G^k_{\leq \sup {f_k^+}^{-1}[1-s,1]} \in G^k_{\geq 0} $$
   \item $\forall s \in (0,c_k)$, $$\mathscr{S}_2\big((f_k^-,f_k^+),G^k,c_k\big)(s) = \mathscr{S}_L(f_k^-,G^k)(s) = \inf G^k_{\geq \inf {f_k^-}^{-1}[s,1]} \in G^k_{\leq 0}$$
   \item $\mathscr{S}_2((f_k^-,f_k^+),G^k,c_k)(s) \in G^k, \ \forall s\in(0,1),$ for $k=1,...,n$
\end{enumerate}
\end{theorem}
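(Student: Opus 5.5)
Items (ii) and (iii) come straight from the definition. Item (iv) then follows from (ii) and (iii) together with closedness of $G^k$. Item (i) is the substantive part: show each component is neat and satisfies both tails.

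For (ii) and (iii): by \eqref{S2Xkeq}, for $s\in[c_k,1)$ the component equals $\mathscr{S}_R(f_k^+,G^k)(s)$, and \eqref{SR2geq0} rewrites this as $\sup G^k_{\leq \sup {f_k^+}^{-1}[1-s,1]}\geq 0$. The set $G^k_{\le a}$ with $a\ge 0$ is closed, contains $0$, and is bounded above, so its supremum lies in $G^k$, hence in $G^k_{\geq 0}$. The case $s\in(0,c_k)$ is the reflection: use \eqref{S2useful-}, note $\inf {f_k^-}^{-1}[s,1]\le 0$ since $f_k^-(0)=1$, and that $G^k_{\geq b}$ with $b\le 0$ is closed, contains $0$, and is bounded below. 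Since $(0,1)=(0,c_k)\cup[c_k,1)$, (iv) follows.

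For (i), first I show $X_k:=\mathscr{S}_2((f_k^-,f_k^+),G^k,c_k)$ is neat. On $(0,c_k)$ it agrees with $\mathscr{S}_L(f_k^-,G^k)$, which is increasing by Theorem~\ref{SLproperties}. On $[c_k,1)$ it agrees with $\mathscr{S}_R(f_k^+,G^k)$, which is increasing by Theorem~\ref{SRproperties}. Across the junction, values on the left piece are $\le 0$ and values on the right piece are $\ge 0$ by (ii) and (iii), so $X_k$ is increasing on all of $(0,1)$ and therefore measurable. The product over $k$ then lies in $\mathcal{N}^n$.

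Next, the right tail for $t\ge 0$ (strictly, $t>0$; for $t\le 0$ the bound is trivial since $f_k^+(0)=1$). Points $s<c_k$ give $X_k(s)\le 0$, so for $t>0$
$$\{s: X_k(s)\ge t\}=\{s\in[c_k,1): \mathscr{S}_R(f_k^+,G^k)(s)\ge t\}\subset\{s\in(0,1):\mathscr{S}_R(f_k^+,G^k)(s)\ge t\}.$$
The measure of the last set is at most $f_k^+(t)$ by Theorem~\ref{SRproperties}(i). The left tail is symmetric for $t<0$, using Theorem~\ref{SLproperties}(i). At $t=0$ both bounds are trivial because $f_k^\pm(0)=1$. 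Matching this against the requirements of \eqref{B2T2}, which imposes $P(X_k\ge t)\le f_k^+(t)$ for all $t$, the case $t<0$ is again trivial since $f_k^+\ge f_k^+(0)=1$ there by monotonicity.

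The only real care needed is the case split at $t=0$ and at the junction $c_k$, including the degenerate cases $c_k\in\{0,1\}$ where one piece is empty. These are bookkeeping rather than true obstacles. The step I expect to need most attention is confirming that $\mathscr{S}_L$ genuinely satisfies its tail. This rests on Theorem~\ref{SLproperties}, which the paper asserts by symmetry, and I will take it as given.
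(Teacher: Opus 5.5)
Your proposal is correct and follows essentially the same route as the paper. Neatness comes from monotonicity of $\mathscr{S}_L$ and $\mathscr{S}_R$ on the two pieces, the tails from Theorems~\ref{SRproperties}(i) and \ref{SLproperties}(i) via \eqref{S2useful+} and \eqref{S2useful-}, membership in $G^k_{\geq 0}$ and $G^k_{\leq 0}$ from closedness and $0\in G^k$, and (iv) from (ii)--(iii). You are also more explicit than the paper about the sign change at the junction $c_k$ and about the trivial ranges of $t$ required by \eqref{B2T2}, both of which the paper leaves implicit.
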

\begin{proof}
(i): We must show for $k=1,...,n$, 
\begin{align*}
   1. \ & \mathscr{S}_2(f_k,G^k,c_k) \nearrow \text{ on } (0,1) \\
   2. \ & P\big(\mathscr{S}_2((f_k^-,f_k^+),G^k,c_k\big) \geq t) \leq f_k^+(t), \ \forall t \geq 0 \\ 
   3. \ &P\big(\mathscr{S}_2((f_k^-,f_k^+),G^k,c_k\big) \leq t) \leq f_k^-(t), \ \forall t \leq 0
\end{align*}
1. By \eqref{S2useful-}, $\forall s \in (0,c_k)$, $\mathscr{S}_2(f_k,G^k,c_k)(s) = \mathscr{S}_L(f_k,G^k)(s) \leq 0$,
which is monotonically increasing on $(0,c_k)$ by theorem~\ref{SLproperties} (i). By \eqref{S2useful+}, $\forall s \in [c_k,1)$, $\mathscr{S}_2(f_k,G^k,c_k)(s) \\ = \mathscr{S}_R(f_k,G^k)(s) \geq 0$,
which is monotonically increasing on $[c_k,1)$ by theorem~\ref{SRproperties} (i). 

2. and 3. follows easily from \eqref{S2useful+}, \eqref{S2useful-}, $P(\mathscr{S}_R(f_k^+,G^k) \geq t) \leq f_k^+(t), \ \forall t \in \mathbb{R}$ and $P(\mathscr{S}_L(f_k^-,G^k) \leq t) \leq f_k^-(t), \ \forall t\in \mathbb{R}$
(theorem~\ref{SRproperties} (i), theorem~\ref{SLproperties} (i)). 

(ii): By \eqref{S2useful+} we know $\mathscr{S}_R(f_k^+,G^k)(s) \geq 0, \ k =1,...,n$. This becomes \\ $\mathscr{S}_R(f_k,G^k)(s) \in G^k \cap [0,\infty)$ due to 
theorem~\ref{SRproperties} (v) and $0 \in G^k$.  

(iii): Similar to (ii), using \eqref{S2useful-} and theorem~\ref{SLproperties}. 

(iv) follows by (ii) and (iii).
\end{proof}
One can certainly pass down other properties of $\mathscr{S}_R$ and $\mathscr{S}_L$ to $\mathscr{S}_2$, using \eqref{S2useful+} and \eqref{S2useful-}, but it becomes a technical chore 
and would not help us. 

\section{Main result for the $2$ shift operator}\label{sectionS2mainresult}
In this section, let $f = ((f_1^-,f_1^+),...,(f_n^-,f_n^+)) \in \mathcal{B}_2^n$ and let $V\subset \mathbb{R}^n$ be closed. In this section we will prove \eqref{Ssup4}, which was
\begin{equation}\label{Ssup4again}
\sup_{X \in \mathcal{T}_2(f)} P(X \in V) = P(\mathscr{S}_2(f,G,c) \in V)
\end{equation}
for some $G$ (section~\ref{abbreviationschapter4}) and $c \in [0,1]^n$. This will be done in two steps: 

Step $1$: Consider a $(X^m)_m \subset \mathcal{T}_2(f)$ such that 
$$
P(X^m \in V)  \xrightarrow[m \rightarrow \infty]{} \sup_{X \in \mathcal{T}_2(f)} P(X \in V).
$$
By corollary~\ref{S2subseqconva.e.}, the $1$st component of $(X^m)_m$, $(\pi_1 \circ X^m)_m \subset \mathcal{T}_2(f_1^-,f_1^+)$, has a subsequence which converges a.e.\ to some $X_1 \in \mathcal{T}_2(f_1^-,f_1^+)$. 
Successively repeating this for the other components leads to a standard diagonalization, from which one obtains a subsequence $(X^{m_l})_l \subset (X^m)_m$, such that

\begin{equation}\label{xsitilde1}
X^{m_l} \xrightarrow[]{a.e.} (X_1,...,X_n) =: X_0 \in \mathcal{T}_2(f).
\end{equation}
Notice $P(X^{m_l} \in V) \xrightarrow[l \rightarrow \infty]{} \sup_{X \in \mathcal{T}_2(f)} P(X \in V)$. From this, the a.e.\ convergence, and $V$ being closed, we can show (see below) that 
$P(X_0 \in V) = \sup_{X \in \mathcal{T}_2(f)} P(X \in V)$.

Step $2$: Recall the informal argument at figure~\ref{fig.1}. We will use shift operators and the simplified domains of neat r.v.'s to make it rigorous., i.e.\ 
$$
\sup_{X \in \mathcal{T}_2(f)} P( X \in V) = P(X_0 \in V) \leq P(\mathscr{S}_2(f,G_{X_0},c_{X_0}) \in V) \text{ for some } G_{X_0},c_{X_0}. 
$$
Equality holds because $\mathscr{S}_2(f,G_{X_0},c_{X_0}) \in \mathcal{T}_2(f)$, which is \eqref{Ssup4again}.

Implementation of step $1$: 
\begin{lemma}\label{X0lemma}
$\exists X_0 \in \mathcal{T}_2(f)$ such that
$$
\sup_{X \in \mathcal{T}_2(f)} P(X \in V) = P(X_0 \in V).
$$
\end{lemma}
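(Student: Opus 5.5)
The plan follows Step~1 exactly as outlined above. Let $\alpha := \sup_{X \in \mathcal{T}_2(f)} P(X \in V)$ and choose a maximizing sequence $(X^m)_m \subset \mathcal{T}_2(f)$ with $P(X^m \in V) \to \alpha$.

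\textbf{Step 1: extract an a.e.\ limit.} Write $X^m = (X^m_1,\dots,X^m_n)$, with each $X^m_k \in \mathcal{N}$ satisfying the $2$ tail $(f_k^-,f_k^+)$. Apply corollary~\ref{S2subseqconva.e.} to $(X^m_1)_m$ to get a subsequence along which $X^m_1 \to X_1$ a.e.\ on $(0,1)$, with $X_1 \in \mathcal{T}_2(f_1^-,f_1^+)$. Along that subsequence apply the corollary to the second components, and so on. After $n$ finite nested extractions we get a subsequence $(X^{m_l})_l$ and $X_0 = (X_1,\dots,X_n) \in \mathcal{T}_2(f)$. The product of the $X_k$ is automatically in $\mathcal{N}^n$, and each component has the right tails.

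\textbf{Step 2: upgrade to a.e.\ convergence on $(0,1)^n$.} For each $k$ let $N_k \subset (0,1)$ be the null set where $X^{m_l}_k(s_k) \not\to X_k(s_k)$. Outside $\bigcup_k (0,1)^{k-1}\times N_k \times (0,1)^{n-k}$, which has product measure $0$ by Theorem~\ref{Fubini'stheorem}, we have $X^{m_l}(s) \to X_0(s)$ in $\mathbb{R}^n$.

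\textbf{Step 3: closedness of $V$ gives the inequality.} For $s$ outside this null set, if $X^{m_l}(s) \in V$ for infinitely many $l$, then $X_0(s) \in V$ because $V$ is closed. Hence, up to a null set,
$$
\limsup_{l} \mathbbm{1}_{\{X^{m_l} \in V\}} \leq \mathbbm{1}_{\{X_0 \in V\}}.
$$
By reverse Fatou on the probability space $(0,1)^n$, or equivalently $\limsup_l m(A_l) \leq m(\limsup_l A_l)$,
$$
\alpha = \limsup_l P(X^{m_l} \in V) \leq P(X_0 \in V).
$$
Since $X_0 \in \mathcal{T}_2(f)$, also $P(X_0 \in V) \leq \alpha$, which gives equality.

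\textbf{Main obstacle.} The only delicate step is Step~1. Corollary~\ref{S2subseqconva.e.} is stated for $f \in \mathcal{B}_2$ and a sequence in $\mathcal{T}_2(f)$ for $n=1$, so it has to be applied componentwise, and we must check that the limit lies in $\mathcal{T}_2(f)$ with the correct $2$ tails. The latter comes from lemma~\ref{preservedconvind}~(iv) via the corollary; the membership in $\mathcal{T}(h)$ with $h=\min\{1,f^++f^-(-\cdot)\}$ inside the corollary is only an intermediate device. One also needs the product measurability of the exceptional set, which is immediate since each $N_k$ is Borel null.
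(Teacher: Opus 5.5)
Your proof is correct and follows the paper's route: apply Corollary~\ref{S2subseqconva.e.} componentwise with nested extraction to get $X_0\in\mathcal{T}_2(f)$, then use closedness of $V$ along the a.e.\ convergence. The difference is only in packaging. You invoke $\limsup_l m(A_l)\le m(\limsup_l A_l)$, whereas the paper proves that same inequality by hand through the partition $\widetilde W_k=\{N(w)=k\}$; and your Step~2 makes explicit the a.e.\ convergence on the product space $(0,1)^n$, which the paper takes for granted.
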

\begin{proof}
As described at \eqref{xsitilde1} $\exists (X^m)_{m \in \mathbb{N}} \subset \mathcal{T}_2(f), X_0 \in \mathcal{T}_2(f)$ such that\footnote{We have dropped the $l$ by reenumeration.}
\begin{equation}\label{twoassumptions}
   X^m \xrightarrow[]{a.e.} X_0 \in \mathcal{T}_2(f) \text{ and } 
   P(X^m \in V) \rightarrow \sup_{X \in \mathcal{T}_2(f)} P(X \in V).
\end{equation}
Denote $\mu$ the Lebesgue measure of $(0,1)^n$ (with the Borel $\sigma$-algebra). Define $W := \{w \in (0,1)^n \ \big| \ X^m(w) \xrightarrow[m \rightarrow \infty]{} X_0(w)\}$. By \eqref{twoassumptions}, $\mu(W^c) = 0$.\footnote{One might recall, the specific construction of the 
convergent a.e.\ subsequence in lemma~\ref{neatnessconvergence} converged for all $s \in(0,1)$ except countably many points. One could drag that thread all the way up here to get $W^c$ is countable if need be.} Consider 
\begin{equation}\label{X0lemmaWtildedef}
   \widetilde{W} := X_0^{-1}(V^c)\cap W
\end{equation}
which is measurable. Now, for $w \in \widetilde{W} \subset W$, $X^m(w)$ converges to $X_0(w) \in V^c$. Because $V^c$ is open, $\exists N \in \mathbb{N}$ such that 
$m \geq N \implies X^m(w) \notin V$. Define $N(w)$ the smallest of all such $N$; that is, 
$$
N(w) := \min \big\{N \in \mathbb{N} \ | \ \forall m \geq N, X^m(w) \notin V \big\}, \ \forall w \in \widetilde{W}.
$$
This allows classifying the points in $\widetilde{W}$: define 
\begin{equation}\label{widetildeWk}
   \widetilde{W}_k := \{w \in \widetilde{W} \ \big| \ N(w) = k\}, \ k = 1,2,...
\end{equation}
Notice each $w \in \widetilde{W}$ is an element of precisely one $\widetilde{W}_k$. In particular
\begin{equation}\label{widetildeWpartition}
   \widetilde{W} = \widetilde{W_1} \cup \widetilde{W}_2 \cup \cdots, \text{ and } \widetilde{W}_k \text{ are pairwise disjoint}.
\end{equation}
To show $\widetilde{W}_k$ are measurable: 

For $k=1,2,...$, define $W'_k := \bigcup_{m \geq k} {(X^m)}^{-1}(V)$, which are measurable. We claim 
$$
\forall k \in \mathbb{N}, \ \widetilde{W}_k = \big(W'_{k-1} \backslash W'_k\big) \cap \widetilde{W}
$$
where $W_0':= \widetilde{W}$. Proving this claim will show $\widetilde{W}_k$ are measurable. LHS, RHS will refer to this equation. For $k=1$,
\begin{align*} 
   w \in \text{ LHS} &\iff w \in \widetilde{W} \text{ and } \forall m \geq 1, X^m(w) \notin V \\ 
   &\iff w \in \text{ RHS}.
\end{align*} 
For $k \geq 2$, 
\begin{align*} 
w \in \text{ LHS } &\iff w \in \widetilde{W} \text{ and } N(w) = k \\ 
&\iff w \in \widetilde{W}, \ X^{k-1}(w) \in V, \text{ and } \forall m \geq k, X^m(w) \notin V \\ 
&\iff w \in \text{ RHS}. 
\end{align*} 
Therefore $\widetilde{W}_k$ are measurable. Now, by $\sigma$-additivity of $\mu$ applied repeatedly, 
\begin{align}
P(X^m \in V) &= \mu \big((X^m)^{-1}(V)\big) \nonumber \\
&=\mu\big((X^m)^{-1}(V) \cap W^c\big) + \mu\big({(X^m)}^{-1}(V) \cap W\big) \nonumber \\ 
&\stackrel{\mu(W^c)=0}{=}\mu\big((X^m)^{-1}(V)\cap W\big) \nonumber \\ 
&=\mu\big((X^m)^{-1}(V)\cap ((W\backslash{\widetilde{W}})\cup \widetilde{W})\big) \nonumber \\
&=\mu\big((X^m)^{-1}(V) \cap (W \backslash \widetilde{W})\big) + \mu\big((X^m)^{-1}(V)\cap \widetilde{W}\big) \nonumber \\
&\leq \mu\big(W \backslash \widetilde{W}\big) + \mu\big((X^m)^{-1}(V) \cap \widetilde{W}\big) \label{eqP(XinV)} 
\end{align}
The left term is $\mu(W \backslash \widetilde{W}) \stackrel{\eqref{X0lemmaWtildedef}}{=} \mu(W \backslash  (X_0^{-1}(V^c)\cap W)) 
= \mu(W \backslash X_0^{-1}(V^c)) = \mu(W \cap X_0^{-1}(V)) = \mu(X_0^{-1}(V))$ because $\mu(W^c) = 0$. 

For the right term, by definition of $N(w)$, notice for $w \in (X^m)^{-1}(V) \cap \widetilde{W}$ that $N(w) > m$. In particular, $w \notin \widetilde{W}_1, ... ,\widetilde{W}_m$. However, 
$\widetilde{W} \stackrel{\eqref{widetildeWpartition}}{=} \widetilde{W}_1 \cup \widetilde{W}_2 \cup \cdots$. It follows $(X^m)^{-1}(V) \cap \widetilde{W} = \bigcup_{k > m} \widetilde{W}_k$. Since $\widetilde{W}_k$ are disjoint,
$$
\mu\big((X^m)^{-1}(V) \cap \widetilde{W}\big) = \sum_{k > m} \mu(\widetilde{W}_k).
$$
Applying these simplifications of the left and right term in \eqref{eqP(XinV)},
$$
P(X^m \in V) = \mu(X_0^{-1}(V)) + \sum_{k > m} \mu(\widetilde{W}_k).
$$
The sum is finite, being less than $\mu(\widetilde{W})$. Taking $m \rightarrow \infty$ gives 
$$
\sup_{X \in \mathcal{T}_2(f)} P(X \in V) = P(X_0 \in V).
$$
\end{proof} 
Implementation of Step 2: 

Recall that the informal argument (figure~\ref{fig.1}) shifted the $X_k$ outwards on subsets against the tails by considering the (infinitesimal) masses of slices of $V \subset \mathbb{R}^n$. 
This can be made rigorous as follows.

The following abbreviation is useful for considering slices of $V$. 
\begin{equation}\label{notationpi}
\{x \in \mathbb{R}^n\}_{x_k = t} = \{(x_1,...,x_n) \in \mathbb{R}^n \ \big| \ x_k = t\}.
\end{equation}
\begin{definition}\label{hkdefinition}
For $k=1,...,n$, define
\begin{align}\label{hkdef}
\forall t \in \mathbb{R}, \ h_k(t) &:= P\big((X_1,...,X_{k-1},t,X_{k+1},...,X_n) \in V \big) \\ 
       &= P\big((X_1,...,X_{k-1},t,X_{k+1},...,X_n) \in V \cap \{x \in \mathbb{R}^n\}_{x_k = t}   \big) \label{hkdefeq2}
\end{align}
\end{definition}
Because $X_k$ are mutually ind., the, possibly infinitesimal, mass of the slice $V \cap \{x \in \mathbb{R}^n\}_{x_k = t}$ is, informally,
$$
``P\big((X_1,...,X_n) \in V \cap \{x \in \mathbb{R}^n\}_{x_k = t}\big) = h_k(t) P(X_k = t)".
$$
We need the following lemma, which is almost certainly an exercise somewhere or has been proven somewhere before.
\begin{lemma}\label{hklemma}
Let $(t_l)_{l \in \mathbb{N}} \subset \mathbb{R}, \ \lim_{l \rightarrow \infty} t_l = t_{\infty} \in \mathbb{R}$, and $h_k(t_l)$ converge as $l \rightarrow \infty$. Then
\begin{equation}
h_k(t_\infty) \geq \lim_{l \rightarrow \infty} h_k(t_l).
\end{equation}
\end{lemma}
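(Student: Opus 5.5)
The plan is to prove upper semicontinuity of $h_k$ directly from the product structure and the closedness of $V$, essentially a reverse Fatou argument. Writing $X=(X_1,\dots,X_n)\in\mathcal{N}^n$ as a product r.v.\ on $(0,1)^n$, the variable $t$ enters only in the $k$th slot. So I set $\Omega' := (0,1)^{n-1}$ with Lebesgue measure $\mu'$, with coordinates $s'=(s_1,\dots,s_{k-1},s_{k+1},\dots,s_n)$, and define $Y_t(s') := (X_1(s_1),\dots,X_{k-1}(s_{k-1}),t,X_{k+1}(s_{k+1}),\dots,X_n(s_n))$. Then
$$
h_k(t) = \mu'\big(\{s' \in \Omega' \ | \ Y_t(s') \in V\}\big) = \int_{\Omega'} \mathbbm{1}_V(Y_t(s'))\, d\mu'(s').
$$
Measurability of $s' \mapsto Y_t(s')$ is clear, since each component is monotone or constant.

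The key pointwise observation: for every fixed $s'$, $Y_{t_l}(s') \to Y_{t_\infty}(s')$ in $\mathbb{R}^n$, because only the $k$th coordinate changes and $t_l \to t_\infty$. Since $V$ is closed, $\mathbbm{1}_V$ is upper semicontinuous. Hence
$$
\limsup_{l \to \infty} \mathbbm{1}_V(Y_{t_l}(s')) \leq \mathbbm{1}_V(Y_{t_\infty}(s')), \quad \forall s' \in \Omega'.
$$
Concretely, if $Y_{t_\infty}(s') \in V^c$, which is open, then $Y_{t_l}(s') \in V^c$ for all large $l$.

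Next I apply the reverse Fatou lemma. The indicators are bounded by $1$, which is integrable since $\mu'(\Omega')=1$. This gives
$$
\lim_{l\to\infty} h_k(t_l) = \limsup_{l} \int \mathbbm{1}_V(Y_{t_l})\,d\mu' \leq \int \limsup_l \mathbbm{1}_V(Y_{t_l})\,d\mu' \leq h_k(t_\infty).
$$
Alternatively, I can mimic the elementary set argument of lemma~\ref{X0lemma} with $W = \Omega'$. Decompose $\{Y_{t_\infty}\notin V\}$ according to the first index after which $Y_{t_l}$ stays outside $V$, and then use $\sigma$-additivity exactly as there. This keeps the proof within tools already used in the paper.

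The only point needing care, and the main obstacle, is justifying that $h_k(t)$, defined as a probability on $(0,1)^n$, equals this integral over $(0,1)^{n-1}$. The integrand does not depend on $s_k$, so by Fubini's theorem (theorem~\ref{Fubini'stheorem}) the integral over the $s_k$ coordinate contributes a factor $1$. After that, everything is pointwise convergence plus closedness of $V$. Note that the assumed existence of $\lim_l h_k(t_l)$ is used only to write $\lim$ in place of $\limsup$.
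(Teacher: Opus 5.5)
Your argument is correct and is essentially the paper's proof. The paper also reduces to $h_1(t_l)=P((X_2,\dots,X_n)\in S_l')$ for the projected slices $S_l'$. It then shows, using closedness of $V$, that the set $Q$ of points lying in infinitely many $S_l'$ is contained in $S_\infty'$. Finally it proves $P((X_2,\dots,X_n)\in S_l'\cap Q^c)\to 0$ by hand, which is exactly the reverse Fatou inequality $\limsup_l \mu(S_l')\le \mu(\limsup_l S_l')$ that you invoke directly; the only difference is that you work in the domain $(0,1)^{n-1}$ rather than the image space. One small remark: your measurability justification uses monotonicity of the $X_j$, but only Borel measurability and independence of the $X_j$, $j\neq k$, are needed. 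This matters because the paper later applies the lemma with radially neat, non-monotone components $\mathscr{S}(f_j,G^j)$.
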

\begin{proof}
WLOG $k=1$. Consider the slices of $V$, 
$$
S_l := V \cap \{x \in \mathbb{R}^n\}_{x_1 = t_l}, \ l=1,2,..., \text{ and } l=\infty.
$$
Consider the projections of the slices given by $(x_1,...,x_n) \mapsto (x_2,...,x_n)$. That is,
$$
S_l' := \{ x \in \mathbb{R}^{n-1} \ \big| \ (t_l,x) \in S_l\}, \ l=1,2,..., \text{ and } l=\infty.
$$
For $l = 1,2,....,$ and $l = \infty$, one has
\begin{align}
   h_1(t_l) &\stackrel{\eqref{hkdef}}{=} P\big((t_l,X_2,...,X_n) \in S_l) \big) \\ 
   & \ = P\big((X_2,...,X_n) \in S_l' \big). \label{hkxsi}
\end{align} 
$S_l$ is the intersection of two closed sets, thus closed. Furthermore, $S_l' \subset \mathbb{R}^{n-1}$ are closed: given $(x^m)_{m \in \mathbb{N}} \subset S_l', \ x^m \rightarrow x'$, one has 
$\big((t_l,x^m)\big)_m \subset S_l$ with $(t_l,x^m) \xrightarrow[m \rightarrow \infty]{} (t_l,x') \in S_l$, i.e.\ $x' \in S_l'$. 

Define
\begin{equation}\label{hklemmaQdef} 
Q:= \big\{x \in \mathbb{R}^{n-1} \ \big| \ x \in S_l' \text{ for infinitely many } l \in\mathbb{N} \big\}.
\end{equation} 

To show $Q$ is measurable: $Q^c$ are the points of $\mathbb{R}^{n-1}$ which are only in finitely many $S_l'$. In particular,
$$
\forall m \in \mathbb{N}, \ \bigcap_{l \geq m} S_l'^c \subset Q^c. 
$$
Conversely, if $q \in Q^c$, then 
$$
q \in \bigcap_{l \geq m} S_l'^c \text{ for } m>>0.
$$
Thus, $Q^c = \bigcup_{m \geq 1} \big(\bigcap_{l \geq m} S_l'^c \big) \implies Q$ is measurable. 

To show $Q \subset S_\infty'$: $\forall p \in Q, \ p$ is in infinitely many $S_l'$ by definition \eqref{hklemmaQdef}. Notice in such cases $(t_l,p) \in S_l$. This gives a sequence in $V$ converging to $(t_\infty,p) \in \overline{V} = V$. So $p \in S_\infty'$. 

By $\sigma$-additivity and monotonicity, 
\begin{align} 
   h_1(t_l) &\stackrel{\eqref{hkxsi}}{=} P \big((X_2,...,X_n) \in S_l' \big) \nonumber \\
   &= P\big((X_2,...,X_n) \in S_l' \cap Q\big) +P\big((X_2,...,X_n) \in S_l' \cap Q^c\big) \nonumber \\ 
   &\leq P\big((X_2,...,X_n) \in Q \big) +  P\big((X_2,...,X_n) \in S_l' \cap Q^c\big) \nonumber \\ 
   &\stackrel{Q \subset S_\infty'}{\leq} P\big((X_2,...,X_n) \in S_\infty' \big) + P\big((X_2,...,X_n) \in S_l' \cap Q^c\big) \nonumber \\ 
   &\stackrel{\eqref{hkxsi}}{=}h_1(t_\infty) + P\big((X_2,...,X_n) \in S_l' \cap Q^c\big). \label{hklemmashiteq}
\end{align} 
We will show second term goes to $0$ as $l \rightarrow \infty$. Define 
$$
\widetilde{Q}_l := \left\{q \in Q^c \ \big| \ q \in S_l' \text{ and } q \notin \bigcup_{m > l} S_m' \right\}, \ l = 1,2,...
$$
Define $\widetilde{Q}_0$ as those points in $Q$ which are not in any $S_l'$. $Q^c$ consists of the points of $\mathbb{R}^{n-1}$ which are in finitely many $S_l'$ \eqref{hklemmaQdef}. Notice then
$$
Q^c = \widetilde{Q}_0 \cup \widetilde{Q}_1 \cup \cdots
$$ 
Furthermore, $\widetilde{Q}_l$ are disjoint. Analagous to $Q, \ \widetilde{Q}_l$ can be shown to be measurable. This gives
\begin{align*}
   P\big((X_2,...,X_n) \in S_l' \cap Q^c\big) = m(S_l' \cap Q^c) &= m\big(S_l' \cap \big(\bigcup_{m \geq l} \widetilde{Q}_m\big)\big) \\  
&= \sum_{m \geq l} m(S_l' \cap \widetilde{Q}_m) \\
&\leq \sum_{m \geq l} m(\widetilde{Q}_m)
\end{align*}
Because $m(Q^c) = \sum_{m \geq 0} m(\widetilde{Q}_m)$ is finite, $\sum_{m \geq l} m(\widetilde{Q}_m)$ goes to $0$ as $l \rightarrow \infty$. Thus, $\lim_{l \rightarrow \infty} h_1(t_l) \leq h_1(t_\infty)$ by \eqref{hklemmashiteq}.

\end{proof}

\begin{theorem}[$2$ shift operator main result] \label{S2maintheorem}
Let $V \subset \mathbb{R}^n$ be closed. Then there exists $G \ni 0$ and $c = (c_1,...,c_n) \in [0,1]^n$ such that\footnote{$\mathscr{S}_2$ is at definition~\ref{S2def}.}
\begin{equation}\label{S2maintheoremeq}
\sup_{X \in \mathcal{T}_2(f)} P(X \in V) = P(\mathscr{S}_2(f,G,c) \in V)
\end{equation}
\end{theorem}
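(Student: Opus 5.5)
The plan is to start from a maximizer and then "shift" its coordinates one at a time, never decreasing $P(X\in V)$, until every coordinate has the form \eqref{S2Xkeq}. By lemma~\ref{X0lemma} there is $X_0=(X_1,\dots,X_n)\in\mathcal{T}_2(f)$ with $P(X_0\in V)=\sup_{X\in\mathcal{T}_2(f)}P(X\in V)$. We process $k=1,\dots,n$ in order. At step $k$ the current maximizer is $X$, with coordinates $1,\dots,k-1$ already replaced, and we form $h_k$ by definition~\ref{hkdefinition} from it. By Fubini (theorem~\ref{Fubini'stheorem}) on the product domain $(0,1)^n$ we have
\[
P(X\in V)=\int_0^1 h_k(X_k(s))\,ds,
\]
where measurability of $h_k$ also comes from Fubini. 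Since $X_k$ is neat, $X_k(s)<0$ exactly on an initial interval $(0,c_k)$, up to an endpoint. We take this $c_k$, which equals $P(X_k<0)$, as the $k$th entry of $c$.

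Next we define the $k$th factor of $G$. Let
\[
G^k_+:=\{t\ge 0 \ | \ h_k(t)\ge h_k(t') \ \forall t'\in[0,t]\},
\]
and let $G^k_-$ be the mirror set on $(-\infty,0]$. Put $G^k:=G^k_+\cup G^k_-$, which contains $0$. To see that $G^k$ is closed, take $t_l\to t$ with $t_l\in G^k_+$ and any $t'<t$. Then $h_k(t')\le h_k(t_l)$ for all large $l$. Passing to a subsequence along which $h_k(t_l)$ converges, lemma~\ref{hklemma} gives $h_k(t')\le h_k(t)$, so $t\in G^k_+$. Lemma~\ref{hklemma} says precisely that $h_k$ is upper semicontinuous, and this is the tool used throughout.

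The core step is a pointwise comparison modelled on \eqref{gcontclaim}. Write $Y:=\mathscr{S}_2((f_k^-,f_k^+),G^k,c_k)$. For $s\ge c_k$ put $T:=\sup {f_k^+}^{-1}[1-s,1]$; the $2$-tail analogue of lemma~\ref{Xtilde}(iii) gives $0\le X_k(s)\le T$. By upper semicontinuity, the maximum of $h_k$ on the compact interval $[0,T]$ is attained, and the set of maximizers is closed. Let $t^*$ be the largest maximizer. Then $t^*\in G^k_+$, so $u:=\sup G^k_{\le T}$ satisfies $u\ge t^*$. Also $u\in G^k_+$, because $G^k$ is closed, and hence $h_k(u)\ge h_k(t^*)$. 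By theorem~\ref{S2properties}(ii), $u=Y(s)$, so
\[
h_k(Y(s))=\max_{[0,T]}h_k\ge h_k(X_k(s)).
\]
For $s<c_k$ the symmetric argument uses $\mathscr{S}_L$ and theorem~\ref{S2properties}(iii). Integrating over $s$, replacing $X_k$ by $Y$ does not decrease $P(X\in V)$. By theorem~\ref{S2properties}(i) the new tuple still lies in $\mathcal{T}_2(f)$, so it is still a maximizer.

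Later steps change $h_j$ for $j>k$ but never touch coordinate $k$ again. So after step $n$ the tuple equals $\mathscr{S}_2(f,G,c)$ with $G=G^1\times\cdots\times G^n\ni 0$ closed, and it attains the supremum. This is \eqref{S2maintheoremeq}; the reverse inequality is immediate from theorem~\ref{S2properties}(i).

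I expect the main obstacle to be the pointwise comparison, in particular matching the maximizer of $h_k$ with the specific point $\sup G^k_{\le T}$ that $\mathscr{S}_R$ selects. The choice of the largest maximizer $t^*$ together with closedness of $G^k$ is what handles this. Two secondary technical points also need checking: measurability of $h_k$ and of the replaced coordinate, and the boundary behaviour at $s=c_k$.
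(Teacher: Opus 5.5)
Your proposal is correct and is essentially the paper's proof. Both take a maximizer from lemma~\ref{X0lemma}, replace one coordinate at a time using Fubini and change of variables, and bound $h_k(X_k(s))\le\max_{[0,T]}h_k=h_k(\sup G^k_{\le T})$ pointwise via the upper semicontinuity in lemma~\ref{hklemma}. The one real difference is that you define $G^k$ with weak inequalities, as the paper itself does for $\mathscr{S}$ in \eqref{G1definitioneq}, instead of $\overline{H^1}\cup\{0\}$ with strict ones. This makes closedness immediate and puts every maximizer of $h_k$ on $[0,T]$ into $G^k$, so choosing the largest one is unnecessary. It also avoids a small slip in the paper: there, $t_\infty\notin H^1$ is read as giving $h_1(t')>h_1(t_\infty)$, but only $\ge$ follows unless $t_\infty$ is taken to be the smallest maximizer.
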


\begin{proof}
By lemma~\ref{X0lemma}, $\exists X_0 \in \mathcal{T}_2(f)$ such that
\begin{equation}\label{S2eq1}
\sup_{X \in \mathcal{T}_2(f)} P(X \in V) = P(X_0 \in V).
\end{equation}
We will show $\forall X \in \mathcal{T}_2(f)$, there exists a $0 \in G = G^1 \times \cdots \times G^n \subset \mathbb{R}^n$ closed, and $c \in [0,1]^n$ such that 
\begin{equation}\label{S2eq2}
P(X \in V) \leq P(\mathscr{S}_2(f,G,c) \in V). 
\end{equation}
After proving this, we may set $X=X_0$ in \eqref{S2eq2} to get 
$$
\sup_{X \in \mathcal{T}_2(f)} P(X \in V) \leq P(\mathscr{S}_2(f,G,c) \in V), \ \text{ some } G,c.
$$
Because $\mathscr{S}_2(f,G,c) \in \mathcal{T}_2(f)$ (theorem~\ref{S2properties}) equality holds, which is the theorem. Below we prove \eqref{S2eq2}. 

Suppose $X \in \mathcal{T}_2(f)$. Define ($h_k$ is from definition~\ref{hkdefinition})
\begin{align*}
H^1 &:= \big\{ t \in [0,\infty)\  \big| \ \forall t' \in [0,t), h_1(t') < h_1(t)\big \} \\ & \ \ \cup \big\{ t \in (-\infty,0] \ \big| \ \forall t' \in (t,0], h_1(t')<h_1(t) \big\} \\
G^1 &:= \overline{H^1} \cup \{0\}
\end{align*}

The proof will be sequential simplifications; there will be a sufficient condition for \eqref{S2eq2}, then there will be a simpler condition to show that condition, and so on, 
until we reach a condition simple enough to prove.

To prove \eqref{S2eq2}, it is sufficient to show 
\begin{equation}\label{S2eq4}
P(X \in V) \leq P\big((\mathscr{S}_2(f_1,G^1,c_1),X_2,...,X_n) \in V\big), \text{ where } c_1 = P(X_1<0).
\end{equation}
This is because $(\mathscr{S}_2(f_1,G^1,c_1),X_2,...,X_n) \in \mathcal{T}_2(f)$ ($\mathscr{S}_2(f_1,G^1,c_1) \in \mathcal{T}_2(f_1)$, theorem~\ref{S2properties} (i), and $X_k \in \mathcal{T}_2(f_k)$ by definition), 
hence one may induct \eqref{S2eq4} over the remaining components to get\footnote{This induction strategy will be used many more times in various proofs.}
\begin{align*}
&P\big((\mathscr{S}_2(f_1,G^1,c_1),X_2,...,X_n) \in V \big) \\ 
&\leq P\big(\mathscr{S}_2(f_1,G^1,c_1),\mathscr{S}_2(f_2,G^2,c_2),X_3,...,X_n) \in V \big) \\
&\leq \cdots \\
&\leq P\big((\mathscr{S}_2(f_1,G^1,c_1),...,\mathscr{S}_2(f_n,G^n,c_n)) \in V \big) \\ 
&= P(\mathscr{S}_2(f,G,c) \in V). 
\end{align*} 
where $c_k = P(X_k < 0), \forall k$. 
Denote $m$ the Lebesgue measure on $(0,1)$ (with the Borel $\sigma$-algebra) and ${X_1}_\#(m)$ its pushforward \eqref{pushforwarddef} to $\mathbb{R}$ via $X_1$. 
Because $h_1(t) = P((t,X_2,...,X_n) \in V), \ \forall t \in \mathbb{R}$, and $X_k$ are mutually ind., one can apply Fubini's theorem (theorem~\ref{Fubini'stheorem}) to indicator function of $V$, and integrate over the first component to get
$$
P(X \in V) = \int_{\mathbb{R}} h_1 \ d{X_1}_\#(m).
$$
Similarly, 
$$
P\big((\mathscr{S}_2(f_1,G^1,c_1),X_2,...,X_n) \in V\big) = \int_{\mathbb{R}} \ h_1 \ d\mathscr{S}_2(f_1, G^1,c_1)_\#(m).
$$
By change of variables for Lebesgue integration,\footnote{The informal argument (figure~\ref{fig.1}) interprets as the $1$st integral is $\leq$ the $2$nd. But, how to actually compare these quantities is nebulous at first. It is the same situation as example $2$'s equation \eqref{eg2eq},
which was resolved by making the domains of the $X_k$ as simple as possible. Neat r.v.'s will work here too.}
\begin{align} 
   \int_\mathbb{R} h_1 \ d{X_1}_\# (m) &= \int_{(0,1)} h_1 \circ X_1 \ dm \label{xsiint1} \\
   \int_\mathbb{R} h_1 \ d\mathscr{S}_2(f_1, G^1,c_1)_\#(m) &= \int_{(0,1)} h_1 \circ \mathscr{S}_2(f_1,G^1,c_1) \ dm. \label{xsiint2}
\end{align} 
To compare these integrals, it is sufficient to show (by standard properties)
\begin{equation}\label{S2eq6}
\forall s \in(0,c_1) \cup (c_1,1), \ h_1(X_1(s)) \leq h_1 \big(\mathscr{S}_2(f_1, G^1,c_1)(s) \big).
\end{equation}
There are two cases: $0<s<c_1$ and $c_1 < s < 1$, corresponding to the negative and positive axis respectively, by \eqref{S2useful+} and \eqref{S2useful-}. We will only prove 
\eqref{S2eq6} for $c_1 < s < 1$ (we assume $c_1 \neq 1$), because the case $0<s<c_1$ is done similarly, being symmetric. Write 
$$
\widetilde{X}_1^+(s) = \sup {f_1^+}^{-1}[1-s,1], \ \forall s \in(0,1).
$$
One has $\widetilde{X}_1^+ \geq 0$, because $f_1^+(0) = 1$. By \eqref{S2useful+}, $\forall s \in(c_1,1)$, 
\begin{equation}\label{S2maintheoremeqskripka1}
\mathscr{S}_2(f_1,G^1,c_1)(s) = \mathscr{S}_R(f_1^+,G^1)(s) = \sup G^1_{\leq \widetilde{X}_1^+(s)}.
\end{equation}
Notice $\forall s \in (c_1, 1), \ X_1(s) \geq 0$. Indeed, if this were not the case, because $X \nearrow$ on $(0,1)$, one quickly finds $P(X_1 < 0) > c_1$, but $c_1 = P(X_1 < 0)$ by definition.

Furthermore, because $X_1 \in \mathcal{T}_2(f_1^-,f_1^+)$, $X_1$ is right tail bounded by $f_1^+$. 
In particular, lemma~\ref{Xtilde} (iii) says
$$
X_1(s) \leq \widetilde{X}_1^+(s).
$$
Thus $X_1(s) \in [0, \widetilde{X}_1^+(s)], \ \forall s \in(c_1,1)$. Thus, to show \eqref{S2eq6} for $s \in (c_1,1)$, it is sufficient to show (by \eqref{S2maintheoremeqskripka1}) that 
\begin{equation}\label{S2claimeq}
\sup_{t \in [0,\widetilde{X}_1^+(s)]} h_1(t) = h_1\big(\sup G^1_{\leq \widetilde{X}_1^+(s)}\big).
\end{equation}
This is confined within the next lemma.

\begin{lemma}\label{S2claimeqlemma}
Let $(f_1^-,f_1^+) \in \mathcal{B}_2$ and $c_1 \in (0,1)$. Define\footnote{$h_k$ is at definition~\ref{hkdefinition}.}
\begin{align*} 
H^1 &:= \big\{ t \in [0,\infty)\  \big| \ \forall t' \in [0,t), h_1(t') < h_1(t)\big \} \\ 
    & \ \ \cup \big\{ t \in (-\infty,0] \ \big| \ \forall t' \in (t,0], h_1(t')<h_1(t) \big\} \\
G^1 &:= \overline{H^1} \cup \{0\}.
\end{align*}
Write $\widetilde{X}_1^+(s) = \sup {f_1^+}^{-1}[1-s,1], \ \forall s \in(0,1)$. Then
\begin{equation}\label{S2claimeq'}
   \sup_{t \in [0,\widetilde{X}_1^+(s)]} h_1(t) = h_1\big(\sup G^1_{\leq \widetilde{X}_1^+(s)}\big), \ \forall s \in (c_k,1).
\end{equation}
\end{lemma}
\begin{proof}

Fix $s \in (c_k,1)$. By theorem~\ref{S2properties} (ii), 
$$
\mathscr{S}_2((f_1^-,f_1^+),G^1,c_1)(s) = \sup G^1_{\leq \widetilde{X}_1^+(s)} \in G^1 = \overline{H^1} \cup \{0\}.
$$
Thus $\exists (t_k')_k \subset H^1 \cup \{0\}$ with $t_k' \rightarrow \sup G^1_{\leq \widetilde{X}_1^+(s)}$. By discarding terms, WLOG 
$(t_k')_k$ is either monotonically increasing or monotonically decreasing. This way, the definition of $H^1$ implies $(h_1(t_k'))_k$ is monotonically
increasing or monotonically decreasing. In particular, it converges. By lemma~\ref{hklemma},
\begin{equation}\label{bytheaboveequation}
h_1 (\sup G^1_{\leq \widetilde{X}_1^+(s)}) \geq \lim_{k \rightarrow \infty} h_1(t_k'). 
\end{equation}
Note $\sup G^1_{\leq \widetilde{X}_1^+(s)} \leq \widetilde{X}^+(s)$. The following claim will be used to prove \eqref{S2claimeq'} (it is not sufficient).

Claim: 
\begin{equation}\label{h1itsownsup}
h_1 (\sup G^1_{\leq \widetilde{X}_1^+(s)}) \geq h_1(t), \ \forall t \in [0,\sup G^1_{\leq \widetilde{X}_1^+(s)}].
\end{equation}
Indeed, suppose $t \in [0, \sup G^1_{\leq \widetilde{X}_1^+(s)}), \ \sup G^1_{\leq \widetilde{X}_1^+(s)} > 0$. Then $\forall k>>0, \ t_k' > t$. Applying the definition of $H^1$, 
$\forall k >> 0, \ h_1(t_k') > h_1(t)$. The claim follows by \eqref{bytheaboveequation}.

Now, let $(t_l)_{l \in \mathbb{N}} \subset [0,\widetilde{X}_1^+(s)]$ such that 
$$
h_1(t_l) \xrightarrow[l \rightarrow \infty]{} \sup_{t \in [0,\widetilde{X}_1^+(s)]} h_1(t).
$$  
By compactness, $(t_l)_l$ has a convergent subsequence $(t_{l_k})_k$ with $t_{l_k} \rightarrow t_{\infty}$, some $t_\infty \in [0,\widetilde{X}^+_1(s)]$. 
By lemma~\ref{hklemma}, 
\begin{equation}\label{hk(tinfty)}
h_1(t_\infty) \geq \lim_{k \rightarrow \infty} h_1(t_{l_k}) = \sup_{t \in[0,\widetilde{X}_1^+(s)]} h_1(t) \geq h_1(t_\infty),
\end{equation}
hence \eqref{hk(tinfty)} holds with equalities in plcae of inequalities. 

Suppose, by contradiction, that $t_\infty > \sup G^1_{\leq \widetilde{X}_1^+(s)}$. So $t_\infty \notin G^1 \supset H^1$. Then by definition of $H^1$, 
there must exist $t' \in [0,t_\infty)$ such that $h_1(t') > h_1(t_\infty)$. A contradiction by \eqref{hk(tinfty)}.

Therefore $0 \leq t_\infty \leq \sup G^1_{\leq \widetilde{X}_1^+(s)}$. Setting $t = t_\infty$ in \eqref{h1itsownsup} gives \eqref{S2claimeq'} by \eqref{hk(tinfty)}.
\end{proof}
Lemma~\ref{S2claimeqlemma} was the proof of \eqref{S2claimeq} which, as described there, finishes the proof of theorem~\ref{S2maintheorem}. 
\end{proof}

For $A \subset \mathbb{R}^n$, define $\#_0(A)$ as the smallest product subset containing $A \cup \{0\}$. \index{$\#_0$} That is, 
\begin{equation}\label{defgridfunction}
\#_0(A) := \pi_1(A) \cup \{0\} \times \cdots \times \pi_n(A) \cup \{0\}, \ \forall A \subset \mathbb{R}^n.
\end{equation}
\begin{lemma}\label{gridfnproperties}
Let $G \ni 0$ and $V \subset \mathbb{R}^n$. $\#_0$ satisfies the following. 
\begin{enumerate}[label=(\roman*)]
\item $\forall A \subset B \subset \mathbb{R}^n$, $A \subset \#_0(A) \subset \#_0(B)$ 
\item $\#_0(G) = G$ 
\item $\#_0(G \cap V) \cap V = G \cap V$
\item $\forall A \subset \mathbb{R}^n, \ \overline{\#_0\big( \overline{A}\big) } = \overline{\#_0(A)}$
\end{enumerate} 
\end{lemma}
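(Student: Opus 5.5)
The plan is to unwind the definition \eqref{defgridfunction} coordinatewise: $\#_0(A) = \prod_{k=1}^n (\pi_k(A)\cup\{0\})$. Throughout, "$G\ni 0$" is read as in section~\ref{abbreviationschapter4}: $G = G^1\times\cdots\times G^n$ is a product set and $0\in G$, so that $0\in G^k$ for every $k$. Each item then reduces to a one-dimensional fact about projections.

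For (i), take $x\in A$. Each coordinate satisfies $x_k=\pi_k(x)\in\pi_k(A)\cup\{0\}$, so $x\in\#_0(A)$. If $A\subset B$, then $\pi_k(A)\subset\pi_k(B)$ for every $k$, and taking products gives $\#_0(A)\subset\#_0(B)$.

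For (ii), note first that $G\neq\emptyset$ because $0\in G$. Hence $\pi_k(G)=G^k$, and since $0\in G^k$ we get $\pi_k(G)\cup\{0\}=G^k$. Taking the product gives $\#_0(G)=G$.

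For (iii), $G\cap V\subset G$, so (i) and (ii) give $\#_0(G\cap V)\subset\#_0(G)=G$. Intersecting with $V$ yields $\#_0(G\cap V)\cap V\subset G\cap V$. For the reverse inclusion, (i) gives $G\cap V\subset\#_0(G\cap V)$, and of course $G\cap V\subset V$.

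Item (iv) is the only one requiring a topological argument, and I expect it to be the main obstacle, though a mild one. I would use the standard fact that the closure of a finite product is the product of the closures, $\overline{\prod_k B_k}=\prod_k\overline{B_k}$. This gives
\[
\overline{\#_0(A)}=\prod_k\big(\overline{\pi_k(A)}\cup\{0\}\quad\text{and}\quad \overline{\#_0(\overline{A})}=\prod_k\big(\overline{\pi_k(\overline{A})}\cup\{0\}\big).
\]
It therefore suffices to show $\overline{\pi_k(\overline{A})}=\overline{\pi_k(A)}$ for each $k$. The inclusion $\supset$ holds because $\pi_k(A)\subset\pi_k(\overline{A})$. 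For $\subset$, continuity of $\pi_k$ gives $\pi_k(\overline{A})\subset\overline{\pi_k(A)}$, and taking closures finishes the argument. The case $A=\emptyset$ is trivial, since both sides equal $\{0\}$.
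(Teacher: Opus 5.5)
Your proof is correct and follows the same route as the paper's. The paper calls (i) and (ii) immediate from the coordinatewise definition, derives (iii) from (i) and (ii), and obtains (iv) from the definition plus ``standard closure results''; you fill in those omitted details, namely that $G=G^1\times\cdots\times G^n$ is a product set containing $0$ (which (ii) needs), the identity $\overline{\prod_k B_k}=\prod_k\overline{B_k}$, and the inclusion $\pi_k(\overline{A})\subset\overline{\pi_k(A)}$ by continuity of $\pi_k$. Only a typo remains: your first display for (iv) is missing the closing parenthesis after $\{0\}$.
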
 
\begin{proof} 
(i) and (ii) are immediate by \eqref{defgridfunction}. (iii) is since by (i) $\#_0(G \cap V) \cap V \supset G \cap V$ whereas 
$$
\#_0(G \cap V) \subset \#_0(G) \stackrel{(ii)}{=} G \implies \#_0(G \cap V) \cap V \subset G \cap V.
$$
(iv) is by \eqref{defgridfunction} and standard closure results.
\end{proof} 

The next theorem (with the previous lemma) implies WLOG the $G$ in theorem~\ref{S2maintheorem} satisfies $G = \overline{\#_0(G \cap V)}$. 
This is useful when $V$ is finite for the following reason: typically one must check infinitely many possible $G$ to find the $G$ which satisfies theorem~\ref{S2maintheorem}. 
However if we know $G=\overline{\#_0(G \cap V)}$ this task is significantly simplified; when $V$ is finite there are only finite possible sets 
$\overline{\#_0(G \cap V)}$ can be over all possible inputs of $G$. In other words, as a function of $G$ it has an infinite domain, but finite range. Thus 
combined with $G = \overline{\#_0(G \cap V)}$ there are now only finitely many $G$ to check for theorem~\ref{S2maintheorem} as opposed to infinitely many $G$.

\begin{theorem}\label{S2finiteV}
Let $f \in \mathcal{B}_2^n, \ V \subset \mathbb{R}^n$ be closed, $G \ni 0$ closed, and $c \in [0,1]^n$. Then
\begin{equation}\label{S2finiteVequation} 
 P(\mathscr{S}_2(f,G,c) \in V) \leq P\big(\mathscr{S}_2\big(f,\overline{\#_0(G \cap V)},c\big) \in V\big).
\end{equation} 
\end{theorem}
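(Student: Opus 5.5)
The plan is to prove \eqref{S2finiteVequation} pointwise on $(0,1)^n$, in the same way as theorem~\ref{thm.1}. Write $G' := \overline{\#_0(G \cap V)}$. I will show
$$
\{s \in (0,1)^n \ | \ \mathscr{S}_2(f,G,c)(s) \in V\} \subset \{s \in (0,1)^n \ | \ \mathscr{S}_2(f,G',c)(s) \in V\}.
$$
Applying the Lebesgue measure then gives the inequality. In fact I expect the stronger statement that $\mathscr{S}_2(f,G',c)(s) = \mathscr{S}_2(f,G,c)(s)$ whenever the latter lies in $V$.

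First I check that $G'$ is an admissible input, i.e.\ that $G'$ is closed, contains $0$, and is a product set. By lemma~\ref{gridfnproperties} (i), (ii), we have $\#_0(G\cap V) \subset \#_0(G) = G$. Since $G$ is closed, this gives
$$
0 \in G' \subset G, \qquad G'^k \subset G^k \text{ for each } k .
$$
Also $G'$ is a product set, because the closure of a product $A_1\times\cdots\times A_n$ is $\overline{A_1}\times\cdots\times\overline{A_n}$, so $G'^k = \overline{\pi_k(G\cap V)\cup\{0\}}$.

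Now fix $s=(s_1,\dots,s_n)$ with $x := \mathscr{S}_2(f,G,c)(s) \in V$. By theorem~\ref{S2properties} (iv), $x \in G$, so $x \in G \cap V$. Hence each coordinate satisfies $x_k \in \pi_k(G\cap V) \subset G'^k$. I then compare coordinates using the formulas in theorem~\ref{S2properties} (ii), (iii), splitting into two cases.

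In the case $s_k \ge c_k$, we have $x_k = \sup G^k_{\le \widetilde{X}_k^+(s_k)}$, where $\widetilde{X}_k^+(s_k) = \sup {f_k^+}^{-1}[1-s_k,1]$. Since $G'^k \subset G^k$, it follows that $\sup G'^k_{\le \widetilde{X}_k^+(s_k)} \le x_k$. On the other hand, $x_k \in G'^k$ and $x_k \le \widetilde{X}_k^+(s_k)$, so $x_k \in G'^k_{\le \widetilde{X}_k^+(s_k)}$, which gives the reverse inequality. Thus $\mathscr{S}_2((f_k^-,f_k^+),G'^k,c_k)(s_k) = x_k$.

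The case $s_k < c_k$ is the mirror image. It uses $\inf G^k_{\ge \inf {f_k^-}^{-1}[s_k,1]}$ and theorem~\ref{S2properties} (iii) in the same way.

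Doing this for every $k$ gives $\mathscr{S}_2(f,G',c)(s) = x \in V$, which is the claimed inclusion. The only delicate point, which I expect to be the main (though minor) obstacle, is justifying the use of the closed-form expressions \eqref{S2useful+} and \eqref{S2useful-} for $G'$. These need $0 \in G'$ and $G'$ closed, which is exactly why the closure and the adjoined $\{0\}$ appear in $\#_0$. It also needs $\sup G'^k_{\le \widetilde{X}_k^+(s_k)}$ to be attained, which follows from the closedness of $G'^k$. Measurability of the two sets is automatic, since both shift operators are in $\mathcal{N}^n$ by theorem~\ref{S2properties} (i) and $V$ is closed.
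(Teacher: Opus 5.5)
Your proof is correct, and it takes a more direct route than the paper's.

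\textbf{The paper's route.} The paper works one coordinate at a time. It replaces $G^1$ by ${G^1}'=\overline{\pi_1(G\cap V)}\cup\{0\}$ while holding the other factors fixed, and writes both probabilities as $\int_{(0,1)} h_1\circ(\cdot)\,dm$ via Fubini and change of variables. It then proves $h_1(\mathscr{S}_2(f_1,G^1,c_1)(s))\le h_1(\mathscr{S}_2(f_1,{G^1}',c_1)(s))$ using a dichotomy: either the value lies outside $\overline{\pi_1(G\cap V)}\cup\{0\}$, so $h_1=0$, or the two shift operators agree at $s$. Finally it iterates over coordinates. The iteration needs the auxiliary identity \eqref{S2finiteVinductionclaim}, which says the relevant projections do not change after earlier factors are replaced. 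That identity is where closedness of $V$ enters, via $\overline{G\cap V}=G\cap V$.

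\textbf{Your route.} You use the fact that $\mathscr{S}_2(f,G,c)$ is built coordinatewise on the common domain $(0,1)^n$. So the event $\{\mathscr{S}_2(f,G,c)\in V\}$ already forces every coordinate into $\pi_k(G\cap V)\subset {G^k}'$ at once. The same ``agreement at points that land in ${G^k}'$'' observation then gives the set inclusion, and even pointwise equality, in one step. This removes Fubini, $h_1$, and the coordinate induction entirely.

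\textbf{What each buys.} Your argument never uses that $V$ is closed: Borel suffices for measurability. The closure in $\overline{\#_0(G\cap V)}$ is needed only to make $G'$ an admissible closed input containing $0$. The paper's route instead follows the one-coordinate-at-a-time $h_1$ template of Theorem~\ref{S2maintheorem}. There the comparison is a genuine improvement of $h_1$-values rather than a membership statement, so no pointwise set inclusion is available. Here that machinery is more than the statement requires.
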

\begin{proof}
Write $f = (f_1,...,f_n)$ where $f_k = (f_k^-,f_k^+) \in \mathcal{B}_2$. Also write
$$
\overline{\#_0(G \cap V)} = G' = {G^1}'\times \cdots \times{G^n}'
$$
where ${G^k}' = \overline{\pi_k(G \cap V)} \cup \{0\}$. We claim for $k=2,...,n$ that 
\begin{equation}\label{S2finiteVinductionclaim} 
\overline{\pi_k\big({G^1}' \times \cdots \times {G^{k-1}}' \times G^k \times \cdots \times G^n \cap V\big )} \cup \{0\} = {G^k}'.
\end{equation} 
LHS, RHS will refer to this equation, which we prove now. Notice $G' \subset G$ since $G = G^1 \times \cdots \times G^n \ni 0$ is closed. In particular, ${G^j}' \subset G^j, \ j = 1,...,n$. Thus  
$\text{LHS} \subset \overline{\pi_k(G \cap V)} \cup \{0\}= \text{RHS}$. 

Conversely, suppose $x_k \in \text{ RHS}$. If $x_k = 0$ then $x_k \in \text{ LHS}$. If $x_k \neq 0$ then $\exists \{y_j\}_{j \in \mathbb{N}} \subset G \cap V, \ \pi_k(y_j)\xrightarrow[j \rightarrow \infty]{} x_k$. By lemma~\ref{gridfnproperties} (iii) and standard closure properties, 

\begin{align*} 
G \cap V = \overline{G \cap V} &= \overline{\#_0(G \cap V) \cap V} \\ 
                                                 &\subset \overline{\#_0(G \cap V)} \cap \overline{V} \\ 
                                                 &= G' \cap V. 
\end{align*} 
It follows $\{y_j\}_j \subset G' \cap V$. Since ${G^j}' \subset G^j, \ j = 1,...,n$, it follows $x_k \in \text{LHS}$. Thus LHS $\supset$ RHS. This shows \eqref{S2finiteVinductionclaim}. 

Next we claim
\begin{equation}\label{S2finiteVeq}
P\big(\mathscr{S}_2(f,G,c) \in V \big) \leq P\big((\mathscr{S}_2(f,{G^1}' \times G^2 \times \cdots \times G^n,c) \in V \big).  
\end{equation}
Having shown this, since the first component and $G$ was arbitrary, one may reapply it to ${G^1}' \times G^2 \times \cdots \times G^n$ to get
$$
P\big((\mathscr{S}_2(f,{G^1}' \times G^2 \times \cdots \times G^n,c) \in V \big) \leq P\big((\mathscr{S}_2(f,{G^1}' \times {G^2}'' \times G^3 \cdots \times G^n,c) \in V \big)
$$
where ${G^2}'' = \overline{\pi_2({G^1}' \times G^2 \times \cdots \times G^n)} \cup \{0\}$. By \eqref{S2finiteVinductionclaim}, ${G^2}'' = {G^2}'$. Thus 
$$
P\big((\mathscr{S}_2(f,{G^1}' \times G^2 \times \cdots \times G^n,c) \in V \big) \leq P\big((\mathscr{S}_2(f,{G^1}' \times {G^2}' \times G^3 \cdots \times G^n,c) \in V \big).
$$
Repeating this $n-2$ more times, one gets \eqref{S2finiteVequation}.    

LHS and RHS will refer to \eqref{S2finiteVeq}, which we prove now. Applying theorem~\ref{Fubini'stheorem} to the indicator function of $V$ and  Lebesgue integrating over the $1$st component
\begin{align*} 
\text{LHS} &= \int_\mathbb{R} h_1 \ d \mathscr{S}_2(f_1,G^1,c_1)_\#(m)  \\ 
\text{RHS} &= \int_\mathbb{R} h_1 \ d \mathscr{S}_2(f_1,{G^1}',c_1)_\#(m).
\end{align*} 
where $m$ is the Lebesgue measure on $(0,1)$ with the Borel $\sigma$-algebra and $h_1$ is defined at definition~\ref{hkdefinition}. By change of variables (recall $\mathscr{S}(f,G,c) \in \mathcal{N}^n$, which has mutually ind.\ components), 
\begin{align*} 
\text{LHS} &= \int_{(0,1)} h_1 \circ \mathscr{S}_2(f_1,G^1,c_1) \ dm \\ 
\text{RHS} &= \int_{(0,1)} h_1 \circ \mathscr{S}_2(f_1,{G^1}',c_1) \ dm
\end{align*}

To show LHS $\leq$ RHS, by monotonicity, we may show 
\begin{equation}\label{S2mu2} 
h_1\big(\mathscr{S}_2(f_1,G^1,c_1)(s) \big) \leq h_1 \big(\mathscr{S}_2(f_1,{G^1}',c_1)(s) \big), \ \forall s \in (0,1).
\end{equation}
Restating \eqref{S2useful+} and \eqref{S2useful-}, 
\begin{equation}\label{S2mu3}
\mathscr{S}_2(f_1,G^1,c_1)(s) =
\begin{cases}
   \mathscr{S}_L(f_1^-,G^1)(s) & s \in (0,c_1)\\ 
   \mathscr{S}_R(f_1^+,G^1)(s) & s \in [c_1,1) 
\end{cases}
\end{equation} 
with $\forall s \in(0,c_1), \ \mathscr{S}_L(f_1^-,G^1)(s) \leq 0$ and $\forall s \in [c_1,1), \ \mathscr{S}_R(f_1^+,G^1)(s) \geq 0$. Similarly for $\mathscr{S}_2(f_1,{G^1}',c_1)$. 

There are two cases in \eqref{S2mu2}.

Case $\mathscr{S}_2(f_1,G^1,c_1)(s) \notin \overline{\pi_1(G \cap V)} \cup \{0\}$: 

Denoting $\mathscr{S}_2(f_1,G^1,c_1)(s) = a$, 
\begin{equation}\label{S2maincase1} 
   h_1(a) \stackrel{\eqref{hkdef}}{=}  P\big(\big(a, \mathscr{S}_2(f_2,G^2,c_2),...,\mathscr{S}_2(f_n,G^n,c_n)\big) \in V\big).
\end{equation}
For \eqref{S2mu2}, because $h_1 \geq 0$, it suffices to show $h_1(a) = 0$. Suppose by contradiction $\exists s_2,...,s_n \in (0,1)$ such that 
$$
\big(a, \mathscr{S}_2(f_2,G^2,c_2)(s_2),...,\mathscr{S}_n(f_n,G^n,c_n)(s_n)\big) \in V.
$$
By theorem~\ref{S2properties} (iv), this is also an element of $G$. Thus $a \in \pi_1(G \cap V)$. A contradiction; no such $s_2,...,s_n$ exist. Therefore \eqref{S2maincase1} is $0$.  

Case $\mathscr{S}_2(f_1,G^1,c_1)(s) \in \overline{\pi_1(G \cap V)} \cup \{0\}$: 

We claim 
\begin{equation}\label{S2mu4}
   \mathscr{S}_2(f_1,{G^1}',c_1)(s) = \mathscr{S}_2(f_1,G^1,c_1)(s)
\end{equation} 
from which \eqref{S2mu2} follows. Either $s \in (0,c_1)$ or $s \in [c_1,1)$. Say $s \geq c_1$. Then,
\begin{align} 
\overline{\pi_1(G \cap V)} \cup \{0\} \ni \mathscr{S}_2(f_1,G^1,c_1)(s) & \stackrel{\eqref{S2mu3}}{=} \mathscr{S}_R(f_1^+,G^1)(s) \\ 
   &\ = \sup G^1_{\leq \sup {f_1^+}^{-1}[1-s,1]} \geq 0 \label{S2mu5}
\end{align}
where the last step is by theorem~\ref{S2properties} (ii). That is, $\mathscr{S}_2(f_1,G^1,c_1)(s) \geq 0$ is an element of $\overline{\pi_1(G \cap V)} \cup \{0\}$ which achieves the supremum of 
$G^1_{\leq \sup {f_1^+}^{-1}[1-s,1]}$, therefore it achieves the supremum of $\big(\overline{\pi_1(G \cap V)} \cup \{0\}\big) \cap (-\infty, {f_1^+}^{-1}[1-s,1]]$, which is the definition of $\mathscr{S}_2(f_1,{G^1}',c_1)(s)$ \eqref{S2mu3}. 
Thus \eqref{S2mu4} holds for $s \in [c_1,1)$.

The case $s \in (0,c_1)$ is similar, because $\mathscr{S}_L$ is the reflection of $\mathscr{S}_R$. So \eqref{S2mu2} holds as desired.
\end{proof}

\section{Main result for the absolute shift operator}\label{sectionabsoluteshiftoperatormainresult}

\begin{definition}[Radially neat r.v.'s]\label{radiallyneatdef} 
A real r.v.\ $X$ is radially neat iff $X:(0,1) \rightarrow \mathbb{R}$ and $|X| \nearrow$. (In particular, $|X| \in \mathcal{N})$. The set of radially neat r.v.'s is $\mathcal{N}_{rad}$. 
\end{definition} 

Whereas $\mathscr{S}_R, \mathscr{S}_L,$ and $\mathscr{S}_2$ returned a $n$-tuple of neat r.v.'s, $\mathscr{S}$ will return a $n$-tuple of radially neat r.v.'s. 
\begin{definition}\label{Sdef}
The absolute shift operator $\mathscr{S}$ maps
$$
\mathscr{S}: (f,G) \mapsto (X_1,...,X_n)
$$
where $f=(f_1,...,f_n) \in \mathcal{B}^n, \ 0 \in G = G^1 \times \cdots \times G^n \subset \mathbb{R}^n$, $G^k$ Borel measurable such that $G^k_{|\cdot|}$ is closed for $k=1,...,n$, $(X_1,...,X_n)$ is the mutually ind.\ product of 
the r.v.'s $X_1,...,X_n$, and for $k=1,...,n$,
\begin{equation}\label{SXkeq} 
   \forall s \in (0,1), \ X_k(s) := 
   \begin{cases} 
      \mathscr{S}_R(f_k,G^k_{|\cdot|})(s) & \text{if } \mathscr{S}_R(f_k,G^k_{|\cdot|})(s) \in G^k \cap [0,\infty)  \\
      -\mathscr{S}_R(f_k,G^k_{|\cdot|})(s) & \textnormal{otherwise}
   \end{cases} 
\end{equation}
\end{definition} 
\index{Absolute shift operator $\mathscr{S}$} $V_{|\cdot|}$ is defined in section~\ref{abbreviationschapter4} along with the other subscript notations. $\mathscr{S}_R$ and $\widetilde{X}_k$ are at definition~\ref{SRdef} and \eqref{condenseSR}. 
Because $f_k(0) = 1$ \eqref{absolutetails}, by \eqref{Xtildeabbreviations} we have
$$
\forall s \in (0,1), \ 0 \in f_k^{-1} [1-s,1] \implies \widetilde{X}_k \geq 0.
$$
Because $0 \in G^k$ by definition of $\mathscr{S}$, trivially $0 \in G^k_{|\cdot|}$. Combining this with $\widetilde{X}_k(s) \geq 0$,
\begin{align} 
\mathscr{S}_R(f_k,G^k_{|\cdot|})(s) &\stackrel{\eqref{SRXkeq}}{=} \sup \big(G^k_{|\cdot|}\cup(-\infty, \inf G^k_{|\cdot|}] \big)_{\leq \widetilde{X}_k(s)} \nonumber \\ 
&\ = \sup \big(G^k_{|\cdot|}\big)_{\leq \widetilde{X}_k(s)} \geq 0 \label{SSRgeq0}
\end{align}
is an expansion $\mathscr{S}_R$ in \eqref{SXkeq}. \eqref{SSRgeq0} is an important fact to remember: by \eqref{SXkeq} it follows if $\mathscr{S}_R(f_k,G_{|\cdot|}^k) \in G^k_{\geq 0}$ then $\mathscr{S}(f_k,G^k)(s) \geq 0$, and if $\mathscr{S}_R(f_k,G_{|\cdot|}^k)(s) \notin G^k_{\geq 0}$ 
then $\mathscr{S}(f_k,G^k)(s) \leq 0$. We will use these simple, but ever present facts continuously. For example, 
\begin{equation}\label{|S|Xkeq} 
\forall s \in (0,1), \ |\mathscr{S}(f_k,G^k)(s)| \stackrel{\eqref{SXkeq}}{=} |\mathscr{S}_R(f_k,G^k_{|\cdot|})(s)| = \mathscr{S}_R(f_k,G^k_{|\cdot|})(s).
\end{equation} 

Similar to the other shift operators (see \eqref{condenseSR}), the definition of $\mathscr{S}$ for $n=1$ allows the shorter expression
$$
\mathscr{S}(f,G) = \big(\mathscr{S}(f_1,G^1),...,\mathscr{S}(f_n,G^n)\big).
$$

\begin{theorem}[$\mathscr{S}$ properties]\label{Sproperties} 
$\mathscr{S}(f,G)$ (defined at definition~\ref{Sdef}) satisfies
   \begin{enumerate}[label=(\roman*)]
      \item $\mathscr{S}(f,G)(s) \in G, \ \forall s \in(0,1)^n$
      \item $\mathscr{S}(f,G)$ is Borel measurable
      \item For $k=1,...,n$, $\mathscr{S}(f_k,G^k)$ is a radially neat r.v. (definition~\ref{radiallyneatdef})
      \item For $k=1,...,n$, $P\big(|\mathscr{S}(f_k,G^k)| \geq t \big) \leq f_k(t), \forall t \geq 0$ with equality $\forall t \in G^k_{|\cdot|}$
   \end{enumerate} 
\end{theorem}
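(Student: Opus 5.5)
The plan is to reduce everything to the corresponding facts for $Y_k := \mathscr{S}_R(f_k,G^k_{|\cdot|})$, which are already collected in theorem~\ref{SRproperties}, together with the identity $|\mathscr{S}(f_k,G^k)| = Y_k$ from \eqref{|S|Xkeq} and the nonnegativity \eqref{SSRgeq0}. Throughout I extend $f_k$ by $1$ on $(-\infty,0)$, as in the proof of theorem~\ref{thm.2}, so that $f_k\in\mathcal{B}_R$ and theorem~\ref{SRproperties} applies with the closed set $G^k_{|\cdot|}$. Since $0\in G^k_{|\cdot|}$, we have $\inf G^k_{|\cdot|}=0$ or less. By \eqref{SSRgeq0} we then get $Y_k(s)\in G^k_{|\cdot|}$, not merely in $G^k_{|\cdot|}\cup(-\infty,\inf G^k_{|\cdot|}]$.

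For (i), fix $s$ and write $a=Y_k(s)\in G^k_{|\cdot|}$. Either $a\in G^k\cap[0,\infty)$, in which case $X_k(s)=a\in G^k$; or it is not. In the second case, $a\in G^k_{|\cdot|}$ means $a$ or $-a$ lies in $G^k$, so $-a\in G^k$, and $X_k(s)=-a\in G^k$. Taking products over $k$ gives (i).

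For (iii), the identity $|X_k|=Y_k$ and $Y_k\in\mathcal{N}$ (theorem~\ref{SRproperties}(i)) give radial neatness, once we know $X_k$ is measurable.

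For (ii), measurability of each component suffices for the product. I write
$$X_k = Y_k\,\mathbbm{1}_{A} - Y_k\,\mathbbm{1}_{A^c}, \qquad A := Y_k^{-1}\big(G^k\cap[0,\infty)\big).$$
Here $A$ is Borel because $G^k$ is Borel and $Y_k$ is monotone, hence Borel measurable. This is the only place the Borel hypothesis on $G^k$ (rather than closedness) matters, and I expect it to be the main, if minor, point to check. The product map $(0,1)^n\to\mathbb{R}^n$ is then measurable, since each coordinate is.

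For (iv), $P(|X_k|\ge t)=P(Y_k\ge t)$, which is at most $f_k(t)$ by theorem~\ref{SRproperties}(i). For the equality, use theorem~\ref{SRproperties}(ii): $P(Y_k\ge t)=f_k\big(\inf (G^k_{|\cdot|}\cup(-\infty,\inf G^k_{|\cdot|}])_{\ge t}\big)$. For $t\in G^k_{|\cdot|}$ with $t\ge 0$ this infimum equals $t$, giving equality.
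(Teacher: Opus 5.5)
Your proposal is correct and follows the paper's proof essentially step for step. Like the paper, you reduce everything to $Y_k=\mathscr{S}_R(f_k,G^k_{|\cdot|})$ via \eqref{|S|Xkeq}, \eqref{SSRgeq0} and theorem~\ref{SRproperties}, and you argue (i), (iii), (iv) the same way. The only differences are cosmetic: in (ii) your decomposition $X_k=Y_k\mathbbm{1}_A-Y_k\mathbbm{1}_{A^c}$ with $A=Y_k^{-1}(G^k\cap[0,\infty))$ replaces the paper's explicit preimage computation, and you make explicit the extension of $f_k$ by $1$ on $(-\infty,0)$ that the paper leaves implicit.
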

   \begin{proof} 
   (i): Fix $k$, $s_k \in (0,1)$. It is sufficient to show $\mathscr{S}(f_k,G^k)(s_k) \in G^k$. This is an application of \eqref{SXkeq} and \eqref{SSRgeq0}: if $\mathscr{S}_R(f_k,G^k_{|\cdot|})(s) \in G^k \cap [0,\infty)$ there is nothing to show. 
        Therefore, suppose this is not the case. By theorem~\ref{SRproperties} (v), 
\begin{equation}\label{SSRinG}
\mathscr{S}_R(f_k,G^k_{|\cdot|})(s) \in G^k_{|\cdot|} \cup (-\infty, \inf G^k_{|\cdot|}], \ \forall s \in (0,1).
\end{equation} 
Recall 
        $$ 
        G^k_{|\cdot|} = \{|t| \ \big| \ t \in G^k\} \ \ \ \ \ \ (\text{section~\ref{abbreviationschapter4}}).
        $$
In particular because $0 \in G^k$ by hypothesis, $0 \in G^k_{|\cdot|} \subset [0,\infty)$. By \eqref{SSRgeq0} and \eqref{SSRinG}, 
\begin{equation}\label{SSRinG2} 
\mathscr{S}_R(f_k,G^k_{|\cdot|})(s) \in G^k_{|\cdot|}, \ \forall s \in (0,1). 
\end{equation}
Because $\mathscr{S}_R(f_k,G^k_{|\cdot|})(s) \notin G^k \cap [0,\infty)$ by assumption, \eqref{SSRinG2} implies $-\mathscr{S}_R(f_k,G^k_{|\cdot|}) \in G^k \cap (-\infty, 0]$. 
Applying \eqref{SXkeq} and \eqref{SSRgeq0} trivially implies $\mathscr{S}(f_k,G^k)(s) \in G^k$. 

   (ii): It is sufficient to show each component of $\mathscr{S}(f,G)$ is measurable. Fix $k$. Let $U \subset \mathbb{R}$ be measurable. We will show $\mathscr{S}(f_k,G^k)^{-1}(U)$ is measurable. 
   Firstly,
   $$
   \mathscr{S}(f_k,G^k)^{-1}(U) \stackrel{\text{(i)}}{=} \mathscr{S}(f_k,G^k)^{-1}(U \cap G^k). 
   $$
   We claim (section~\ref{abbreviationschapter4} notations)
   \begin{align}\label{(*)claim}  
      \mathscr{S}(f_k,G^k)^{-1}(U \cap G^k) &= \mathscr{S}_R(f_k,G^k)^{-1}(U \cap G^k_{\geq 0}) \nonumber \\ 
                                                                           &\cup \mathscr{S}_R(f_k,G^k)^{-1}\big((U \cap G^k_{<0})_- \backslash G^k_{\geq0}\big).
   \end{align} 
   Then, because $U \cap G^k_{\geq 0}$ and $(U \cap G^k_{<0})_- \backslash G^k_{\geq0}$ are measurable, being a finite combination of unions, intersections, reflections, and complements of measurable 
   sets, and because $\mathscr{S}_R(f_k,G^k_{|\cdot|})$ is a neat r.v., it will follow $\mathscr{S}(f_k,G^k)^{-1}(U)$ is measurable.
   \begin{align*}
   &s \in \mathscr{S}(f_k,G^k)^{-1}(U \cap G^k) \\ &\iff \mathscr{S}(f_k,G^k)(s) \in U \cap G^k_{\geq 0} \text{ or } \mathscr{S}(f_k,G^k)(s) \in U \cap G^k_{<0}.
   \end{align*}
   Using \eqref{SXkeq} and $\mathscr{S}_R(f_k,G^k_{|\cdot|}) \geq 0$ by \eqref{SSRgeq0}, this is iff
   \begin{align*} 
   &\bigg(\mathscr{S}_R(f_k,G^k_{|\cdot|})(s) \in U \cap G^k_{\geq 0} \text{ or } \Big(- \mathscr{S}_R(f_k,G^k_{|\cdot|})(s) \in U \cap G^k_{<0} \\ & \text{and } \mathscr{S}_R(f_k,G^k_{|\cdot|})(s) \notin G^k_{\geq 0}\Big)\bigg) \\ &\iff \Big(s \in \mathscr{S}(f_k,G^k)^{-1} (U \cap G^k_{\geq 0}) \text{ or } s \in \mathscr{S}(f_k,G^k)^{-1} \big((U \cap G^k_{>0})_{-} \backslash G^k_{\geq 0}\big)\Big) 
   \end{align*} 
Thus \eqref{(*)claim} is proved.

   (iii): By definition~\ref{radiallyneatdef}, we must show $|\mathscr{S}(f_k,G^k)| \in \mathcal{N}$ and $\mathscr{S}(f_k,G^k)$ is measurable. The latter property is (ii). The former follows since 
   $|\mathscr{S}(f_k,G^k)| \stackrel{\eqref{|S|Xkeq}}{=} \mathscr{S}_R(f_k,G^k_{|\cdot|})$  and the function $\mathscr{S}_R$ always returns a neat r.v.\ for any input by theorem~\ref{SRproperties} (iv). 

   (iv): Because $|\mathscr{S}(f_k,G^k)| = \mathscr{S}_R(f_k,G^k_{|\cdot|})$, this is follows by theorem~\ref{SRproperties} (i) and (ii).
   
\end{proof} 

We may attempt to find the CDF of $\mathscr{S}(f_k,G^k)$: \eqref{|S|Xkeq} allows applying \eqref{SRRCDF}, which provides the reversed CDF of $\mathscr{S}_R(f_k,G^k_{|\cdot|})(s)$, to get
\begin{equation}\label{SRCDF} 
   P\big(|\mathscr{S}(f_k,G^k)| \geq t\big) =  f_k\big(\inf \big(G^k_{|\cdot|} \cup (-\infty, \inf G^k_{|\cdot|}]\big)_{\geq t}\big), \ \forall t \geq 0
\end{equation} 
employing the conventions $\inf \ \emptyset = +\infty$ and $f_k(+\infty) = 0$. Because $ 0 \in G^k$ notice $0 \in G^k_{|\cdot|}$. In particular, this simplifies to 
\begin{equation}\label{SRCDF2}
P\big(|\mathscr{S}(f_k,G^k)| \geq t\big) =  f_k\big(\inf \big(G^k_{|\cdot|}\big)_{\geq t}\big), \ \forall t \geq 0.
\end{equation}
Recall the statement above \eqref{|S|Xkeq}. Expressing the reversed CDF slightly more specifically, for $t \geq 0$, 
\begin{align}
   P\big(\mathscr{S}(f_k,G^k) \geq t \big) &\stackrel{\eqref{SXkeq}}{=} P\big(\mathscr{S}_R(f_k,G^k_{|\cdot|}) \geq t \text{ and } \mathscr{S}_R(f_k,G^k_{|\cdot|}) \in G^k_{\geq 0}  \big) \nonumber \\
   &\ = P\big( \mathscr{S}_R(f_k,G^k_{|\cdot|}) \in G^k_{\geq t}\big). \label{Sxsi1}
\end{align} 
Similarly, for $t<0$, 
\begin{align} 
   P\big( \mathscr{S}(f_k,G^k) \leq t\big) & \stackrel{\eqref{SXkeq}}{=} P\big(-\mathscr{S}_R(f_k,G^k_{|\cdot|}) \leq t \text{ and } \mathscr{S}_R(f_k,G^k_{|\cdot|}) \notin G^k_{\geq 0} \big) \nonumber \\ 
   &=  P\big( \mathscr{S}_R(f_k,G^k_{|\cdot|}) \in [-t,\infty) \backslash G^k_{\geq0} \big). \label{Sxsi2}
\end{align} 
Our main goal in this section is to prove the following. 
\begin{theorem}\label{Smainresult}
Let $f \in \mathcal{B}^n$ \eqref{absolutetails}. Let $V \subset \mathbb{R}^n$ be closed. Then there exists $0 \in G = G^1 \times \cdots \times G^n \subset \mathbb{R}^n$ 
Borel measurable, $G^k_{|\cdot|}$ closed, $k=1,...,n$, such that\footnote{$\mathscr{S}$ is at definition~\ref{Sdef} and $\mathcal{T}(f)$ at \eqref{T(f)}.}
\begin{equation}\label{Smainresulteq1}
\sup_{X \in \mathcal{T}(f)} P(X \in V) = P(\mathscr{S}(f,G) \in V).
\end{equation}
\end{theorem}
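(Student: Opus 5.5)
Following the pattern of theorem~\ref{S2maintheorem}, I will proceed in two steps. \textbf{Step 1}: show the supremum is attained by some $X_0 \in \mathcal{T}(f)$. Take $(X^m)_m \subset \mathcal{T}(f)$ with $P(X^m \in V) \to \sup$. Since $\mathcal{T}(f)$ consists of neat tuples, lemma~\ref{conva.e.subseq} applies component by component. Diagonalizing gives a subsequence converging a.e.\ to some $X_0 \in \mathcal{T}(f)$. The argument of lemma~\ref{X0lemma} uses only a.e.\ convergence and closedness of $V$, so it transfers verbatim and yields $P(X_0 \in V) = \sup_{X \in \mathcal{T}(f)} P(X \in V)$.

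\textbf{Step 2}: for any $X \in \mathcal{T}(f)$, construct an admissible $G$ with $P(X \in V) \le P(\mathscr{S}(f,G) \in V)$. Equality then follows from theorem~\ref{Sproperties}, which places $\mathscr{S}(f,G)$ in the admissible class after replacing radially neat components by equidistributed neat ones via corollary~\ref{c2.1}; this does not change $P(\cdot \in V)$. As in theorem~\ref{S2maintheorem}, I replace one component at a time and induct. The induction requires the replacement lemma for radially neat (not just neat) remaining components, but Fubini only uses the distributions of $X_2,\dots,X_n$, so this is harmless.

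For the first component, use $h_1$ from definition~\ref{hkdefinition}. Set $\phi(r) := \max\{h_1(r), h_1(-r)\}$ for $r \ge 0$, letting $\sigma(r) \in \{r,-r\}$ be a maximizer with a fixed tie-break toward $+r$. By lemma~\ref{hklemma}, $h_1$ is upper semicontinuous along convergent sequences, and hence so is $\phi$. Define
$$H := \{ r \ge 0 \mid \forall r' \in [0,r),\ \phi(r') < \phi(r)\}, \qquad R := \overline{H} \cup \{0\},$$
and $G^1 := \{\sigma(r) \mid r \in R\}$. Then $G^1_{|\cdot|} = R$ is closed, and $0 \in G^1$ once the tie-break sends $0 \mapsto 0$. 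By construction of \eqref{SXkeq}, $\mathscr{S}(f_1,G^1)(s) = \sigma(\mathscr{S}_R(f_1,R)(s))$. (Measurability of $G^1$ is a concern; it is addressed below.)

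By Fubini (theorem~\ref{Fubini'stheorem}) and change of variables, it suffices to show pointwise in $s$ that $h_1(X_1'(s)) \le h_1(\mathscr{S}(f_1,G^1)(s))$, where $X_1'$ is a radially neat version of $X_1$ with the same distribution. This $X_1'$ exists: take the neat version $Y$ of $|X_1|$ from lemma~\ref{existenceneat}, then attach signs measurably. Since $|X_1'| = Y \le \widetilde{X}_1$ by lemma~\ref{Xtilde}(iii), we get $h_1(X_1'(s)) \le \phi(Y(s)) \le \sup_{[0,\widetilde{X}_1(s)]} \phi$. It therefore suffices to prove the analogue of lemma~\ref{S2claimeqlemma}:
$$\sup_{r \in [0,\widetilde{X}_1(s)]} \phi(r) = \phi\bigl(\sup R_{\le \widetilde{X}_1(s)}\bigr).$$
Its proof copies lemma~\ref{S2claimeqlemma}, using compactness and upper semicontinuity of $\phi$, and the result then equals $h_1(\mathscr{S}(f_1,G^1)(s))$.

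I expect the main obstacle to be the measurability of $G^1$, and more precisely of the sign choice $\sigma$ on $R$. Definition~\ref{Sdef} requires $G^k$ to be Borel. I would first try to show that $\{r \in R \mid h_1(-r) > h_1(r)\}$ is Borel. For this I would realize $h_1(\pm r)$ as a measurable function of $r$ via Fubini, since $h_1(t) = \int 1_V(t,x)\,d\mu(x)$ is Borel measurable in $t$. With that, $\sigma$ becomes a Borel map, and $G^1 = \sigma(R)$ is the union of two Borel pieces $R \cap \{h_1(r)\ge h_1(-r)\}$ and $-(R \cap \{h_1(-r)>h_1(r)\})$. A secondary check is that \eqref{SXkeq} then reproduces $\sigma$: a point $r \in R$ is sent to $+r$ exactly when $r \in G^1 \cap [0,\infty)$, which matches the tie-break.
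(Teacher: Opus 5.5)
Your two-step structure (attain the supremum, then replace one coordinate at a time) matches the paper. Your $G^1=\sigma(R)$ is admissible: $G^1_{|\cdot|}=R$ is closed by construction, $G^1$ is Borel once $h_1$ is, and \eqref{SXkeq} reproduces $\sigma$.

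\textbf{A step that fails.} A radially neat $X_1'$ with the same law as $X_1$ need not exist. If $|X_1'|=Y$ is neat, then $Y$ is injective off the preimages of atoms of $|X_1|$. There the sign of $X_1'(s)$ is a function of $|X_1'(s)|$, so a radially neat variable can only have laws in which the sign is deterministic given a non-atomic modulus. For example, take $X_1$ uniform on $[-1,1]$. Then $Y(s)=s$ a.e., so $P(X_1'\in A)=m(A\cap\{\epsilon=1\})$ for Borel $A\subset(0,1)$, where $\epsilon$ is the attached sign. Taking $A=\{\epsilon=1\}$ and then $A=\{\epsilon=-1\}$ shows this cannot equal $m(A)/2$.

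This is exactly why the paper inserts Step A. It first replaces $X_1$ by $\phi_1\circ X_1$, which moves the mass at $\pm t$ to the better sign and is a function of $|X_1|$. Only then does Step B identify its law with the radially neat $\mathscr{S}(f_1',{G^1}')$. In your setting the repair is shorter, because you never need $X_1'$. With $\mu=(X_1)_\#(m)$ and $Y$ the neat version of $|X_1|$,
\[ \int h_1\,d\mu\le\int\phi(|x|)\,d\mu(x)=\int_{(0,1)}\phi\circ Y\,dm\le\int_{(0,1)}\sup_{[0,\widetilde{X}_1(s)]}\phi\,dm(s). \]
This uses only Borel measurability of $\phi$ (from that of $h_1$, which you already invoke) and $Y\le\widetilde{X}_1$.

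\textbf{A step that cannot be copied verbatim.} Lemma~\ref{S2claimeqlemma} does not transfer as written to your strict-inequality $H$. Let $a:=\sup R_{\le\widetilde{X}_1(s)}$ and let $t_\infty>a$ be a maximizer of $\phi$ on $[0,\widetilde{X}_1(s)]$. Then $t_\infty\notin H$, but that only gives some $t'<t_\infty$ with $\phi(t')\ge\phi(t_\infty)$. This is no contradiction when the maximum is attained at several points; the paper's proof of that lemma writes $>$ here, which does not follow.

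The fix is to take $t_\infty$ to be the \emph{least} maximizer. It exists because upper semicontinuity makes $\{r\in[0,\widetilde{X}_1(s)]:\phi(r)\ge M\}$ compact. Then $t_\infty\in H$, so $t_\infty\le a$, and your analogue of the claim $\phi(a)\ge\phi(t)$ on $[0,a]$ finishes.

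The paper's proof of the present theorem avoids this issue by using the non-strict set $G^1=\{t:\ h_1(t')\le h_1(t)\ \forall t'\in[-|t|,|t|]\}$. For that set, $t_0\notin G^1$ does yield a strict inequality.

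With these two repairs your argument is correct and somewhat more economical than the paper's. Closedness of $G^1_{|\cdot|}$ is automatic, and Step B's distributional identification is replaced by the one-line pushforward above.
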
 
Notice this is applicable to sharpest tail bounds (see section~\ref{tailboundintroduction}) since for $g: \mathbb{R}^n \rightarrow \mathbb{R}$ continuous, $g^{-1}[t,\infty)$ is closed $\forall t >0$. In particular, setting $V_t = g^{-1}[t,\infty)$ in \eqref{Smainresulteq1} yields
$$
\forall t \in \mathbb{R}, \sup_{X \in \mathcal{T}(f)} P(g(X) \geq t) = P(g(\mathscr{S}(f, G_t)) \geq t), \ \text{some } \{G_t\}_{t \in \mathbb{R}}
$$ 

To prove this theorem we will repeat the two step process used to prove theorem~\ref{S2maintheorem} (for $\mathscr{S}_2$) for $\mathscr{S}$. An outline of the two steps are below. 

Step $1$: let $(X^m)_{m \in \mathbb{N}} \subset \mathcal{T}(f)$ with
$$
P(X^m \in V) \xrightarrow[m\rightarrow \infty]{} \sup_{X \in \mathcal{T}(f)} P(X \in V).
$$
Extract from $(X^m)_m$ a subsequence $(X^{m_k})_k$ and $X_0 \in \mathcal{T}(f)$ such that $X^m \xrightarrow[]{a.e.} X_0$. Then show 
$$
P(X_0 \in V) = \lim_{k \rightarrow \infty} P(X^{m_k} \in V). 
$$
This shows there exists an $X_0 \in \mathcal{T}(f)$ which achieves the supremum. 

Step $2$: show $\forall X \in \mathcal{T}(f), \ \exists G_X$ satisfying definition~\ref{Sdef}, such that 
\begin{equation}\label{Sstep2eq}
P(X \in V) \leq P\big(\mathscr{S}(f,G_X) \in V \big). 
\end{equation}
In particular, 
$$
\sup_{X \in \mathcal{T}(f)} P(X \in V) = P(X_0 \in V) \leq P( \mathscr{S}(f,G_{X_0}) \in V). 
$$
For $\mathscr{S}_2$ in place of $\mathscr{S}$, at this point we were done, since ``$\mathscr{S}_2(f,G,c)$" was an element of $\mathcal{T}_2(f)$, thus the reverse inequality held as well. However, because 
$\mathscr{S}(f,G) \in \mathcal{N}_{rad} \supsetneq \mathcal{N}$, $\mathscr{S}(f,G)$ is not always an element of $\mathcal{T}(f)$. Fortunately this is not a problem, because to show 
$P\big(\mathscr{S}(f,G_{X_0}) \in V \big) \leq \sup_{X \in \mathcal{T}(f)} P(X \in V)$ we may find $X' \in \mathcal{T}(f)$ such that
$$
P(\mathscr{S}(f,G_{X_0}) \in V) = P(X' \in V).
$$
The existence of $X'$ is immediate, because $\mathscr{S}(f,G)$ satisfies the absolute tail bounds $f=(f_1,...,f_n)$ (theorem~\ref{Sproperties} (iv)), and has mutually ind.\ components (definition~\ref{Sdef}), 
so $\exists X' \in \mathcal{N}^n$ with the same distribution as $\mathscr{S}(f,G)$ by corollary~\ref{c2.1}.

Implementation of step $1$: 
\begin{lemma}\label{SX0lemma}
$\exists X_0 \in \mathcal{T}(f)$ such that 
$$
\sup_{X \in \mathcal{T}(f)} P(X \in V) =P(X_0 \in V).
$$
\end{lemma}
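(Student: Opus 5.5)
The plan is to copy the proof of lemma~\ref{X0lemma} verbatim, with $\mathcal{T}(f)$ in place of $\mathcal{T}_2(f)$. The only place the tail type enters is the extraction of an a.e.\ convergent subsequence, and for absolute tails this is exactly lemma~\ref{conva.e.subseq}.

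\textbf{Step 1: the maximizing sequence.} First choose $(X^m)_{m\in\mathbb{N}}\subset\mathcal{T}(f)$ with
$$P(X^m\in V)\rightarrow \sup_{X\in\mathcal{T}(f)}P(X\in V).$$

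\textbf{Step 2: a diagonal subsequence.} Each component sequence $(\pi_k\circ X^m)_m$ is a sequence of neat r.v.'s absolute tail bounded by $f_k$, i.e.\ it lies in $\mathcal{T}(f_k)$ with $n=1$. Apply lemma~\ref{conva.e.subseq} to the first component to get a subsequence converging a.e.\ on $(0,1)$ to some $X_1\in\mathcal{T}(f_1)$. Pass to that subsequence and apply lemma~\ref{conva.e.subseq} to the second component, and so on through all $n$ components. Further subsequences of an a.e.\ convergent sequence still converge a.e., so after $n$ extractions we have a single subsequence, relabelled $(X^m)_m$, whose every component converges a.e.

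\textbf{Step 3: identify the limit.} Set $X_0=(X_1,\dots,X_n)$, the product r.v.\ on $(0,1)^n$. It lies in $\mathcal{N}^n$, and it lies in $\mathcal{T}(f)$ because each $X_k$ satisfies $f_k$. Let $E_k\subset(0,1)$ be the null set where the $k$-th component fails to converge. The set where $X^m(s)\not\rightarrow X_0(s)$ is contained in $\bigcup_k (0,1)^{k-1}\times E_k\times(0,1)^{n-k}$. By theorem~\ref{Fubini'stheorem} this union has product measure zero, so $X^m\rightarrow X_0$ a.e.\ on $(0,1)^n$.

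\textbf{Step 4: upper semicontinuity.} It remains to show $P(X_0\in V)\geq\limsup_m P(X^m\in V)$. Define $W$ to be the convergence set and $\widetilde W:=X_0^{-1}(V^c)\cap W$. For $w\in\widetilde W$ we have $X_0(w)\in V^c$, which is open, so $X^m(w)\notin V$ for all large $m$. Let $N(w)$ be the least threshold after which this holds, and partition $\widetilde W$ into the measurable sets $\widetilde W_k=\{N=k\}$, exactly as in lemma~\ref{X0lemma}. This gives
$$P(X^m\in V)\le \mu(X_0^{-1}(V))+\sum_{k>m}\mu(\widetilde W_k).$$
The tail sum tends to $0$ because $\sum_k\mu(\widetilde W_k)=\mu(\widetilde W)\le 1$. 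Hence $P(X_0\in V)\ge\sup_{X\in\mathcal{T}(f)}P(X\in V)$, and the reverse inequality holds since $X_0\in\mathcal{T}(f)$.

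\textbf{Main obstacle.} I expect the only real care to be in Step 3: justifying that componentwise a.e.\ convergence on $(0,1)$ gives a.e.\ convergence on the product space $(0,1)^n$. This is the Fubini/null-cylinder argument above, which the paper uses implicitly at \eqref{xsitilde1}.
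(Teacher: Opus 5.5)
Your proposal is correct and follows the paper's proof: you extract a.e.\ convergent subsequences one component at a time via lemma~\ref{conva.e.subseq}, then reuse the argument of lemma~\ref{X0lemma} to pass to the limit on the closed set $V$. Your null-cylinder argument in Step 3 spells out what the paper leaves implicit, and you write $\le$ in Step 4 where the paper writes an equality that does not actually hold; neither affects the result.
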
 
\begin{proof} 
Let $(X^m)_{m \in \mathbb{N}} \subset \mathcal{T}(f)$ be such that 
$$
\lim_{m \rightarrow \infty} (X^m \in V) \xrightarrow[m \rightarrow \infty]{} \sup_{X \in \mathcal{T}(f)} P(X \in V).
$$
By lemma~\ref{conva.e.subseq}, the $1$st component of $(X^m)_m$, $(\pi_1 \circ X^m)_m \subset \mathcal{T}(f_1)$, has a subsequence which converges a.e.\ to some $X_1 \in \mathcal{T}(f_1)$. 
Successively repeating this for the other components leads to a standard diagonalization, from which one obtains a subsequence $(X^{m_l})_l \subset (X^m)_m$, such that
\begin{equation}\label{xsitilde}
X^{m_l} \xrightarrow[]{a.e.} (X_1,...,X_n) =: X_0 \in \mathcal{T}(f).
\end{equation}
The remainder of the proof, that $\lim_{k\rightarrow \infty} P(X^{m_k} \in V) = P(X_0 \in V)$, is precisely lemma~\ref{X0lemma}'s proof (see section~\ref{sectionS2mainresult}).
\end{proof} 

Implementation of step $2$: 
\begin{theorem}\label{SWLOG}
$\forall X \in \mathcal{T}(f), \ \exists G \ni 0$ Borel measurable with $G^k_{|\cdot|}$ closed for $k=1,...,n$, such that 
$$
P(X \in V) \leq P(\mathscr{S}(f,G)\in V).
$$
\end{theorem}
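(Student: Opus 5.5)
The plan is to repeat the component-by-component argument used for theorem~\ref{S2maintheorem}, now with radial slices instead of signed ones. Fix $X \in \mathcal{T}(f)$ and let $h_1$ be as in definition~\ref{hkdefinition}. For each radius $r \geq 0$ set $\phi_1(r) := \max\{h_1(r), h_1(-r)\}$. Then define
$$
H^1 := \big\{ r \in [0,\infty) \ \big| \ \forall r' \in [0,r), \ \phi_1(r') < \phi_1(r) \big\}, \qquad R^1 := \overline{H^1} \cup \{0\}.
$$
To turn $R^1$ into a signed set, assign to each $r \in R^1$ the sign achieving $\phi_1(r)$, preferring $+r$ on ties. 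This gives
$$
G^1 := \{ r \in R^1 \ | \ h_1(r) \geq h_1(-r)\} \cup \{ -r \ | \ r \in R^1, \ h_1(-r) > h_1(r)\}.
$$
Then $0 \in G^1$ and $G^1_{|\cdot|} = R^1$ is closed. Measurability of $G^1$ reduces to measurability of $h_1$ on $\mathbb{R}$. That follows from Fubini's theorem (theorem~\ref{Fubini'stheorem}) applied to the indicator of $V$, since $h_1(t)$ is the section measure of a Borel set.

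As in the proof of theorem~\ref{S2maintheorem}, it suffices to show
$$
P(X \in V) \leq P\big((\mathscr{S}(f_1,G^1),X_2,\dots,X_n) \in V\big),
$$
and then induct over the components. This needs a slight adjustment, because $\mathscr{S}(f_1,G^1)$ is radially neat rather than neat. Before each induction step, use corollary~\ref{c2.1} together with theorem~\ref{Sproperties} (iii), (iv) to replace the already-shifted components by neat r.v.'s with the same distribution. Probabilities of $V$ are unchanged, and the current tuple lies in $\mathcal{T}(f)$ again, so the argument applies to the next coordinate. At the end, the product of shifted components is exactly $\mathscr{S}(f,G)$ with $G = G^1\times\cdots\times G^n$. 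This works because each $\mathscr{S}(f_k,G^k)$ depends only on $f_k, G^k$, and the $h_k$ used to define $G^k$ is computed from the current tuple.

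Using Fubini's theorem and change of variables as in \eqref{xsiint1}--\eqref{xsiint2}, the single-coordinate inequality becomes
$$
\int_{(0,1)} h_1 \circ X_1 \, dm \leq \int_{(0,1)} h_1 \circ \mathscr{S}(f_1,G^1)\, dm.
$$
Both sides must be computed on a common parametrisation. Replace $X_1$ by $X_1'(s) := \pm \widetilde{|X_1|}(s)$, where $\widetilde{|X_1|}$ is the neat right-continuous version of $|X_1|$ from lemma~\ref{uniquenessneat}. The sign of $X_1'$ at a point is the sign of $X_1$ there. This requires a measurable coupling of the signs along each level of $|X_1|$. Since only the pushforward of $X_1$ matters, one can take any measure-preserving rearrangement. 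Then $h_1(X_1'(s)) \leq \phi_1(\widetilde{|X_1|}(s))$. By lemma~\ref{Xtilde} (iii), applied to the right tail of $|X_1|$ (bounded by $f_1$ extended by $1$ on $(-\infty,0)$), we have $\widetilde{|X_1|}(s) \leq \widetilde{X}_1(s)$.

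The pointwise goal is then
$$
\sup_{r \in [0,\widetilde{X}_1(s)]} \phi_1(r) = \phi_1\big(\sup (R^1)_{\leq \widetilde{X}_1(s)}\big) = h_1\big(\mathscr{S}(f_1,G^1)(s)\big).
$$
The second equality holds by \eqref{SXkeq}, \eqref{|S|Xkeq} and the sign choice in $G^1$. For the first, copy the proof of lemma~\ref{S2claimeqlemma}. This needs an upper semicontinuity statement for $\phi_1$: if $r_l \to r_\infty$ and $\phi_1(r_l)$ converges, then $\phi_1(r_\infty) \geq \lim \phi_1(r_l)$. It follows from lemma~\ref{hklemma} by passing to a subsequence along which the maximising sign is constant. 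With this, the compactness argument on $[0,\widetilde{X}_1(s)]$ and the monotone-sequence argument through $H^1$ go through verbatim.

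I expect the main obstacle to be the measurability bookkeeping rather than the inequalities. Two points need care. First, $G^1$ must be Borel, which needs Borel measurability of $h_1$ and of $r\mapsto \phi_1$, so I would verify it via Fubini sections. Second, the sign-rearrangement $X_1'$ must be a legitimate r.v. with the same law as $X_1$. If building $X_1'$ becomes awkward, the fallback is to bound the left integral directly. Disintegrate ${X_1}_\#(m)$ into its radial part (the law of $|X_1|$) and a conditional sign, bound $h_1 \leq \phi_1$ pointwise in $|x|$, and then apply the neat argument to the law of $|X_1|$ alone.
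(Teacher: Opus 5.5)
Your route differs from the paper's. The paper uses Steps A--C: it folds signs via $\phi_1\circ X_1$, identifies that law with $\mathscr{S}(f_1',{G^1}')$, and then compares against the non-strict set $G^1=\{t \mid h_1(t)=\sup_{[-|t|,|t|]}h_1\}$. You instead absorb the sign into $\phi_1(r)=\max\{h_1(r),h_1(-r)\}$, bound $h_1(X_1(s))\le\phi_1(|X_1(s)|)$ pointwise, and reduce to a one-sided problem for the law of $|X_1|$ with a strict-record set $H^1$ as in lemma~\ref{S2claimeqlemma}. This reduction is sound. It spares you the distributional identification of Step B, and $G^1_{|\cdot|}=R^1$ is closed by construction. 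Your sign-coupling $X_1'$ is unnecessary: the fallback (pointwise bound, then change of variables to the law of $|X_1|$) is all you need. Note also that $|X_1|$ is not neat, so its neat version comes from lemma~\ref{existenceneat} (equivalently lemma~\ref{Xtilde}), not lemma~\ref{uniquenessneat}. Your induction via neat replacements is also fine.

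The gap is in the step you say goes through ``verbatim''. The contradiction in the proof of lemma~\ref{S2claimeqlemma} runs: $t_\infty>\sup G^1_{\le\widetilde X}$, hence $t_\infty\notin H^1$, hence some $t'<t_\infty$ has $h_1(t')>h_1(t_\infty)$. With a strict-record definition, $r_\infty\notin H^1$ only yields some $r'\in[0,r_\infty)$ with $\phi_1(r')\ge\phi_1(r_\infty)$. That is compatible with $r_\infty$ being a maximiser, so no contradiction follows. For example, if $\phi_1$ is constant then $H^1=\{0\}$ and $r_s=0$, yet the compactness argument may return $r_\infty=\widetilde X_1(s)>r_s$.

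The fix is short. Let $M=\sup_{[0,\widetilde X_1(s)]}\phi_1$. By your upper-semicontinuity statement, the set $\{r\in[0,\widetilde X_1(s)] \mid \phi_1(r)=M\}$ is nonempty and closed. Its minimum $r_*$ satisfies $\phi_1(r')<\phi_1(r_*)$ for all $r'\in[0,r_*)$, so $r_*\in H^1$. Hence $r_*\le r_s$, and your monotone-sequence claim gives $\phi_1(r_s)\ge\phi_1(r_*)=M$.

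Alternatively, adopt the paper's non-strict set, whose absolute values are exactly $\{r\ge 0 \mid \phi_1(r)=\sup_{[0,r]}\phi_1\}$. Any maximiser then lies in $G^1$ automatically, which is why the paper's Step C needs no least-maximiser argument. The cost is proving $G^1_{|\cdot|}$ closed via lemma~\ref{hklemma}.
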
 
\begin{proof} 
All r.v.'s being considered in this theorem are mutually ind.\ by construction (see section~\ref{preliminaries}). Introduce 
\begin{equation}\label{G1definitioneq}  
G^1 := \big\{t \in \mathbb{R} \ \big| \ \forall t' \in \big[-|t|,|t|\big], \ h_1(t') \leq h_1(t) \big\}
\end{equation} 
where $h_1$ is from definition~\ref{hkdef}. Note $0 \in G^1$. 

\textit{Claim}: $G^1$ is Borel measurable and $G^1_{|\cdot|}$ is closed. 

We will show measurability first. Define 
$$
H_1(t) = \sup_{t' \in [-|t|, |t|]} h_1(t'), \ \forall t \in \mathbb{R}.
$$ 
Note since $h_1 \geq 0$ that $H^1 \geq 0$. $H_1 \nearrow$ on $[0,\infty)$ and $H_1 \searrow$ on $(-\infty,0]$. Because monotone functions are Borel measurable, $H_1 \mathbbm{1}_{[0,\infty)}$ and 
$H_1 \mathbbm{1}_{(-\infty,0)}$ are measurable, where $\mathbbm{1}$ denotes the indicator function. Adding these functions gives $H_1$, implying $H_1$ is measurable. Clearly 
$$
t \in G^1 \iff h_1(t) = H_1(t).
$$
It follows $G^1 = (H_1-h_1)^{-1}(\{0\})$. So $G^1$ is measurable.

Now we address closedness. Suppose $\{t_j\}_{j \in \mathbb{N}} \subset G^1_{|\cdot|}$ such that $t_j \xrightarrow[j \rightarrow \infty]{} t_\infty$. We must show $t_\infty \in G^1_{|\cdot|}$. Recall 
$$
G^1_{|\cdot|} = \{|t| \ \big| \ t \in G^1\}.
$$
Considering the preimages of the $t_j$ in $G^1$, notice there exists a monotone sequence $\{t_{j_k}'\}_{k \in \mathbb{N}} \subset G^1$, $t_{j_k}' \rightarrow t_\infty' \in G^1$, such that  
$\forall k, \ |t'_{j_k}| = t_{j_k}$ and $|t_\infty'| = t_\infty$.

If $t_\infty = 0$ we are done, since $t_\infty \in G^1_{|\cdot|}$ trivially. Otherwise suppose $t \in (-|t'_\infty|, |t'_\infty|)$. Since $t_{j_k}$ converges to a value of larger magnitude than $t$, by definition of $G^1$,
$$
h_1(t) \leq h_1(t_{j_k}), \ \forall k >> 0
$$
By lemma~\ref{hklemma} (recall $t_{j_k}$ is monotone), $\lim_{k \rightarrow \infty} h_1(t_{j_k}) \leq h_1(t_\infty)$. It follows $h_1(t) \leq h_1(t_\infty')$. Since $t$ was arbitrary, $\forall t \in (-|t_\infty'|,|t_\infty'|), \ h_1(t) \leq h_1(t_\infty')$. 
Now, there are two possibilities: either $h_1(t_\infty') \geq h_1(-t_\infty')$ in which case $t_\infty' \in G^1$, or 
$$
h_1(-t_\infty) \geq h_1(t_\infty) \geq h_1(t), \ \forall t \in (-|t_\infty'|, |t_\infty'|)
$$
in which case $-t_\infty' \in G^1$. In either case $t_\infty = |t_{\infty}'| \in G^1_{|\cdot|}$. Therefore $G^1_{\cdot}$ is closed. 

The proof of theorem~\ref{SWLOG} centers around the claim
\begin{equation}\label{Smu1} 
P(X \in V) \leq P\big((\mathscr{S}(f_1,G^1),X_2,...,X_n) \in V \big). 
\end{equation} 
This is more difficult than \eqref{S2eq4}, because this time $\mathscr{S}(f_1,G^1)$ and $X_1$ have different types of domain organization: $X_1 \nearrow$ on $(0,1)$ whereas 
$|\mathscr{S}(f_1,G^1)| \nearrow $ on $(0,1)$ (theorem~\ref{Sproperties} (iii)). 

The workaround is to introduce $0 \in {G^1}' \subset \mathbb{R}$ measurable with ${G^1}'_{|\cdot|}$ closed, $f_1' \in \mathcal{B}$, and $\phi: \mathbb{R} \rightarrow \mathbb{R}$, such that the intermediate steps 
\begin{align*} 
P(X \in V) &\stackrel{\text{step A}}{\leq} P\big((\phi \circ X_1,X_2,...,X_n) \in V \big) \\ 
&\stackrel{\text{step B}}{=} P\big((\mathscr{S}(f_1',{G^1}'),X_2,...,X_n) \in V \big) \\ 
&\stackrel{\text{step C}}{\leq} P\big((\mathscr{S}(f_1,G^1),X_2,...,X_n) \in V \big)
\end{align*} 
are manageable. Informally, step A combines, $\forall t \geq 0, \ X_1$'s collective mass at $t$ and  $-t$, into either $t$ or $-t$, depending on which will lead to greater mass in $V$. 

Step B organizes the domain of $\phi \circ X_1$ by finding $f_1'$ and ${G^1}'$ such that $\phi \circ X_1$ and $\mathscr{S}(f_1',{G^1}')$ have the same distribution. 

Step C considers the integral expressions of the probabilities with Fubini's theorem and applies change of variables for Lebesgue integration to make use of the radially neat domains. 

These steps will be put in their own lemmas. With \eqref{Smu1} shown, we can inductively reapply \eqref{Smu1} the same way \eqref{S2eq4} was inducted. That is, 
\begin{align*}
   &P\big((\mathscr{S}(f_1,G^1),X_2,...,X_n) \in V \big) \\ 
   &\leq P\big(\mathscr{S}(f_1,G^1),\mathscr{S}(f_2,G^2),X_3,...,X_n) \in V \big) \\
   &\leq \cdots \\
   &\leq P\big((\mathscr{S}(f_1,G^1),...,\mathscr{S}_2(f_n,G^n)) \in V \big) \\ 
   &= P(\mathscr{S}(f,G) \in V). 
\end{align*} 
There is one nuance: $X_k \in \mathcal{T}(f_k), \ k =1,...,n$, but $\mathscr{S}(f_k,G^k) \notin \mathcal{T}(f_k)$, because $\mathscr{S}(f_k,G^k)$ only satifies the absolute tail $f_k$ (theorem~\ref{Sproperties} (iv)), 
not the neat domain condition for $\mathcal{T}(f)$ \eqref{T(f)}.

Fortunately we are saved, because, in the future, the proof of \eqref{Smu1} will never invoke that the r.v.'s $X_2,...,X_n$ being held fixed (while $X_1$ is worked on) are neat; only that $X_2,...,X_n$ are Borel 
measurable and mutually ind., which $\mathscr{S}(f_k,G^k)$ are as well (definition~\ref{Sdef} and theorem~\ref{Sproperties}). So this induction is valid. 

\begin{lemma}[Step A in theorem~\ref{SWLOG}]\label{SStepA}
This lemma proceeds from the assumptions in theorem~\ref{Smainresult}. Define
\begin{equation}\label{G1'def} 
   {G^1}' := \Big\{t \in \mathbb{R} \ \big| \ h_1(t) \geq h_1(-t) \Big\}\\
\end{equation}
where, as usual, $h_1(t) = P\{(t,X_2,...,X_n) \in V\}, \ \forall t \in \mathbb{R}$. Define
\begin{equation}\label{phi1} 
\phi_1(t) := 
\begin{cases} 
|t| & \text{if }|t| \in {G^1}' \\ 
-|t| & \text{otherwise} 
\end{cases} 
\end{equation} 
Then 
$$
P(X \in V) \leq P\big((\phi \circ X_1,X_2,...,X_n) \in V \big)
$$
\begin{proof} 
Notice, $\forall t \in \mathbb{R}$, $t$ or $-t \in {G^1}'$. Thus
\begin{equation}\label{G1'=[0,infty)}
({G^1}')_{|\cdot|} = [0,\infty).
\end{equation} 
${G^1}'$ is measurable because ${G^1}' = H_1^{-1}([0,\infty))$, where $H^1(t) = h_1(t) - h_1(-t), \ \forall t \in \mathbb{R}$, which is measurable. 

It is elementary to show $\phi_1$ is Borel measurable. Indeed, $\forall U \subset \mathbb{R}$ measurable, one may verify
\begin{align*} 
\phi_1^{-1}(U) =& \Big[\big( U_{\geq 0} \cap {G^1}'\big) \cup \Big(\big\{-x \ \big| \ x \in U \cap (-\infty,0)   \big\} \backslash {G^1}'\Big) \Big] \\
               &\cup \Big[\big(U_{\geq 0} \cap {G^1}'\big) \cup \Big(\big\{-x \ \big| \ x \in U \cap (-\infty,0) \big\}\backslash {G^1}'\Big) \Big]_{-}
\end{align*}
which is a finite combination of unions, intersections, compliments, and reflections of measurable sets (sections~\ref{abbreviationschapter4} notations)). By theorem~\ref{Fubini'stheorem} applied to the indicator function of $V$, integrating $P(X \in V)$ over the $1$st component, 
$$
P(X \in V) = \int_{\mathbb{R}} h_1 {dX_1}_\#(m) 
$$
where $X_1\#(m)$ is the pushforward \eqref{pushforwarddef} of the Lebesgue measure on $(0,1)$. Similarly, 
$$
P\big((\phi_1 \circ X_1,X_2,...,X_n) \in V \big) = \int_\mathbb{R} h_1 \ d(\phi_1 \circ X_1)_\#(m).
$$
By change of variables for Lebesgue integration, 
\begin{align*} 
&P(X \in V) = \int_{(0,1)} h_1 \circ X_1 \ dm\\
P\big((\phi_1 \circ X_1,&X_2,...,X_n) \in V \big) = \int_{(0,1)} h_1 \circ \phi_1 \circ X_1 \ dm.
\end{align*} 
By definition of $\phi_1$ and ${G^1}'$ \eqref{G1'def}, notice $\forall t \in \mathbb{R}$, $\phi_1$ sends $t$ to $t$ or $-t$, depending which makes $h_1$ larger (with the nuance if $h_1(t) = h_1(-t)$, then $\phi$ sends 
$t$ to $|t|$). That is, $\forall t \in \mathbb{R}, h_1(t) \leq h_1(\phi_1(t))$. Since $X_1:(0,1) \rightarrow \mathbb{R}$ it follows $\forall s \in (0,1), \ h_1(X_1(s)) \leq h_1 \circ \phi(X_1(s))$. The lemma follows 
by monotonicity of Lebesgue integration.  
\end{proof} 
\end{lemma} 
\begin{lemma}[Step B in theorem~\ref{SWLOG}] 
This lemma proceeds from the assumptions in theorem~\ref{SWLOG}. Define $\phi_1$ and ${G^1}'$ as in lemma~\ref{SStepA}. Define  
\begin{equation}\label{Sf1'} 
   f_1'(t) := P(|X_1| \geq t), \ \forall t \geq 0
\end{equation} 
Then
\begin{equation}\label{SstepBlemmaeq}
P\big((\phi_1 \circ X_1,X_2,...,X_n) \in V \big) = P\big((\mathscr{S}(f_1',{G^1}'),X_2,...,X_n) \in V \big).
\end{equation}
\end{lemma} 
\begin{proof}
We will show $\mathscr{S}(f_1',{G^1}')$ and $\phi_1 \circ X_1$ have the same distribution. Then \eqref{SstepBlemmaeq} follows with straightforward, albeit tedious rigor.
   
We will show 
\begin{enumerate}[label=(\roman*)]
   \item $ P\big(\phi \circ X_1 \geq t \big) = P\big(\mathscr{S}(f_1',{G^1}') \geq t \big), \ \forall t \geq 0 $
   \item $P\big(\phi \circ X_1 \leq t \big) = P\big(\mathscr{S}(f_1',{G^1}') \leq t \big), \ \forall t \leq 0 $
\end{enumerate} 
Recall the $\lambda$-$\pi$ theorem (section~\ref{preliminaries}). Sets of the form $(-\infty, -t_1], \ [t_2,\infty), \ \forall t_1,t_2 \geq 0$, form a $\pi$-system and generate the Borel $\sigma$-algebra of $\mathbb{R}$. Therefore, 
by the $\lambda$-$\pi$ theorem, proving (i) and (ii) will show $\phi \circ X_1$ and $\mathscr{S}(f_1,',{G^1}')$ have the same distribution.

(i): Following \eqref{Xtildedefn} and lemma~\ref{Xtilde}, let
$$
\widetilde{X}_1(s) = \sup {f_1'}^{-1}[1-s,1], \ \forall s \in(0,1).
$$
Recall $\mathscr{S}(f_1',{G^1}')(s) \geq 0 \iff \mathscr{S}_R(f_1,({G^1}')_{|\cdot|})(s) \in {G^1}'\cap [0,\infty)$ (\eqref{SSRgeq0} and \eqref{SXkeq}). It follows for $t \geq 0$, 
\begin{align} 
P\big(\mathscr{S}(f_1',{G^1}') \geq t \big) &\stackrel{\eqref{SXkeq}}{=} P\big(\mathscr{S}_R\big(f_1',({G^1}')_{|\cdot|}\big) \in ({G^1}')_{\geq t} \big) \nonumber \\ 
&\stackrel{\eqref{SSRgeq0}}{=} P\big(\sup \ \big(({G^1}')_{|\cdot|}\big)_{\leq \widetilde{X}_1(s)} \in ({G^1}')_{\geq t} \big) \nonumber \\  
&\stackrel{\eqref{G1'=[0,infty)}}{=} P\big(\sup \ [0,\infty)_{\leq \widetilde{X}_1(s)} \in ({G^1}')_{\geq t} \big) \nonumber \\
&= P(\widetilde{X}_1 \in ({G^1}')_{\geq t}). \label{Sgamma1} 
\end{align} 
On the other hand, for $t \geq 0$, 
\begin{align} 
P(\phi_1 \circ X_1 \geq t) &= P( X_1 \in \phi_1^{-1}[t,\infty)) \nonumber \\ 
&\stackrel{\eqref{phi1}}{=} P\Big(X_1 \in ({G^1}')_{\geq t} \cup \big(({G^1}')_{\geq t}\big)_-\Big) \nonumber \\ 
&\ = P\big(|X_1| \in ({G^1}')_{\geq t}\big). \label{Sgamma2}
\end{align} 
But $|X_1|$ and $\widetilde{X}_1$ both satisfy the absolute tail $f_1'$: 
\begin{align*} 
P(|X_1| \geq t) &\ \stackrel{\eqref{Sf1'}}{=} f_1'(t), \ \forall t \geq 0 \\  
P(\widetilde{X}_1 \geq t)&\stackrel{\text{lemma~\ref{Xtilde}}}{=}f_1'(t), \ \forall t \geq 0.
\end{align*} 
Notice $f_1'(0) = 1$, so $\widetilde{X}_1 \geq 0$. Therefore the equality $P(|X_1| \geq t) = P(\widetilde{X}_1 \geq t)$ not only holds $\forall t \geq 0$, but also $\forall t < 0$, both sides being $1$. That is, $|X_1|$ and $\widetilde{X}_1$ have the same reversed CDFs, hence the same distribution (section~\ref{preliminaries}). 
Therefore \eqref{Sgamma1} equals \eqref{Sgamma2}, which shows (i).

(ii): Similarly, for $t<0$, \eqref{Sxsi2} says 
\begin{align} 
P\big(\mathscr{S}(f_1',{G^1}')\leq t) &= P \Big(\mathscr{S}_R\big(f_1',({G^1}')_{|\cdot|}\big) \in [-t,\infty) \backslash G^k_{\geq 0} \Big) \nonumber \\ 
&\stackrel{\eqref{G1'=[0,infty)}}{=} P \Big(\mathscr{S}_R\big(f_1',[0,\infty)]\big) \in [-t,\infty) \backslash G^k_{\geq 0} \Big) \nonumber \\
&= P\Big(\widetilde{X}_1 \in [-t,\infty) \backslash ({G^1}')_{\geq 0} \Big). \label{Saleph1}
\end{align} 
where the last step is the same calculation as \eqref{Sgamma1}. On the other hand, suppose $t<0, \ s \in (0,1)$ and $\phi_1(X_1(s)) \leq t$. By \eqref{phi1} this happens iff $|X_1(s)| \notin {G^1}'$ and $|X_1(s)| \geq |t| = -t$. 
This is iff $|X_1(s)| \in [-t,\infty)\backslash {G^1}'$. Thus for $t<0$, 
\begin{equation}
P(\phi \circ X_1 \leq t) = P\Big(|X_1| \in [-t,\infty) \backslash {G^1}'_{\geq 0}\Big). \label{Saleph2}
\end{equation}

Recall in (i) we showed $|X_1|$ and $\widetilde{X}_1$ have the same distribution. So \eqref{Saleph1} equals \eqref{Saleph2}, which proves (ii) as desired. As mentioned, (i) and (ii) imply 
$\phi\circ X_1$ and $\mathscr{S}(f_1',{G^1}')$ have the same distribution.

Lastly it is a straightforward application of the $\lambda$-$\pi$ theorem (using the closed product sets in $\mathbb{R}^n$ as the $\pi$-system) and mutual ind.\ of the r.v.'s (see section~\ref{preliminaries}) to show $(\phi_1 \circ X_1,X_2,...,X_n)$ and $(\mathscr{S}(f_1',{G^1}'),X_2,...,X_n)$ have the same distribution.
\end{proof} 

\begin{lemma}[Step C in theorem~\ref{SWLOG}] 
This lemma proceeds from the assumptions in theorem~\ref{SWLOG}. Define ${G^1}'$ the same as \eqref{G1'def} and $f_1'$ the same as \eqref{Sf1'}. Define $G^1$ and $H^1$ the same as theorem~\ref{SWLOG} does. Specifically
\begin{equation}\label{SG1defagain}
   G^1 := \big\{t \in \mathbb{R} \ \big| \ \forall t' \in \big[-|t|,|t|\big], \ h_1(t') \leq h_1(t) \big\} 
\end{equation} 
Then
\begin{equation}\label{StepCagaineq}
   P\big((\mathscr{S}(f_1',{G^1}'),X_2,...,X_n) \in V \big) = P\big((\mathscr{S}(f_1,G^1),X_2,...,X_n) \in V \big).
\end{equation}
\end{lemma} 
\begin{proof} 
Recall $h_1(t) = P((t,X_2,...,X_n) \in V), \ \forall t \in \mathbb{R}$. By Fubini's theorem (theorem~\ref{Fubini'stheorem}, Lebesgue integrating over the $1$st component and using change of variables yields
\begin{align} 
   P\big((\mathscr{S}(f_1',G^1),X_2,...,X_n) \in V \big) &= \int_\mathbb{R} \ \ h_1 \ d \mathscr{S}(f_1',{G^1}')_\#(m) \\ 
                                                        &= \int_{(0,1)} h_1 \circ \mathscr{S}(f_1',{G^1}') \ dm \label{SK1} \\
   P\big((\mathscr{S}(f_1,G^1),X_2,...,X_n) \in V \big) &= \int_{\mathbb{R}} \ \ h_1 \ d\mathscr{S}(f_1,G^1)_\#(m) \\ 
                                                        &= \int_{(0,1)} h_1 \circ \mathscr{S}(f_1,G^1) \ dm \label{SK2}
\end{align}
where $m$ is the Lebesgue measure on $(0,1)$ with Borel $\sigma$-algebra. To show \eqref{SK1} $\leq$ \eqref{SK2}, by monotonicity of integration it suffices to show 
\begin{equation} 
h_1\big(\mathscr{S}(f_1',{G^1}')(s) \big) \leq h_1\big(\mathscr{S}(f_1,G^1)(s) \big), \ \forall s \in (0,1) \label{SK3}.
\end{equation} 
Notice $f_1'(t) \stackrel{\eqref{Sf1'}}{=} P(|X_1| \geq t) \stackrel{X_1 \in \mathcal{T}(f_1)}{\leq} f_1(t), \ \forall t \geq 0$ ($X_1 \in \mathcal{T}(f_1)$ by hypothesis at the start of theorem~\ref{SWLOG}). It is easy to check
$$
\sup {f_1'}^{-1} [1-s,1] \leq \sup f_1^{-1} [1-s, 1 ], \ \forall s\in(0,1),
$$
which is left to the reader. Now, 
$$
\big|\mathscr{S}(f_1',{G_1}')\big| \stackrel{\eqref{|S|Xkeq}}{=} \mathscr{S}_R \big(f_1',({G_1}')_{|\cdot|}\big) \stackrel{\text{theorem~\ref{SRproperties}}}{\in} \mathcal{T}_R(f_1') \subset \mathcal{N}.
$$
We can now apply lemma~\ref{Xtilde} (iii) to get 
$$
\big|\mathscr{S}(f_1',{G_1}')(s)\big| \leq \sup {f_1'}^{-1}[1-s,1] \leq \sup f_1^{-1} [1-s, 1 ], \ \forall s \in (0,1). 
$$
In particular, $\mathscr{S}(f_1',{G_1}')(s) \in \big[-\sup f_1^{-1} [1-s, 1 ], \ \sup f_1^{-1} [1-s, 1 ]\big]$. Therefore to show \eqref{SK3}, it is sufficient to show, for a given $s \in (0,1)$, that 
\begin{equation}\label{SK4}
\sup_{|t| \leq \sup f_1^{-1}[1-s,1]} h_1(t) = h_1 \big(\mathscr{S}(f_1,G^1)(s) \big).
\end{equation} 
Before we prove \eqref{SK4}, we will need the following. 

Claim: $\forall t \geq 0, \ \exists t_0 \in [-t,t]$ such that 
$$
h_1(t_0) = \sup_{t' \in [-t,t]} h_1(t').
$$
To show this, suppose $(t_l)_{l \in \mathbb{N}} \subset [-t,t]$ with $h_1(t_l) \rightarrow \sup_{t' \in [-t,t]} h_1(t')$. By compactness $(t_l)$ has a convergent subsequence $(t_{l_k})$ with $t_{l_k} \rightarrow t_0$, some $t_0 \in [-t,t]$. 
We now get 
$$
h_1(t_0) \stackrel{\text{lemma~\ref{hklemma}}}{\geq} \lim_{k \rightarrow \infty} h_1(t_{l_k}) = \sup_{t' \in [-t,t]} h_1(t') \geq h_1(t_0)
$$
as desired. 

\eqref{SK4} is proved below.

Fix $s \in (0,1)$. By the claim, $\exists t_0 \in [-\sup f_1^{-1}[1-s,1], \ \sup f_1^{-1}[1-s,1]]$ such that 
\begin{equation}\label{St0eq}
h_1(t_0) = \sup_{|t| \leq \sup f_1^{-1}[1-s,1]} h_1(t).
\end{equation} 
Notice that
$$
|\mathscr{S}(f_1,G^1)(s)| \stackrel{\eqref{|S|Xkeq}}{=} \mathscr{S}_R(f_1,{G^1}_{|\cdot|})(s) \stackrel{\text{lemma~\ref{Xtilde}}}{\leq} \sup f^{-1}[1-s,1]
$$
since $\mathscr{S}_R(f_1,{G^1}_{|\cdot|}) \in \mathcal{T}_R(f_1)$ (theorem~\ref{SRproperties}). In particular, notice
$$
\mathscr{S}(f_1,G^1)(s) \in \big[ -\sup f^{-1}[1-s,1], \ \sup f^{-1}[1-s,1] \big].
$$ 
We will show that $h_1(t_0) = h_1(\mathscr{S}(f_1,G^1)(s))$ by considering two cases, proving \eqref{SK4}, thereby proving the lemma.

Case $|t_0| \leq |\mathscr{S}(f_1,G^1)(s)|$:
If $\mathscr{S}(f_1,G^1)(s) = 0$, there is nothing to show. Otherwise because $\mathscr{S}(f_1,G^1)(s) \in G^1$ (theorem~\ref{Sproperties} (i)), by definition of $G^1$ \eqref{SG1defagain}, one immediately has 
$h_1(t_0) \leq h_1\big(\mathscr{S}(f_1,G^1)(s)\big)$. Equality holds by \eqref{St0eq} and since $|\mathscr{S}(f_1,G^1)(s)| \leq \sup f_1^{-1}[1-s,1]$.

Case $|t_0| > |\mathscr{S}(f_1,G^1)(s)|$:

By \eqref{|S|Xkeq} and \eqref{SSRgeq0}, 
\begin{equation*}
|\mathscr{S}(f_1,G^1)(s)| = \sup \ \big(G^1_{|\cdot|}\big)_{\leq \sup f_1^{-1} \ [1-s,1]}.
\end{equation*}
In particular, this implies $|t_0| \notin \big(G^1_{|\cdot|}\big)_{\leq \sup f_1^{-1} \ [1-s,1]}$. But by construction \eqref{St0eq}, $t_0 \in [-\sup f_1^{-1}[1-s,1], \sup f_1^{-1}[1-s,1]]$, implying 
$|t_0| \notin G^1_{|\cdot|} = \big\{|t| \ \big| \ t \in G^1\big\}$. Thus $t_0 \notin G^1$. But by definition of $G^1$ \eqref{G1definitioneq}, this can only be if $t_0$ not a maximizing $t$ of $h_1(t)$ on $[-|t_0|,|t_0|]$. This contradicts \eqref{St0eq}. 

This proves \eqref{SK4}, so we are done.
\end{proof} 
This completes the proof of theorem~\ref{SWLOG}.
\end{proof} 
This completes step $1$, which was lemma~\ref{SX0lemma}, and step $2$ (recall \eqref{Sstep2eq}), which was theorem~\ref{SWLOG}, in proving theorem~\ref{Smainresult}. So we have shown theorem~\ref{Smainresult}.

For $\mathscr{S}_2$, because $\mathscr{S}_2$ always maps into $G$, we had theorem~\ref{S2finiteV}, which expressed WLOG $G$ is approximately in $V$. 
A similar result holds for $\mathscr{S}$; we will need the following.
\begin{lemma}\label{psilemma}
Let $V \subset \mathbb{R}^n$ be closed. For $G^k \subset \mathbb{R}$ denote $H^k = \big(G^k \cap \overline{\pi_k(V)} \big) \cup \{0\}$ and define 
\begin{equation}\label{defpsi} 
\psi(G^k) = H^k \cup \Big(\Big[\big(\overline{H^k_{|\cdot|}} \backslash H^k_{|\cdot|}\big) \cup \big(\overline{H^k_{|\cdot|}} \backslash H^k_{|\cdot|} \big)_- \Big] \cap \overline{\pi_k(V)}\Big).
\end{equation} 
Then $\psi(G^k)_{|\cdot|} = \overline{H^k_{|\cdot|}}$.
\end{lemma}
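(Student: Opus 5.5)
The plan is to prove the identity by direct set manipulation, using only the definition \eqref{defpsi} and the elementary fact from section~\ref{abbreviationschapter4} that $\overline{(W_{|\cdot|})} = (\overline{W})_{|\cdot|}$ for $W \subset \mathbb{R}$. Abbreviate $A := H^k_{|\cdot|}$, $B := \overline{A}\backslash A$ and $C := (B \cup B_-) \cap \overline{\pi_k(V)}$, so that $\psi(G^k) = H^k \cup C$. Since $|\cdot|$ commutes with unions,
$$
\psi(G^k)_{|\cdot|} = A \cup C_{|\cdot|},
$$
and it suffices to show $A \cup C_{|\cdot|} = \overline{A}$.

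For the inclusion $\subset$: clearly $A \subset \overline{A}$. Also $C \subset B \cup B_-$, and $B \subset \overline{A} \subset [0,\infty)$, so $(B\cup B_-)_{|\cdot|} = B$. Hence $C_{|\cdot|} \subset B \subset \overline{A}$.

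For the inclusion $\supset$: since $\overline{A} = A \cup B$, it remains to show $B \subset C_{|\cdot|}$. This is the only step with content, because it uses the intersection with $\overline{\pi_k(V)}$ in \eqref{defpsi}. Fix $b \in B$.
\begin{enumerate}[label=(\roman*)]
\item Since $0 \in H^k$, we have $0 \in A$, so $b \neq 0$, and $b > 0$ because $A \subset [0,\infty)$.
\item Since $H^k \subset \overline{\pi_k(V)} \cup \{0\}$, we get $A \subset \big(\overline{\pi_k(V)}\big)_{|\cdot|} \cup \{0\}$.
\item Taking closures and using the closure/absolute-value commutation, $b \in \overline{A} \subset \big(\overline{\pi_k(V)}\big)_{|\cdot|} \cup \{0\}$.
\end{enumerate}
Because $b \neq 0$, there is $y \in \overline{\pi_k(V)}$ with $|y| = b$, that is, $y \in \{b,-b\} \subset B \cup B_-$. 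Therefore $y \in C$ and $b = |y| \in C_{|\cdot|}$.

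The main obstacle is exactly this last step: justifying that the closure of $A$ stays inside $(\overline{\pi_k(V)})_{|\cdot|} \cup \{0\}$. I would handle it via the stated fact $\overline{W_{|\cdot|}} = (\overline{W})_{|\cdot|}$ applied to $W = \overline{\pi_k(V)}$, which is closed, together with the trivial closedness of a finite union with $\{0\}$. Everything else is routine bookkeeping.
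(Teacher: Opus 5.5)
Your proof is correct and follows essentially the same route as the paper: both hinge on the observation that $\overline{H^k_{|\cdot|}}$ lies inside a closed set built from $\overline{\pi_k(V)}$ together with $\{0\}$, and on the fact that $0\in H^k_{|\cdot|}$, which disposes of the one exceptional point, so that every point of $\overline{H^k_{|\cdot|}}\backslash H^k_{|\cdot|}$ has a signed preimage in $\overline{\pi_k(V)}$. The only differences are cosmetic: the paper runs a chain of set identities and uses the obviously closed set $\overline{\pi_k(V)}\cup\overline{\pi_k(V)}_-\cup\{0\}$, whereas you argue by double inclusion with $(\overline{\pi_k(V)})_{|\cdot|}\cup\{0\}$, and you correctly get its closedness from the closure/absolute-value commutation.
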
 
\begin{proof} 
Recall for $A \subset \mathbb{R}, \ A_{|\cdot|} = \{|x| \ | \ x \in A \}$ and $A_- = \{-x \ | \ x \in A\}$. So $(A \cup B)_{|\cdot|} = A_{|\cdot|} \cup B_{|\cdot|}$. Also that closure and $|\cdot|$ commute, hence 
$\overline{A_{|\cdot|}}$ is unambiguous. By elementary operations
\begin{align} 
\psi(G^k)_{|\cdot|} &=H^k_{|\cdot|} \cup \Big[\big(\overline{ H^k_{|\cdot|} } \backslash H^k_{|\cdot|} \big)\cap \overline{\pi_k(V)} \Big]_{|\cdot|} \cup\ \Big[\big(\overline{H^k_{|\cdot|}} \backslash H^k_{|\cdot|}\big)_- \cap \overline{\pi_k(V)} \Big]_{|\cdot|} \nonumber \\ 
                    &= H^k_{|\cdot|} \cup \Big[\big(\overline{H^k_{|\cdot|}} \backslash H^k_{|\cdot|}\big) \cap \overline{\pi_k(V)} \Big] \cup \Big[\big(\overline{H^k_{|\cdot|}} \backslash H^k_{|\cdot|} \big)  \cap \overline{\pi_k(V)}_-\Big] \nonumber \\ 
                    &= H^k_{|\cdot|} \cup \Big[\big(\overline{H^k_{|\cdot|}} \backslash H^k_{|\cdot|} \big) \cap \big(\overline{\pi_k(V)} \cup \overline{\pi_k(V)}_- \big)\Big] \nonumber \\ 
                    &= \overline{H^k_{|\cdot|}} \cap \Big(H^k_{|\cdot|} \cup \big( \overline{\pi_k(V)} \cup \overline{\pi_k(V)}_-\big) \Big) \nonumber \\ 
                    &=  H^k_{|\cdot|} \cup \Big(\overline{H^k_{|\cdot|}} \cap \big( \overline{\pi_k(V)} \cup \overline{\pi_k(V)}_-\big)\Big). \label{terminatoreq} 
\end{align} 
Because $H^k \subset \overline{\pi_k(V)} \cup \{0\}$ it follows $H^k_{|\cdot|} \subset \overline{\pi_k(V)} \cup \overline{\pi_k(V)}_- \cup \{0\}$. In particular
$\overline{H^k_{|\cdot|}} \subset \overline{\pi_k(V)} \cup \overline{\pi_k(V)}_- \cup \{0\}$. Thus $\overline{H^k_{|\cdot|}} \cap \big( \overline{\pi_k(V)} \cup \overline{\pi_k(V)}_-\big)$ 
equals $\overline{H^k_{|\cdot|}}$ or $\overline{H^k_{|\cdot|}} \backslash \{0\}$. In either case, because $0 \in H^k_{|\cdot|}$, it immediately follows \eqref{terminatoreq} equals $\overline{H^k_{|\cdot|}}$. 
\end{proof} 

\begin{theorem}\label{SVfinite}
There exists $G \ni 0$ measurable, $G^k_{|\cdot|}$ closed and $G^k \subset \overline{\pi_k(V)} \cup \{0\}, \ k=1,...,n$, which satisfies theorem~\ref{Smainresult}.
\end{theorem}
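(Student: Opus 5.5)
The plan is to start from the set $G = G^1\times\cdots\times G^n$ given by theorem~\ref{Smainresult} and replace its components one at a time by $\psi(G^k)$ from lemma~\ref{psilemma}. This mirrors the component-wise induction in theorem~\ref{S2finiteV}. Write $G' = \psi(G^1)\times\cdots\times\psi(G^n)$. First I check that $G'$ is admissible for definition~\ref{Sdef}:
\begin{enumerate}[label=(\alph*)]
\item $0 \in \psi(G^k)$ because $0 \in H^k$.
\item $\psi(G^k)$ is Borel, since it is a finite combination of unions, intersections, closures, reflections and differences of Borel sets.
\item $\psi(G^k)_{|\cdot|} = \overline{H^k_{|\cdot|}}$ is closed by lemma~\ref{psilemma}.
\item $\psi(G^k) \subset \overline{\pi_k(V)}\cup\{0\}$ directly from \eqref{defpsi}.
\end{enumerate}
It then suffices to prove
$$
P(\mathscr{S}(f,G)\in V) \leq P\big((\mathscr{S}(f_1,\psi(G^1)),\mathscr{S}(f_2,G^2),\dots,\mathscr{S}(f_n,G^n))\in V\big),
$$
and to iterate over $k$. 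As noted in the proof of theorem~\ref{SWLOG}, the argument only uses that the fixed components are measurable and mutually independent. The reverse inequality then comes for free via corollary~\ref{c2.1} and theorem~\ref{Sproperties} (iv), exactly as in the discussion preceding lemma~\ref{SX0lemma}.

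For the one-component step, define $h_1(t) = P((t,\mathscr{S}(f_2,G^2),\dots,\mathscr{S}(f_n,G^n))\in V)$. By Fubini (theorem~\ref{Fubini'stheorem}) and change of variables, both sides become $\int_{(0,1)} h_1\circ Y\,dm$, with $Y = \mathscr{S}(f_1,G^1)$ and $Y = \mathscr{S}(f_1,\psi(G^1))$ respectively. So it is enough to prove the pointwise inequality $h_1(a)\le h_1(a')$ for every $s\in(0,1)$, where $a = \mathscr{S}(f_1,G^1)(s)$ and $a' = \mathscr{S}(f_1,\psi(G^1))(s)$. There are two cases.

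\emph{Case $a \notin \overline{\pi_1(V)}\cup\{0\}$.} No point of $V$ has first coordinate $a$, so $h_1(a)=0\le h_1(a')$.

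\emph{Case $a \in \overline{\pi_1(V)}\cup\{0\}$.} I show $a'=a$.
\begin{enumerate}[label=(\alph*)]
\item Since $a\in G^1$ by theorem~\ref{Sproperties} (i), we get $a\in H^1$, so $|a|\in H^1_{|\cdot|}$.
\item Since $H^1\subset G^1\cup\{0\}$ and $G^1_{|\cdot|}$ is closed, $H^1_{|\cdot|} \subset \overline{H^1_{|\cdot|}}\subset G^1_{|\cdot|}$.
\item By \eqref{|S|Xkeq} and \eqref{SSRgeq0}, $|a| = \sup (G^1_{|\cdot|})_{\le \widetilde{X}_1(s)}$. So $|a|$ is the supremum of a larger set and belongs to the smaller set $(\overline{H^1_{|\cdot|}})_{\le\widetilde{X}_1(s)}$. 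Hence $|a'| = \sup (\overline{H^1_{|\cdot|}})_{\le \widetilde{X}_1(s)} = |a|$.
\end{enumerate}
It remains to match signs via \eqref{SXkeq}.
\begin{enumerate}[label=(\alph*)]
\item If $a\ge 0$, then $a\in H^1\subset\psi(G^1)$, so $a'=|a|=a$.
\item If $a<0$, then $|a|\notin G^1_{\ge0}$ by \eqref{SXkeq}, and $|a|>0$. I must show $|a|\notin\psi(G^1)_{\ge 0}$:
\begin{enumerate}[label=(\roman*)]
\item $|a|\notin H^1_{\ge0}$, because $H^1\subset G^1\cup\{0\}$.
\item $|a|$ is not in the added piece, because that piece only contains points whose absolute value lies in $\overline{H^1_{|\cdot|}}\setminus H^1_{|\cdot|}$, whereas $|a|\in H^1_{|\cdot|}$.
\end{enumerate}
Hence $a'=-|a|=a$.
\end{enumerate}
So $h_1(a)=h_1(a')$ in this case.

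I expect this sign-consistency bookkeeping to be the main obstacle. The delicate point is that $\psi$ adds back exactly those points of $\overline{H_{|\cdot|}}$ missing from $H_{|\cdot|}$, so that closedness holds, without creating a positive representative for a point where $\mathscr{S}(f_1,G^1)$ was negative. The rest of the argument is the routine Fubini/induction pattern already used twice in the paper.
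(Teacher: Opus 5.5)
Your proposal is correct and follows the paper's proof. You replace each $G^k$ by $\psi(G^k)$ one component at a time, reduce via Fubini and change of variables to the pointwise inequality $h_1(\mathscr{S}(f_1,G^1)(s))\le h_1(\mathscr{S}(f_1,\psi(G^1))(s))$, note that $h_1=0$ when the first coordinate misses $V$, and otherwise get equality by matching magnitudes (lemma~\ref{psilemma}) and then signs (\eqref{SXkeq}). The only differences are cosmetic: you split cases on $\overline{\pi_1(V)}\cup\{0\}$ instead of $\pi_1(V)$, which works equally well, and your appeal to corollary~\ref{c2.1} for the reverse inequality is slightly more careful than the paper's wording.
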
 
\begin{proof} 
Let $G \ni 0$ with $G^k$ Borel measurable and $G_{|\cdot|}^k$ closed for $k=1,...,n$. Consider
$$
G' = \psi(G^1) \times \cdots \times \psi(G^n)
$$
using \eqref{defpsi}. Because $G^k$ are Borel measurable, and the operations in \eqref{defpsi} preserve measurability, $G'$ is measurable. Furthermore $\psi(G^k) \ni 0$ 
because $0 \in H^k$, $\psi(G^k)_{|\cdot|}$ is closed by lemma~\ref{psilemma}, and $\psi(G^k) \subset \overline{\pi_k(V)} \cup \{0\}$. We claim  
\begin{equation}\label{SVfinitemaineq}
P(\mathscr{S}(f,G) \in V) \leq P(\mathscr{S}(f,G') \in V). 
\end{equation} 
We will finish the proof assuming this claim and then prove this claim. Suppose $G$ was a maximizing solution to \eqref{Smainresulteq1}. Then, by theorem~\ref{Sproperties} (iv) for $\mathscr{S}(f,G')$, LHS $\geq$ RHS in \eqref{SVfinitemaineq}. 
Thus $G'$ would also be a maximizing solution to theorem~\ref{Smainresult}, which would prove theorem~\ref{SVfinite}.  

Let us return to proving \eqref{SVfinitemaineq}. It is reducible to the following.
\begin{equation}\label{SVgamma2} 
   P\big(\mathscr{S}(f,G)\in V \big) \leq P\Big(\big(\mathscr{S}(f,\psi(G^1) \times G^2 \times \cdots \times G^n\big) \in V \Big).
\end{equation} 
Indeed, because the $1$st component and $G$ was arbitrary, reapplying \eqref{SVgamma2} gives
$$ 
P\Big(\mathscr{S}(f,\psi(G^1) \times G^2 \times \cdots \times G^n\big) \in V \Big) \leq P\Big(\mathscr{S}(f,\psi(G^1) \times \psi(G^2) \times G^3 \times \cdots \times G^n\big) \in V \Big) 
$$
Successively repeating this $n-2$ more times implies \eqref{SVfinitemaineq}. Below we prove \eqref{SVgamma2}.  

Still using $h_1(t) = P\big((t,X_2,...,X_n) \in V \big), \ \forall t \in \mathbb{R}$ (definition~\ref{hkdef}), because $X_k$ are mutually ind., 
using theorem~\ref{Fubini'stheorem} applied to the indicator function of $V$ and Lebesgue integrating over the $1$st component yields
\begin{align*} 
   P(\mathscr{S}(f,G) \in V) &= \int_{\mathbb{R}} h_1 \ d \mathscr{S}(f_1,G^1)_\# (m) \\ 
   P\Big(\big(\mathscr{S}(f,\psi(G^1) \times G^2 \times \cdots \times G^n\big) \in V \Big) &= \int_\mathbb{R} h_1 \ d \mathscr{S}(f_1,\psi(G^1))_\#(m)
\end{align*}
where $m$ is the Lebesgue measure on $(0,1)$. By change of variables, 
\begin{align*} 
   P(\mathscr{S}(f,G) \in V) &= \int_{(0,1)} h_1 \circ \mathscr{S}(f_1,G^1) \ dm \\
   P\Big(\big(\mathscr{S}(f,\psi(G^1) \times G^2 \times \cdots \times G^n\big) \in V \Big) &= \int_{(0,1)} h_1 \circ \mathscr{S}(f_1,\psi(G^1)) \ dm
\end{align*} 
By monotonicity it suffices to show 
\begin{equation} \label{SVgamma3} 
h\big(\mathscr{S}_1(f_1,G^1)(s) \big) \leq h_1 \big( \mathscr{S}_1(f_1,\psi(G^1))(s) \big), \ \forall s \in (0,1). 
\end{equation} 
Fix $s \in (0,1)$. 

Case $\mathscr{S}(f_1,G^1)(s) \notin \pi_1(V)$: 

It immediately follows
\begin{equation*} 
h_1(\mathscr{S}(f_1,G^1)(s)) = P((\mathscr{S}(f_1,G^1)(s), X_2,...,X_n) \in V) = 0.
\end{equation*} 
\eqref{SVgamma3} holds since $h_1 \geq 0$. 

Case $\mathscr{S}(f_1,G^1)(s) \in \pi_1(V)$: 

It is sufficient to prove 
\begin{equation}\label{SVgamma4} 
\mathscr{S}(f_1,G^1)(s) = \mathscr{S}(f_1,\psi(G^1))(s). 
\end{equation} 
First we will show 
\begin{equation}\label{SVgamma5} 
|\mathscr{S}(f_1,G^1)(s)| = | \mathscr{S}(f_1,\psi(G^1))(s)|. 
\end{equation} 
Recall $H^k = (G^k \cap \overline{\pi(V)}) \cup \{0\}$ in \eqref{defpsi}. By theorem~\ref{Sproperties} (i), 
\begin{equation}\label{SVepsilon1}
\mathscr{S}(f_1,G^1)(s) \in G^1 \cap \pi_1(V) \subset H^1.
\end{equation} 
By \eqref{SSRgeq0} and \eqref{|S|Xkeq}, $|\mathscr{S}(f_1,G^1)(s)| = \mathscr{S}_R(f_1,G^1_{|\cdot|})(s) = \sup G^1_{|\cdot|} \cap (-\infty, \widetilde{X}_1(s)]$ 
which is an element of $H^1_{|\cdot|}$ by the above, hence 
$$
|\mathscr{S}(f_1,G^1)(s)| = \sup \ G^1_{|\cdot|} \cap H^1_{|\cdot|} \cap (-\infty, \widetilde{X}_1(s)]. 
$$
Now $H^k \subset G^k \implies H^k_{|\cdot|} \subset G^k_{|\cdot|}$, thus 
\begin{align} 
|\mathscr{S}(f_1,G^1)(s)| = \mathscr{S}_R(f_1,G^1_{|\cdot|})(s) &= \sup H^1_{|\cdot|} \cap (-\infty, \widetilde{X}_1(s)] \nonumber \\ 
&= \sup \ \overline{H^1_{|\cdot|}} \cap (-\infty, \widetilde{X}_1(s)] \nonumber \\ 
&\stackrel{\text{lemma~\ref{psilemma}}}{=} \sup \ \psi(G^1)_{|\cdot|} \cap (-\infty, \widetilde{X}_1(s)] \nonumber \\ 
&\stackrel{\eqref{SSRgeq0}}{=} \mathscr{S}_R(f_1, \psi(G^1)_{|\cdot|})(s) \label{SVfinitepsieq} \\ & 
\stackrel{\eqref{|S|Xkeq}}{=} |\mathscr{S}(f_1,\psi(G^1))(s)|. \nonumber
\end{align} 
Therefore \eqref{SVgamma5} holds. We will now prove $\mathscr{S}(f_1,G^1)(s)$ and $\mathscr{S}(f_1,\psi(G^1))(s)$ share the same sign. 
By \eqref{SXkeq} and since $\mathscr{S}_R(f_1,G^1_{|\cdot|})(s) \geq 0$ \eqref{SSRgeq0}, 
\begin{equation}\label{SVepsilon5} 
\mathscr{S}(f_1,G^1)(s) \geq 0 \iff \mathscr{S}_R(f_1,G^1_{|\cdot|})(s) \in G^1.
\end{equation} 
Similarly 
\begin{equation}\label{SVepsilon4}
\mathscr{S}(f_1,\psi(G^1))(s) \geq 0 \iff \mathscr{S}_R(f_1,\psi(G^1)_{|\cdot|})(s) \in \psi(G^1). 
\end{equation} 
Now, $\mathscr{S}_R(f_1,G^1_{|\cdot|})(s) \in G^1 \implies \mathscr{S}_R(f_1,G^1_{|\cdot|})(s) \stackrel{\eqref{SXkeq}}{=} \mathscr{S}(f_1,G^1)(s) \in \pi_1(V)$ (case hypothesis), hence that $\mathscr{S}_R(f_1,G^1_{|\cdot|})(s) \in (G^1 \cap \overline{\pi_1(V)}) \cup \{0\} = H^1$. Conversely $\mathscr{S}_R(f_1,G^1_{|\cdot|})(s) \in H^1 \implies \mathscr{S}_R(f_1,G^1_{|\cdot|})(s) \in G^1$ since $0 \in G^1$ by hypothesis. \eqref{SVepsilon5} becomes
\begin{equation}\label{SVepsilon2} 
\mathscr{S}(f_1,G^1)(s) \geq 0 \iff \mathscr{S}_R(f_1,G^1_{|\cdot|})(s) \in H^1 \stackrel{\eqref{defpsi}}{\subset} \psi(G^1).
\end{equation} 
Now, $\mathscr{S}_R(f_1,G^1_{|\cdot|}) = |\mathscr{S}(f_1,G^1)(s)| \in H^1_{|\cdot|}$ by \eqref{SVepsilon1} which implies 
$$
\mathscr{S}_R(f_1,G^1_{|\cdot|}) \notin \overline{H^1_{|\cdot|}} \backslash H^1_{|\cdot|},
$$
whereas because $\big(\overline{H^1_{|\cdot|}}\backslash H^1_{|\cdot|} \big)_- \subset (-\infty, 0)$ and $\mathscr{S}_R(f_1,G^1_{|\cdot|})(s) \geq 0$, 
$$
\mathscr{S}_R(f_1,G^1_{|\cdot|})(s) \notin \big(\overline{H^1_{|\cdot|}}\backslash H^1_{|\cdot|} \big)_-.
$$
By \eqref{defpsi} we conclude if $\mathscr{S}_R(f_1,G^1_{|\cdot|})(s) \in \psi(G^1)$ then $\mathscr{S}_R(f_1,G^1_{|\cdot|})(s) \in H^1$. Thus by \eqref{SVepsilon2} 
\begin{equation}\label{SVepsilon3} 
\mathscr{S}(f_1,G^1)(s) \geq 0 \iff \mathscr{S}_R(f_1,G^1_{|\cdot|})(s) \in \psi(G^1). 
\end{equation} 
Since $\mathscr{S}_R(f_1,G^1_{|\cdot|})(s) = \mathscr{S}_R(f_1,\psi(G^1)_{|\cdot|})(s)$  by \eqref{SVfinitepsieq}, \eqref{SVepsilon3} and \eqref{SVepsilon4} imply \\ 
$\mathscr{S}(f_1,G^1)(s)$ and $\mathscr{S}(f_1, \psi(G^1))(s)$ have the same sign. This shows \eqref{SVgamma4} as desired.
\end{proof}
  
\begin{theorem}\label{improvedfiniteV}
Let $V \subset \mathbb{R}^n$ be finite. There exists a $G$ which satisfies theorem~\ref{Smainresult} such that
\begin{equation}\label{improvedfiniteVeq1}
G = \pi_1(G \cap V) \cup \{0\} \times \cdots \times \pi_n(G \cap V) \cup \{0\}.
\end{equation} 
\end{theorem}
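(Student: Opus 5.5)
We start from theorem~\ref{SVfinite}, which already provides a maximizing $G \ni 0$ with $G^k_{|\cdot|}$ closed and $G^k \subset \overline{\pi_k(V)} \cup \{0\} = \pi_k(V) \cup \{0\}$, since $V$ is finite. Each $G^k$ is then finite, so every subset of it is closed and Borel. Set $G' := \#_0(G \cap V)$, with ${G^k}' = \pi_k(G \cap V) \cup \{0\} \subset G^k$. The aim is to show that $G'$ also satisfies theorem~\ref{Smainresult}. Once that is done, \eqref{improvedfiniteVeq1} for $G'$ follows from lemma~\ref{gridfnproperties} (iii): $G' \cap V = G \cap V$, so $\#_0(G' \cap V) = \#_0(G \cap V) = G'$.

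To show $G'$ is still maximizing, we prove $P(\mathscr{S}(f,G) \in V) \leq P(\mathscr{S}(f,G') \in V)$, replacing one coordinate at a time as in theorem~\ref{S2finiteV}. First check, as in \eqref{S2finiteVinductionclaim}, that after replacing the first $k-1$ coordinates the projection $\pi_k(\cdot \cap V) \cup \{0\}$ is still ${G^k}'$. The argument carries over because $G \cap V \subset G' \cap V$, by lemma~\ref{gridfnproperties} (iii).

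For a single coordinate, apply Fubini and change of variables exactly as in theorem~\ref{SVfinite}. This reduces the claim to the pointwise inequality $h_1(\mathscr{S}(f_1,G^1)(s)) \leq h_1(\mathscr{S}(f_1,{G^1}')(s))$.

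\emph{Case $a := \mathscr{S}(f_1,G^1)(s) \notin \pi_1(G \cap V)$.} Every point $(a,\mathscr{S}(f_2,G^2)(s_2),\dots)$ lies in $G$ by theorem~\ref{Sproperties} (i), so it cannot lie in $V$. Hence $h_1(a) = 0$. (The case $a = 0$ is harmless, since $0 \in {G^1}'$.)

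\emph{Case $a \in \pi_1(G \cap V) \cup \{0\} = {G^1}'$.} First match absolute values. We have $|a| = \sup (G^1_{|\cdot|})_{\leq \widetilde{X}_1(s)}$, and this supremum is attained at $|a| \in ({G^1}')_{|\cdot|} \subset G^1_{|\cdot|}$. Therefore it equals $\sup ({G^1}'_{|\cdot|})_{\leq \widetilde{X}_1(s)} = |\mathscr{S}(f_1,{G^1}')(s)|$.

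Next match signs, using \eqref{SXkeq}. If $a \ge 0$, then $|a| \in G^1$ and $|a| = a \in {G^1}'$, so both operators return $+|a|$. If $a < 0$, then $|a| \notin G^1$, hence $|a| \notin {G^1}'$, so both return $-|a|$. Thus $\mathscr{S}(f_1,{G^1}')(s) = a$ and the inequality holds with equality.

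The main obstacle is this sign-matching step together with the bookkeeping in the induction over coordinates. The sign of $\mathscr{S}$ depends on whether $|a|$ lies in the set, and shrinking the set could flip a positive choice to a negative one. The containment ${G^k}' \subset G^k$ rules this out in one direction, and $a \in {G^k}'$ rules it out in the other. Finiteness of $V$ is used only to make all the sets closed and to avoid the closure and $\psi$ corrections needed in theorem~\ref{SVfinite}.
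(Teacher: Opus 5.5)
Your proposal is correct and follows the paper's proof. Both start from a maximizer $G$ given by theorem~\ref{SVfinite}, pass to $G'=\#_0(G\cap V)$, and replace one coordinate at a time as in theorem~\ref{S2finiteV}. Both then dispose of the case $\mathscr{S}(f_1,G^1)(s)\notin\pi_1(G\cap V)$ via $h_1=0$, and otherwise show $\mathscr{S}(f_1,{G^1}')(s)=\mathscr{S}(f_1,G^1)(s)$ by matching magnitudes and then signs. You in fact spell out the magnitude and sign-matching details that the paper only outlines, using ${G^1}'\subset G^1$ in the negative branch and $a\in{G^1}'$ in the nonnegative branch.
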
 
\begin{proof} 
Because the proof of this theorem is very similar to the proofs of theorem~\ref{SVfinite} and theorem~\ref{S2finiteV}, we will only give an outline. 

Let $G$ be a solution to theorem~\ref{SVfinite}. Then $G^k \subset \overline{\pi_k(V)} \cup \{0\} = \pi_k(V) \cup \{0\}, \ k=1,...,n$ ($V$ is finite). Consider (recall \eqref{defgridfunction})
$$
\#_0(G \cap V) = G' = {G^1}' \times \cdots \times {G^n}' 
$$
Notice $\#_0(G' \cap V) = G'$, i.e.\ $G'$ satisfies \eqref{improvedfiniteVeq1}. To show $G'$ is also a solution to theorem~\ref{Smainresult} (hence theorem~\ref{improvedfiniteV}), similar to theorem~\ref{SVfinite} it is sufficient to prove the claim
$$
P(\mathscr{S}(f,G) \in V) \leq P(\mathscr{S}(f,G') \in V).
$$
Notice because $G$ and $G \cap V$ are finite that $G \ni 0$ is closed and $G' = \overline{\#_0(G \cap V)}$. These were the assumptions for $G$ and $G'$ in theorem~\ref{S2finiteV}. The main subject of theorem~\ref{S2finiteV} was proving this same claim for $\mathscr{S}_2$. Therefore, we may defer to theorem~\ref{S2finiteV}, replacing any occurence of $\mathscr{S}_2$ with $\mathscr{S}$, until one reaches the point where by monotonicity of integration 
it suffices to show
\begin{equation}\label{h1lastequation} 
h_1(\mathscr{S}(f_1,G^1))(s) \leq h_1(\mathscr{S}(f_1,{G^1}')(s), \ \forall s \in (0,1).
\end{equation} 
There are two cases: $\mathscr{S}(f_1,G^1)(s) \notin \pi_1(G \cap V)$ and $\mathscr{S}(f_1,G^1)(s) \in \pi_1(G \cap V)$. The former case
has $h_1(\mathscr{S}(f_1,G^1)(s)) = 0$ similar to theorem~\ref{SVfinite}; \eqref{h1lastequation} holds. The latter case follows by proving 
$$
\mathscr{S}(f_1,G^1)(s) = \mathscr{S}(f_1,{G^1}')(s). 
$$
Similar to theorem~\ref{SVfinite}, this is done by showing their magnitudes are equal and then that their signs are as well. 
\end{proof} 

\section{Solving example $1$}\label{solvingeg.1section}
Either theorem~\ref{SVfinite} or theorem~\ref{improvedfiniteV} solves figure~\ref{eg.1fig}, but the latter has fewer $G$ to check. Recall figure~\ref{eg.1fig} had 
$V = \{(a_1,b_1),(a_2,b_2)\}, \ 0 \leq a_1 < a_2, \ 0 \leq b_2 < b_1$. There are only four $G$ which satisfy \eqref{improvedfiniteVeq1}. 
\begin{enumerate}[label=(\roman*)]
   \item $G = \{0\}$
   \item $G = \{0, a_1\} \times \{0, b_1\}$
   \item $G = \{0, a_2\} \times \{0, b_2\}$
   \item $G = \{0, a_1,a_2\} \times \{0, b_1,b_2\}$
\end{enumerate} 
We will compute $P(\mathscr{S}(f,G) \in V)$ for these $G$ and take the max. By theorem~\ref{improvedfiniteV}, this returns $\sup P(X \in V)$. 

For (i), $P(\mathscr{S}(f,G) \in V) = 0$ because $\mathscr{S}$ always maps into $G$ (theorem~\ref{Sproperties} (i)), but $0 \notin V$. For (ii), because $\mathscr{S}(f_1,\{0,a_1\})$ is shifted outwards on 
$\{0,a_1\}$ as far as $f_1$ allows (more precisely in the sense of \eqref{SRCDF2} and because $\mathscr{S}(f_1,\{0,a_1\})$ maps into $\{0,a_1\}$) it follows $P\big(\mathscr{S}(f_1,\{0,a_1\}) = a_1\big) = f_1(a_1)$. 

Similarly, $P\big(\mathscr{S}(f_2,\{0,b_1\}) = b_1\big) = f_2(b_1)$. Because the r.v.'s are ind., it follows their product has
$$
P\big(\mathscr{S}(f, \{0,a_1\} \times \{0,b_1\}) \in V\big) = f_1(a_1) f_2(b_1). 
$$
Similarly, for (iii) one has $P(\mathscr{S}(f, \{0,a_2\} \times \{0,b_2\}) \in V) = f_1(a_2)f_2(b_2)$. 

For (iv), by \eqref{SRCDF2} 
$$
P\big(|\mathscr{S}(f_1, \{0,a_1,a_2\})| \geq t\big) = f_1\big(\inf \ \{0,a_1,a_2\} \cap [t,\infty)\big), \ \forall t \geq 0.
$$
Because $\mathscr{S}(f_1,\{0,a_1,a_2\})(0,1) \subset \{0,a_1,a_2\} \subset [0, \infty)$ (theorem~\ref{Sproperties} (i)) and $f_1(0) = 1$, this implies 
$$
P\big(\mathscr{S}(f_1,\{0,a_1,a_2\}) \geq t\big) = 
\begin{cases} 
1 & \text{if } t = 0 \\ 
f_1(a_1) & \text{if } 0< t \leq a_1 \\ 
f_1(a_2) & \text{if } a_1< t \leq a_2 \\ 
0            &\text{if } a_2 < t
\end{cases} 
$$
which clearly implies 
\begin{align*} 
P\big(\mathscr{S}(f_1,\{0,a_1,a_2\}) = a_2\big) &= f_1(a_2) \\
P\big(\mathscr{S}(f_1,\{0,a_1,a_2\}) = a_1\big) &= f_1(a_1) - f_1(a_2).
\end{align*} 
Similarly $\mathscr{S}(f_2,\{0,b_1,b_2\})$ has $f_2(b_1)$ mass at $b_1$ (recall that $b_1 > b_2$) and $f_2(b_2)-f_2(b_1)$ at $b_2$. Because the r.v.'s are ind., their product
$\mathscr{S}\big(f, \{0,a_1,a_2\} \times \{0,b_1,b_2\}\big)$ has $(f_1(a_1)-f_1(a_2)) f_2(b_1)$ mass at $(a_1,b_1)$ and $f_1(a_2)(f_2(b_2) - f_2(b_1))$ mass at $(a_2,b_2)$. Adding, 
$$
P\big(\mathscr{S}(f,\{0,a_1,a_2\} \times \{0,b_1,b_2\}) \in V\big) = f_1(a_1)f_2(b_1) + f_1(a_2)f_2(b_2) - 2f_1(a_2)f_2(b_1).
$$
Taking the maximum of these four quantities yields
\begin{equation*}  
\max\Big\{f_1(a_1)f_2(b_1),\ f_1(a_2) f_2(b_2), \ f_1(a_1)f_2(b_1) + f_1(a_2)f_2(b_2)-2f_1(a_2)f_2(b_1) \Big\}.
\end{equation*}
as the solution to figure~\ref{eg.1fig}. This method readily extends to $V \subset \mathbb{R}^n$ finite (the computation is similar)
however the number of $G$ to check can be as large as the number of subsets of $V$.

\printindex
\end{document}